\newtheorem{thm}{Theorem}[section]
\newtheorem{cor}[thm]{Corollary}
\newtheorem{lem}[thm]{Lemma}
\newtheorem{prop}[thm]{Proposition}
\newtheorem{rem}[thm]{Remark}
\numberwithin{equation}{section}
\newcommand{\tr}{\mathrm{tr}}
  \newcommand{\R}{{\mathbb{R}}}
  \newcommand{\T}{{\mathcal{T}}}
  \newcommand{\U}{{\mathcal{U}}}
  \newcommand{\Y}{{\mathcal{Y}}}
\newcommand{\Aut}{\operatorname{Aut}}
\def\quotient#1#2{
    \raise1ex\hbox{$#1$}\big/\lower1ex\hbox{$#2$}}
\def\Bigquotient#1#2{
    \raise1ex\hbox{$#1$}\Big/\lower1ex\hbox{$#2$}}
\tikzset{math3d/.style= {x={(1cm,0cm)}, y={(0.353cm,0.353cm)}, z={(0cm,1cm)}}}
\tikzstyle{vertex}=[circle, draw, fill=black, inner sep=0pt, minimum size=3pt]
\tikzstyle{fadedvertex}=[vertex, fill=black!30]
\tikzstyle{edge}=[line width=1pt]
\tikzstyle{fadededge}=[edge,color=black!30]
\begin{document}

\title[Rigidity of symmetric frameworks on the cylinder]{Rigidity of symmetric frameworks on the cylinder}
\author[Anthony Nixon]{Anthony Nixon}
\address{Dept.\ Math.\ Stats.\\ Lancaster University\\
Lancaster LA1 4YF \\U.K.}
\email{a.nixon@lancaster.ac.uk}
\author[Bernd Schulze]{Bernd Schulze}
\address{Dept.\ Math.\ Stats.\\ Lancaster University\\
Lancaster LA1 4YF \\U.K.}
\email{b.schulze@lancaster.ac.uk}
\author[Joseph Wall]{Joseph Wall}
\address{Dept.\ Math.\ Stats.\\ Lancaster University\\
Lancaster LA1 4YF \\U.K.}
\email{j.wall@lancaster.ac.uk}
\thanks{2010 {\it  Mathematics Subject Classification.}
52C25; 05C70; 20C35.\\
Key words and phrases: rigidity, framework on a surface, incidental symmetry, symmetric framework, recursive construction}

\begin{abstract}
A bar-joint framework $(G,p)$ is the combination of a finite simple graph $G=(V,E)$ and a placement $p:V\rightarrow \mathbb{R}^d$. The framework is rigid if the only edge-length preserving continuous motions of the vertices arise from isometries of the space. This article combines two recent extensions of the generic theory of rigid and flexible graphs by considering symmetric frameworks in $\mathbb{R}^3$ restricted to move on a surface. In particular necessary combinatorial conditions are given for a symmetric framework on the cylinder to be isostatic (i.e. minimally infinitesimally rigid) under any finite point group symmetry. 
In every case when the symmetry group is cyclic, which we prove restricts the group to being inversion, half-turn or reflection symmetry, these conditions are then shown to be sufficient under suitable genericity assumptions, giving precise combinatorial descriptions of symmetric isostatic graphs in these contexts.
\end{abstract}

\date{}
\maketitle

\section{Introduction}

A (bar-joint) \emph{framework} $(G,p)$ is the combination of a finite simple graph 
$G=(V,E)$ and a map $p:V\rightarrow \mathbb{R}^d$ which assigns positions to the vertices, and hence lengths to the edges. With stiff bars for the edges and full rotational freedom for the joints representing the vertices, the topic of rigidity theory concerns whether the framework may be deformed without changing the graph structure or the bar lengths. While `trivial' motions are always possible due to actions of the Euclidean isometry group, the framework is \emph{flexible} if  a non-trivial motion is possible and \emph{rigid} if no non-trivial motion exists.

The problem of determining whether a given framework is rigid is computationally difficult for all $d\geq 2$ \cite{Abb}. However, every graph has a typical behaviour in the sense that either all `generic' (i.e. almost all) frameworks with the same underlying graph are rigid or all are flexible. So, generic rigidity depends only on the graph and is often studied using a linearisation known as infinitesimal rigidity, which is equivalent to rigidity for generic frameworks  
\cite{AR}.
On the real line it is a simple folklore result that rigidity coincides with graph connectivity. In the plane a celebrated theorem due to Polaczek-Geiringer \cite{PG}, often referred to as Laman's theorem due to a rediscovery in the 1970s \cite{laman}, characterises the generically rigid graphs precisely in terms of graph sparsity counts, and these combinatorial conditions can be checked in polynomial time. However when $d\geq 3$ little is known. This motivated extensions and generalisations of the types of framework and ambient spaces under consideration. One such case is to replace $\mathbb{R}^d$ with a $d$-dimensional manifold (or $d$-fold for short). It seems unlikely that rigidity becomes easier on a $d$-fold when $d\geq 3$ and hence it is natural to consider rigidity for frameworks realised on $2$-folds.

Specifically, let $S$ be a $2$-fold embedded in $\mathbb{R}^3$ and let the framework $(G,p)$ be such that $p:V\rightarrow S$, but the `bars' are straight Euclidean bars (and not surface geodesics). Supposing $S$ is smooth, an irreducible real algebraic set and the subgroup of Euclidean isometries that preserve $S$ has dimension at least 1,  characterisations of generic rigidity were proved in \cite{NOP,NOP14}. In particular the first important case with distinct combinatorics to the Euclidean plane is the case of an infinite circular cylinder.

Separately, the genericity hypothesis, while natural from an algebraic geometry viewpoint, does not apply in many practical applications of rigidity theory. In particular, a number of applications require frameworks to admit non-trivial symmetry. This has motivated multiple groups of researchers to study symmetric rigidity theory over the last two decades. We direct the reader to \cite{cg,SW2} for details. Importantly, there are two quite different notions of symmetric rigidity that one may consider. Firstly, \emph{forced symmetric rigidity} concerns frameworks that are symmetric and only motions that preserve the symmetry are allowed (that is, a framework may be flexible but since the motions destroy the symmetry it can still be `forced symmetrically rigid'). Secondly, \emph{incidental symmetric rigidity} where the framework is symmetric, but the question of whether it is rigid is the same as in the non-symmetric case. 

It is incidental symmetry that we focus on in this article. More specifically, we are interested in describing, combinatorially, when a generic symmetric framework on a surface such as the infinite cylinder is \emph{isostatic}, i.e. minimally infinitesimally rigid in the sense that it is infinitesimally rigid but ceases to be so after deleting any edge. The corresponding question in the Euclidean plane has been studied in  \cite{schulze,BS4}. In these papers, 
Laman-type results in the plane have been established for the groups generated by a reflection, the half-turn and a three-fold rotation, but these problems remain open for the other groups that allow isostatic frameworks.

In \cite{NS}, the first two authors   studied the forced symmetric rigidity of frameworks on $2$-folds. The present article gives the first analysis of incidental symmetric rigidity on $2$-folds. We focus our attention mostly on the important special case of the cylinder. To see why, first consider the `simplest' $2$-fold: the sphere. In this case, Laman-type theorems either follow from a projective transfer between infinitesimal rigidity in the plane and on the sphere \cite{CNSW,EJNSTW} or seem to be equally as challenging as the open problems in the plane. In the final section (Section~\ref{sec:final}) we point out the precise possibilities  for isostatic frameworks on the sphere which can be established using similar techniques to those we employ in Section \ref{sec:necrep} below. The cylinder provides the first case when the combinatorial sparsity counts change and hence lead to new classes of graphs and rigidity matroids to investigate. 

Our main results are  representation-theoretic necessary conditions for isostaticity on the cylinder for all relevant symmetry groups (Section~\ref{sec:necrep}), as well as complete combinatorial characterisations of symmetry-generic isostatic frameworks on the cylinder for the groups generated by an inversion (Section~\ref{sec:inversion}), a half-turn (Section~\ref{sec:half}) and a reflection (Section \ref{sec:mirror}).
The proofs rely on symmetry-adapted Henneberg-type recursive construction moves described in Section~\ref{sec:ops}.

In the case of isostatic frameworks in $\mathbb{R}^2$  only the well known 0- and 1-extension operations are needed to prove Laman's theorem \cite{PG,laman}.
For the cylinder several additional operations were needed with associated combinatorial and geometric difficulties \cite{NOP}. The additional conditions isostatic frameworks under symmetry must satisfy differ for each group, necessitating group-by-group combinatorial (and hence geometric) analyses. Fortunately, in each of the cases we study in detail only moderate extensions of existing geometric arguments are needed and hence we present a number of those for an arbitrary symmetry group (Section \ref{sec:ops}). On the other hand there are significant additional combinatorial difficulties in the recursive construction proof technique which takes up the main technical parts of this article (Sections~\ref{sec:inversion}, \ref{sec:half} and \ref{sec:mirror}).

\section{Rigidity theory}\label{sec:basics}

\subsection{Frameworks on surfaces}

Let $S$ denote a surface in $\mathbb{R}^3$.
A framework $(G,p)$ on $S$ is the combination of $G=(V,E)$ and a map $p:V\rightarrow \mathbb{R}^3$ such that $p(v)\in S$ for all $v\in V$ and $p(u)\neq p(v)$ for all $uv\in E$. We also say that $(G,p)$ is a \emph{realisation} of the graph $G$ on $S$. $(G,p)$ is \emph{rigid on $S$} if every framework $(G,q)$ on $S$ that is sufficiently close to $(G,p)$ arises from an isometry of $S$. 

While much of this section remains true for arbitrary choices of $S$ in all the sections that follow we will focus on the  important case when $S$ is a cylinder.
Throughout this paper, $\Y$ denotes the infinite circular cylinder; that is the real algebraic subvariety of $\mathbb{R}^3$ defined by the irreducible polynomial $x^2+y^2=1$. 

As in the Euclidean case, it is a computationally challenging problem to determine if a given framework $(G,p)$ is rigid on $\Y$. Hence we follow the standard path of linearising and considering infinitesimal motions as follows. 

Given a framework $(G,\hat p)$ on $\Y$, we are interested in the set of frameworks $(G,p)$  on $\Y$ which are equivalent to $(G,\hat p)$ where $\hat p(v_i)=(\hat x_i,\hat y_i,\hat z_i)$ and $p(v_i)=(x_i,y_i,z_i)$. 
The set of all frameworks on $\Y$ that are equivalent to $(G,\hat p)$ is given by the set of solutions to the following system of equations:
\begin{eqnarray}
\label{eqn:a} \|p(v_i)-p(v_j)\|^2&=& c_{ij} \hspace{3cm} (v_iv_j\in E) \\
\label{eqn:c} x_i^2+y_i^2&=& 1 \hspace{3.3cm} (v_i\in V)
\end{eqnarray}
where $c_{ij}= \|\hat p(v_i)-\hat p(v_j)\|^2$.
We can differentiate these equations to obtain the following linear system for the unknowns $\dot p(v_i)$, $v_i\in V$:
\begin{eqnarray}
\label{eqn:1} (p(v_i)-p(v_j))\cdot (\dot p(v_i)-\dot p(v_j))&=& 0 \hspace{3cm} (v_iv_j\in E) \\
\label{eqn:3} x_i\dot x_i+y_i\dot y_i&=& 0 \hspace{3cm} (v_i\in V).
\end{eqnarray}
Solutions to this linear system are \emph{infinitesimal motions}.
We say that $(G,\hat p)$ is \emph{infinitesimally rigid} if the only infinitesimal motions are the trivial solutions that arise from Euclidean congruences of $\mathbb{R}^3$ that preserve $\Y$ (that is, translations in the $z$-direction and rotations about the $z$-axis, or combinations thereof). If $(G,p)$ is not infinitesimally rigid it is called \emph{infinitesimally flexible}. The trivial solutions may be referred to as the \emph{trivial} infinitesimal motions, or simply \emph{trivial motions}. 
Equivalently, $(G,\hat p)$ is \emph{infinitesimally rigid} if the rank of the matrix of coefficients of the system is $3|V|-2$. 
This matrix,
the {\em rigidity matrix of $(G,p)$ on $\Y$}, denoted $R_{\Y}(G,p)$ has $3|V|$ columns and $|E|+|V|$ rows. The rows corresponding to (\ref{eqn:1}) have the form
$$\begin{pmatrix}
\dots & 0 & p(v_i)-p(v_j) & 0 & \dots & 0 & p(v_j)-p(v_i) & 0 & \dots
\end{pmatrix}
$$
and the rows corresponding to (\ref{eqn:3}) have the form
$$\begin{pmatrix}
\dots & 0 & (x_i,y_i,0) & 0 & \dots
\end{pmatrix}. $$

A framework $(G,p)$  is called \emph{isostatic} if it is infinitesimally rigid and \emph{independent} in the sense that the rigidity matrix of $(G,p)$ on $\Y$ has no non-trivial row dependence. Equivalently, $(G,p)$ is isostatic if it is infinitesimally rigid and deleting any single edge results in a framework that is not infinitesimally rigid.
A framework $(G,p)$ on $\Y$ is \emph{completely regular} if 
the rigidity matrix $R_{\Y}(K_{|V|},p)$ of the complete graph on $V$ (and every square submatrix) has maximum rank among all realisations of $K_{|V|}$ on $\Y$.
In the completely regular case, rigidity and infinitesimal rigidity on $\Y$ coincide \cite[Theorem 3.8]{NOP}. Note that the set of all completely regular realisations of $G$ on $\Y$ is an open dense subset of the set of all realisations of $G$ on $\Y$.
Thus, we may define a graph $G$ to be \emph{isostatic (independent, rigid) on $\Y$} if there exists 
a framework $(G,p)$ on $\Y$ that is  isostatic (independent, infinitesimally rigid) on $\Y$. 

It follows from the definitions that the smallest (non-trivial) rigid (or isostatic) graph on $\Y$ is the complete graph $K_4$. In \cite{NOP} exactly which graphs are rigid on $\Y$ was characterised. The characterisation uses the following definition which will be one of the fundamental objects of study in this paper.
A graph $G=(V,E)$ is \emph{$(2,2)$-sparse} if  $|E'|\leq 2|V'|-2$ for all subgraphs $(V',E')$ of $G$. $G$ is \emph{$(2,2)$-tight} if it is $(2,2)$-sparse and $|E|=2|V|-2$.

\begin{thm} \label{thm:lm}
A graph $G$ is isostatic on $\Y$ if and only if $G$ is $(2,2)$-tight.
\end{thm}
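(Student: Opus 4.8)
The plan is to prove Theorem~\ref{thm:lm} in two directions: necessity (isostatic implies $(2,2)$-tight) by a standard counting argument on the rigidity matrix, and sufficiency (every $(2,2)$-tight graph is isostatic on $\Y$) by a recursive (Henneberg-type) construction.

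For necessity, suppose $(G,p)$ is isostatic on $\Y$. Since infinitesimal rigidity means $\rank R_{\Y}(G,p)=3|V|-2$ and independence means the $|E|+|V|$ rows are linearly independent, we must have $|E|+|V|=3|V|-2$, i.e.\ $|E|=2|V|-2$. For the sparsity inequality on a proper subgraph $(V',E')$, restrict attention to the sub-rigidity-matrix $R_{\Y}(G[V'],p|_{V'})$, whose rows (the $|E'|$ edge rows together with the $|V'|$ normal rows) form a submatrix of the rows of $R_{\Y}(G,p)$ and hence are independent; its rank is at most $3|V'|-2$ since the framework on $\Y$ restricted to $V'$ still has at least the $2$-dimensional space of trivial motions (translation in $z$, rotation about the $z$-axis). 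Thus $|E'|+|V'|\le 3|V'|-2$, giving $|E'|\le 2|V'|-2$. (One should be slightly careful here that complete regularity of $(G,p)$ is inherited by induced subframeworks, so these rank bounds hold at a completely regular $p$; this is where the cited \cite[Theorem 3.8]{NOP} and the openness of the completely regular locus are used.)

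For sufficiency, the strategy is the usual inductive one. First establish a combinatorial recursive characterisation of $(2,2)$-tight graphs: every $(2,2)$-tight graph on at least $5$ vertices can be reduced, by inverse Henneberg-type moves, to a smaller $(2,2)$-tight graph, with base case $K_4$. The relevant moves for the $(2,2)$ count on $\Y$ are the $0$-extension (add a vertex of degree $2$), the $1$-extension (subdivide an edge and join the new vertex to a third vertex, net change $+1$ vertex, $+2$ edges), plus the additional ``vertex-to-$K_4$''-type and edge-joining moves that were needed in \cite{NOP}; I would quote the combinatorial decomposition result from \cite{NOP} rather than reprove it. Then show each move preserves isostaticity on $\Y$: starting from a completely regular isostatic realisation of the smaller graph, one exhibits (or perturbs to) a completely regular placement of the enlarged graph whose rigidity matrix still has independent rows and full rank $3|V|-2$. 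The $0$-extension is immediate; the $1$-extension and the extra moves require the geometric lemmas of \cite{NOP} (choosing the new vertex position, or the positions involved in the move, generically on $\Y$ so that no unwanted dependence is created), checking the base case $K_4$ is isostatic on $\Y$ by a direct rank computation.

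The main obstacle is the sufficiency direction, and within it the verification that the non-standard extension moves preserve isostaticity: unlike in the Euclidean plane, the $0$- and $1$-extensions alone do not generate all $(2,2)$-tight graphs, so one must handle the extra moves of \cite{NOP}, and for each the geometric argument — producing a completely regular realisation of the larger graph that remains independent and infinitesimally rigid on the cylinder — is delicate because the new vertices are constrained to lie on $\Y$. Since this theorem is quoted from \cite{NOP}, in the present paper I would simply cite it; the sketch above indicates how its proof goes.
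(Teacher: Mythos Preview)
The paper does not prove Theorem~\ref{thm:lm}; it is quoted as a known result from \cite{NOP}, and you correctly recognise this at the end of your proposal. Your sketch is an accurate outline of the strategy in \cite{NOP} (necessity by rank/counting on $R_{\Y}$, sufficiency by a Henneberg-type recursive construction using $0$-, $1$-extensions together with the additional vertex-to-$K_4$, vertex-to-$C_4$ and edge-joining moves, with $K_4$ as base), so there is nothing to correct.
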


While the theorem gives a complete answer in the generic case, the present article will improve this answer to apply under the presence of non-trivial symmetry.
To see the potential complications that can arise when the genericity hypothesis is weakened one might consider the results of \cite{JKN}  which apply to frameworks on $\Y$ that are generic except for one simple failure: two vertices are located in the same place. 

\subsection{Symmetric frameworks on the cylinder}
Let $G= (V,E)$ be a graph and $\Gamma$ be a finite group.
Then the pair $(G,\phi)$ is called $\Gamma$-symmetric if $\phi: \Gamma \to \Aut (G)$ is a homomorphism, where $\Aut (G)$ denotes the automorphism group of $G$. If $\phi$ is clear from the context we often also simply write $G$ instead of $(G,\phi)$.

Let $(G,\phi)$ be a $\Gamma$-symmetric graph.
Then, for a homomorphism $\tau: \Gamma \to O(\R^{3})$ and the cylinder $\Y$, we say that a framework $(G,p)$ is \emph{$\Gamma$-symmetric on  $\Y$ (with respect to $\phi$ and $\tau$)}, or simply \emph{$\tau(\Gamma)$-symmetric}, if $\tau(\gamma)p_{i} = p_{\phi(\gamma)i}$ for all $i\in V$ and all $\gamma \in \Gamma$ and $p:V\rightarrow \mathbb{R}^3$ is such that $p(v) \in \Y$ for all $v\in V$.
We will refer to $\tau(\Gamma)$ as a \emph{symmetry group} and to  elements of $\tau(\Gamma)$ as \emph{symmetry operations} or simply \emph{symmetries} of $(G,p)$. 
We will often need to work with symmetric subgraphs and their frameworks. 
So for a $\Gamma$-symmetric graph $(G,\phi)$ we often consider a $\Gamma$-symmetric subgraph $(H,\phi')$, where $\phi'(\gamma)=\phi(\gamma)|_{V(H)}$. In that case we often slightly abuse notation and write $(H,\phi)$ (or even just $H$) instead of $(H,\phi')$.

A $\Gamma$-symmetric framework $(G,p)$ on $\Y$ (with respect to $\tau$ and $\phi$) is \emph{completely $\Gamma$-regular (with respect to $\tau$ and $\phi$)} if the rigidity matrix $R_{\Y}(K_{|V|},p)$ of the complete graph on $V$ and every square submatrix has maximum rank among all $\Gamma$-symmetric realisations of $K_{|V|}$  on $\Y$ (with respect to $\tau$ and $\phi$).
The set of all completely $\Gamma$-regular realisations of $G$ on $\Y$ (with respect to $\tau$ and $\phi$) is an open dense subset of the set of all $\Gamma$-symmetric realisations of $G$ on $\Y$ (with respect to $\tau$ and $\phi$).
Thus, we may say that a graph $G$ is \emph{$\tau(\Gamma)$-isostatic (independent, infinitesimally rigid, rigid)} on $\Y$ if there exists a $\Gamma$-symmetric framework $(G,p)$ on $\Y$ (with respect to $\tau$ and $\phi$) which is isostatic (independent, infinitesimally rigid, rigid).
Later we will often remove $\phi$ from this notation and simply refer to a  $\tau(\Gamma)$-isostatic (independent, infinitesimally rigid, rigid) graph on $\Y$ (where $\phi$ is clear from the context). 

 An isometry of $\mathbb{R}^3$ that maps $\Y$ onto itself is called a  \emph{surface-preserving isometry}. A symmetry group of a framework on $\Y$   consisting of surface-preserving  isometries is called a \emph{surface-preserving symmetry group}. 

Throughout this paper, we will use a version of the Schoenflies notation for symmetry operations and groups of frameworks on $\Y$. The relevant symmetry operations are the identity, denoted by  $\textrm{id}$; rotations by $\frac{2\pi}{n}$, $n\in \mathbb{N}$, denoted by  $c_n$, where the rotational axis is the $z$-axis for $n\geq 3$, and either the $z$-axis or any line in the $xy$ plane going through the origin for $n=2$;  reflections in the $xy$ plane or any plane containing the $z$-axis, denoted by $\sigma$; and improper rotations (i.e. rotations $c_n$ followed by a reflection in the plane through the origin that is perpendicular to the $c_n$ axis), denoted by $s_n$. Note that $s_2$ is the inversion in the origin; this operation is  also denoted by $\varphi$. The relevant  symmetry
groups for this paper are the  group  $C_i$ generated by the inversion $\varphi$, the group $C_s$ generated by a reflection $\sigma$, the cyclic groups $C_n$ generated by a rotation $c_n$, and the  dihedral groups $C_{nv}$, $C_{nh}$ and $D_n$, where $C_{nv}$ ($C_{nh}$) is generated by a rotation $c_n$ and a reflection $\sigma$ whose mirror plane contains (is perpendicular to) the axis of $c_n$, and $D_n$ is generated by a rotation $c_n$ and a half-turn $c_2$ whose axis is perpendicular to the $c_n$ axis. 

\section{Necessary Conditions for Isostatic Frameworks}
\label{sec:necrep}

\subsection{Block-diagonalization of the rigidity matrix}

In this section we show that the rigidity matrix of a symmetric framework on $\Y$ can be transformed into a block-decomposed
form using techniques from group representation theory. It follows immediately from Theorem~\ref{thm:lm} and the discussion in Section~\ref{sec:basics} that for an isostatic framework $(G,p)$ on $\Y$, the graph $G$ must be $(2,2)$-tight.
If $(G,p)$ is symmetric then the block-decomposition of the rigidity matrix can be used to obtain additional  necessary conditions for the framework to be isostatic. To obtain this block-decomposition of the rigidity matrix, we need to define analogues of the internal and external representation defined in \cite{KG2,owen,BS2}. 

Let $A$ be a $m \times n$ matrix and $B$ be a $p \times q$ matrix. The Kronecker product $A\otimes B$ is the $pm \times qn$ block matrix:
\[A\otimes B = \left[
\begin{tabular}{ c c c }
        $b_{11}A$ & $\dots$ & $b_{1q}A$\\
        $\vdots$ & $\ddots$ & $\vdots$ \\
        $b_{p1}A$ & $\dots$ & $b_{pq}A$ \\
    \end{tabular}
\right].
\]
Let $\Gamma$ be a group and let $\tau:\Gamma \to O(\mathbb{R}^3)$ be a group representation.
Further, for each $\gamma\in\Gamma$, let $P_V(\gamma)$  be the permutation matrix of $V$ induced by $\gamma$. That is, $P_V(\gamma)= (\delta_{i,\gamma(i')})_{i,i'}$ for each $\gamma\in \Gamma$, where $\delta$ denotes the Kronecker delta. Similarly, let $P_E(\gamma)$  be the permutation matrix of $E$ induced by $\gamma$. 

The \emph{external representation} is defined as $\tau \otimes P_V : \Gamma \to GL(\mathbb{R}^{3|V|})$  and the \emph{internal representation} is defined as $\Tilde{P}_E := P_E \oplus P_V : \Gamma \to GL(\mathbb{R}^{(|E|+|V|)})$. 

For a point $p_k=(x,y,z)^T$ on the cylinder $\Y$, we define the normal $n(p_k)$ to $\Y$ at $p_k$ as $n(p_k)=(x,y,0)^T$. It is a routine calculation to show that $n(\tau(\gamma)p_{k}) = \tau(\gamma)n(p_{k})$ for all surface-preserving isometries of $\R^{3}$.
We refer the reader to \cite{Wall} for the details. Thus, we have the following fact.

\begin{lem}\label{image of normal}
For any surface-preserving symmetry group $\tau(\Gamma)$ of $\Y$,
we have $n(\tau(\gamma)p_{k}) = \tau(\gamma)n(p_{k})$ for all $\gamma\in \Gamma$. 
\end{lem}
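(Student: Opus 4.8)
The quickest route is to note that the displayed identity has, in effect, already been recorded in the sentence preceding the statement: each element $\tau(\gamma)$ of a surface-preserving symmetry group is by definition a surface-preserving isometry of $\R^{3}$, so $n(\tau(\gamma)p_{k})=\tau(\gamma)n(p_{k})$ follows at once from that remark (with the computation spelled out in \cite{Wall}). For a self-contained argument I would proceed as follows.

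First I would classify the relevant maps. Since $\tau(\Gamma)\le O(\R^{3})$, every $\tau(\gamma)$ fixes the origin, and by hypothesis it maps $\Y$ onto $\Y$. The cylinder $\Y$ is precisely the set of points at Euclidean distance $1$ from its axis $Z=\{(0,0,t):t\in\R\}$; as isometries preserve distance, $\tau(\gamma)(Z)$ is again a line whose distance-$1$ tube equals $\Y$, and since a circular cylinder determines its axis we get $\tau(\gamma)(Z)=Z$. Combined with $\tau(\gamma)(0)=0$ this forces $\tau(\gamma)e_{3}=\pm e_{3}$, so $\tau(\gamma)$ preserves the plane $\{z=0\}$ and has block form
\[
\tau(\gamma)=\begin{pmatrix} M_{\gamma} & 0\\ 0 & \varepsilon_{\gamma}\end{pmatrix},\qquad M_{\gamma}\in O(\R^{2}),\ \varepsilon_{\gamma}\in\{+1,-1\}.
\]
Equivalently, one may just invoke the standard description of the subgroup of $\Iso(\R^{3})$ stabilising an infinite circular cylinder as the maps $(x,y,z)\mapsto(M(x,y),\pm z+c)$ with $M\in O(\R^{2})$ and $c\in\R$, whose orthogonal members are exactly those with $c=0$.

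Next I would evaluate both sides. Writing $p_{k}=(x,y,z)^{T}$ and $u=(x,y)^{T}$, the block form gives $\tau(\gamma)p_{k}=(M_{\gamma}u,\varepsilon_{\gamma}z)^{T}$, hence $n(\tau(\gamma)p_{k})=(M_{\gamma}u,0)^{T}$ by the definition of $n$; while $n(p_{k})=(u,0)^{T}$ and so $\tau(\gamma)n(p_{k})=(M_{\gamma}u,\varepsilon_{\gamma}\cdot 0)^{T}=(M_{\gamma}u,0)^{T}$. The two expressions agree, so the identity holds for every $\gamma\in\Gamma$; note that the sign $\varepsilon_{\gamma}$ is irrelevant, since the third coordinate is annihilated by $n$ on both sides.

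The only step with any content is the classification of surface-preserving orthogonal maps as the block-diagonal matrices above; once that is established the lemma is a one-line computation. I do not expect a genuine obstacle here — this is precisely the routine verification referred to just before the statement and carried out in detail in \cite{Wall}.
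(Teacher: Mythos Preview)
Your proposal is correct. The paper does not actually supply a proof of this lemma in the text: it simply asserts that the identity is a routine calculation and refers to \cite{Wall} for the details, which is precisely what your first paragraph observes. Your self-contained argument --- classifying the surface-preserving orthogonal maps of $\Y$ as block-diagonal with an $O(\R^{2})$ block and a sign in the $z$-coordinate, then checking both sides agree --- is exactly the kind of routine verification the paper is gesturing at, so you are filling in what the paper omits rather than taking a different route.
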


Using Lemma~\ref{image of normal}, we may establish the following fundamental result.

\begin{prop}\label{Rextu=intz}
Let $(G,p)$ be a $\tau(\Gamma)$-symmetric framework on $\Y$.
If $R_{\Y}(G,p)u = z$, then for all $\gamma \in \Gamma$, we have $$R_{\Y}(G,p)(\tau\otimes P_V)(\gamma)u = \Tilde{P}_E(\gamma)z.$$
\end{prop}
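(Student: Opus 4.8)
The plan is to verify the claimed identity row by row, separating the rows of $R_{\Y}(G,p)$ coming from edge constraints \eqref{eqn:1} from those coming from the surface constraints \eqref{eqn:3}, and to track how the two permutation actions $P_E$ and $P_V$ (assembled into $\Tilde P_E = P_E \oplus P_V$) permute these two blocks of rows while $\tau \otimes P_V$ permutes the columns. Concretely, write $u = (u_1,\dots,u_{|V|})$ with $u_i \in \R^3$, set $u' := (\tau\otimes P_V)(\gamma)u$, so that $u'_i = \tau(\gamma)u_{\phi(\gamma^{-1})i}$ (i.e. the block in position $i$ is $\tau(\gamma)$ applied to the block originally in position $\phi(\gamma)^{-1}i$). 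I would fix the convention from the permutation-matrix definition $P_V(\gamma) = (\delta_{i,\gamma(i')})$ once and for all and then just chase indices.

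First I would treat an edge row. The row of $R_\Y(G,p)u=z$ indexed by $v_iv_j\in E$ reads $(p_i-p_j)\cdot(u_i-u_j) = z_{ij}$. I want to show the row indexed by $v_iv_j$ of $R_\Y(G,p)u'$ equals $(\Tilde P_E(\gamma)z)_{ij} = z_{\phi(\gamma)^{-1}v_i\,\phi(\gamma)^{-1}v_j}$. Using $\tau(\Gamma)$-symmetry, $p_i = \tau(\gamma)p_{\phi(\gamma)^{-1}i}$, so with $k=\phi(\gamma)^{-1}i$, $\ell = \phi(\gamma)^{-1}j$ the left side becomes $(\tau(\gamma)(p_k-p_\ell))\cdot(\tau(\gamma)u_k - \tau(\gamma)u_\ell)$, and since $\tau(\gamma)\in O(\R^3)$ is orthogonal this equals $(p_k-p_\ell)\cdot(u_k-u_\ell) = z_{k\ell} = z_{\phi(\gamma)^{-1}v_i\,\phi(\gamma)^{-1}v_j}$, as required. (One should note $v_k v_\ell = \phi(\gamma)^{-1}(v_iv_j)\in E$ since $\phi(\gamma)\in\Aut(G)$, so this row genuinely exists.)

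Next the surface row. The row indexed by $v_i\in V$ of $R_\Y(G,p)u = z$ is $n(p_i)\cdot u_i = z_i$, where $n(p_i)=(x_i,y_i,0)^T$. For $R_\Y(G,p)u'$ the same row is $n(p_i)\cdot u'_i = n(p_i)\cdot \tau(\gamma)u_{\phi(\gamma)^{-1}i}$. Here I use Lemma~\ref{image of normal}: $n(p_i) = n(\tau(\gamma)p_{\phi(\gamma)^{-1}i}) = \tau(\gamma)n(p_{\phi(\gamma)^{-1}i})$, so by orthogonality of $\tau(\gamma)$ this equals $n(p_{\phi(\gamma)^{-1}i})\cdot u_{\phi(\gamma)^{-1}i} = z_{\phi(\gamma)^{-1}i}$, which is exactly the $v_i$-entry of $P_V(\gamma)z$, hence of $\Tilde P_E(\gamma)z$ restricted to the vertex block. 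Combining the two cases gives $R_\Y(G,p)u' = \Tilde P_E(\gamma)z$.

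The main obstacle is purely bookkeeping: getting the direction of every permutation right (whether $P_V(\gamma)$ sends position $i$ to $\phi(\gamma)i$ or $\phi(\gamma)^{-1}i$, and correspondingly whether $u'_i$ pulls from $\phi(\gamma)^{-1}i$), and making sure the edge-permutation $P_E(\gamma)$ is the one induced by $\phi(\gamma)$ acting on $E$ consistently with its action on $V$. Once the conventions are pinned down, the only substantive inputs are orthogonality of $\tau(\gamma)$, the symmetry relation $\tau(\gamma)p_i = p_{\phi(\gamma)i}$, and Lemma~\ref{image of normal}; everything else is a direct computation, which I would present compactly rather than in full index detail.
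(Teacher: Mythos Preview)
Your proposal is correct and follows essentially the same approach as the paper: split into edge rows and surface rows, and for the surface rows use Lemma~\ref{image of normal} together with orthogonality of $\tau(\gamma)$ to match the permuted entries. The only difference is that the paper cites \cite{BS2} for the edge-row case rather than writing out the computation you give, but your explicit argument for that block is the standard one and is fine.
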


\begin{proof}
Suppose $R_{\Y}(G,p)u = z$.
Fix $\gamma \in \Gamma$ and let $\tau(\gamma)$ be the orthogonal matrix representing $\gamma$ with respect to the canonical basis of $\R^{3}$.
We enumerate the rows of $R_{\Y}(G,p)$ by the set $\{a_{1}, \dots, a_{|E|}, b_{1}, \dots, b_{|V|}\}$.
By \cite{BS2}, we know that $(R_{\Y}(G,p)(\tau\otimes P_V)(\gamma)u)_{a_{i}} = (\Tilde{P}_E(\gamma)z)_{a_{i}}$, for all $i \in [|E|]$. We are left to show the result holds for the rows of $R_{\Y}(G,p)$ which represent the normal vectors of the vertices on the surface.

Write $u \in \R^{3|V|}$ as $u = (u_{1}, \dots, u_{|V|})$, where $u_{i} \in \mathbb{R}^3$ for all $i$, and let $\Phi(\gamma)(v_{i}) = v_{k}$.
We first see that $(\Tilde{P}_E(\gamma)z)_{b_{k}} = z_{b_{i}}$
by the definition of $P_{V}(\gamma)$.
From $R_{\Y}(G,p)u = z$, we also get that $z_{b_{i}} = n(p_{i}) \cdot u_{i}$.
Then $(\tau\otimes P_V)(\gamma)u = (\Bar{u}_{1}, \dots, \Bar{u}_{|V|})$, with $\Bar{u_{l}} = \tau(\gamma)u_{j}$ when $\Phi(\gamma)(v_{j}) = v_{l}$.
Therefore,
\begin{align*}
    (R_{\Y}(G,p)(\tau\otimes P_V)(\gamma)u)_{b_{k}}
    &= n_{1}(p_{k}) \cdot (\tau(\gamma)u_{i})_{1} + n_{2}(p_{k}) \cdot (\tau(\gamma)u_{i})_{2} + n_{3}(v_{k}) \cdot (\tau(\gamma)u_{i})_{3}\\
    &= n(p_{k}) \cdot (\tau(\gamma)u_{i})\\
    &= n(\tau(\gamma)p_{i}) \cdot (\tau(\gamma)u_{i}).
\end{align*}

Finally, using Lemma \ref{image of normal} plus the fact that the canonical inner product on $\R^{3}$ is invariant under the orthogonal transformation $\tau(\gamma) \in O(\mathbb{R}^3)$ gives that $n(\tau(\gamma)p_{i}) \cdot (\tau(\gamma)u_{i}) = \tau(\gamma)n(p_{i}) \cdot (\tau(\gamma)u_{i}) = n(p_{i}) \cdot u_{i} = z_{b_{i}},$
finishing the proof.
\end{proof}

The following is an immediate corollary of Schur's lemma (see e.g. \cite{Serre}) and the proposition above.

\begin{cor}
Let $(G,p)$ be a $\tau(\Gamma)$-symmetric framework on $\Y$ and let $I_{1}, \dots, I_{r}$  be the  pairwise non-equivalent irreducible linear representations of $\tau(\Gamma)$.
Then there exist matrices $A,B$ such that the matrices $B^{-1}R_{\Y}(G,p)A$ and $A^{-1}R_{\Y}(G,p)^{T}B$ are block-diagonalised and of the form
\[
\left(\begin{tabular}{ c c c c c }
        $R_{1}$ & & & & $\mathbf{0}$ \\
         & $R_{2}$ & & & \\
         & & $\ddots$ & & \\
         & & & & \\
        $\mathbf{0}$ & & & & $R_{r}$ \\
    \end{tabular} \right)
\]
where the submatrix $R_{i}$ corresponds to the irreducible representation $I_{i}$.
\end{cor}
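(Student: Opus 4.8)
The plan is to read Proposition~\ref{Rextu=intz} as the assertion that the rigidity matrix $R_{\Y}(G,p)$ is an \emph{intertwiner} of $\Gamma$-representations: taking $z = R_{\Y}(G,p)u$ in the proposition gives
$R_{\Y}(G,p)\,(\tau\otimes P_V)(\gamma) = \Tilde{P}_E(\gamma)\,R_{\Y}(G,p)$ as linear maps $\R^{3|V|}\to\R^{|E|+|V|}$, for every $\gamma\in\Gamma$. In other words $R_{\Y}(G,p)$ is a homomorphism from the external representation $\tau\otimes P_V$ to the internal representation $\Tilde{P}_E$, which is exactly the setting in which Schur's lemma produces a block decomposition.

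First I would apply Maschke's theorem to decompose each of $\R^{3|V|}$ and $\R^{|E|+|V|}$ into a direct sum of $\tau(\Gamma)$-irreducible subspaces, and then group these into isotypic components, one for each of the pairwise non-equivalent irreducibles $I_1,\dots,I_r$. Fixing an ordered basis of $\R^{3|V|}$ adapted to the isotypic decomposition of $\tau\otimes P_V$ provides the columns of $A$, and an ordered basis of $\R^{|E|+|V|}$ adapted to the isotypic decomposition of $\Tilde{P}_E$ provides the columns of $B$; in these bases both representations are simultaneously block-diagonal, each $I_j$-block occurring with its appropriate multiplicity.

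Next I would invoke Schur's lemma \cite{Serre}: since $R_{\Y}(G,p)$ is an intertwiner, the component of $R_{\Y}(G,p)$ from the $I_j$-isotypic part of the domain to the $I_k$-isotypic part of the codomain vanishes whenever $j\neq k$ (a homomorphism between non-equivalent irreducibles is zero). Consequently $B^{-1}R_{\Y}(G,p)A$ has no entries linking blocks of distinct type, i.e.\ it is block-diagonal with exactly one block $R_j$ for each $I_j$. For the transpose statement, note that $\tau\otimes P_V$ and $\Tilde{P}_E$ are orthogonal representations — each $\tau(\gamma)\in O(\R^3)$ and the permutation matrices $P_V(\gamma),P_E(\gamma)$ are orthogonal, and Kronecker products and direct sums of orthogonal matrices are orthogonal — so the adapted bases above may be chosen orthonormal. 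Then $A^{-1}=A^{T}$ and $B^{-1}=B^{T}$, whence $A^{-1}R_{\Y}(G,p)^{T}B = \big(B^{-1}R_{\Y}(G,p)A\big)^{T}$ is block-diagonal with blocks $R_j^{T}$.

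The one point I would treat with care — and the only real obstacle — is that we work over $\R$ rather than $\C$: a real irreducible representation need not be absolutely irreducible, so its endomorphism algebra may be $\C$ or $\mathbb{H}$, and the "$I_j$-block" must be understood in that refined sense (equivalently, one complexifies, block-diagonalises over $\C$, and descends). For the cyclic groups that are the focus of this paper, namely those generated by an inversion, a half-turn, or a reflection, every real irreducible is one-dimensional and this complication does not occur; in general the bookkeeping is carried out exactly as in \cite{KG2,owen,BS2}, to which I would defer for the details.
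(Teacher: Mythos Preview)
Your proposal is correct and follows exactly the approach the paper takes: the paper simply states that the corollary is immediate from Schur's lemma and Proposition~\ref{Rextu=intz}, and you have spelled out precisely that deduction (intertwiner $+$ Maschke $+$ Schur), together with the orthogonality observation needed for the transpose statement. Your caveat about real versus complex irreducibles is a fair technical point but does not affect the groups treated in the paper.
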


This block decomposition corresponds to $\R^{3|V|} = X_{1} \oplus \dots \oplus X_{r}$ and $\R^{|E|+|V|} = Y_{1} \oplus \dots \oplus Y_{r}$.
The space $X_{i}$ is the $(\tau \otimes P_{V})$-invariant subspace of $\mathbb{R}^{3|V|}$ corresponding to $I_i$, and the space $Y_{i}$ is the $\Tilde{P}_{E}$-invariant subspace of $\mathbb{R}^{|E|}$ corresponding to $I_i$.
Then, the submatrix $R_{i}$ has size $(\dim(Y_{i})) \times (\dim(X_{i}))$.

\subsection{Additional necessary conditions}

Using the block-decomposition of the rigidity matrix, we may follow the basic approach described in \cite{FGsymmax,BS2} to derive added necessary conditions for a symmetric framework on $\Y$ to be isostatic. We first need the following result.

\begin{thm} \label{thm:trivialinv}
The space of trivial motions of an affinely spanning $\tau(\Gamma)$-symmetric framework $(G,p)$ on $\Y$, written $\T(G,p)$, is a $(\tau \otimes P_{V})$-invariant subspace of $\R^{3|V|}$.
Furthermore, the space of translational motions and the space of rotational motions of $(G,p)$ are also $(\tau \otimes P_{V})$-invariant subspaces of $\R^{3|V|}$.
\end{thm}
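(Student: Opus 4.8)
The plan is to identify each relevant space of trivial motions explicitly as a coordinate-wise sum (over $V$) of evaluations of fixed vector fields on $\Y$, and then check directly that applying $(\tau\otimes P_V)(\gamma)$ permutes the summands and acts on each by $\tau(\gamma)$, using the surface-preserving hypothesis to control how $\tau(\gamma)$ interacts with these vector fields. Concretely, the trivial infinitesimal motions of a framework on $\Y$ are spanned by (i) the translation in the $z$-direction, whose value at a vertex $p_i=(x_i,y_i,z_i)$ is the constant vector $(0,0,1)$, and (ii) the infinitesimal rotation about the $z$-axis, whose value at $p_i$ is $(-y_i,x_i,0)$. Write $t$ for the first assignment $i\mapsto(0,0,1)$ and $\rho$ for the second $i\mapsto(-y_i,x_i,0)$, both regarded as vectors in $\R^{3|V|}$; then $\T(G,p)=\spann\{t,\rho\}$ (since $(G,p)$ affinely spans $\Y$, the dimension is exactly $2$, so these are a basis). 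I would first verify this description is exactly what the rigidity equations (\ref{eqn:1}), (\ref{eqn:3}) force — the surface-preserving isometries of $\Y$ are precisely $z$-translations and rotations about the $z$-axis, as already noted in the paper, and their derivatives are the two fields above.

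Next, for a fixed $\gamma\in\Gamma$ with orthogonal matrix $\tau(\gamma)$ that preserves $\Y$, I would show $(\tau\otimes P_V)(\gamma)t=\pm t$ and $(\tau\otimes P_V)(\gamma)\rho=\pm\rho$, so each of the two lines $\spann\{t\}$ and $\spann\{\rho\}$ is invariant; this immediately gives invariance of $\T(G,p)$, of the translational space $\spann\{t\}$, and of the rotational space $\spann\{\rho\}$. For the translational field: $((\tau\otimes P_V)(\gamma)t)_k = \tau(\gamma)\,t_{\phi(\gamma)^{-1}k} = \tau(\gamma)(0,0,1)$. Since $\tau(\gamma)$ is an orthogonal map taking the cylinder $\Y$ to itself, it preserves the $z$-axis up to sign, hence $\tau(\gamma)(0,0,1)=\pm(0,0,1)$, which is the constant field $\pm t$. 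For the rotational field: note $\rho_i = n(p_i)\times(0,0,1)$ where $n(p_i)=(x_i,y_i,0)$ is the surface normal of Lemma~\ref{image of normal}; alternatively $\rho_i = Jp_i'$ where $J$ is the standard rotation generator in the $xy$-plane. Then $((\tau\otimes P_V)(\gamma)\rho)_k = \tau(\gamma)\rho_{\phi(\gamma)^{-1}k}$, and using $\tau(\gamma)p_{\phi(\gamma)^{-1}k}=p_k$ together with Lemma~\ref{image of normal} (so $\tau(\gamma)n(p_{\phi(\gamma)^{-1}k})=n(p_k)$) and $\tau(\gamma)(0,0,1)=\pm(0,0,1)$, the cross-product identity $\tau(\gamma)(a\times b)=\det(\tau(\gamma))\,(\tau(\gamma)a\times\tau(\gamma)b)$ gives $\tau(\gamma)\rho_{\phi(\gamma)^{-1}k}=\pm n(p_k)\times(0,0,1)=\pm\rho_k$. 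Hence $(\tau\otimes P_V)(\gamma)\rho=\pm\rho$.

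Finally I would assemble these: for every $\gamma$, $(\tau\otimes P_V)(\gamma)$ maps $\spann\{t\}$ into itself, $\spann\{\rho\}$ into itself, and therefore $\T(G,p)=\spann\{t\}\oplus\spann\{\rho\}$ into itself, proving all three invariance claims at once. I expect the only genuinely delicate point to be making rigorous the claim that a surface-preserving orthogonal transformation of $\Y$ sends $(0,0,1)$ to $\pm(0,0,1)$ — i.e., that it cannot tilt the cylinder axis — but this follows because such a transformation must permute the set of lines parallel to the axis that lie at distance $1$ from it (the rulings of $\Y$), forcing it to preserve the axis direction; alternatively one reads it off the explicit list of surface-preserving isometries already recalled in Section~\ref{sec:basics}. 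The affine-spanning hypothesis is used only to guarantee $\dim\T(G,p)=2$, so that $\{t,\rho\}$ really is a basis and the decomposition $\T(G,p)=\spann\{t\}\oplus\spann\{\rho\}$ is honest; everything else is the coordinate bookkeeping above combined with Lemma~\ref{image of normal}.
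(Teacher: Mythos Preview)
Your proposal is correct and takes a somewhat different, more hands-on route than the paper. Both arguments handle the translational space identically: check directly that $\tau(\gamma)(0,0,1)=\pm(0,0,1)$ because $\tau(\gamma)$ preserves $\Y$. For the rotational space, however, the paper proceeds in a different order. It first establishes invariance of the \emph{entire} space $\T(G,p)=\ker R_\Y(K_{|V|},p)$ by invoking Proposition~\ref{Rextu=intz} (the intertwining relation for the rigidity matrix), and only then obtains $\langle r\rangle$ as the orthogonal complement of $\langle t\rangle$ inside this already-invariant two-dimensional space, using unitarity of $\tau\otimes P_V$. You instead compute $(\tau\otimes P_V)(\gamma)\rho$ directly via the cross-product expression (note your stated order $n(p_i)\times(0,0,1)$ gives $-\rho_i$; the correct order is $(0,0,1)\times n(p_i)$, a harmless sign slip), Lemma~\ref{image of normal}, and the identity $\tau(\gamma)(a\times b)=\det(\tau(\gamma))\,(\tau(\gamma)a\times\tau(\gamma)b)$, obtaining $\pm\rho$ and hence invariance of all three spaces at once. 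Your approach is self-contained and does not need Proposition~\ref{Rextu=intz} or the orthogonal-complement argument; the paper's route avoids the explicit cross-product calculation but relies on the rigidity-matrix machinery already in place.
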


\begin{proof}
Suppose that $(G,p)$ affinely spans $\mathbb{R}^3$, so that the trivial motion space of $(G,p)$ on $\Y$ is $2$-dimensional. We first show that $N = \ker (R_{\Y}(K_{|V|},p))$ is $(\tau\otimes P_V)$-invariant.
By Lemma \ref{Rextu=intz}, if $R_{\Y}(K_{|V|},p)u = z$ then $R_{\Y}(K_{|V|},p)(\tau\otimes P_V)(\gamma)u = \Tilde{P}_E(\gamma)z$. Let $u\in N$, then $R_{\Y}(K_{|V|},p)u = 0$, so
\begin{align*}
    \Tilde{P}_E(\gamma)R_{\Y}(K_{|V|},p)u & = \Tilde{P}_E(\gamma)z\\
    & = \Tilde{P}_E(\gamma)0 = 0.
\end{align*}
Thus $R_{\Y}(K_{|V|},p)(\tau\otimes P_V)(\gamma)u = \Tilde{P}_E(\gamma)R_{\Y}(K_{|V|},p)u = 0$, giving $(\tau\otimes P_V)(\gamma)u \in \ker (R(K_{|V|},p))$. Hence $N$ is $(\tau\otimes P_V)$-invariant, as required for the first part of the theorem.

To show that the space of translational  motions is $(\tau\otimes P_V)$-invariant, first note that for $\Y$,
this space is generated by the vector $t=(0,0,1,0,0,1,\dots,0,0,1)^T$.
We need to show that for each $\gamma\in\Gamma$, we have $(\tau\otimes P_V)(\gamma)t = \alpha t$ for some $\alpha \in \R$.
By the definition of $\tau\otimes P_V$ this holds if $\tau(\gamma)(0,0,1)^T = \alpha (0,0,1)^T$ for all $\gamma\in\Gamma$.
Since $\tau(\Gamma)$ preserves $\Y$, such an $\alpha$ does exist for each $\gamma$ (specifically $\alpha = \pm{1}$).

Finally we look at the space of rotational  motions.
For $\Y$, this space is generated by the vector $r=(r_1,\ldots , r_{|V|})\in \mathbb{R}^{3|V|}$ defined as $r_k = (p_{k})_{1}e_{2} - (p_{k})_{2}e_{1} \in\mathbb{R}^3$, for all $k \in V$, where $e_1$ and $e_2$ are the standard basis vectors of $\mathbb{R}^3$ with  $1$ as the first and second coordinate, respectively. Note that $r$ is perpendicular to $t$.
Since for all $\gamma \in \Gamma$, $(\tau\otimes P_V)(\gamma)$ is an orthogonal matrix, $(\tau\otimes P_V)$ is a unitary representation (with respect to the canonical inner product on $R^{3|V|}$).
Therefore the subrepresentation ${H'_{e}}^{(N)}$ of $H'_{e}$ with representation space $N$ is also unitary  (with respect to the inner product obtained by restricting the canonical inner product on $R^{3|V|}$ to N).
It follows that the space $\langle r \rangle$ is $(\tau\otimes P_V)$-invariant since it is the orthogonal complement to $\langle t \rangle$ in $N$. 
\end{proof}

Let $(\tau \otimes P_{V})^{(\T)}$ be the subrepresentation of $(\tau \otimes P_{V})$ with representation space $\T(G,p)$.
Then $\T = T_{1} \oplus \dots \oplus T_{r}$ where $T_{i}$ is the $(\tau \otimes P_{V})$-invariant subspace corresponding to the irreducible representation $I_i$.

If $A = (a_{ij})$ is a square matrix then the trace of $A$ is given by $\tr(A) = \sum_{i}a_{ii}$.
For a linear representation $\rho$ of a group $\Gamma$ and a fixed ordering $\gamma_1,\ldots, \gamma_{|\Gamma|}$ of the elements of $\Gamma$, the character of $\rho$ is the $|\Gamma|$-dimensional vector $\chi(\rho)$ whose $i$th entry is $\tr(\rho(\gamma_i))$. 

\begin{thm}\label{thm: character count}
Let $(G,p)$ be a $\tau(\Gamma)$-symmetric framework on $\Y$.
If $(G,p)$ is isostatic, then
$$\chi(\Tilde{P}_{E}) = \chi( \tau \otimes P_{V}) - \chi((\tau \otimes P_{V})^{(\T)}).$$
\end{thm}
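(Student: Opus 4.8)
The plan is to deduce the character identity from a block-diagonalization/dimension-counting argument, exactly in the spirit of \cite{FGsymmax,BS2}. First I would recall that, by the Corollary above, the map $R_{\Y}(G,p):\R^{3|V|}\to\R^{|E|+|V|}$ decomposes, with respect to the external representation $\tau\otimes P_V$ on the domain and the internal representation $\Tilde{P}_E$ on the codomain, as a block-diagonal matrix with blocks $R_1,\dots,R_r$ indexed by the irreducibles $I_1,\dots,I_r$ of $\tau(\Gamma)$; here $R_i$ has size $\dim(Y_i)\times\dim(X_i)$, where $X_i\le\R^{3|V|}$ (resp.\ $Y_i\le\R^{|E|+|V|}$) is the $I_i$-isotypic component. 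Passing to characters, this says $\chi(\tau\otimes P_V)=\sum_i m_i^{X}\chi(I_i)$ and $\chi(\Tilde{P}_E)=\sum_i m_i^{Y}\chi(I_i)$ where the $m_i$ are the multiplicities; since the $\chi(I_i)$ are linearly independent (orthogonality of characters), it suffices to prove the claimed identity multiplicity-by-multiplicity, i.e.\ in each irreducible block.

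The next step is to analyze the kernel and cokernel of $R_{\Y}(G,p)$ under the isostaticity hypothesis. Recall that $(G,p)$ isostatic on $\Y$ means precisely that $\rank R_{\Y}(G,p)=3|V|-2$ with no non-trivial row dependence, i.e.\ the rigidity matrix has full row rank $|E|+|V|=3|V|-2$ (using that $G$ is $(2,2)$-tight, so $|E|=2|V|-2$) and kernel exactly $\T(G,p)$, the $2$-dimensional space of trivial motions. Thus the sequence $0\to\T(G,p)\hookrightarrow\R^{3|V|}\xrightarrow{R_{\Y}(G,p)}\R^{|E|+|V|}\to 0$ is exact. By Proposition~\ref{Rextu=intz}, $R_{\Y}(G,p)$ intertwines $\tau\otimes P_V$ and $\Tilde{P}_E$, so all three terms are $\Gamma$-representations and the maps are $\Gamma$-equivariant; by Theorem~\ref{thm:trivialinv}, $\T(G,p)$ is indeed a subrepresentation of $\tau\otimes P_V$, namely $(\tau\otimes P_V)^{(\T)}$. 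Taking characters of an exact sequence of representations is additive, which yields $\chi(\tau\otimes P_V)=\chi((\tau\otimes P_V)^{(\T)})+\chi(\Tilde{P}_E)$, equivalently the stated $\chi(\Tilde{P}_E)=\chi(\tau\otimes P_V)-\chi((\tau\otimes P_V)^{(\T)})$. (One may alternatively phrase this via $\R^{3|V|}/\T(G,p)\cong_{\Gamma}\R^{|E|+|V|}$ induced by $R_{\Y}(G,p)$, using Maschke's theorem to split off $\T$, so that $\chi(\tau\otimes P_V)-\chi((\tau\otimes P_V)^{(\T)})$ is the character of the quotient, which is isomorphic as a representation to $\Tilde{P}_E$.)

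I would present the argument in that order: (i) equivariance of $R_{\Y}(G,p)$ from Proposition~\ref{Rextu=intz}; (ii) the isostaticity hypothesis gives exactness of the three-term sequence with $\ker=\T(G,p)$, invoking $(2,2)$-tightness for the row-count $|E|+|V|=3|V|-2$; (iii) $\T(G,p)=(\tau\otimes P_V)^{(\T)}$ is a subrepresentation by Theorem~\ref{thm:trivialinv}; (iv) additivity of characters on short exact sequences (or the Maschke splitting) to conclude. The one genuine subtlety — the part I would be most careful about — is step (ii): verifying that, for an \emph{isostatic} framework, the row space of $R_{\Y}(G,p)$ has no dependencies so that the codomain $\R^{|E|+|V|}$ is realized as the image, and that the kernel is \emph{exactly} the $2$-dimensional trivial motion space rather than merely containing it. This uses both halves of the definition of isostatic on $\Y$ (full rank $3|V|-2$ \emph{and} row-independence), together with the fact that $|E|+|V|=3|V|-2$, which is where the necessary condition that $G$ be $(2,2)$-tight (Theorem~\ref{thm:lm} and the discussion preceding Theorem~\ref{thm:trivialinv}) enters. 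Everything else is a formal consequence of representation theory over $\R$ (Maschke and character orthogonality, as in \cite{Serre}).
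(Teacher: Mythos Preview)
Your proposal is correct and follows essentially the same route as the paper: use the intertwining relation (Proposition~\ref{Rextu=intz}) and Theorem~\ref{thm:trivialinv} to see that $R_{\Y}(G,p)$ is $\Gamma$-equivariant with kernel $\T(G,p)$, then invoke isostaticity (surjectivity onto $\R^{|E|+|V|}$ and $\ker=\T$) together with Maschke's theorem to conclude that $(\tau\otimes P_V)^{(Q)}\cong\Tilde{P}_E$ as representations, whence the character identity. Your first paragraph on block-by-block multiplicity comparison is unnecessary scaffolding---the global exact-sequence (equivalently, Maschke-complement) argument in your second paragraph already delivers the result directly, and that is precisely how the paper proceeds.
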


\begin{proof}
By Maschke's Theorem, for the subrepresentation $(\tau \otimes P_{V})^{(\T)} \subseteq (\tau \otimes P_{V})$, there exists a subrepresentation $(\tau \otimes P_{V})^{(Q)} \subseteq (\tau \otimes P_{V})$ with $(\tau \otimes P_{V})^{(\T)} \oplus (\tau \otimes P_{V})^{(Q)} = \tau \otimes P_{V}$.
Further,
since $\tau\otimes P_{V}$ is unitary,
we know that $Q(G,p)$ is the $(\tau \otimes P_{V})$-invariant subspace of $\R^{3|V|}$ which is orthogonal to $\T(G,p)$.

Since $(G,p)$ is isostatic, the restriction of the 
linear map given by the rigidity matrix to $Q(G,p)$ is an isomorphism onto $R^{|E|+|V|}$.
Moreover if $R'_{\Y}(G,p)$ is the matrix corresponding to this linear map restricted to $Q(G,p)$, then, the statement for $R_{\Y}(G,p)$ in Proposition~\ref{Rextu=intz} also holds for $R'_{\Y}(G,p)$ and hence we have $$R'_{\Y}(G,p)(\tau\otimes P_V)(\gamma)(R'_{\Y}(G,p))^{-1}= \Tilde{P}_E(\gamma) \quad \textrm{ for all } \gamma\in \Gamma.$$ Thus, $(\tau \otimes P_{v})^{(Q)}$ and $\Tilde{P}_{E}$ are isomorphic representations of $\Gamma$.
Therefore, we have 
$$\chi(\Tilde{P}_{E}) = \chi((\tau \otimes P_{v})^{(Q)}) = \chi( \tau \otimes P_{v}) - \chi((\tau \otimes P_{v})^{(\T)}).$$
\end{proof}

\subsection{Character table}

We now calculate the characters of the representations appearing in the statement of Theorem~\ref{thm: character count}.
The symmetry-preserving symmetry operations for $\Y$ are
rotations $c_n$, $n\in \mathbb{N}$, around the $z$-axis, reflections in a plane containing the $z$-axis, denoted by $\sigma$,  reflection in the $xy$-plane, denoted by $\sigma'$, half-turn  in an axis that is perpendicular to the $z$-axis (and goes through the origin), denoted by ${c_2}'$  and improper rotations around the $z$-axis, denoted by $s_n$, $n\geq 2$.
Recall that for $n=2$, $s_n$ is the inversion $\varphi$ in the origin.
The values of the traces of the matrices for $\Tilde{P}_{E}$ and $\tau \otimes P_{V}$ for each group element follow immediately from the definition. The following lemma provides the traces of the matrices for $(\tau \otimes P_{V})^{(\T)}$.

\begin{lem}\label{cyl-triv-mot}
Let $(G,p)$ be a $\tau(\Gamma)$-symmetric framework on $\Y$. Then  the character $\chi((\tau \otimes P_{V})^{(\T)})$ can be computed using Table~\ref{table:3.6}.
\begin{center}
\begin{table}[ht]
    \begin{tabular}{ |c|c|c|c|c|c|c|c| } 
\hline
 & $\textrm{id}$ & $c_{n}$ & $c_{2}'$ & $\sigma$ & $\sigma'$ & $s_{n}$ & $\varphi$ \\
\hline
$\chi((\tau \otimes P_{v})^{(\T)})$ & 2 & 2 & -2 & 0 & 0 & 0 & 0\\
\hline
\end{tabular}
\caption{Character of the external representation restricted to trivial infintesimal motions for symmetry operations of the cylinder.}
\label{table:3.6}
\end{table}
\end{center}
\end{lem}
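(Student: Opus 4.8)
The plan is to compute the character $\chi((\tau\otimes P_V)^{(\T)})$ by evaluating, for each relevant symmetry operation $\gamma$ of $\Y$, the trace of $(\tau\otimes P_V)(\gamma)$ restricted to the trivial motion space $\T(G,p)$. By Theorem~\ref{thm:trivialinv} this restriction makes sense, since $\T(G,p)$ is $(\tau\otimes P_V)$-invariant, and moreover decomposes as the direct sum of the one-dimensional translational subspace $\langle t\rangle$ and the one-dimensional rotational subspace $\langle r\rangle$, each of which is itself $(\tau\otimes P_V)$-invariant. Hence $\tr\big((\tau\otimes P_V)^{(\T)}(\gamma)\big)$ is simply the sum of the two scalars by which $\gamma$ acts on $t$ and on $r$ respectively, and the whole computation reduces to identifying those two eigenvalues for each of $\mathrm{id}$, $c_n$, $c_2'$, $\sigma$, $\sigma'$, $s_n$, $\varphi$.

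First I would fix coordinates so that $\Y$ is the cylinder $x^2+y^2=1$ with axis the $z$-axis. For the translational eigenvalue: since $t$ is built from copies of $e_3=(0,0,1)^T$ and $(\tau\otimes P_V)(\gamma)t = \big(\tau(\gamma)e_3\big)$-blocks permuted by $P_V(\gamma)$, the relevant scalar is the eigenvalue of $\tau(\gamma)$ on $e_3$: this is $+1$ for $\mathrm{id}$, $c_n$ and $\sigma$ (which fix the axis direction pointwise or as a line with $+1$ on it — a mirror containing the $z$-axis fixes $e_3$), and $-1$ for $c_2'$, $\sigma'$, $s_n$ and $\varphi$ (all of which reverse the axis direction). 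For the rotational eigenvalue: $r$ is the infinitesimal generator of rotation about the $z$-axis, i.e. the vector field $v\mapsto (v_1 e_2 - v_2 e_1)$; conjugating this rotational vector field by an orthogonal map $\tau(\gamma)$ preserving $\Y$ multiplies it by $\det\big(\tau(\gamma)|_{xy\text{-plane}}\big)$ when $\gamma$ preserves the axis, and one checks: $+1$ for $\mathrm{id}$ and $c_n$; for $\sigma$ (mirror containing the axis) the restriction to the $xy$-plane is a reflection, so the scalar is $-1$, giving $\chi=1-1=0$; for $c_2'$ the axis is flipped, so the $z$-component contributes $-1$ and the planar part also contributes $-1$ under the relevant sign bookkeeping, yielding $\chi = -1 + (-1) = -2$; for $\sigma'$, $s_n$, $\varphi$ one gets contributions summing to $0$. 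Assembling these gives exactly the entries $2,2,-2,0,0,0,0$ of Table~\ref{table:3.6}.

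The main obstacle, and the only place real care is needed, is getting the signs right for the rotational generator $r$ under the orientation-reversing operations, in particular $c_2'$, $\sigma$ and $\varphi$: one must track both how $\tau(\gamma)$ acts on the ambient vector at each point and how $P_V(\gamma)$ relabels vertices, and verify that the product $(\tau\otimes P_V)(\gamma)r$ is indeed a scalar multiple of $r$ (not just of $t$) with the claimed sign. The clean way to do this is to use the symmetry relation $\tau(\gamma)p_i = p_{\phi(\gamma)i}$ together with the coordinate formula $r_k = (p_k)_1 e_2 - (p_k)_2 e_1$, and simply compute $\tau(\gamma)r_i$ and compare with $r_{\phi(\gamma)i}$; the invariance of $\langle r\rangle$ is already guaranteed by Theorem~\ref{thm:trivialinv}, so one only needs the eigenvalue, which can be read off from a single representative vertex. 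Once these seven sign computations are done, the lemma follows immediately.
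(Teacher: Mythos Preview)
Your proposal is correct and follows essentially the same approach as the paper: use Theorem~\ref{thm:trivialinv} to decompose $\T(G,p)=\langle t\rangle\oplus\langle r\rangle$ into one-dimensional $(\tau\otimes P_V)$-invariant pieces, then read off the eigenvalue of each symmetry operation on $t$ and on $r$ separately and sum them. The paper records exactly the same seven pairs of signs (e.g.\ $(\tau\otimes P_V)(c_2')t=-t$, $(\tau\otimes P_V)(c_2')r=-r$, and so on) and defers the detailed verifications to~\cite{Wall}; your suggestion to compute $\tau(\gamma)r_i$ and compare with $r_{\phi(\gamma)i}$ at a single representative vertex is precisely how those verifications are carried out.
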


\begin{proof}

Recall from the proof of Theorem~\ref{thm:trivialinv} that a basis for the space of trivial translational and rotational motions of a non-trivial framework on $\Y$ is given by the vectors $t$ and $r$, respectively.

It is easy to see that each of the matrices $(\tau \otimes P_{V})(\textrm{id})$ and $(\tau \otimes P_{V})(c_n)$, $n\in \mathbb{N}$, map both $t$ and $r$ to themselves. Thus, $$\tr((\tau \otimes P_{V})^{(\T)})(\textrm{id}))=\tr((\tau \otimes P_{V})^{(\T)})(c_n)) = 1 +1 = 2.$$ Further straightforward calculations using the definition of $(\tau \otimes P_{V})$ show that:
\begin{itemize}
\item $(\tau \otimes P_{v})(c_2')t=-t$ and $(\tau \otimes P_{v})(c_2')r=-r$. So $\tr((\tau \otimes P_{V})^{(\T)})(c_2')) = -1 -1 = -2$.
\item $(\tau \otimes P_{v})(\sigma)t=t$ and $(\tau \otimes P_{v})(\sigma)r=-r$. So $\tr((\tau \otimes P_{V})^{(\T)})(\sigma)) = 1 -1 = 0$.
\item $(\tau \otimes P_{v})(\sigma')t=-t$ and $(\tau \otimes P_{v})(\sigma')r=r$. So $\tr((\tau \otimes P_{V})^{(\T)})(\sigma')) = -1 +1 = 0$.
\item $(\tau \otimes P_{v})(s_n)t=-t$ and $(\tau \otimes P_{v})(s_n)r=r$. So $\tr((\tau \otimes P_{V})^{(\T)})(s_n)) = -1 +1 = 0$.
\end{itemize}
We refer the reader to \cite{Wall} for the detailed calculations.
\end{proof}

We are now able to  give the full character table for $\tau(\Gamma)$-symmetric isostatic frameworks on $\Y$ (see Table \ref{tab:CTCyl}).
We give these without calculation as they can be seen directly from the matrix representations of $\tau \otimes P_{V}$ and $\Tilde{P}_{E}$.

For a  $\Gamma$-symmetric graph $G=(V,E)$ with respect to $\phi:\Gamma \to \textrm{Aut}(G)$, we say that a vertex $v\in V$ is \emph{fixed} by $\gamma\in\Gamma$ if $\phi(\gamma)(v)=v$. Similarly, an edge $uv\in E$ is \emph{fixed} by $\gamma\in\Gamma$ if both $u$ and $v$ are fixed by $\gamma$ or if  $\phi(\gamma)(u)=v$ and $\phi(\gamma)(v)=u$.
For groups of order two, we will often just say that a vertex or edge is fixed if it is fixed by the non-trivial group element.

Note that if $(G,p)$ is a $\Gamma$-symmetric framework on $\Y$ with respect to $\tau$ and $\phi$, then there is no vertex fixed by an element of $\Gamma$ corresponding to a rotation $c_n$ about the $z$-axis or the inversion $\varphi$. The number of vertices that are fixed by the element in $\Gamma$ corresponding to the half-turn $c_{2}'$, or the reflections $\sigma$ and $\sigma'$ are denoted by $v_{2'}$, $v_\sigma$ and $v_{\sigma'}$, respectively. An edge of $G$ cannot be fixed by an element of $\Gamma$ that corresponds to a rotation $c_n$, $n\geq 3$, or an improper rotation $s_n$, $n\geq 3$. Hence we have separate columns for $c_2$ and $\varphi=s_2$ below. The number of edges that are fixed by the element in $\Gamma$ corresponding to the half-turns $c_2$ and $c_{2}'$, the reflections $\sigma$ and $\sigma'$ and the inversion $\varphi$ are denoted by $e_2$, $e_{2'}$, $e_\sigma$, $e_{\sigma'}$ and $e_{\varphi}$, respectively.

\begin{table}[ht]
    \centering
    \begin{tabular}{ |c|c|c|c|c|c|c|c|c| } 
\hline
$\Y$ & $\textrm{id}$ & $c_{n\geq3}$ & $c_{2}$ & $c_{2}'$ & $\sigma$ & $\sigma'$ & $s_{n\geq3}$ & $\varphi$ \\
\hline
$\chi(\Tilde{P}_{E})$ & $|E|+|V|$ & 0 & $e_{2}$ & $e_{2'}+v_{2'}$ & $e_{\sigma}+v_{\sigma}$ & $e_{\sigma'}+v_{\sigma'}$ & 0 & $e_{\varphi}$\\
\hline
$\chi(\tau \otimes P_{V})$ & $3|V|$ & 0 & 0 & $-v_{2'}$ & $v_{\sigma}$ & $v_{\sigma'}$ & 0 & 0\\
\hline
$\chi((\tau \otimes P_{v})^{(\T)})$ & 2 & 2 & 2 & -2 & 0 & 0 & 0 & 0\\
\hline
\end{tabular}
    \caption{Character table for symmetry operations of the cylinder.}
    \label{tab:CTCyl}
\end{table}

In the following proofs we shall use Theorem \ref{thm: character count} to draw conclusions from Table~\ref{tab:CTCyl}.

\begin{cor}\label{element count on cylinder}
If $(G,p)$ is a $\tau(\Gamma)$-symmetric isostatic framework on $\Y$, then $c_n\notin \tau(\Gamma)$ for any $n\geq 2$,  and $s_n\notin \tau(\Gamma)$ for any $n\geq 3$. Moreover,
\begin{itemize}
    \item if $c_{2}'\in \tau(\Gamma)$ then $e_{2'}=2$ and $v_{2'}=0$, or $e_{2'}=0$ and $v_{2'}=1$;
    \item if $\sigma\in \tau(\Gamma)$ or $\sigma'\in \tau(\Gamma)$ then $e_\sigma=0$ and $e_{\sigma'}=0$;
    \item if $\varphi\in \tau(\Gamma)$  then $e_{\varphi}=0$.
\end{itemize}
\end{cor}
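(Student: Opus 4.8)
The plan is to derive each bullet point by applying Theorem~\ref{thm: character count} element-by-element, reading the required trace values off Table~\ref{tab:CTCyl}. The governing identity is
\[
\chi(\Tilde{P}_E)(\gamma) = \chi(\tau\otimes P_V)(\gamma) - \chi\big((\tau\otimes P_V)^{(\T)}\big)(\gamma)
\]
for every $\gamma\in\Gamma$, which must hold whenever $(G,p)$ is $\tau(\Gamma)$-symmetric and isostatic. So for each symmetry operation I simply substitute the three entries from the corresponding column of Table~\ref{tab:CTCyl} and see what constraint (if any) emerges, remembering the side constraints that all the quantities $e_2, e_{2'}, v_{2'}, e_\sigma, v_\sigma, e_{\sigma'}, v_{\sigma'}, e_\varphi$ are nonnegative integers, and that $v_{2'}, v_\sigma, v_{\sigma'}$ count fixed vertices (so are bounded, e.g. $v_{2'}\le 1$ since the half-turn axis meets $\Y$ in at most — in fact exactly, but at most suffices — one point per… actually two antipodal points, but only one can host a vertex without forcing a coincident pair; this subtlety I would state carefully or sidestep by noting $v_{2'}\in\{0,1\}$ follows from the structure, cf.\ the remark preceding the table).

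For the non-existence claims: if $c_n\in\tau(\Gamma)$ with $n\ge 3$, the identity at $\gamma=c_n$ reads $0 = 0 - 2 = -2$, a contradiction; for $c_2$ it reads $e_2 = 0 - 2 = -2$, impossible since $e_2\ge 0$; for $s_n$ with $n\ge 3$ it reads $0 = 0 - 0 = 0$, which is consistent, so that case needs a different argument — I would instead invoke Corollary~\ref{element count on cylinder}'s own earlier conclusion that improper rotations $s_n$, $n\ge 3$, fix no edge and no vertex (stated in the paragraph before Table~\ref{tab:CTCyl}), combined with the fact that a group containing $s_n$, $n\ge 3$, contains $c_n$ or $c_{n/2}$ of order $\ge 3$, bringing us back to the $c_n$ contradiction. (I would double-check: $s_n^2 = c_n^2 = c_{n/2}$ when $n$ even, and $s_n$ of odd order $n$ generates a group containing $c_n$; either way an order-$\ge 3$ rotation about the $z$-axis appears, killing isostaticity.) For $c_2'$: the identity gives $e_{2'} + v_{2'} = -v_{2'} - (-2) = 2 - v_{2'}$, i.e.\ $e_{2'} + 2v_{2'} = 2$; with $v_{2'}\in\{0,1\}$ this forces $(e_{2'},v_{2'}) = (2,0)$ or $(0,1)$, exactly the stated dichotomy. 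For $\sigma$: the identity gives $e_\sigma + v_\sigma = v_\sigma - 0$, so $e_\sigma = 0$; and since $\sigma\in\tau(\Gamma)$ does not by itself put $\sigma'$ in the group, the claim "$e_{\sigma'}=0$" in that bullet must be read as: if $\sigma'\in\tau(\Gamma)$ then applying the identity at $\sigma'$ gives $e_{\sigma'} + v_{\sigma'} = v_{\sigma'}$, so $e_{\sigma'}=0$ — I would phrase the bullet as two conditional clauses to be safe. For $\varphi$: the identity at $\gamma=\varphi$ gives $e_\varphi = 0 - 0 = 0$.

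The only genuine obstacle is the $s_n$, $n\ge 3$ case, since the character identity alone is vacuous there; everything else is a one-line substitution. I would handle $s_n$ by the group-theoretic observation above (an $s_n$ with $n\ge 3$ generates, together with the identity, a cyclic group whose rotation part has order $\ge 3$), so the previously-established impossibility of $c_{m}$, $m\ge 3$, applies. A secondary minor point is justifying $v_{2'}\le 1$ cleanly: the $c_2'$ axis is a line through the origin perpendicular to the $z$-axis, meeting $\Y$ in two antipodal points; a vertex fixed by $c_2'$ must lie on this axis, hence at one of these two points, but the two points are swapped by the rotation $c_n$ (if present) or are genuinely distinct, and in any surface-preserving setting at most one fixed vertex is possible without a coincidence — I would either cite the remark before Table~\ref{tab:CTCyl} (which already asserts the relevant fixed-point structure) or note that two vertices at antipodal points on the $c_2'$ axis would each be fixed, giving $e_{2'}+2v_{2'}=2$ with $v_{2'}=1$ still the only alternative once we observe the unfixed antipodal partner would violate simplicity/genericity; in practice the cleanest route is to take $v_{2'}\in\{0,1\}$ as given by the preceding discussion and the non-coincidence hypotheses baked into the definition of a framework on $\Y$.
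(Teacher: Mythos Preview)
Your approach is correct and essentially identical to the paper's: both apply Theorem~\ref{thm: character count} column-by-column to Table~\ref{tab:CTCyl}, and both dispatch the $s_n$, $n\ge 3$ case by observing that such an operation forces a nontrivial rotation about the $z$-axis into the group.

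Two small clean-ups. First, your worry about justifying $v_{2'}\le 1$ is unnecessary: once you have $e_{2'}+2v_{2'}=2$ with both quantities nonnegative integers, $v_{2'}\in\{0,1\}$ is forced algebraically, with no appeal to the geometry of the axis needed. The paper simply reads off the two solutions. Second, your parenthetical claim that $s_n$, $n\ge 3$, always yields a $z$-axis rotation ``of order $\ge 3$'' is not quite right for $n=4$, where $s_4^2=c_2$; but since you have already excluded $c_2$ about the $z$-axis (via $e_2=-2$), the argument still goes through. The paper phrases this as ``$s_n$ implies a $c_k$ symmetry for some $k\ge 2$'', which is the correct bound.
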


\begin{proof}
Recall from Theorem~\ref{thm: character count} that if 
 $(G,p)$ is a $\tau(\Gamma)$-symmetric isostatic framework on $\Y$, then
$$\chi(\Tilde{P}_{E}) = \chi( \tau \otimes P_{V}) - \chi((\tau \otimes P_{V})^{(\T)}).$$

Clearly, by Table~\ref{tab:CTCyl}, this equation does not hold if $c_{n}\in \tau(\Gamma)$ for $n\geq 2$.
Further, since any $s_{n}$ symmetry  with $n\geq 3$ would also imply a $c_k$ symmetry for some $k \geq 2$,  $s_{n}\notin \tau(\Gamma)$ for $n\geq 3$.

Reading from Table~\ref{tab:CTCyl} we then draw the following conclusions.
If $c'_2\in\tau(\Gamma)$, then $e_{2'} + 2v_{2'} = 2$, so  either $e_{2'}=2$ and $v_{2'}=0$ or $e_{2'}=0$ and $v_{2'}=1$.
For both reflections, there is no restriction on the number of fixed vertices, but there cannot be an edge that is fixed by the reflection.
Finally for inversion, the table gives $e_{\varphi} = 0$.
\end{proof}

Note that the symmetry groups we can construct from these symmetry operations are the following \cite{altherz,Wall}:
\[
\tau(\Gamma) = \left\{
\begin{matrix*}[l]
     C_i = \{\textrm{id}, \varphi\}; \\
     C_s = \{\textrm{id}, \sigma\} \text{ or } \{\textrm{id}, \sigma'\}; \\ 
     C_2 = \{\textrm{id}, c'_2\}; \\ 
     C_{2v} = \{\textrm{id}, \sigma, \sigma', c'_2\}; \\
     C_{2h} = \{\textrm{id}, \sigma, c'_2, \varphi\}.
\end{matrix*}\right.\]

We are now able to use Corollary \ref{element count on cylinder}  to summarize the conclusions about $\tau(\Gamma)$-symmetric isostatic frameworks on $\Y$ for each possible symmetry group $\tau(\Gamma)$.

\begin{thm}\label{fixed e&v on cy}
Let $(G,p)$ be an isostatic $\tau(\Gamma)$-symmetric framework on $\Y$. Then $G$ is $\Gamma$-symmetric, $(2,2)$-tight and will satisfy the constraints in Table~\ref{table:fixed}.

\begin{center}
\begin{table}[ht]
    \begin{tabular}{|c|c|}
    \hline
    $\tau(\Gamma)$ & $\text{Number of edges and vertices fixed by symmetry operations}$ \\
    \hline
    $C_i$ & $e_{\varphi} = 0$ \\
    $C_s$ & $e_{\sigma} = 0$ \\ 
    $C_2$ & $e_{2'} = 2, v_{2'} = 0$ or $e_{2'} = 0, v_{2'} = 1$ \\ 
    $C_{2v}$ & $e_{\sigma} = e_{\sigma'} = 0, (e_{2'} = 2, v_{2'} = 0$ or $e_{2'} = 0, v_{2'} = 1)$ \\
    $C_{2h}$ & $e_{\sigma} = 0, e_{\varphi} = 0, e_{2'} = 2, v_{2'} = 0$ \\
    \hline
    \end{tabular}
    \caption{Fixed edge/vertex counts for symmetry operations on the cylinder.}
    \label{table:fixed}
    \end{table}
\end{center}
\end{thm}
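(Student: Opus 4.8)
The plan is to assemble Theorem~\ref{fixed e&v on cy} as a direct bookkeeping consequence of the results already established, checking each of the five admissible symmetry groups in turn against the constraints coming from Corollary~\ref{element count on cylinder} and Theorem~\ref{thm: character count}. First I would recall that if $(G,p)$ is a $\tau(\Gamma)$-symmetric isostatic framework on $\Y$, then by the discussion following Theorem~\ref{thm:lm} the graph $G$ is necessarily $(2,2)$-tight, and by definition $G$ is $\Gamma$-symmetric; this disposes of the first two assertions of the theorem uniformly. For the table of fixed-element counts, the key fact is that the list of possible $\tau(\Gamma)$ is exactly $C_i, C_s, C_2, C_{2v}, C_{2h}$ (as stated after Corollary~\ref{element count on cylinder}), so it suffices to treat each one.

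Next I would go group by group. For $C_i = \{\mathrm{id},\varphi\}$: the only non-trivial element is the inversion $\varphi$, so Corollary~\ref{element count on cylinder} immediately gives $e_\varphi = 0$. For $C_s$, whether it is $\{\mathrm{id},\sigma\}$ or $\{\mathrm{id},\sigma'\}$, the reflection bullet of Corollary~\ref{element count on cylinder} gives $e_\sigma = 0$ (and $e_{\sigma'}=0$); I would note that the convention just before Table~\ref{tab:CTCyl}, together with the fact that $C_s$ on the cylinder is realised either via $\sigma$ or via $\sigma'$, lets us record this simply as $e_\sigma = 0$. For $C_2 = \{\mathrm{id}, c_2'\}$: the $c_2'$ bullet of Corollary~\ref{element count on cylinder} gives exactly the dichotomy $e_{2'}=2, v_{2'}=0$ or $e_{2'}=0, v_{2'}=1$. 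For $C_{2v} = \{\mathrm{id},\sigma,\sigma',c_2'\}$: apply the reflection bullet to both $\sigma$ and $\sigma'$ to get $e_\sigma = e_{\sigma'} = 0$, and apply the $c_2'$ bullet to get the same dichotomy on $e_{2'}, v_{2'}$; these constraints are simultaneously imposed since all of $\sigma,\sigma',c_2'$ lie in $\tau(\Gamma)$.

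The one case requiring a little more than quoting Corollary~\ref{element count on cylinder} is $C_{2h} = \{\mathrm{id},\sigma,c_2',\varphi\}$, where the table entry strengthens the $C_2$ dichotomy to $e_{2'}=2, v_{2'}=0$ (ruling out the alternative $e_{2'}=0, v_{2'}=1$). Here the argument is: $\sigma \in \tau(\Gamma)$ gives $e_\sigma = 0$ and $\varphi \in \tau(\Gamma)$ gives $e_\varphi = 0$ as before; for the $c_2'$ count I would observe that a vertex of $G$ fixed by the element corresponding to $c_2'$ must be placed at a point of $\Y$ fixed by the half-turn about a horizontal axis through the origin, hence on that axis, hence at one of the two points where the axis meets $\Y$; but applying the inversion $\varphi$ (which commutes with $c_2'$ and sends each such point to the other) to such a vertex, combined with the fact that $\varphi$ fixes no vertex of $G$ (as noted before Table~\ref{tab:CTCyl}), forces $v_{2'} = 0$, and then the $c_2'$ bullet of Corollary~\ref{element count on cylinder} leaves only $e_{2'} = 2$. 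I expect this last point---ruling out $v_{2'}=1$ in the $C_{2h}$ case by playing the inversion against the half-turn---to be the only step that is not pure transcription from the preceding corollary; everything else is direct substitution, and I would present the proof as a short case analysis with that one geometric remark highlighted.
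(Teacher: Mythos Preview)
Your proposal is correct and follows essentially the same route as the paper: the first four rows are read off directly from Corollary~\ref{element count on cylinder}, and for $C_{2h}$ one rules out $v_{2'}=1$ by producing a second $c_2'$-fixed vertex. The paper does this via $\sigma$ (noting $\sigma(v)=\varphi(v)\neq v$ is again fixed by $c_2'$), while you use $\varphi$ directly; these are equivalent since both ultimately rest on the fact that $\varphi$ fixes no vertex. One small tightening: your $\varphi$-argument on its own only shows that $c_2'$-fixed vertices come in pairs (so $v_{2'}$ is even), not that $v_{2'}=0$ outright---it is the combination with the dichotomy $v_{2'}\in\{0,1\}$ from Corollary~\ref{element count on cylinder} that forces $v_{2'}=0$, so you should phrase the last step accordingly.
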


\begin{proof} The graph $G$ must clearly be $\Gamma$-symmetric and $(2,2)$-tight (by \cite{NOP}). The statements for $C_{i}$, $C_{s}$, $C_{2}$ and $\mathcal{C}_{2v}$ follow immediately from Corollary \ref{element count on cylinder}.
For  a $C_{2h}$-symmetric framework $(G,p)$, note that if $v$ is a vertex of $G$ that is fixed by  $c'_2$, then $\sigma(v) = \varphi(v) \neq v$ will also be fixed by $c'_2$, so we cannot have $v_{2'}=1$. Thus, we must have $e_{2'} = 2, v_{2'} = 0$.
\end{proof}

\section{Rigidity preserving operations}\label{sec:ops}

Given a $\tau(\Gamma)$-symmetric isostatic framework on $\Y$, in this section we will construct larger $\tau(\Gamma)$-symmetric isostatic frameworks on $\Y$. To do this we introduce symmetry-adapted Henneberg-type graph operations. These operations are depicted in Figures \ref{fig:01}, \ref{fig:vk4} and \ref{fig:4cycle}.

Where it is reasonable to do so, we will work with a general group $\Gamma = \{\textrm{id} = \gamma_{0}, \gamma_{1}, \dots, \gamma_{t-1}\}$ and we will write $\gamma_{k}v$ instead of $\phi(\gamma_{k})(v)$ and often $\gamma_{k}(x,y,z)$ or $(x^{(k)},y^{(k)},z^{(k)})$ for $\tau(\gamma_{k})(p(v))$ where $p(v) = (x,y,z)$.
For a group of order two, it will be common to write $v' = \gamma(v)$ for $\gamma \in \Gamma \setminus\{\textrm{id}\}$.

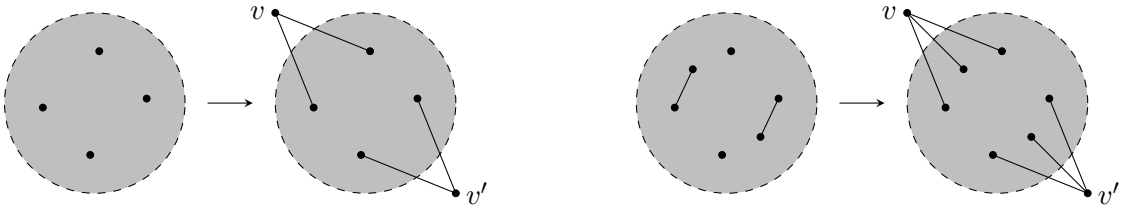
\begin{figure}[ht]
    \begin{center}
    \begin{tikzpicture}
  [scale=.3]
  
  \coordinate (n1) at (0,8);
  \coordinate (n2) at (1.7,3.8);
  \coordinate (n3) at (2.5,5.5);
  \coordinate (n4) at (4.2,6.3);
  \coordinate (n5) at (8,0);
  \coordinate (n6) at (3.8,1.7);
  \coordinate (n7) at (5.5,2.5);
  \coordinate (n8) at (6.3,4.2);
  \coordinate (n9) at (4,4);
  
 \draw[dashed, fill=lightgray] (n9) circle (4cm); 
 \draw[fill=black] (n2) circle (0.15cm);
 \draw[fill=black] (n4) circle (0.15cm);
 \draw[fill=black] (n6) circle (0.15cm);
 \draw[fill=black] (n8) circle (0.15cm);

  \coordinate (n11) at (12,8);
  \coordinate (n12) at (13.7,3.8);
  \coordinate (n13) at (14.5,5.5);
  \coordinate (n14) at (16.2,6.3);
  \coordinate (n15) at (20,0);
  \coordinate (n16) at (15.8,1.7);
  \coordinate (n17) at (17.5,2.5);
  \coordinate (n18) at (18.3,4.2);
  \coordinate (n19) at (16,4);
  
 \draw[dashed, fill=lightgray] (n19) circle (4cm); 
 \draw[fill=black] (n11) circle (0.15cm)
	    node[left] {$v$};
 \draw[fill=black] (n12) circle (0.15cm);
 \draw[fill=black] (n14) circle (0.15cm);
 \draw[fill=black] (n15) circle (0.15cm)
	    node[right] {$v'$};
 \draw[fill=black] (n16) circle (0.15cm);
 \draw[fill=black] (n18) circle (0.15cm);

  \foreach \from/\to in {n11/n12,n11/n14,n15/n16,n15/n18} 
    \draw (\from) -- (\to);
	    
  \coordinate (n21) at (28,8);
  \coordinate (n22) at (29.7,3.8);
  \coordinate (n23) at (30.5,5.5);
  \coordinate (n24) at (32.2,6.3);
  \coordinate (n25) at (36,0);
  \coordinate (n26) at (31.8,1.7);
  \coordinate (n27) at (33.5,2.5);
  \coordinate (n28) at (34.3,4.2);
  \coordinate (n29) at (32,4);
  
 \draw[dashed, fill=lightgray] (n29) circle (4cm); 
 \draw[fill=black] (n22) circle (0.15cm);
 \draw[fill=black] (n23) circle (0.15cm);
 \draw[fill=black] (n24) circle (0.15cm);
 \draw[fill=black] (n26) circle (0.15cm);
 \draw[fill=black] (n27) circle (0.15cm);
 \draw[fill=black] (n28) circle (0.15cm);

  \foreach \from/\to in {n22/n23,n28/n27} 
    \draw (\from) -- (\to);

  \coordinate (n31) at (40,8);
  \coordinate (n32) at (41.7,3.8);
  \coordinate (n33) at (42.5,5.5);
  \coordinate (n34) at (44.2,6.3);
  \coordinate (n35) at (48,0);
  \coordinate (n36) at (43.8,1.7);
  \coordinate (n37) at (45.5,2.5);
  \coordinate (n38) at (46.3,4.2);
  \coordinate (n39) at (44,4);

 \draw[dashed, fill=lightgray] (n39) circle (4cm); 
 \draw[fill=black] (n31) circle (0.15cm)
	    node[left] {$v$};
 \draw[fill=black] (n32) circle (0.15cm);
 \draw[fill=black] (n33) circle (0.15cm);
 \draw[fill=black] (n34) circle (0.15cm);
 \draw[fill=black] (n35) circle (0.15cm)
	    node[right] {$v'$};
 \draw[fill=black] (n36) circle (0.15cm);
 \draw[fill=black] (n37) circle (0.15cm);
 \draw[fill=black] (n38) circle (0.15cm);
	    
  \foreach \from/\to in {n31/n32,n31/n33,n31/n34,n35/n36,n35/n37,n35/n38} 
    \draw (\from) -- (\to);
    
\draw[-stealth] (9,4) -- (11,4);
\draw[-stealth] (37,4) -- (39,4);

\end{tikzpicture}
    \caption{Symmetrised 0- and 1-extensions adding new vertices $v$ and $v'$ in each case.}
    \label{fig:01}
    \end{center}
\end{figure}

\begin{figure}[ht]
\begin{tikzpicture}
  [scale=.28]
  
  \coordinate (n1) at (5,9);
  \coordinate (n2) at (4,5);
  \coordinate (n3) at (3,12);
  \coordinate (n4) at (7,13);
  \coordinate (n5) at (9,5);
  \coordinate (n6) at (10,9);
  \coordinate (n7) at (11,2);
  \coordinate (n8) at (7,1);
  \coordinate (n9) at (7,7);
  
 \draw[dashed, fill=lightgray] (n9) circle (7cm); 
 \draw[fill=black] (n1) circle (0.15cm)
	    node[right] {$v$};
 \draw[fill=black] (n2) circle (0.15cm);
 \draw[fill=black] (n3) circle (0.15cm);
 \draw[fill=black] (n4) circle (0.15cm);
 \draw[fill=black] (n5) circle (0.15cm)
 	    node[left] {$v'$};
 \draw[fill=black] (n6) circle (0.15cm);
 \draw[fill=black] (n7) circle (0.15cm);
 \draw[fill=black] (n8) circle (0.15cm);

  \coordinate (n12) at (24,5);
  \coordinate (n13) at (23,12);
  \coordinate (n14) at (27,13);
  \coordinate (n16) at (30,9);
  \coordinate (n17) at (31,2);
  \coordinate (n18) at (27,1);
  \coordinate (n19) at (27,7);
  \coordinate (n20) at (24,8);
  \coordinate (n21) at (24,10);
  \coordinate (n22) at (26,8);
  \coordinate (n23) at (26,10);
  \coordinate (n24) at (28,4);
  \coordinate (n25) at (28,6);
  \coordinate (n26) at (30,4);
  \coordinate (n27) at (30,6);
  
 \draw[dashed, fill=lightgray] (n19) circle (7cm); 
 \draw[fill=black] (n12) circle (0.15cm);
 \draw[fill=black] (n13) circle (0.15cm);
 \draw[fill=black] (n14) circle (0.15cm);
 \draw[fill=black] (n16) circle (0.15cm);
 \draw[fill=black] (n17) circle (0.15cm);
 \draw[fill=black] (n18) circle (0.15cm);
 \draw[fill=black] (n20) circle (0.15cm);
 \draw[fill=black] (n21) circle (0.15cm);
 \draw[fill=black] (n22) circle (0.15cm);
 \draw[fill=black] (n23) circle (0.15cm);
 \draw[fill=black] (n24) circle (0.15cm);
 \draw[fill=black] (n25) circle (0.15cm);
 \draw[fill=black] (n26) circle (0.15cm);
 \draw[fill=black] (n27) circle (0.15cm);
	  
\draw[-stealth] (15,7) -- (19,7);

  \foreach \from/\to in {n1/n2,n1/n3,n1/n4,n5/n6,n5/n7,n5/n8} 
    \draw (\from) -- (\to);
  \foreach \from/\to in {n22/n12,n21/n13,n23/n14,n25/n16,n26/n17,n24/n18,
  n20/n21,n20/n22,n20/n23,n21/n22,n21/n23,n22/n23,n24/n25,n24/n26,n24/n27,n25/n26,n25/n27,n26/n27} 
    \draw (\from) -- (\to);

\end{tikzpicture}
\caption{The symmetrised vertex-to-$K_4$ operation (in this case expanding the degree 3 vertices $v$ and $v'$).}
\label{fig:vk4}
\end{figure}
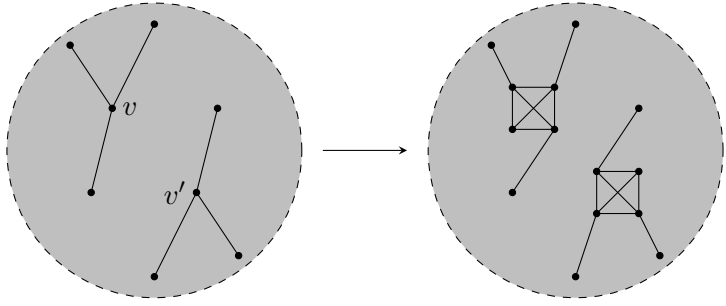

\begin{center}
\begin{figure}[ht]
\centering
\begin{tikzpicture}
  [scale=.28]
  
  \coordinate (n1) at (5,9);
  \coordinate (n2) at (3,5);
  \coordinate (n3) at (3.5,9);
  \coordinate (n31) at (5,10.5);
  \coordinate (n32) at (6.5,9);
  \coordinate (n33) at (5,7.5);
  \coordinate (n4) at (8,12);
  \coordinate (n5) at (9,5);
  \coordinate (n6) at (11,9);
  \coordinate (n7) at (10.5,5);
  \coordinate (n71) at (9,3.5);
  \coordinate (n72) at (7.5,5);
  \coordinate (n73) at (9,6.5);
  \coordinate (n8) at (6,2);
  \coordinate (n9) at (7,7);
  
 \draw[dashed, fill=lightgray] (n9) circle (7cm); 
 \draw[fill=black] (n1) circle (0.15cm);
 \draw[fill=black] (n2) circle (0.15cm);
 \draw[fill=black] (n4) circle (0.15cm);
 \draw[fill=black] (n5) circle (0.15cm);
 \draw[fill=black] (n6) circle (0.15cm);
 \draw[fill=black] (n8) circle (0.15cm);
	    
  \coordinate (n12) at (23,5);
  \coordinate (n13) at (22.5,10);
  \coordinate (n131) at (24,11.5);
  \coordinate (n132) at (27,8.5);
  \coordinate (n133) at (25.5,7);
  \coordinate (n14) at (28,12);
  \coordinate (n16) at (31,9);
  \coordinate (n17) at (27,5.5);
  \coordinate (n171) at (28.5,7);
  \coordinate (n172) at (31.5,4);
  \coordinate (n173) at (30,2.5);
  \coordinate (n18) at (26,2);
  \coordinate (n19) at (27,7);
  \coordinate (n21) at (24,10);
  \coordinate (n22) at (25.5,8.5);
  \coordinate (n25) at (28.5,5.5);
  \coordinate (n26) at (30,4);
  
 \draw[dashed, fill=lightgray] (n19) circle (7cm); 
 \draw[fill=black] (n12) circle (0.15cm);
 \draw[fill=black] (n14) circle (0.15cm);
 \draw[fill=black] (n16) circle (0.15cm);
 \draw[fill=black] (n18) circle (0.15cm);
 \draw[fill=black] (n21) circle (0.15cm);
 \draw[fill=black] (n22) circle (0.15cm);
 \draw[fill=black] (n25) circle (0.15cm);
 \draw[fill=black] (n26) circle (0.15cm);
	  
\draw[-stealth] (15,7) -- (19,7);

  \foreach \from/\to in {n1/n2,n1/n3,n1/n31,n1/n32,n1/n33,n1/n4,n5/n6,n5/n7,n5/n71,n5/n72,n5/n73,n5/n8} 
    \draw (\from) -- (\to);
  \foreach \from/\to in {n12/n21,n12/n22,n14/n21,n14/n22,n21/n13,n21/n131,n22/n132,n22/n133,
  n16/n25,n16/n26,n18/n25,n18/n26,n25/n17,n25/n171,n26/n172,n26/n173} 
    \draw (\from) -- (\to);

\end{tikzpicture}
\caption{The symmetrised vertex-to-$C_4$ operation. In this example each of the split vertices had degree 6 and the corresponding two new vertices have degree 4 each.}
\label{fig:4cycle}
\end{figure}
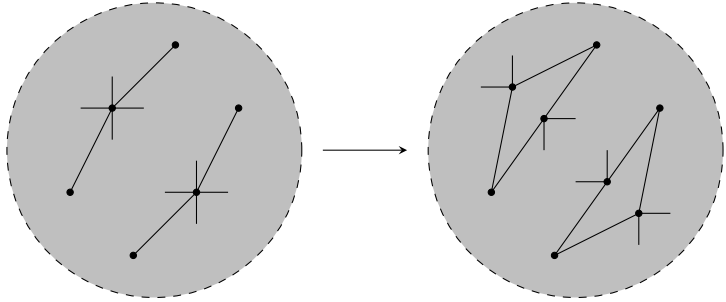
\end{center}

In each of the following operations we have a $\Gamma$-symmetric graph $(G,\phi)$ for a group $\Gamma$ of order $t$ and define a new  $\Gamma$-symmetric graph $(G^+,\phi^+)$.
We write $G=(V,E)$ and $G^+ = (V^+,E^+)$.
For all $\gamma \in \Gamma$ and $v \in V$, $\phi^+(\gamma) v = \phi(\gamma)v$.
A \emph{symmetrised 0-extension} creates a new $\Gamma$-symmetric graph $G^+$ by adding the $t$ vertices $\{v, \gamma v, \dots, \gamma_{t-1}v\}$ with $v$ adjacent to $v_{i},v_{j}$, and for each $k \in \{1,\dots,t-1\}$, $\gamma_{k}v$ adjacent to $\gamma_{k}v_{i},\gamma_{k}v_{j}$.
Let $e_i=x_{i}y_{i}$, $i=0\leq i \leq t-1$ be an edge orbit of $G$ of size $t$ under the action of $\Gamma$. Further let $z_0\neq x_0,y_0$ and let $z_i=\gamma_i z_0$ for $i=1,\ldots, t-1$. A \emph{symmetrised 1-extension} creates a new $\Gamma$-symmetric graph by deleting all the edges $e_i$ from $G$ and adding $t$ vertices $\{v, \gamma v, \dots, \gamma_{t-1}v\}$ with $v$ adjacent to $x_0,y_0$ and $z_0$, and $\gamma_iv$ adjacent to $x_{i},y_{i}$ and $z_{i}$ for $i=1,\ldots, t-1$. 
A \emph{symmetrised vertex-to-$C_{4}$} operation at the vertices $w, \gamma_{1}w, \dots, \gamma_{t-1}w$, creates a new $\Gamma$-symmetric graph $G^+=(V^+,E^+)$ where $V^+ = V \cup \{u,\dots, \gamma_{t-1}u\}$. 
The edge set changes such that if $w$ is adjacent to $v_1,\dots, v_r$ in $G$, $v_{1}, v_{2}$ are adjacent to both $w$ and the new vertex $u$, with $v_3,\dots,v_r$ adjacent to one of $w$ or $u$ in $E^+$.
Similarly $v_{1}^{(k)}, v_{2}^{(k)}$ are adjacent to both $w^{(k)}$ and $u^{(k)}$ and $v_3^{(k)},\dots,v_r^{(k)}$ are adjacent to one of $w^{(k)}$ or $u^{(k)}$ in $G^{+}$.
A \emph{symmetrised vertex-to-$K_{4}$} operation at the vertices $w, \gamma_{1}w, \dots, \gamma_{t-1}w$, creates a new $\Gamma$-symmetric graph $G^+$ with $V^+=V \cup \{a_0,b_0,c_0,\dots,a_{t-1}, b_{t-1},c_{t-1}\}$, where for each $1\leq i \leq t-1$, $\gamma_ia_0 = a_i,\gamma_ib_0 = b_i,\gamma_ic_0 = c_i$. 
If in $G$ the vertex $w$ is adjacent to $v_1,\dots, v_r$,  then $v_{i}$ is adjacent to some $d_i \in \{w,a,b,c\}$ in $G^{+}$ for each $i$. Similarly $v_{i}^{(k)}$ is adjacent to $d_i^{(k)}$ for all $k$.
Finally, we let $G^+[w,a_0,b_0,c_0] \cong K_{4}$ and $G^+[\gamma_iw,a_i,b_i,c_1] \cong K_{4}$ for all $i$.

For $\Gamma=\mathbb{Z}_2$, we introduce special cases of symmetrised extensions above.
A \emph{symmetrised fixed-vertex $0$-extension}, adds a single degree two vertex $v$ that is fixed.
The neighbours of the new vertex are not fixed, but are images of each other under the non-trivial group element.
A \emph{symmetrised fixed-vertex-to-$C_4$} operation at the fixed vertex $w$ creates a new graph $G^+ = G+u$, where $u$ is also a fixed vertex.
The edge set changes such that if $w$ is adjacent to $v_1,\dots, v_r$ in $G$, $v_{1}, v_{2}$ are adjacent to both $w$ and the new vertex $u$, with $v_3,\dots,v_r$ adjacent to one of $w$ or $u$ in $E^+$.

\subsection{Henneberg extensions}

To make the geometric statements in this section as general as possible, we sometimes show that the graph operations preserve $\tau(\Gamma)$-independence and sometimes $\tau(\Gamma)$-rigidity depending on the proof strategy. Note that for some symmetry groups $\tau(\Gamma)$, there are no $\tau(\Gamma)$-isostatic graphs and hence this distinction is important.

\begin{lem}\label{0-ext rigid}
Suppose $(G,\phi)$ is $\Gamma$-symmetric. Let $(G^{+},\phi^+)$ be obtained from $(G,\phi)$ by a symmetrised $0$-extension such that $v_i$ and $v_j$ are not fixed vertices and $v_i\neq \gamma_k v_j$ for any $k$. If $G$ is $\tau(\Gamma)$-independent on $\Y$, then $G^{+}$ is $\tau(\Gamma)$-independent on $\Y$.
\end{lem}

\begin{proof}
Write $G^{+} = G + \{v,\dots, \gamma_{t-1}v\}$, and let $v \in V^{+}$ be adjacent to $v_{i},v_{j}$, and for each $k \in \{1,\dots,t-1\}$, $\gamma_{k}v$ adjacent to $\gamma_{k}v_{i},\gamma_{k}v_{j}$. Since $G$ is $\tau(\Gamma)$-independent on $\Y$ we may choose $p$ so that $R_{\mathcal{Y}}(G,p)$ has linearly independent rows.
Define $p^{+}: V^{+} \to \R^{3}$ by $p^{+}(w)=p(w)$ for all $w\in V$, $p^{+}(v) = (x,y,z)$, and $p^{+}(\gamma_{k}v) = (x^{(k)},y^{(k)},z^{(k)})$. Write $p(v_{i}) = (x_{i},y_{i},z_{i})$, $p(v_{j}) = (x_{j},y_{j},z_{j})$. Then, $R_{\Y}(G^{+},p^{+}) =$\\
\[
\left[ \begin{tabular}{ c c c c c c c c c }
    $R_{\mathcal{Y}}(G,p)$ & & & & & & & & \\
    & $x - x_{i}$ & $y - y_{i}$ & $z - z_{i}$ & & & & &  \\
    *& $x - x_{j}$ & $y - y_{j}$ & $z - z_{j}$ & & & $\mathbf{0}$ & &  \\
    & $x$ & $y$ & 0 & & & & &  \\
    & & & & $\ddots$ & & & & \\
    & & & & &  $x^{(k)} - x_{i}^{(k)}$ & $y^{(k)} - y_{i}^{(k)}$ & $z^{(k)} - z_{i}^{(k)}$ & \\
    * & & $\mathbf{0}$ & & & $x^{(k)} - x_{j}^{(k)}$ & $y^{(k)} - y_{j}^{(k)}$ & $z^{(k)} - z_{j}^{(k)}$ & \\
    & & & & & $x^{(k)}$ & $y^{(k)}$ & 0 & \\
    & & & & & & & & $\ddots$
    \end{tabular} \right], \]
and hence the fact that $R_{\mathcal{Y}}(G^{+},p^{+})$ has linearly independent rows will follow once each $3 \times 3$ submatrix indicated above is shown to be invertible.
For the first such submatrix, one can see that is the case unless $p(v_j)$ lies on the intersection between the cylinder and the plane $A = \{(x,y,z)+a_1 (x,y,0) + a_2 (x-x_i,y-y_i,z-z_i)\}$.
Note that the hypotheses of the lemma guarantee that $p^+$ can be chosen in this way.
Since each $\tau(\gamma_{k})$ is an isometry, all of the other $t-1$ remaining submatrices are also invertible, and so $\textrm{rank } R_{\mathcal{Y}}(G^{+},p^{+}) = \textrm{rank }R_{\mathcal{Y}}(G,p) + 3t$.
Hence, if $G$ is $\tau(\Gamma)$-independent on the cylinder, so is $G^{+}$.
\end{proof}

We note that $p(v_j)$ could belong to the plane $A$ in the above proof when $v,v_i,v_j$ are in special positions.
Hence when some of $v,v_i,v_j$ are fixed by the symmetry or are images of one another under the symmetry, a symmetrised 0-extension may not preserve rigidity. 
In the following remark we note two cases when such symmetry exists but $R_{\mathcal{Y}}(G^{+},p^{+})$ has full rank. 

\begin{rem}\label{rem: Cs fixed 0-ext}
For a $\mathbb{Z}_2$-symmetric graph $G$ and symmetry group $\tau(\Gamma) = C_s$, let $G^+$ be defined as in either of the following way:
\begin{itemize}
    \item let $G^+ = G + \{v\}$ be obtained by a symmetrised fixed-vertex $0$-extension,
    \item let $G^+ = G + \{v,v'\}$ be obtained by a symmetrised $0$-extension, where $N(v) = \{v_i,v_j\} = N(v')$.
\end{itemize}

If $G$ is $C_s$-independent on $\Y$ then $G^{+}$ is $C_s$-independent on $\Y$.
\end{rem}

\begin{lem}\label{1-ext rigid}
Let $(G,\phi)$ be a $\Gamma$-symmetric graph, and $(G^{+},\phi^+)$ be obtained from $(G,\phi)$ by a symmetrised $1$-extension.
If $G$ is $\tau(\Gamma)$-rigid on $\Y$, then $G^{+}$ is $\tau(\Gamma)$-rigid on $\Y$. 
\end{lem}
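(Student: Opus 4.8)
The plan is to exhibit a single $\Gamma$-symmetric infinitesimally rigid realisation of $G^{+}$ on $\Y$, which is all that ``$\tau(\Gamma)$-rigid on $\Y$'' requires. Since $G$ is $\tau(\Gamma)$-rigid, fix a $\Gamma$-symmetric infinitesimally rigid framework $(G,p)$ on $\Y$. Let $\{x_iy_i\}$ be the deleted edge orbit and $\{z_i\}$ the orbit of third neighbours, so that the new vertex $\gamma_k v$ is joined to $x_k,y_k,z_k$ in $G^{+}$. Extend $p$ to $p^{+}$ by keeping $p$ on $V$ and setting $p^{+}(\gamma_k v)=\tau(\gamma_k)q$ for a single point $q\in\Y$ to be chosen; this placement is $\Gamma$-symmetric by construction and is a genuine framework once $q$ avoids finitely many points, which we henceforth assume. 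It remains to choose $q$ so that $(G^{+},p^{+})$ is infinitesimally rigid.

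First I would isolate the standard elimination step. Let $\dot p^{+}$ be an infinitesimal motion of $(G^{+},p^{+})$ and put $\dot p=\dot p^{+}|_{V}$; then $\dot p$ is an infinitesimal motion of $H:=G-\{x_iy_i\}$ with the inherited realisation. For each new vertex $\gamma_k v$ the three incident edge rows together with its surface-normal row give four linear equations in the three unknowns $\dot p^{+}(\gamma_k v)$; for generic $q$ the $4\times 3$ coefficient matrix has rank $3$, so solvability imposes exactly one linear condition $\rho_k(\dot p)=0$ on $(\dot p(x_k),\dot p(y_k),\dot p(z_k))$, after which $\dot p^{+}(\gamma_k v)$ is uniquely determined. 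Equivalently, Gaussian elimination of the $3t$ new-vertex columns from $R_{\Y}(G^{+},p^{+})$ leaves the $H$-edge rows, the $V$-normal rows, and $t$ further rows $\rho_0,\dots,\rho_{t-1}$ supported on the columns of $V$ and permuted among themselves by $\Gamma$. Thus $\dot p$ lies in the subspace $\M$ of $\ker R_{\Y}(H,p)$ on which all $\rho_k$ vanish, and $(G^{+},p^{+})$ is infinitesimally rigid precisely when $\M=\T(G,p)$: in that case $\dot p$ is trivial and back-substitution makes every $\dot p^{+}(\gamma_k v)$ the corresponding trivial velocity.

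So everything reduces to a general-position statement: there is a $q\in\Y$ for which the orbit of functionals $\rho_0,\dots,\rho_{t-1}$ meets $\ker R_{\Y}(H,p)$ in exactly the trivial motions. Since deleting one edge increases the flex dimension by at most one, $\dim\ker R_{\Y}(H,p)\le 2+t$, and any nontrivial flex $\dot\mu$ of $H$ must fail at least one deleted edge constraint $(p(x_i)-p(y_i))\cdot(\dot\mu(x_i)-\dot\mu(y_i))=0$, since otherwise $\dot\mu$ would be a nontrivial flex of the rigid framework $(G,p)$. If the edge orbit is redundant in $G$ (so $H$ is already infinitesimally rigid) the $\rho_k$ are not needed and one only checks, by an explicit coordinate computation in the spirit of the proof of Lemma~\ref{0-ext rigid}, that a generic $q$ keeps the $4\times 3$ systems nonsingular. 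Otherwise one must produce $q$ making the relevant $\rho_k$ nonzero on each nontrivial flex of $H$; as each $\rho_k$ depends rationally on $q$, this amounts to ruling out that this dependence is identically degenerate along $\Y$, which is exactly the content of the non-symmetric $1$-extension argument for the cylinder in \cite{NOP}, used here together with the observation that $H$ and its flex space are $\Gamma$-invariant, so controlling one orbit representative controls the whole orbit.

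The main obstacle is precisely this last degeneracy check. In the Euclidean setting one places the new vertex on the line through the endpoints of the deleted edge, which forces $\rho_k$ to be exactly that edge's constraint; on $\Y$ this is impossible, since a line meets the cylinder in at most two points, so $v$ cannot be collinear with $x_k$ and $y_k$ without colliding with one of them. The workaround is either a limiting argument letting $q\to p(x_k)$ — linking to the coincident-vertex analysis of \cite{JKN} — or a direct determinant computation on $\Y$ as in \cite{NOP}; moreover, when the deleted orbit or the third-neighbour orbit is fixed by a group element, so the ``orbit'' has size smaller than $t$ (compare Remark~\ref{rem: Cs fixed 0-ext}), the same computation must be redone with the fixing constraint imposed, and it is there that the symmetry genuinely interacts with the geometry.
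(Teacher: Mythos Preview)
Your elimination framing is a legitimate alternative setup: reducing the rigidity of $(G^{+},p^{+})$ to showing that the $t$ ``residual'' functionals $\rho_0,\dots,\rho_{t-1}$ cut $\ker R_{\Y}(H,p)$ down to the trivial motions is correct and clean. You also correctly isolate the obstacle --- that on $\Y$ one cannot place $q$ collinearly with $p(x_k),p(y_k)$ to force $\rho_k$ to coincide with the deleted-edge row.

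However, the proposal does not actually resolve this obstacle; it only names two possible workarounds. The citation route is insufficient: the non-symmetric $1$-extension result of \cite{NOP} gives generic rigidity of $G^{+}$ over \emph{all} realisations on $\Y$, but $\Gamma$-symmetric realisations form a lower-dimensional subvariety, so generic rigidity does not transfer. Your ``controlling one orbit representative controls the whole orbit'' is the right instinct, but it does not follow from the non-symmetric black box --- it requires running the argument inside the symmetric configuration space, which is exactly what the paper does.

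The paper takes the limiting route you mention, but with a specific geometric choice you do not supply: it lets each $\gamma_i q$ approach $\gamma_i p(v_2)$ along the tangent direction $a$ at $p(v_2)$ that is orthogonal (in the tangent plane) to the tangent direction $b$ perpendicular to $p(v_2)-p(v_1)$. This choice is the crux: it guarantees that in the limit the difference $u_2-u_0^{\infty}$ lies in $\langle b\rangle$ and hence is orthogonal to $p(v_2)-p(v_1)$, so the restricted limiting motion satisfies the deleted-edge constraint and is therefore trivial on $(G,p)$. A Bolzano--Weierstrass compactness step then produces the contradiction. Without specifying this tangent direction (or an equivalent explicit computation of $\rho_k$), your limiting suggestion remains a heuristic; the paper's proof shows exactly how to make it go through, and does so directly for the whole orbit at once.
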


\begin{proof}
Let $G^+$ be obtained from a symmetrised 1-extension on $G$, that is by deleting the edges $\{v_{1}v_{2},\dots, \gamma_{t-1}(v_{1}v_{2})\}$, and adding the vertices $\{v_{0},\dots, \gamma_{t-1}v_{0}\}$ where $v_0$ is adjacent to $v_{1}, v_{2}, v_{3}$ and each $\gamma_{i}v_{0}$ is adjacent to $\gamma_iv_{1}, \gamma_iv_{2}, \gamma_iv_{3}$.
Let $(G,p)$ be completely $\Gamma$-regular on $\Y$ and define $p^{+} = (p_{0},$ $p_{-1} = \gamma_1(p_0), \dots, p_{-t+1} =\gamma_{t-1}(p_{0}), p)$, where $(G^{+},p^{+})$ is completely $\Gamma$-regular. 
Suppose for a contradiction, $(G^{+}, p^{+})$ is not infinitesimally rigid on $\Y$.
Then any $\tau(\Gamma)$-symmetric framework of $G^{+}$ on $\Y$ will be infinitesimally flexible. We will use a sequence of $\tau(\Gamma)$-symmetric frameworks, moving only the points $\{p_{0},\dots, p_{-t+1}\}$.
First let $a,b$ be tangent vectors at $p_{1}$, with $b$ orthogonal to $p_{1} - p_{2}$ and $a$ orthogonal to $b$. 
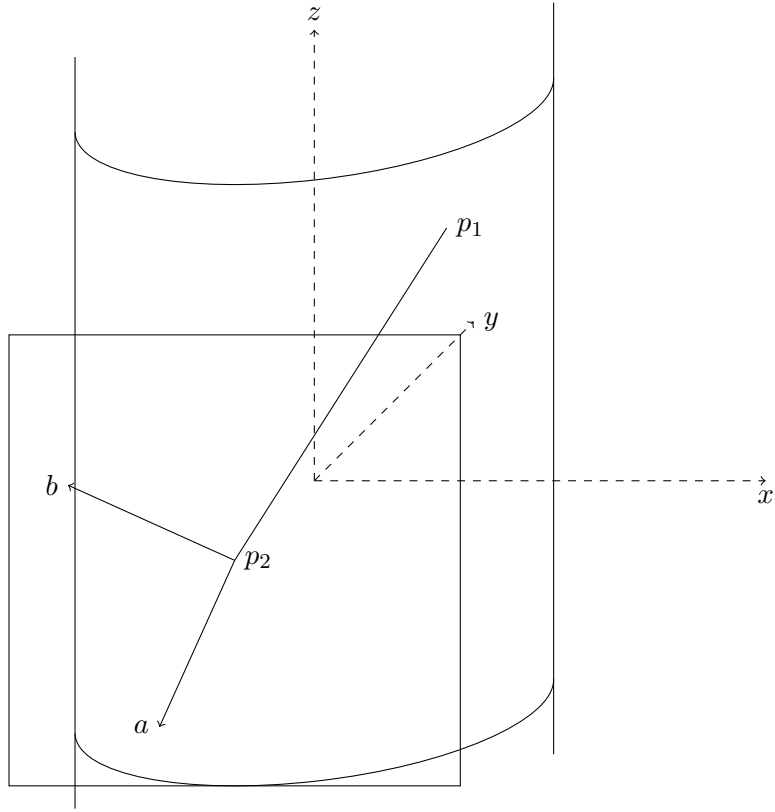
\begin{figure}
    \centering
    \begin{tikzpicture}[math3d]
    \newcommand{\h}{6}
    \newcommand{\rl}{6}
    \newcommand{\rh}{3}
    \begin{scope}[dashed,->,opacity=0.4]
        \draw (0,0,0) -- (\rl,0,0) node[below] {$x$};
        \draw (0,0,0) -- (0,\rl,0) node[right] {$y$};
        \draw (0,0,0) -- (0,0,\h) node[above] {$z$};
    \end{scope}
    
    \draw[opacity=0.4] (-2.81907786236,-1.02606042998,5) arc [start angle=-160,end angle=20,x radius=3,y radius=3];
    \draw[opacity=0.4] (-2.81907786236,-1.02606042998,-3) arc [start angle=-160,end angle=20,x radius=3,y radius=3];
    \draw[opacity=0.4] (2.81907786236,1.02606042998,-4) -- (2.81907786236,1.02606042998,6);
    \draw[opacity=0.4] (-2.81907786236,-1.02606042998,-4) -- (-2.81907786236,-1.02606042998,6);
    
    \draw[opacity=0.4] (3,-3,3) -- (3,-3,-3) -- (-3,-3,-3) -- (-3,-3,3) -- (3,-3,3);
    
    \draw (2.39590653014,-1.80544506946,4) node[right] {$p_{1}$} -- (0,-3,0) node[right] {$p_{2}$};
    \draw[->] (0,-3,0) -- (-2.21552018816,-3,1) node[left] {$b$};
    \draw[->] (0,-3,0) -- (-1,-3,-2.21552018816) node[left] {$a$};
\end{tikzpicture}
    \caption{Tangent vectors $a$, $b$ in relation to the edge $v_{1}v_{2}$.}
\end{figure}
Let $((G^{+}, p^{j}))_{j=0}^{\infty}$ where $p^{j} = (p_{0}^{j},\dots, p_{-t+1}^{j} = \gamma_{t-1}(p_{0}^{j}), p)$ is so that
$$\frac{\gamma_{i}(p_{1})-\gamma_{i}(p_{0}^{j})}{||\gamma_{i}(p_{1})-\gamma_{i}(p_{0}^{j})||} \to \gamma_{i}a$$
as $j \to \infty$, for each $i \in 0,\dots,t-1$. 
The frameworks $(G^{+}, p^{j})$ have a unit norm infinitesimal motion $u^{j}$ which is orthogonal to the space of trivial motions. 
By the Bolzano-Weierstrass theorem there is a subsequence of $(u^{j})$ which converges to a vector, $u^{\infty}$ say, also of unit norm.
We can discard and relabel parts of the sequence to assume this holds for the original sequence.
Looking at the limit $(G^{+}, p^{\infty})$, write $u^{\infty} = (u_{0}^{\infty},\dots, u_{-t+1}^{\infty}, u_{1}, u_{2}, \dots, u_{n})$, $p^{\infty} = (p_{0}^{\infty},\dots, p_{-t+1}^{\infty}, p_{1}, p_{2}, \dots, p_{n})$ with $\gamma_{i}(p_{0}^{\infty}) = \gamma_{i}(p_{1})$ for each $i$.

We show that $(u_{1}, u_{2})$ is an infinitesimal motion of the bar joining $p(v_1)$ and $p(v_2)$.
Since $p_{0}^{j}$ converges to $p_{1}$ in the $a$ direction, the velocities $u_{1}$ and $u_{0}^{\infty}$ have the same component in this direction, so $(u_{1}-u_{0}^{\infty})\cdot a = 0$.
Then $u_{1}-u_{0}^{\infty}$ is tangential to $\Y$ at $p_{1}$, and orthogonal to $a$, so it is orthogonal to $p_{1}-p_{0}^{\infty}$.
Also, $u_{2}-u_{0}^{\infty}$ is orthogonal to $p_{2}-p_{0}^{\infty}$.
Subtracting one from the other gives $u_{1} - u_{2}$ is orthogonal to $p_{1} - p_{2}$, which is the required condition for an infinitesimal motion.

Once again looking at $(G,p)$, we know the infinitesimal motion $u = (u_{1}, u_{2}, \dots, u_{n})$ is a trivial motion. 
In order to preserve the distances $d(p_{0}^{\infty}, p_{2})$ and $d(p_{0}^{\infty}, p_{3})$, $u_{0}^{\infty}$ is determined by $u_{2}$ and $u_{3}$.
Similarly $u_{-i}^{\infty}$ is determined by the motion vectors of $u$ which are present on the neighbours of $\gamma_i p_0$, for all $1 \leq i \leq t-1$.
We now see that $u_{0}^{\infty}$ agrees with $u_{1}$ and so $u^{\infty}$ is a trivial motion for $(G^{+}, p^{\infty})$.
However, since $u^{\infty}$ is a unit norm infinitesimal motion and orthogonal to the space of trivial motions, we have reached a contradiction. 
\end{proof}

\subsection{Further operations}

For a graph $G$ and pairwise vertex disjoint subgraphs $H_{1}, \dots, H_{k}$ of $G$, write $G//\{H_{i}\}_{i=1}^{k}$ for the graph derived from $G$ by contracting each of the subgraphs $H_{1}, \dots, H_{k}$ to their own single vertex. The resultant graph $G//\{H_{i}\}_{i=1}^{k}$ will have $|V(G)|-\sum_{i=1}^{k}(|V(H_{i})|-1)$ vertices and $|E(G)|-\sum_{i=1}^{k}|E(H_{i})|$ edges. When $k=1$ we will sometimes use the more common notation $G/H_1$.

\begin{lem}\label{minrigidK_4}
Suppose $(G,\phi)$ is $\Gamma$-symmetric and $H \leq G$ is a copy of $K_{4}$. Further, suppose for all $\gamma \in \Gamma \setminus \{\textrm{id}\}$, we have that $V(H) \cap V(\gamma H) = \emptyset$.
If $G//\{\gamma_{i}(H)\}_{i = 0}^{t-1}$ is $\tau(\Gamma)$-isostatic on $\mathcal{Y}$, then $G$ is $\tau(\Gamma)$-isostatic on $\mathcal{Y}$.
\end{lem}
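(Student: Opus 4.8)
The plan is to argue by contradiction with a ``collapsing $K_4$'' limit, in the spirit of the proof of Lemma~\ref{1-ext rigid}. Since $H\cap\gamma H=\emptyset$ for all $\gamma\neq\textrm{id}$, the copies $H^{(0)}=H, H^{(1)}=\gamma_1 H,\dots,H^{(t-1)}=\gamma_{t-1}H$ of $K_4$ are pairwise vertex-disjoint and form a single free $\Gamma$-orbit, so $\bar G:=G//\{H^{(i)}\}_{i=0}^{t-1}$ is a well-defined $\Gamma$-symmetric graph in which each $H^{(i)}$ is contracted to a vertex $w_i$, with $\gamma_i w_0=w_i$. As $\bar G$ is $\tau(\Gamma)$-rigid on $\Y$, fix a completely $\Gamma$-regular framework $(\bar G,\bar p)$ on $\Y$; it is infinitesimally rigid. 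Assume, for contradiction, that $G$ is not $\tau(\Gamma)$-rigid on $\Y$, so that every $\Gamma$-symmetric realisation of $G$ on $\Y$ is infinitesimally flexible.

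First I would construct a sequence $(G,p^j)$ of $\Gamma$-symmetric frameworks on $\Y$ collapsing each $H^{(i)}$ onto $\bar p(w_i)$: for each $j$ choose a completely regular (hence isostatic) realisation of $K_4$ on $\Y$ with all four vertices within distance $1/j$ of $\bar p(w_0)$ — these exist by density — place the vertices of $H=H^{(0)}$ there, place the vertices of $H^{(i)}$ at the corresponding $\tau(\gamma_i)$-images, and place every vertex of $G$ outside $\bigcup_i V(H^{(i)})$ at its $\bar p$-position. Then $(G,p^j)$ is $\Gamma$-symmetric and each $\big(H^{(i)},p^j|_{V(H^{(i)})}\big)$ is isostatic on $\Y$, being the image of an isostatic $K_4$ under the surface-preserving isometry $\tau(\gamma_i)$. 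By our assumption $(G,p^j)$ is infinitesimally flexible, so it has a unit-norm infinitesimal motion $u^j$ orthogonal to the ($2$-dimensional) trivial motion space $\langle t,r^j\rangle$ of $(G,p^j)$, where $t$ and $r^j$ denote the translation and rotation fields at the positions $p^j$. By Bolzano--Weierstrass, after discarding and relabelling terms I may assume $u^j\to u^\infty$ with $\|u^\infty\|=1$; the frameworks $(G,p^j)$ converge to a framework $(G,p^\infty)$ in which every vertex of $H^{(i)}$ sits at $\bar p(w_i)$.

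The crux is to derive from $u^\infty$ an infinitesimal motion of $(\bar G,\bar p)$. Because $\big(H^{(i)},p^j|_{V(H^{(i)})}\big)$ is isostatic on $\Y$, the restriction of $u^j$ to $V(H^{(i)})$ lies in the span of the restrictions of $t$ and $r^j$; writing it as $a_i^j t + b_i^j r^j$, the coefficients stay bounded (since $\|u^j\|=1$ while $t,r^j$ have bounded norm and remain uniformly linearly independent near $\bar p(w_i)$), so along a further subsequence $a_i^j\to a_i$, $b_i^j\to b_i$. Hence $u^\infty$ is constant on $V(H^{(i)})$, equal to $\bar u(w_i):=a_i\,t(\bar p(w_i))+b_i\,r(\bar p(w_i))$, and this value satisfies the surface constraint $n(\bar p(w_i))\cdot\bar u(w_i)=0$ (as the limit of the surface constraints of $(G,p^j)$ at the vertices of $H^{(i)}$). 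Setting $\bar u(v)=u^\infty(v)$ for $v$ outside the orbit, every edge and surface constraint of $(\bar G,\bar p)$ arises as the $j\to\infty$ limit of the corresponding constraint of $(G,p^j)$ — for an edge $v w_i$ of $\bar G$ coming from an edge $vh$ of $G$ with $h\in V(H^{(i)})$, one uses $p^\infty(h)=\bar p(w_i)$ and $u^\infty(h)=\bar u(w_i)$ — so $\bar u$ is an infinitesimal motion of $(\bar G,\bar p)$ on $\Y$. Since $(\bar G,\bar p)$ is infinitesimally rigid, $\bar u=\alpha t+\beta r$ for scalars $\alpha,\beta$; evaluating at each $w_i$ and using that $t(\bar p(w_i))$ and $r(\bar p(w_i))$ are linearly independent forces $a_i=\alpha$ and $b_i=\beta$ for all $i$. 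Consequently $u^\infty$ coincides with $\alpha t+\beta r$ at the positions $p^\infty$ on all of $V(G)$, i.e.\ $u^\infty\in\langle t,r^\infty\rangle$ where $r^\infty$ is the rotation field at $p^\infty$; but $u^\infty$ is a unit vector orthogonal to $\langle t,r^\infty\rangle$ (the limit of the trivial motion spaces $\langle t,r^j\rangle$), a contradiction. Therefore $G$ is $\tau(\Gamma)$-rigid on $\Y$.

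I expect the main obstacle to be the limiting bookkeeping: making the collapsing $K_4$-copies simultaneously $\Gamma$-symmetric and isostatic for all large $j$, keeping the coefficients $a_i^j,b_i^j$ bounded, and checking that both the edge constraints across the contracted vertices and the orthogonality to the (moving) trivial-motion space survive the limit, so that the subsequence extractions are legitimate — exactly the kind of care taken in Lemma~\ref{1-ext rigid}. It is also worth recording the degenerate case $G=\bigcup_i H^{(i)}$, in which $\bar G$ is an independent set and, for $t\geq 2$, neither $\bar G$ nor $G$ is rigid on $\Y$, so the implication holds vacuously.
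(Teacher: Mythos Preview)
Your argument is correct, but it follows a genuinely different route from the paper's. The paper proves the lemma by a direct rank/specialisation argument on the rigidity matrix rather than by a limiting compactness argument. It introduces the chain of (non-symmetric) intermediate graphs $G_j=G//\{\gamma_i H\}_{i=j}^{t-1}$, uncontracting one $K_4$ at a time, and for each step writes
\[
R_{\Y}(G_{j+1},p)=
\begin{pmatrix} R_{\Y}(\gamma_j H,\,p|_{\gamma_j H}) & 0\\ * & M_2(p)\end{pmatrix},
\]
where $M_2(p)$ is square. If $G_{j+1}$ were flexible, the isostaticity of $K_4$ on $\Y$ forces a nonzero kernel vector of $M_2(p)$; specialising to the degenerate realisation in which the four vertices of $\gamma_j H$ coincide identifies $M_2$ with the relevant block of $R_{\Y}(G_j,p^*)$ and yields a nontrivial motion of $G_j$, a contradiction. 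Thus the paper's proof is purely algebraic, breaks the symmetry in the intermediate steps, and uses a single degenerate realisation rather than a sequence.

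Your approach instead mirrors Lemma~\ref{1-ext rigid}: you collapse all $t$ copies simultaneously while maintaining $\Gamma$-symmetry and isostaticity of each $K_4$, extract a limit motion via Bolzano--Weierstrass, and push it down to $(\bar G,\bar p)$. What you gain is that the symmetry is preserved throughout and the argument generalises transparently (the same template would handle vertex-to-$H$ for any isostatic $H$); what the paper gains is that no subsequence extraction or uniform-independence estimates are needed, and the intermediate step-by-step structure makes the rank bookkeeping completely explicit.
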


\begin{proof}
 Let $|V| = n$ and $(G,p)$ be a $\tau(\Gamma)$-symmetric framework on $\mathcal{Y}$ which is completely $\Gamma$-regular. Further, let the vertices of $H$ be $x,y,z,w$. Suppose $p = (p(v_{1}), \dots, p(v_{n}))$, labelling so that $$V(\gamma_{i}(H)) = \{\gamma_{i}x = v_{4i+1}, \gamma_{i}y = v_{4i+2}, \gamma_{i}z = v_{4i+3}, \gamma_{i}w = v_{4i+4}\}$$
 for each $i=1,\ldots t-1$. 
Define a set of graphs $\{G_{j}\}_{j=0}^{t}$ by 
\[
G_{j} = \left\{
\begin{matrix*}[l]
     G//\{\gamma_{i}(H)\}_{i = j}^{t-1} & \text{if } j = 0, \dots, t-1; \\
     G & \text{if } j=t.
\end{matrix*}\right\}\]
where $\gamma_0=\textrm{id}$.
We want to show by induction that for $0\leq j \leq t-1$, if $G_{j}$ is isostatic on $\mathcal{Y}$, then $G_{j+1}$ is isostatic on $\mathcal{Y}$.
Then repeating this method, we show $G_{t} := G$ will be isostatic and $\tau(\Gamma)$-symmetric on $\mathcal{Y}$.
For each $0\leq i \leq t-1$, let the vertices $v_{4i+1},v_{4i+2},v_{4i+3},v_{4i+4}$ in $G$ contract to $v_{4i+1}$ in $\{G_{0},\dots,G_{i-1}\}$.\footnote{In the graph $G_{j}$, $j$ can be seen as a count on the number of $K_{4}$ copies of $H$ that are not contracted.}
We start by writing
\[R_{\mathcal{Y}}(G_{1},p|_{G_{1}}) = 
\left( \begin{tabular}{ c c }
        $R_{\mathcal{Y}}(\gamma_{0}(H),p|_{\gamma_{0}(H)})$ & $0$ \\
        $M_{1}(p)$ & $M_{2}(p)$ \\
    \end{tabular} \right) \]
where $M_{2}(p)$ is a $3(n-3t-1)$ square matrix, since
$|V(G_{1})| = n - 3(t-1)$ and so $M_{2}(p)$ has $3(n - 3(t-1)) - 12$ columns, and
$|E(G_{1})| = |E| - 6(t-1)= 2n-6t+4$ so $M_{2}(p)$ has $2n-6t+4+(n-3(t-1))-(6+4)$ rows. 
For a contradiction, suppose that $G_{1}$ is not $\tau(\Gamma)$-isostatic.
Then there exists a non-trivial infinitesimal motion $m$ of $(G_{1},p|_{G_{1}})$.
Since $(H,p|_{H})$ is infinitesimally rigid on $\Y$, we may suppose that 
$$m = (0,0,0,0, m_{5}, m_{9}, \dots, m_{4t+1}, m_{4t+2},\dots, m_{n}).$$
Consider the realisation $(G_{1},\hat{p})$  such that 
$$\hat{p} = (p(v_{1}), p(v_{1}), p(v_{1}), p(v_{1}), p(v_{5}), p(v_{9}), \dots, p(v_{4t+1}), p(v_{4t+2}), \dots, p(v_{n}))$$ 
and define $(G_{0}, p^{*})$ by letting $$p^{*} = (p(v_{1}), p(v_{5}), p(v_{9}), \dots, p(v_{4t+1}), p(v_{4t+2}), \dots, p(v_{n})).$$
By construction $(G_{0},p^{*})$ is completely $\Gamma$-regular, so it is $\tau(\Gamma)$-isostatic on $\mathcal{Y}$.
Now, $M_{2}(p)$ is square with the nonzero vector $(m_{5},m_{9},\dots,m_{4t+1},m_{4t+2},\dots, m_{n}) \in \ker M_{2}(p)$.
Hence $\textrm{rank} M_{2}(p) < 3(n-3t-1)$.
Since $(G,p)$ is completely $\Gamma$-regular, we also have $\textrm{rank} M_{2}(\hat{p}) < 3(n - 3t-1)$ and hence there exists a nonzero vector $\hat{m} \in \ker M_{2}(\hat{p})$.
Therefore we have 
\[R_{\mathcal{Y}}(G_{0},p^{*})
\left(\begin{tabular}{ c }
        $0$\\
        $\hat{m}$ \\
    \end{tabular} \right)
=
\left( \begin{tabular}{ c c }
        $p(v_{1})$ & $0$ \\
        * & $M_{2}(\hat{p})$ \\
    \end{tabular} \right)
\left(\begin{tabular}{ c }
        $0$\\
        $\hat{m}$ \\
    \end{tabular} \right)
= 0,\]
contradicting the infinitesimal rigidity of $(G_{0},p^{*})$.
We continue the above process inductively, writing $R_{\mathcal{Y}}(G_{j},p)$ as 
\[\left( \begin{tabular}{ c c }
        $R_{\mathcal{Y}}(\gamma_{j-1}(H),p|_{\gamma_{j-1}(H)})$ & $0$ \\
        $L_{1}(p)$ & $L_{2}(p)$ \\
    \end{tabular} \right) \]
where $L_{2}(p)$ is a $3(n-3(t-j)-4)$ square matrix.
From the same contradiction argument as before, we have that $(G_{j},p)$ is isostatic, and by noting that $G_{t}$ will be $\tau(\Gamma)$-symmetric, we finish the proof.
\end{proof}

The proof of the following lemma works with a similar strategy as is applied in Lemma \ref{minrigidK_4}.
For the first bullet point of the lemma, for a $C_i$-symmetric graph, we additionally need to perform an inverse (non-symmetric) $0$-extension on the vertex resulting from the contraction of $G_1$.

\begin{lem}\label{2addition rigid}
Suppose $(G_1,\phi_1)$ and $(G_2,\phi_2)$ are $\Gamma$-symmetric graphs with $G_{1} = (V_1, E_1)$ and $G_{2} = (V_2, E_2)$.
\begin{itemize}
    \item For $\tau(\Gamma) = C_i$, let $(G,\phi)$ be the $\Gamma$-symmetric graph with $V(G) = V_{1} \cup V_{2}$ and $E(G) = E_{1} \cup E_{2} \cup \{e_{1}, e_{2}\}$, and $\phi$ defined so that $\phi(\gamma)|_{V_i} = \phi_i(\gamma)$ for $i = 1,2$ and all $\gamma \in \Gamma$; additionally $e_{1} = xy, e_{2} = x'y'$ for any $x\in V_{1}, y\in V_{2}$.
    \item For $\tau(\Gamma) \in \{C_2,C_s\}$, suppose $G_2$ has a fixed vertex $v$ with neighbours $x_1, x_1', \dots, x_k, x_k'$.
    Define $(G,\phi)$ to be the $\Gamma$-symmetric graph with vertex set $V = V_1 \cup V_2 \setminus \{v\}$, and edge set $E$ obtained from $E_1 \cup E_2$ by deleting the edges $vx_1, vx_1', \dots, vx_k, vx_k'$ and replacing them with the edges $x_1y_1, x_1'y_1', \dots, x_ky_k, x_k'y_k'$ for some not necessarily distinct $y_1, y_1', \dots, y_k, y_k' \in V_1$, and $\phi$ being induced by $\phi_1,\phi_2$, similar to the above.
\end{itemize}
If $G_1$ and $G_{2}$ are $\tau(\Gamma)$-rigid on $\Y$, then $G$ is $\tau(\Gamma)$-rigid on $\Y$.
\end{lem}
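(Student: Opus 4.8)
The plan is to handle both cases by the same two-stage scheme. First choose infinitesimally rigid (indeed completely $\Gamma$-regular) realisations of $G_1$ and of $G_2$ on $\Y$; these are infinitesimally rigid because $G_1,G_2$ are $\tau(\Gamma)$-rigid. Then glue them into a realisation $(G,p)$ of $G$ in which the two extra constraints have been placed generically enough, and finally show that every infinitesimal motion of $(G,p)$ restricts to a trivial motion on $V_1$ and on $V_2$ separately, so that the only remaining freedom is a relative trivial motion of the two pieces, which the added edges eliminate. Throughout I use from Theorem~\ref{thm:trivialinv} that on $\Y$ the trivial motions are spanned by the vertical translation $t$ and the rotation $r$, and that a nonzero trivial motion vanishes at no vertex.

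\textbf{The $C_i$ case.} Let $(G_1,q_1)$ and $(G_2,q_2)$ be completely $\Gamma$-regular realisations on $\Y$, and let $p$ agree with $q_1$ on $V_1$ and with $q_2$ on $V_2$; perturbing $q_1$ within the (open, dense) completely $\Gamma$-regular realisations if necessary, I may assume $(p_x)_1(p_y)_2\neq(p_x)_2(p_y)_1$ and $(p_x)_3\neq(p_y)_3$. If $u$ is an infinitesimal motion of $(G,p)$, then $u|_{V_1}$ and $u|_{V_2}$ are trivial, say with translational components $\alpha_1,\alpha_2$ and rotational components $\beta_1,\beta_2$. Using $p(x')=-p(x)$ and $p(y')=-p(y)$, a direct computation shows that the edge equation for $xy$ reads $(\beta_2-\beta_1)\bigl[(p_x)_1(p_y)_2-(p_x)_2(p_y)_1\bigr]+(\alpha_1-\alpha_2)\bigl[(p_x)_3-(p_y)_3\bigr]=0$, while the equation for $x'y'$ is the same but with the sign of the $(\alpha_1-\alpha_2)$-term reversed. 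Adding and subtracting these forces $\beta_1=\beta_2$ and $\alpha_1=\alpha_2$, so $u$ is trivial; hence $(G,p)$ is infinitesimally rigid and $G$ is $C_i$-rigid on $\Y$.

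\textbf{The $C_s$ case.} Fix a completely $\Gamma$-regular realisation $(G_2,q_2)$ of $G_2$ on $\Y$. For each $i$ let $\Pi_i$ be the plane through $q_2(x_i)$ with direction space $\spann\bigl(n(q_2(x_i)),\,q_2(v)-q_2(x_i)\bigr)$ and set $C_i=\Pi_i\cap\Y$, a real conic containing $q_2(x_i)$; by Lemma~\ref{image of normal} and $\tau(\sigma)q_2(v)=q_2(v)$, the reflection $\sigma$ maps $C_i$ onto $C_{i'}$. Choose a completely $\Gamma$-regular realisation $(G_1,q_1)$ of $G_1$ on $\Y$ with $q_1(y_i)\in C_i$ for all $i$ and, generically within these conics, with $q_1(y_i)\neq q_2(x_i)$ and with $n(q_2(x_i)),\,q_2(x_i)-q_1(y_i)$ linearly independent; let $(G,p)$ agree with $q_1$ on $V_1$ and with $q_2$ on $V_2\setminus\{v\}$. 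Let $u$ be an infinitesimal motion of $(G,p)$, and subtract a global trivial motion so that $u|_{V_1}=0$. For each $i$ the normal constraint at $x_i$ and the new edge $x_iy_i$ give $u(x_i)\perp n(q_2(x_i))$ and $u(x_i)\perp q_2(x_i)-q_1(y_i)$, so $u(x_i)$ is orthogonal to the direction space of $\Pi_i$ and in particular $u(x_i)\cdot\bigl(q_2(v)-q_2(x_i)\bigr)=0$; likewise for $x_i'$. Hence the velocity field on $V_2$ equal to $u$ on $V_2\setminus\{v\}$ and to $0$ at $v$ is an infinitesimal motion of $(G_2,q_2)$: the edges of $G_2$ avoiding $v$ lie in $G$, and each edge $vx_i$ contributes $\bigl(q_2(v)-q_2(x_i)\bigr)\cdot\bigl(0-u(x_i)\bigr)=0$. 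Since $(G_2,q_2)$ is infinitesimally rigid this motion is trivial, and being $0$ at the single vertex $v$ it is identically $0$; thus $u|_{V_2\setminus\{v\}}=0$ and $u=0$. Hence $(G,p)$ is infinitesimally rigid and $G$ is $C_s$-rigid on $\Y$.

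\textbf{Main obstacle.} The delicate step is the existence, in the $C_s$ case, of a completely $\Gamma$-regular realisation $(G_1,q_1)$ of $G_1$ with $q_1(y_i)\in C_i$ for every $i$. The equations $q_1(y_i)\in C_i$ cut out a subvariety of the (irreducible) configuration space of $C_s$-symmetric realisations of $G_1$ on $\Y$, and one must check that this subvariety is non-empty --- using that $q_2(x_i)\in C_i$, and, when several $y_i$ coincide, choosing $q_2$ so that the relevant conics $C_i$ meet in points off $\{q_2(x_j)\}$ --- and that it is not contained in the proper closed locus where the rigidity matrix of $K_{|V_1|}$ fails to have maximal rank, which is a routine transversality argument. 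No such issue arises in the $C_i$ case, where all the conditions imposed on $p$ are open.
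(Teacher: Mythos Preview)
Your $C_i$ argument is correct and genuinely different from the paper's. You compute the two bar constraints for $xy$ and $x'y'$ explicitly in terms of the relative translation $\alpha_1-\alpha_2$ and relative rotation $\beta_1-\beta_2$ of the two pieces, and observe that inversion flips the sign of exactly one of the two coefficients, so the pair of equations has only the trivial solution. The paper instead argues by contraction/specialisation (as in Lemma~\ref{minrigidK_4}): write $R_\Y(G,p)$ in block form with $R_\Y(G_1,p|_{G_1})$ in the top-left corner, suppose $(G,p)$ is flexible, normalise a non-trivial motion to vanish on $V_1$, deduce that the complementary square block $M_2(p)$ is singular, and then specialise all of $V_1$ to a single point to contradict rigidity of $G/G_1$, which in the $C_i$ case is just $G_2$ with a (non-symmetrised) $0$-extension attached. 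Your direct computation is shorter and more transparent for $C_i$; the paper's method has the advantage of treating both bullet points uniformly.

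Your $C_s$ argument, however, has a real gap precisely where you flag the ``main obstacle''. You require a realisation $(G_1,q_1)$ that is simultaneously $C_s$-symmetric, infinitesimally rigid, and satisfies $q_1(y_i)\in C_i$ for every $i$, and you dismiss this as a routine transversality check. It is not. The lemma explicitly allows the $y_i$ to repeat, and when $y_i=y_j$ for $i\neq j$ you are forcing $q_1(y_i)$ into the finite set $C_i\cap C_j$; with three or more coincidences you may be forcing it to the single point $q_2(v)$ (the one point all the $C_i$ share). If that common $y$ is not a fixed vertex of $G_1$, placing $q_1(y)$ on the mirror forces $q_1(y)=q_1(y')$, and there is no reason $G_1$ should remain infinitesimally rigid under such a coincidence. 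Even when the $y_i$ are distinct, you have imposed $k$ algebraic constraints on $q_1$ and must argue that the resulting subvariety still meets the infinitesimally-rigid locus; this needs an actual argument, not an appeal to transversality.

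The paper sidesteps all of this geometry. In the $C_s$ case, contracting $G_1$ to a single vertex $w$ turns every edge $x_iy_i$ into $x_iw$, so $G/G_1$ is literally isomorphic to $G_2$ (with $w$ playing the role of the fixed vertex $v$). The block/specialisation argument then gives the result with no need to realise $G_1$ in any special position. If you want to rescue your approach, you would need either to prove the existence of the constrained realisation in full generality (handling repeated $y_i$), or to replace the exact constraint $q_1(y_i)\in C_i$ by a limiting argument as $q_1(y_i)\to q_2(x_i)$ along $C_i$ --- at which point you are essentially reproducing the paper's degeneration.
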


\begin{proof}
We prove the two statements simultaneously. Let $|V| = n$ and $(G,p)$ be a completely $\tau(\Gamma)$-regular framework on $\mathcal{Y}$. Put $p = (p(v_{1}), \dots, p(v_{n}))$ labelling so that $V_{1} = \{v_{1},\dots, v_{r}\}$ and $V_{2} = \{v_{r+1}, \dots, v_{n}\}$.
As in Lemma \ref{minrigidK_4}, we write
\[R_{\mathcal{Y}}(G,p) = 
\left( \begin{tabular}{ c c }
        $R_{\mathcal{Y}}(G_{1},p|_{G_{1}})$ & $0$ \\
        $M_{1}(p)$ & $M_{2}(p)$ \\
    \end{tabular} \right) \]
where $M_{2}(p)$ is a $3(n-r)$ square matrix.
We repeat the same arguments as before to show $G$ is rigid.
For a contradiction, suppose that $G$ is not rigid.
Then there exists some non-trivial infinitesimal motion $m$ of $(G,p)$.
Since $(G_{1},p|_{G_{1}})$ is $\tau(\Gamma)$-rigid on $\Y$, we may suppose that $m = (0,\dots,0, m_{r+1},\dots, m_{n})$. 
Consider the realisation $(G,\hat{p})$  such that $\hat{p} = (p(v_{1}),\dots, p(v_{1}), p(v_{r+1}), \dots, p(v_{n}))$ and define $(G/G_{1}, p^{*})$ by letting $p^{*} = (p(v_{1}), p(v_{r+1}), \dots, p(v_{n}))$.
By construction $(G/G_1,p^{*})$ is completely regular, so $(G/G_{1}, p^{*})$ is independent on $\mathcal{Y}$.

Now, $M_{2}(p)$ is square with the nonzero vector $(m_{1},m_{r+1},\dots, m_{n}) \in \ker M_{2}(p)$.
Hence $\textrm{rank} M_{2}(p) < 3(n-r)$.
Since $(G/G_1,p^{*})$ is completely $\tau(\Gamma)$-regular, we also have $\textrm{rank} M_{2}(\hat{p}) < 3(n-r)$ and hence there exists a nonzero vector $\hat{m} \in \ker M_{2}(\hat{p})$.
Therefore we have 
\[(R_{\mathcal{Y}}(G/G_{1},p^{*}))
\left(\begin{tabular}{ c }
        $0$\\
        $\hat{m}$ \\
    \end{tabular} \right)
=
\left( \begin{tabular}{ c c }
        $p(v_{1})$ & $0$ \\
        * & $M_{2}(\hat{p})$ \\
    \end{tabular} \right)
\left(\begin{tabular}{ c }
        $0$\\
        $\hat{m}$ \\
    \end{tabular} \right)
= 0,\]
contradicting the rigidity of $(G/G_{1},p^{*})$.
Note that in the $C_i$-symmetric case, $G/G_{1}$ is the graph obtained from $G_{2}$ by a (non-symmetrised) 0-extension.
Hence, we know that if $G_{1}$ and $G_{2}$ are $\tau(\Gamma)$-rigid on $\Y$, then $G/G_{1}$ is rigid and so $G$ is $\tau(\Gamma)$-rigid.
\end{proof}

\begin{lem}\label{cycle rigid}
Let $(G,p)$ be a $\tau(\Gamma)$-symmetric and independent framework.
Let $w \in V$ be adjacent to $v_{1}, \dots, v_{r}$.
Suppose that $p(w) - p(v_{1})$, $p(w) - p(v_{2})$, and $n(w)$ are linearly independent.
Let $(G^{+},\phi^+)$ be obtained by performing a symmetrised vertex-to-$C_{4}$ operation at the vertices $w, \gamma_{1}w, \dots, \gamma_{t-1}w$.
Let $p^{+}(v) = p(v)$ for all $v \in V \setminus \{\gamma_{k} w | k \in \{0,\dots , t-1\}\}$, and $p^{+}(\gamma_{k} w) = p^{+}(\gamma_{k} u) = p(\gamma_{k} w)$ for all $k$. 
Then $(G^+,p^+)$ is independent.
\end{lem}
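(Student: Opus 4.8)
The plan is to show directly that $R_{\Y}(G^{+},p^{+})$ has linearly independent rows, exploiting that each new vertex $\gamma_{k}u$ is placed on top of $\gamma_{k}w$. Assume for a contradiction there is a non-trivial row dependence, i.e.\ a nonzero vector $\omega$ with an entry $\omega_{e}$ for each $e\in E^{+}$ and an entry $\omega_{x}$ for each $x\in V^{+}$ such that, block by block over the vertices,
\[ \sum_{xy\in E^{+}}\omega_{xy}\bigl(p^{+}(x)-p^{+}(y)\bigr)+\omega_{x}\,n\bigl(p^{+}(x)\bigr)=0\qquad\text{for all }x\in V^{+}. \]
The key point is that, since $p^{+}(\gamma_{k}w)=p^{+}(\gamma_{k}u)=p(\gamma_{k}w)$, the rows for the two edges $\gamma_{k}w\,\gamma_{k}v_{1}$, $\gamma_{k}u\,\gamma_{k}v_{1}$ of the $4$-cycle carry the same nonzero data (and likewise for $v_{2}$), and there is no $\gamma_{k}w$--$\gamma_{k}u$ edge.

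The first step is to push $\omega$ down to $G$. Define $\omega'$ on $G$ by keeping $\omega'_{e}=\omega_{e}$ on every edge of $G$ not in the $\Gamma$-orbit of an edge at $w$; for $i\geq 3$ the edge $\gamma_{k}w\,\gamma_{k}v_{i}$ of $G$ corresponds to a unique edge of $G^{+}$ (at $\gamma_{k}w$ or at $\gamma_{k}u$) and we copy its value; for $i\in\{1,2\}$ set $\omega'_{\gamma_{k}w\,\gamma_{k}v_{i}}=\omega_{\gamma_{k}w\,\gamma_{k}v_{i}}+\omega_{\gamma_{k}u\,\gamma_{k}v_{i}}$; on normal entries keep $\omega'_{x}=\omega_{x}$ for $x\neq\gamma_{k}w$ and put $\omega'_{\gamma_{k}w}=\omega_{\gamma_{k}w}+\omega_{\gamma_{k}u}$. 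A routine check of the vertex equations shows $\omega'$ is a row dependence of $R_{\Y}(G,p)$: at every vertex $\gamma_{k}v_{i}$ the $G$-equation is termwise the $G^{+}$-equation (the merged pair accounts for the collapsed $4$-cycle edges), and at $\gamma_{k}w$ the $G$-equation is exactly the sum of the $G^{+}$-equations at $\gamma_{k}w$ and at $\gamma_{k}u$. Since $(G,p)$ is independent, $\omega'=0$.

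The second step is to feed $\omega'=0$ back into $G^{+}$. It forces $\omega_{e}=0$ on every edge of $G^{+}$ except possibly $\gamma_{k}w\,\gamma_{k}v_{1}$, $\gamma_{k}w\,\gamma_{k}v_{2}$ and their $\gamma_{k}u$-partners, with $\omega_{\gamma_{k}u\,\gamma_{k}v_{i}}=-\omega_{\gamma_{k}w\,\gamma_{k}v_{i}}$ and $\omega_{\gamma_{k}u}=-\omega_{\gamma_{k}w}$, while $\omega_{x}=0$ at all other vertices. Now only three terms survive in the original equation at $\gamma_{k}u$; using Lemma~\ref{image of normal} to write $n(p(\gamma_{k}w))=\tau(\gamma_{k})n(p(w))$ and cancelling $\tau(\gamma_{k})$, it becomes
\[ \omega_{\gamma_{k}w\,\gamma_{k}v_{1}}\bigl(p(w)-p(v_{1})\bigr)+\omega_{\gamma_{k}w\,\gamma_{k}v_{2}}\bigl(p(w)-p(v_{2})\bigr)+\omega_{\gamma_{k}w}\,n\bigl(p(w)\bigr)=0. \]
Since $p(w)-p(v_{1})$, $p(w)-p(v_{2})$, $n(p(w))$ are linearly independent by hypothesis, all three coefficients are $0$; hence $\omega=0$, a contradiction, and $(G^{+},p^{+})$ is independent.

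The main obstacle I anticipate is purely bookkeeping in the push-down step: degenerate orbit configurations (some $\gamma_{k}v_{i}$ equal to a $\gamma_{\ell}w$, or links of distinct $\gamma_{k}w$ sharing vertices) must be handled, but in each case $\omega'$ remains well defined and the vertex-equation verification proceeds orbit by orbit exactly as above. The geometric hypothesis is used only at the final collapse, where the three residual stress coefficients around each coincident pair are pinned down by the linear independence of $p(w)-p(v_{1})$, $p(w)-p(v_{2})$ and $n(p(w))$.
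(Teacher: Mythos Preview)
Your proof is correct and takes the dual ``stress'' viewpoint to the paper's argument. Rather than building $R_{\Y}(G^{+},p^{+})$ from $R_{\Y}(G,p)$ by adjoining $3t$ new zero columns and $3t$ new rows (for the edges $\gamma_{k}u\,\gamma_{k}v_{1}$, $\gamma_{k}u\,\gamma_{k}v_{2}$ and the normals at $\gamma_{k}u$) and then performing explicit row operations to transfer each remaining edge $\gamma_{k}w\,\gamma_{k}v_{i}$ over to $\gamma_{k}u$, you assume a nonzero row dependence of $R_{\Y}(G^{+},p^{+})$, push it down to a row dependence of $R_{\Y}(G,p)$ by merging the contributions at each coincident pair, and then use the residual vertex equation at $\gamma_{k}u$ to kill the four-cycle coefficients. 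The geometric hypothesis (linear independence of $p(w)-p(v_{1})$, $p(w)-p(v_{2})$, $n(p(w))$) enters at exactly the same point in both arguments: in the paper it guarantees invertibility of the $3\times 3$ blocks so that the added rows raise the rank by $3t$ and so that the coefficients $\alpha,\beta,\gamma$ in the row operations exist; in your version it forces the three surviving stress coefficients at each $\gamma_{k}u$ to vanish. The paper's approach is a bit more constructive in showing the rank jump explicitly, while yours avoids writing down any intermediate matrix and is marginally cleaner. Your caveat about degenerate orbit configurations is reasonable, though in the paper the lemma is only applied for groups of order two with free orbits, so the issue does not arise.
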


\begin{proof}
We will construct $R_{\mathcal{Y}}(G^{+}, p^{+})$ from $R_{\mathcal{Y}}(G, p)$ by a series of matrix row operations. 
We first add $3t$ zero columns to $R_{\mathcal{Y}}(G, p)$ for the new vertices $\{\gamma_{k} u\}$.
Then add $3t$ rows to this matrix, for the edges $\gamma_{k} u \gamma_{k}v_{1}, \gamma_{k} u \gamma_{k}v_{2}$, and the normal vectors to the surface at the points $p(\gamma_{k} u)$.
Since $p(w) - p(v_{1})$, $p(w) - p(v_{2})$, $n(w)$ are linearly independent (and, hence, so are each of the $p(\gamma_{k}w) - p(\gamma_{k}v_{1})$, $p(\gamma_{k}w) - p(\gamma_{k}v_{2})$, $n(\gamma_{k}w)$), $\textrm{rank} R_{\mathcal{Y}}(G^{+}, p^{+}) = \textrm{rank} R_{\mathcal{Y}}(G, p) + 3t$.
This gives the matrix $M$ of the form:
\[M = 
\left[ \begin{tabular}{ c c c }
        * & $p(w)-p(v_{1})$ & $0$ \\
        * & $p(w)-p(v_{2})$ & $0$ \\
         & $\vdots$ & \\
        * & $p(w)-p(v_{i})$ & $0$\\
         & $\vdots$ & \\
        * & $0$ & $p(u)-p(v_{1})$\\
        * & $0$ & $p(u)-p(v_{2})$\\
         & $\vdots$ & \\
         \hline
         * & $n(w)$ & $0$ \\
         * & $0$ & $n(u)$\\
           & $\vdots$ & 
    \end{tabular} \right], \]
where the columns given are for the vertices $w$ and $u$, and rows given for the edges $wv_{1}, wv_{2}, wv_{i}, uv_{1}, uv_{2}$ and normal vectors to the surface at $w$ and $u$.
There would be similar columns for each pair $\gamma_{k}w$ and $\gamma_{k}u$.
This is the rigidity matrix for a graph generated from $G$ by a $\tau(\Gamma)$-symmetric vertex-to-$C_{4}$ operation where $v_{i}w$ is an edge for all $3 \leq i \leq r$.
We wish to show that removing the edges $\{\gamma_{k}w\gamma_{k}v_{i} : k = 0,\dots, t-1\}$ and replacing them with the edges $\{\gamma_{k}u\gamma_{k}v_{i}  : k = 0,\dots, t-1\}$ preserves $\tau(\Gamma)$-independence.

Since $p(w) - p(v_{1})$, $p(w) - p(v_{2})$, and $n(w)$ are linearly independent and span $\R^{3}$, there exists $\alpha, \beta, \gamma \in \R$ such that 
$$p(w)-p(v_{i}) = \alpha(p(w) - p(v_{1})) + \beta (p(w) - p(v_{2})) + \gamma n(w).$$ 
Hence we perform row operations as follows.
From the row of $wv_{i}$, subtract $\alpha$ multiples of the row of $wv_{1}$, $\beta$ multiples of the row of $wv_{2}$, $\gamma$ multiples of the row for the normal vector of $w$.
Then to the row of $wv_{i}$, add $\alpha$ multiples of the row of $uv_{1}$, $\beta$ multiples of the row of $uv_{2}$, $\gamma$ multiples of the row for the normal vector of $u$.
Since $p(w) = p(u)$, when we do this to every neighbour $v_{i}$ of $u$, and similarly $\gamma_{k}v_{i}$ of $\gamma_{k}u$ (since all $\tau(\gamma_{k})$ are isometries of $\R^{3}$ that preserve the cylinder, the same $\alpha, \beta, \gamma$ work for the symmetric copies) in $G^{+}$, we obtain $R_{\Y}(G^{+}, p^{+})$.
The row operations preserve $\tau(\Gamma)$-independence, giving the desired result.
\end{proof}

When considering $C_s$-symmetric frameworks, we will use a special case of Lemma~\ref{cycle rigid} which we record in the following remark.

\begin{rem}\label{rem: C4 mirror}
Let $(G,p)$ be a $C_s$-symmetric and independent framework with $w\in V$ be fixed by $\sigma$ and adjacent to $v_1, \dots, v_r$.
Suppose that $p(w) - p(v_1)$, $p(w) - p(v_{1}')$, and $n(w)$ are linearly independent.
Let $G^{+}$ be obtained by performing a symmetrised fixed-vertex-to-$C_{4}$ operation at $w$, so that $v_{1}, v_{1}'$ are adjacent to both $w$ and the new vertex $u$ also fixed by $\sigma$ in $G^{+}$.
Let $p^{+}(v) = p(v)$ for all $v \in V$, and $p^{+}(u) = p(w)$.
Then $(G^+,p^+)$ is independent.
\end{rem}

For the case when the group is $C_2$, we will need one more operation.
A \textit{double 1-extension} on a $\mathbb{Z}_2$-symmetric graph $G$ is the combination of two non-symmetric 1-extensions: the first creates a new graph $G^+$ by removing a fixed edge $e=vv'$ of $G$, adding a new vertex, say $w$, of degree three adjacent to $v,v'$ and some other vertex $y$; followed by another non-symmetric 1-extension on $G^+$, namely removing $wv'$ and adding a new vertex $w'$ with 3 incident edges chosen so that $v' = \varphi(v)$.
See Figure \ref{double 1-ext}.

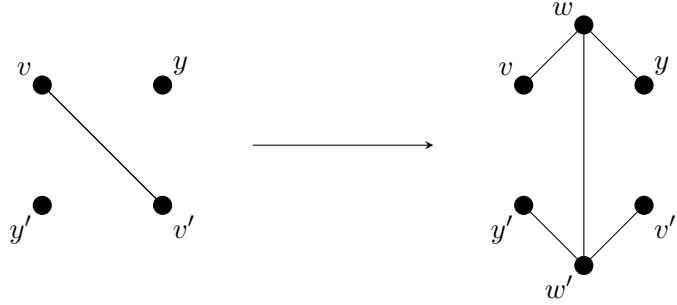
\begin{figure}
    \centering
    \begin{tikzpicture}
  [scale=.8,auto=left]
  
  \coordinate (n1) at (9,4);
  \coordinate (n2) at (8,3);
  \coordinate (n3) at (10,3);
  \coordinate (n4) at (8,1);
  \coordinate (n5) at (10,1);
  \coordinate (n6) at (9,0);
  
 \draw[fill=black] (n1) circle (0.15cm)
    node[above left] {$w$};
 \draw[fill=black] (n2) circle (0.15cm)
    node[above left] {$v$};
 \draw[fill=black] (n3) circle (0.15cm)
    node[above right] {$y$};
 \draw[fill=black] (n4) circle (0.15cm)
    node[below left] {$y'$};
 \draw[fill=black] (n5) circle (0.15cm)
    node[below right] {$v'$};
 \draw[fill=black] (n6) circle (0.15cm)
    node[below left] {$w'$};

  \foreach \from/\to in {n1/n6,n1/n2,n1/n3,n6/n4,n6/n5} 
    \draw (\from) -- (\to);

  \coordinate (n22) at (0,3);
  \coordinate (n23) at (2,3);
  \coordinate (n24) at (0,1);
  \coordinate (n25) at (2,1);
  
 \draw[fill=black] (n22) circle (0.15cm)
    node[above left] {$v$};
 \draw[fill=black] (n23) circle (0.15cm)
    node[above right] {$y$};
 \draw[fill=black] (n24) circle (0.15cm)
    node[below left] {$y'$};
 \draw[fill=black] (n25) circle (0.15cm)
    node[below right] {$v'$};

  \foreach \from/\to in {n25/n22} 
    \draw (\from) -- (\to);

\foreach \from/\to in {n25/n22} 
    \draw (\from) -- (\to);

  \coordinate (n31) at (3.5,2);
  \coordinate (n32) at (6.5,2);
\draw [-stealth] (n31) -- (n32);

\end{tikzpicture}
    \caption{A double 1-extension which deletes a fixed edge, and adds a new fixed edge between two degree 3 vertices.}
    \label{double 1-ext}
\end{figure}

\begin{lem}\label{double 1-ext rig}
Let $(G,\phi)$ be a $\Gamma$-symmetric graph (where $\Gamma=\mathbb{Z}_2$), with fixed edge $vv'$.
Let $(G^+,\phi^+)$ be the graph with vertex set $V^+ = V + \{w,w'\}$, and edge set $E^+ = E - vv' + \{wv,wy,w'v',w'y',ww'\}$, $\phi^+(\gamma)|_V = \phi(\gamma)$ for all $\gamma \in \mathbb{Z}_2$.
If $G$ is $C_2$-rigid on the cylinder then $G^+$ is too.
\end{lem}

\begin{proof}
Let $G^+$ be obtained from a double 1-extension on $G$, that is by deleting the edge $vv'$, and adding the vertices $w,w'$ where $w$ is a node adjacent to $v,y, w'$ and $w'$ is adjacent to $v', y', w$.
Let $c = \tau(\gamma)$ be the half-turn in $\tau(\Gamma)$ (recall that previously $c$ was called either $c_2$ or $c_2'$ depending on the position of the rotational axis relative to the cylinder). 
Let $p_0$ and $c(p_{0})$ be the positions of the vertex $w$ and its symmetric copy. Let $(G,p)$ be completely $\Gamma$-regular on $\Y$ and define $p^{+} = (p_0, p_{-1}, p)$, so that $(G^{+},p^{+})$ is completely $\Gamma$-regular. We let $p(v) = p_1$, $p(v') = p_2 = c(p_1)$, $p(y) = p_3$, and $p(y') = p_4=c(p_3)$. 

Suppose for a contradiction that $(G^{+}, p^{+})$ is not infinitesimally rigid on $\Y$.
Then any $\tau(\Gamma)$-symmetric framework of $G^{+}$ on $\Y$ will be infinitesimally flexible. We will use a sequence of $\tau(\Gamma)$-symmetric frameworks, moving only the points $\{p_{0},c(p_{0})\}$.
Let $\mathcal{T}$ denote the tangent plane to $\mathcal{Y}$ at $p_1$. Choose $a$ and $b$ to be orthogonal vectors in $\mathcal{T}$ such that
$b$ is orthogonal to $p_1 - p_2$. 
Let $((G^{+}, p^{j}))_{j=0}^{\infty}$ be a sequence of frameworks where $p^{j} = (p_{0}^{j}, c(p_{0}^{j}), p)$ is taken so that $$\frac{c^{i}(p_1)-c^{i}(p_{0}^{j})}{||c^{i}(p_1)-c^{i}(p_{0}^{j})||} \to c^{i}a$$
as $j \to \infty$, for each $i \in 0,1$. 
The frameworks $(G^{+}, p^{j})$ have a unit norm infinitesimal motion $u^{j}$ which is orthogonal to the space of trivial motions of $\mathcal{Y}$. 
By the Bolzano-Weierstrass theorem there is a subsequence of $(u^{j})$ which converges to a vector, $u^{\infty}$ say, also of unit norm.
We can discard and relabel parts of the sequence to assume this holds for the original sequence. For convenience, in an infinitesimal motion $u$, we will denote the instantaneous velocity at $c(p_{0})$ by $u_{-1}$.
Looking at the limit $(G^{+}, p^{\infty})$, write $u^{\infty} = (u_{0}^{\infty}, u_{-1}^{\infty}, u_{1}, u_{2}, \dots, u_{n})$, $p^{\infty} = (p_{0}^{\infty}, c(p_{0}^{\infty}), p_{1}, p_{2}, \dots, p_{n})$ with $p_{0}^{\infty} = p_{1}$ and $c(p_{0}^{\infty}) = p_2$.

We show that $(u_{1}, u_{2})$ is an infinitesimal motion of the bar joining $p(v)$ and $p(v')$.
Since $p_{0}^{j}$ converges to $p_{1}$ in the $a$ direction, the velocities $u_{1}$ and $u_{0}^{\infty}$ have the same component in this direction, so $(u_{1}-u_{0}^{\infty})\cdot a = 0$.
Then $u_{1}-u_{0}^{\infty}$ is tangential to $\Y$ at $p_{1}$, and orthogonal to $a$, so it is orthogonal to $p_{1}-p_{0}^{\infty}$.
Also, since there is a bar joining $p(w) = p_0^{\infty}$ and $p(w') = c(p_0^{\infty})$ as well as a bar joining $p(w')$ and $p(v') = p_2$, $u_{0}^{\infty}-u_{-1}^{\infty}$ is orthogonal to $p_{0}^{\infty}-c(p_{0}^{\infty})$ and $u_{2}-u_{-1}^{\infty}$ is orthogonal to $p_{2}-c(p_{0}^{\infty})$.
We may express this as
$$\langle u_{1}-u_{0}^{\infty}, p_{1}-p_{0}^{\infty} \rangle = \langle u_{2}-u_{-1}^{\infty}, p_{2}-c(p_{0}^{\infty}) \rangle = \langle u_{0}^{\infty}-u_{-1}^{\infty}, p_{0}^{\infty}-c(p_{0}^{\infty}) \rangle = 0.$$
It follows that
$$0=\langle u_{1}-u_{0}^{\infty}, p_{1}-p_{0}^{\infty} \rangle - \langle u_{2}-u_{-1}^{\infty}, p_{2}-c(p_{0}^{\infty}) \rangle + \langle u_{0}^{\infty}-u_{-1}^{\infty}, p_{0}^{\infty}-c(p_{0}^{\infty}) \rangle = \langle u_1 - u_2 , p_1 - p_2 \rangle.$$
which is the required condition for an infinitesimal motion.

Once again looking at $(G,p)$, we know the infinitesimal motion $u = (u_{1}, u_{2}, \dots, u_{n})$ is a trivial motion.
In order to preserve the distance $d(p_{0}^{\infty}, p_{3})$, $u_0^{\infty}$ takes one of two values, representing rotating or translating the bar between $p(w)$ and $p(y)$.
Additionally, $(u_{1}-u_{0}^{\infty})\cdot a = 0$ determines $u_{0}^{\infty}$.
Similarly, $u_{-1}^{\infty}$ is determined by $d(c(p_{0}^{\infty}), p_{4})$ and $(u_{2}-u_{-1}^{\infty})\cdot \gamma_2 a = 0$.
Finally, since $\langle u_{0}^{\infty}-u_{-1}^{\infty}, p_{0}^{\infty}-c(p_{0}^{\infty}) \rangle = 0$, $\langle u_1 - u_2 , p_1 - p_2 \rangle = 0$, and $p_{0}^{\infty} = p_1, c(p_{0}^{\infty}) = p_2$, we have that $u_{0}^{\infty}$ agrees with $u_{1}$ and $u_{-1}^{\infty}$ agrees with $u_{2}$, so $u^{\infty}$ is a trivial motion for $(G^{+}, p^{\infty})$.
This gives a contradiction since $u^{\infty}$ is a unit norm infinitesimal motion orthogonal to the space of trivial motions of $\mathcal{Y}$.
\end{proof}

\section{Symmetric isostatic graphs}
\label{sec:merged}

In the next four sections we prove our main results. These are combinatorial characterisations of when a symmetric graph is isostatic on $\Y$ for the symmetry groups $C_i = \{\textrm{id}, \varphi\}$, $C_2 = \{\textrm{id}, c_2'\}$ and $C_s = \{\textrm{id}, \sigma\}$.
These results give a precise converse to the necessary conditions developed in Section \ref{sec:necrep} and utilise the geometric operations of the previous section. 
In order to prove the results we need to develop some combinatorics. In this section we work as generally as possible among the three groups. Then the three subsequent sections specialise one by one to the specific symmetry groups.

\subsection{Graph theoretic preliminaries and base graphs.}
We will use standard graph theoretic terminology. 
For a graph $G=(V,E)$, $\delta(G)$ will denote the minimum degree of $G$, $N(v)$ and $N[v]$ will denote the open and closed neighbourhoods of a vertex $v\in V$ respectively (so $N[v]=N(v)\cup \{v\}$).
As is common, $W_k$ will denote the wheel over a cycle on $k-1$ vertices ($k \geq 4$) and $Wd(n,k)$ will denote the windmill, which is $k$ copies of $K_n$ all joined at a single vertex. 
The degree of a vertex $v$ is denoted $d_G(v)$.
For $X\subset V$ we will use $i_G(X)$ to denote the number of edges in the induced subgraph $G[X]$ and the set $X$ will be called \emph{k-critical}, for $k\in \mathbb{N}$, if $i_G(X)=2|X|-k$.
For $X,Y\subset V$, $d_G(X,Y)$ will denote the number of edges of the form $xy$ with $x\in X\setminus Y$ and $y\in Y\setminus X$.
We will often suppress subscripts when the graph is clear from the context and use $d(v)$, $i(X)$ and $d(X,Y)$.
We also say that a subset $X$ of $V$ is \emph{$\Gamma$-symmetric} if $(G[X],\phi)$ is a $\Gamma$-symmetric subgraph of the $\Gamma$-symmetric graph $(G,\phi)$.

Consider the inversion symmetry group $C_i$. It follows from Theorem \ref{fixed e&v on cy} that the graphs we need to understand are $C_i$-symmetric graphs which are $(2,2)$-tight and have no edges or vertices fixed by the inversion $\varphi$.
Henceforth we shall refer to such graphs as \emph{$(2,2)$-$C_i$-tight} graphs.
Similarly, graphs which are $(2,2)$-sparse and $C_i$-symmetric shall be referred to as \emph{$(2,2)$-$C_i$-sparse}.
Figure \ref{fig CI2} shows the two base graphs for the class of $(2,2)$-$C_i$-tight graphs; we will call the graph on six vertices $(F_1,\phi_1)$, and the graph on eight vertices $(F_2,\phi_2)$, where for $\gamma \in \mathbb Z_2 \setminus \{\textrm{id}\}$, $\phi_1(\gamma)$ and $\phi_2(\gamma)$ do not fix any vertices or edges of $F_1$ and $F_2$ respectively.

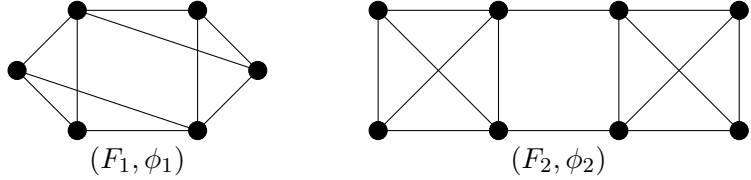
\begin{figure}[ht]
    \centering
    \begin{tikzpicture}
  [scale=.8,auto=left]
  
  \coordinate (n1) at (0,1);
  \coordinate (n2) at (1,0);
  \coordinate (n3) at (1,2);
  \coordinate (n4) at (3,0);
  \coordinate (n5) at (3,2);
  \coordinate (n6) at (4,1);
  
 \draw[fill=black] (n1) circle (0.15cm);
 \draw[fill=black] (n2) circle (0.15cm);
 \draw[fill=black] (n3) circle (0.15cm);
 \draw[fill=black] (n4) circle (0.15cm);
 \draw[fill=black] (n5) circle (0.15cm);
 \draw[fill=black] (n6) circle (0.15cm);

  \foreach \from/\to in {n1/n2,n1/n3,n1/n4,n2/n3,n2/n4,n3/n5,n3/n6,n4/n5,n4/n6,n5/n6} 
    \draw (\from) -- (\to);
    
  \coordinate (n11) at (6,0);
  \coordinate (n12) at (6,2);
  \coordinate (n13) at (8,0);
  \coordinate (n14) at (8,2);
  \coordinate (n15) at (10,0);
  \coordinate (n16) at (10,2);
  \coordinate (n17) at (12,0);
  \coordinate (n18) at (12,2);  
  
 \draw[fill=black] (n11) circle (0.15cm);
 \draw[fill=black] (n12) circle (0.15cm);
 \draw[fill=black] (n13) circle (0.15cm);
 \draw[fill=black] (n14) circle (0.15cm);
 \draw[fill=black] (n15) circle (0.15cm);
 \draw[fill=black] (n16) circle (0.15cm);
 \draw[fill=black] (n17) circle (0.15cm);
 \draw[fill=black] (n18) circle (0.15cm);

  \foreach \from/\to in {n11/n12,n11/n13,n11/n14,n12/n13,n12/n14,n13/n15,n13/n14,n14/n16,n15/n16,n15/n17,n15/n18,n16/n17,n16/n18,n17/n18} 
    \draw (\from) -- (\to);

\draw (2,-0.5) node {$(F_1,\phi_1)$};
\draw (9,-0.5) node {$(F_2,\phi_2)$};
    
\end{tikzpicture}
    \caption{The $C_i$-symmetric base graphs.}
    \label{fig CI2}
\end{figure}

Instead consider the half-turn symmetry group $C_2$.
By Theorem \ref{fixed e&v on cy}, a $C_2$-isostatic graph is $(2,2)$-tight and has two fixed edges and no fixed vertex, or no fixed edge and one fixed vertex. Hence a graph is called \emph{$(2,2)$-$C_2$-tight} if it is $(2,2)$-tight, $C_2$-symmetric and contains either two fixed edges and no fixed vertex, or no fixed edge and one fixed vertex.
Similarly, graphs which are $(2,2)$-sparse and $C_2$-symmetric shall be referred to as \emph{$(2,2)$-$C_2$-sparse}.
In Figure \ref{BaseGraphs,C_2,cylinder}, we show four small $C_2$-symmetric graphs that are $(2,2)$-tight.
These are, reading left to right, top to bottom: $(K_4, \phi_3)$ with two fixed edges and no fixed vertex, $(W_5,\phi_4)$ with one fixed vertex and no fixed edge, $(Wd(4,2),\phi_5)$ with one fixed vertex and no fixed edge, and $(F_2,\phi_2)$.
These will turn out to be the base graphs of our recursive construction.
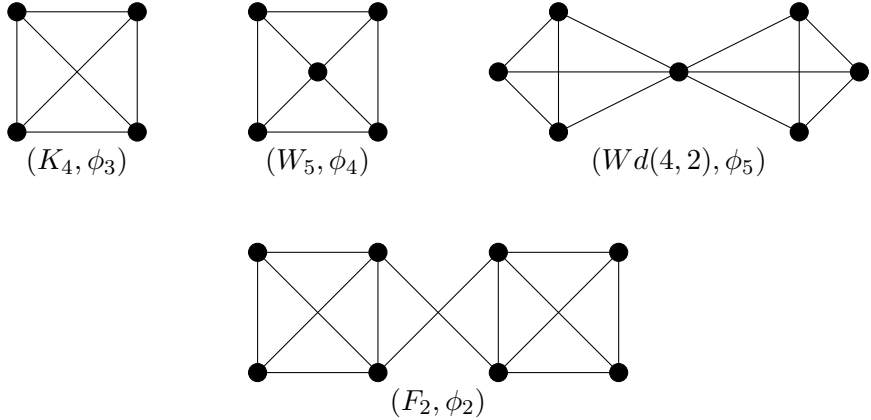
\begin{figure}[ht]
\centering
    \begin{tikzpicture}
  [scale=.8,auto=left]
  
  \coordinate (n21) at (0,4);
  \coordinate (n22) at (0,6);
  \coordinate (n23) at (2,4);
  \coordinate (n24) at (2,6);
 \draw[fill=black] (n21) circle (0.15cm);
 \draw[fill=black] (n22) circle (0.15cm);
 \draw[fill=black] (n23) circle (0.15cm);
 \draw[fill=black] (n24) circle (0.15cm);

  \foreach \from/\to in {n21/n22,n21/n23,n22/n24,n23/n24,n21/n24,n22/n23} 
    \draw (\from) -- (\to);
  
  \coordinate (n1) at (4,4);
  \coordinate (n2) at (4,6);
  \coordinate (n3) at (6,4);
  \coordinate (n4) at (6,6);
  \coordinate (n5) at (5,5);
 \draw[fill=black] (n1) circle (0.15cm);
 \draw[fill=black] (n2) circle (0.15cm);
 \draw[fill=black] (n3) circle (0.15cm);
 \draw[fill=black] (n4) circle (0.15cm);
 \draw[fill=black] (n5) circle (0.15cm);

  \foreach \from/\to in {n1/n2,n1/n3,n2/n4,n3/n4,n1/n5,n2/n5,n3/n5,n4/n5} 
    \draw (\from) -- (\to);
    
  \coordinate (n11) at (8,5);
  \coordinate (n12) at (9,4);
  \coordinate (n13) at (9,6);
  \coordinate (n14) at (13,4);
  \coordinate (n15) at (13,6);
  \coordinate (n16) at (14,5);
  \coordinate (n17) at (11,5);
 \draw[fill=black] (n11) circle (0.15cm);
 \draw[fill=black] (n12) circle (0.15cm);
 \draw[fill=black] (n13) circle (0.15cm);
 \draw[fill=black] (n14) circle (0.15cm);
 \draw[fill=black] (n15) circle (0.15cm);
 \draw[fill=black] (n16) circle (0.15cm);
 \draw[fill=black] (n17) circle (0.15cm);

  \foreach \from/\to in {n11/n12,n11/n13,n12/n13,n14/n15,n14/n16,n15/n16,n11/n17,n12/n17,n13/n17,n14/n17,n15/n17,n16/n17} 
    \draw (\from) -- (\to);

  \coordinate (n41) at (4,0);
  \coordinate (n42) at (4,2);
  \coordinate (n43) at (6,0);
  \coordinate (n44) at (6,2);
  \coordinate (n45) at (8,0);
  \coordinate (n46) at (8,2);
  \coordinate (n47) at (10,0);
  \coordinate (n48) at (10,2);  
  
 \draw[fill=black] (n41) circle (0.15cm);
 \draw[fill=black] (n42) circle (0.15cm);
 \draw[fill=black] (n43) circle (0.15cm);
 \draw[fill=black] (n44) circle (0.15cm);
 \draw[fill=black] (n45) circle (0.15cm);
 \draw[fill=black] (n46) circle (0.15cm);
 \draw[fill=black] (n47) circle (0.15cm);
 \draw[fill=black] (n48) circle (0.15cm);

  \foreach \from/\to in {n41/n42,n41/n43,n41/n44,n42/n43,n42/n44,n43/n46,n43/n44,n44/n45,n45/n46,n45/n47,n45/n48,n46/n47,n46/n48,n47/n48} 
    \draw (\from) -- (\to);

\draw (1,3.5) node {$(K_4,\phi_3)$};
\draw (5,3.5) node {$(W_5,\phi_4)$};
\draw (11,3.5) node {$(Wd(4,2),\phi_5)$};
\draw (7,-0.5) node {$(F_2,\phi_2)$};
        
\end{tikzpicture}
    \caption{The $C_2$-symmetric base graphs.}
    \label{BaseGraphs,C_2,cylinder}
\end{figure}

Finally consider the reflection symmetry group $C_s$.
By Theorem \ref{fixed e&v on cy}, a $C_s$-isostatic graph is $(2,2)$-tight and has no fixed edge and any number of fixed vertices. Hence a graph is called \emph{$(2,2)$-$C_s$-tight} if it is $(2,2)$-tight, $C_s$-symmetric and contains no fixed edge.
Similarly, graphs which are $(2,2)$-sparse, $C_s$-symmetric and have no fixed edge shall be referred to as \emph{$(2,2)$-$C_s$-sparse}.
In Figure \ref{BaseGraphs,C_s,cylinder}, we show six small $C_s$-symmetric graphs that are $(2,2)$-tight.
These are, reading left to right, top to bottom: $(F_2, \phi_2)$, $(W_5, \phi_4)$, $(Wd(4,2),\phi_5)$, $(F_1,\phi_1)$, $(F_1,\phi_6)$ with two fixed vertices and no fixed edge, and $(K_{3,4}, \phi_7)$ with three fixed vertices and no fixed edge.
These will be the base graphs of our recursive construction.

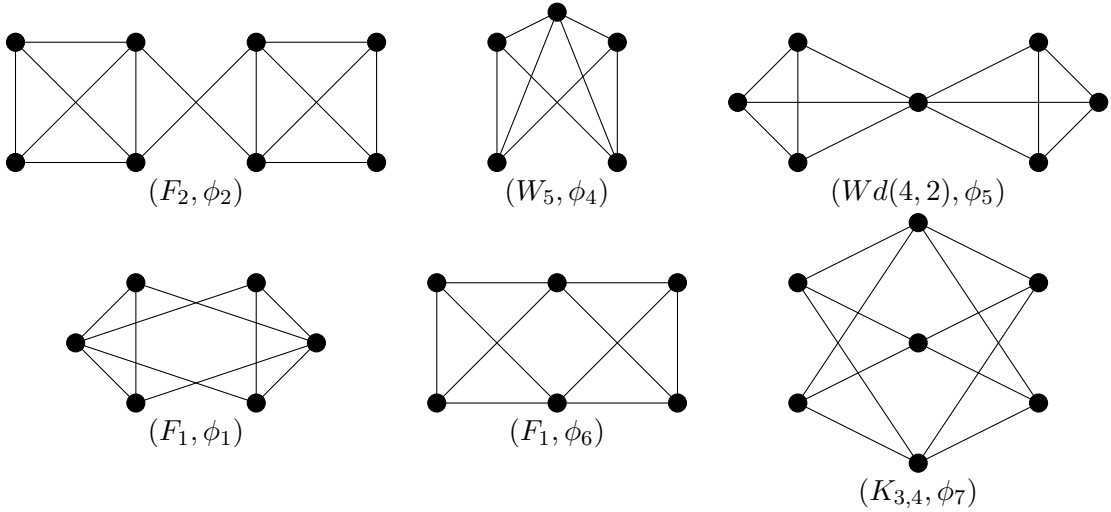
\begin{figure}[ht]
    \centering
    \begin{tikzpicture}
  [scale=.8,auto=left]

  \coordinate (n1) at (1,-3);
  \coordinate (n2) at (2,-4);
  \coordinate (n3) at (2,-2);
  \coordinate (n4) at (4,-4);
  \coordinate (n5) at (4,-2);
  \coordinate (n6) at (5,-3);
 \draw[fill=black] (n1) circle (0.15cm);
 \draw[fill=black] (n2) circle (0.15cm);
 \draw[fill=black] (n3) circle (0.15cm);
 \draw[fill=black] (n4) circle (0.15cm);
 \draw[fill=black] (n5) circle (0.15cm);
 \draw[fill=black] (n6) circle (0.15cm);

\draw (3,-4.5) node {$(F_1,\phi_1)$};

  \foreach \from/\to in {n1/n2,n1/n3,n1/n4,n1/n5,n2/n3,n2/n6,n3/n6,n4/n5,n4/n6,n5/n6} 
    \draw (\from) -- (\to);

  \coordinate (n11) at (0,0);
  \coordinate (n12) at (0,2);
  \coordinate (n13) at (2,0);
  \coordinate (n14) at (2,2);
  \coordinate (n15) at (4,0);
  \coordinate (n16) at (4,2);
  \coordinate (n17) at (6,0);
  \coordinate (n18) at (6,2);  
  
 \draw[fill=black] (n11) circle (0.15cm);
 \draw[fill=black] (n12) circle (0.15cm);
 \draw[fill=black] (n13) circle (0.15cm);
 \draw[fill=black] (n14) circle (0.15cm);
 \draw[fill=black] (n15) circle (0.15cm);
 \draw[fill=black] (n16) circle (0.15cm);
 \draw[fill=black] (n17) circle (0.15cm);
 \draw[fill=black] (n18) circle (0.15cm);

\draw (3,-0.5) node {$(F_2,\phi_2)$};

  \foreach \from/\to in {n11/n12,n11/n13,n12/n14,n13/n16,n15/n17,n16/n18,n17/n18,n11/n14,n12/n13,n13/n14,n14/n15,n15/n16,n15/n18,n16/n17} 
    \draw (\from) -- (\to);
    
  \coordinate (n21) at (12,1);
  \coordinate (n22) at (13,0);
  \coordinate (n23) at (13,2);
  \coordinate (n24) at (17,0);
  \coordinate (n25) at (17,2);
  \coordinate (n26) at (18,1);
  \coordinate (n27) at (15,1);
 \draw[fill=black] (n21) circle (0.15cm);
 \draw[fill=black] (n22) circle (0.15cm);
 \draw[fill=black] (n23) circle (0.15cm);
 \draw[fill=black] (n24) circle (0.15cm);
 \draw[fill=black] (n25) circle (0.15cm);
 \draw[fill=black] (n26) circle (0.15cm);
 \draw[fill=black] (n27) circle (0.15cm);

\draw (15,-0.5) node {$(Wd(4,2),\phi_5)$};

  \foreach \from/\to in {n21/n23,n22/n23,n24/n25,n24/n26,n22/n27,n25/n27,n21/n22,n25/n26,n21/n27,n23/n27,n24/n27,n26/n27} 
    \draw (\from) -- (\to);
    
  \coordinate (n31) at (13,-4);
  \coordinate (n32) at (13,-2);
  \coordinate (n33) at (15,-5);
  \coordinate (n34) at (15,-3);
  \coordinate (n35) at (15,-1);
  \coordinate (n36) at (17,-4);
  \coordinate (n37) at (17,-2);
 \draw[fill=black] (n31) circle (0.15cm);
 \draw[fill=black] (n32) circle (0.15cm);
 \draw[fill=black] (n33) circle (0.15cm);
 \draw[fill=black] (n34) circle (0.15cm);
 \draw[fill=black] (n35) circle (0.15cm);
 \draw[fill=black] (n36) circle (0.15cm);
 \draw[fill=black] (n37) circle (0.15cm);

\draw (15,-5.5) node {$(K_{3,4},\phi_7)$};

  \foreach \from/\to in {n31/n33,n31/n34,n31/n35,n32/n33,n32/n34,n32/n35,n34/n36,n34/n37,n35/n36,n35/n37,n33/n36,n33/n37} 
    \draw (\from) -- (\to);
    
  \coordinate (n41) at (7,-4);
  \coordinate (n42) at (7,-2);
  \coordinate (n43) at (9,-4);
  \coordinate (n44) at (9,-2);
  \coordinate (n45) at (11,-4);
  \coordinate (n46) at (11,-2);
 \draw[fill=black] (n41) circle (0.15cm);
 \draw[fill=black] (n42) circle (0.15cm);
 \draw[fill=black] (n43) circle (0.15cm);
 \draw[fill=black] (n44) circle (0.15cm);
 \draw[fill=black] (n45) circle (0.15cm);
 \draw[fill=black] (n46) circle (0.15cm);

\draw (9,-4.5) node {$(F_1,\phi_6)$};

  \foreach \from/\to in {n41/n42,n41/n43,n41/n44,n42/n43,n42/n44,n43/n45,n44/n46,n44/n45,n43/n46,n45/n46} 
    \draw (\from) -- (\to);
    
  \coordinate (n51) at (8,0);
  \coordinate (n52) at (8,2);
  \coordinate (n53) at (9,2.5);
  \coordinate (n54) at (10,0);
  \coordinate (n55) at (10,2);
 \draw[fill=black] (n51) circle (0.15cm);
 \draw[fill=black] (n52) circle (0.15cm);
 \draw[fill=black] (n53) circle (0.15cm);
 \draw[fill=black] (n54) circle (0.15cm);
 \draw[fill=black] (n55) circle (0.15cm);

\draw (9,-0.5) node {$(W_5,\phi_4)$};

  \foreach \from/\to in {n51/n52,n51/n53,n51/n55,n52/n53,n52/n54,n53/n55,n53/n54,n54/n55} 
    \draw (\from) -- (\to);

\end{tikzpicture}
    \caption{The $C_s$-symmetric base graphs, with the mirror vertically aligned on the page.}
    \label{BaseGraphs,C_s,cylinder}
\end{figure}

\subsection{Reduction operations}

We will consider \emph{reduction operations}: these are the reverse of the extension operations described in Section \ref{sec:ops}.
While the operations we require vary slightly for each symmetry group, the following are required across the three symmetries we will provide characterisations for, namely symmetrised 0-reduction, symmetrised 1-reduction, symmetrised $C_4$ contraction, symmetrised $K_4$ contraction.

\begin{lem}\label{deg(v)=2}
Let $(G,\phi)$ be $(2,2)$-$C$-tight for $C\in \{C_i,C_2,C_s\}$ and suppose $v \in V$ is a vertex of degree 2.
Then either $C=C_s$, $v =\sigma(v)= v'$ and $H = G - \{v\}$ is is $(2,2)$-$C$-tight or $v\neq v'$ and
$H = G - \{v,v'\}$ is $(2,2)$-$C$-tight.
\end{lem}

\begin{proof}
The case when $C=C_s$ and $v = v'$ is trivial.
Moreover if $C = C_2$ then any degree two vertex $v$ in a $(2,2)$-$C$-tight graph $G$ satisfies $v'=c'_2(v) \neq v$, for otherwise the subgraph $G-v$ would be $(2,2)$-tight but have no fixed edges or vertices, contradicting the fact that $G$ is $(2,2)$-$C$-tight. 
For any $C$, $vv'\notin E$ for otherwise $H = G - \{v,v'\}$ would have $|V(H)| = |V|-2$ but $|E(H)| = |E|-3$, violating the $(2,2)$-sparsity of $G$.
Then, any subgraph of $H$ is a subgraph of $G$, so as $G$ is $(2,2)$-tight, $H$ is.
Also $H$ will be $C$-symmetric, and we do not remove any fixed edges or vertices.
\end{proof}

Most of the technical work in the next four sections involves analysing when we can remove a vertex of degree 3.
Hence, for brevity, we will say that a vertex of degree 3 is called a \emph{node}.

\begin{lem}\label{lem: no blocking}
Let $(G,\phi)$ be $(2,2)$-$C$-tight for $C\in \{C_i,C_2,C_s\}$ and suppose $v \in V$ is a node so that $x,y\in N(v)$ with $xy\notin E$ and $\{x,y\} \neq \{x',y'\}$.
Then $G' = G-\{v,v'\}+\{xy,x'y'\}$ is not $(2,2)$-$C$-tight if and only if at least one of the following hold:
\begin{enumerate}
    \item there exists a 2-critical set $U$ with $x,y\in U$;
    \item there exists a 3-critical set $W$ with $x,y,x',y' \in W$;
    \item $C = C_2$ and there exists a 4-critical set $T$ with $x,y,x',y' \in T$ and $G[T]$ is $C_2$-symmetric with no fixed vertex or edges.
\end{enumerate}
\end{lem}

\begin{proof}
Suppose that $x,y$ (resp. $x',y'$) are contained in a 2-critical set $U$, or $x,y,x',y'$ are contained in a 3-critical set $W$.
Then $U$ and $W$ would, with the new edges, create subgraphs $G'[U] = (U,E_1)$ and $G'[W] = (W,E_2)$ where $|E_1| = 2|U| -1$ and $|E_2| = 2|W| -1$ respectively.
This proves the first two conditions imply $G'$ is not $(2,2)$-$C$-tight.
Additionally for $(2,2)$-$C_2$-tight graphs, all $C_2$-symmetric tight subgraphs must have the fixed vertex or edge constraint.
Any reduction cannot create a tight subgraph which does not satisfy this fixed count.
Therefore a 4-critical $C_2$-symmetric vertex set $T$ where $G[T]$ does not contain fixed edges or vertices, has $G'[T]$ a $C_2$-symmetric $(2,2)$-tight subgraph of $G'$, which is not $(2,2)$-$C_2$-tight.
Hence the third condition implies $G'$ is not $(2,2)$-$C$-tight.

Conversely if conditions (1)-(3) hold then the facts that $G$ is $(2,2)$-$C$-tight, $G'$ is obtained from a subgraph of $G$ by adding 2 distinct edges, and $C_i$ and $C_s$ do not have fixed vertex or edge constraints that need to be preserved in the reduction imply that $G'$ is $(2,2)$-$C$-tight. 
\end{proof}

\begin{lem}\label{lem no W}
Let $(G,\phi)$ be $(2,2)$-$C$-tight for $C \in \{C_i, C_2, C_s\}$ with no fixed edge and
suppose $v \in V$ is a node with $N(v) =\{x,y,z\}$ and $xy \notin E$.
If the pair $x,y$ is not contained in any $2$-critical subset of $V\setminus \{v,v'\}$, then there does not exist $W \subseteq V\setminus \{v,v'\}$ with $x,x',y,y' \in W$ and $i_G(W) = 2|W| - 3$.
\end{lem}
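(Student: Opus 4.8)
The plan is to argue by contradiction using a counting argument on the hypothetical sets $W$. Suppose such a set $W \subseteq V \setminus \{v,v'\}$ exists, so $W$ contains $x,x',y,y'$ and satisfies $i_G(W) = 2|W|-3$. The strategy is to combine $W$ with its image $\gamma W$ under the non-trivial group element $\gamma$ (inversion for $C_i$, reflection for $C_s$) and exploit the submodularity of the function $i_G$ together with the $(2,2)$-sparsity of $G$.

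First I would record the standard submodular-type inequality: for any $A,B \subseteq V$ we have $i_G(A) + i_G(B) \leq i_G(A \cup B) + i_G(A \cap B)$, with equality controlled by the count $d_G(A,B)$ of edges between $A \setminus B$ and $B \setminus A$; more precisely $i_G(A) + i_G(B) + d_G(A,B) = i_G(A\cup B) + i_G(A\cap B)$. Applying this with $A = W$ and $B = \gamma W$: since $\gamma$ is an automorphism of $G$, $i_G(\gamma W) = i_G(W) = 2|W|-3$ and $|\gamma W| = |W|$. The set $W \cup \gamma W$ is $\tau(\Gamma)$-symmetric, and the set $W \cap \gamma W$ is also $\tau(\Gamma)$-symmetric; crucially $W \cap \gamma W$ contains $x,x',y,y'$ since these four vertices lie in $W$ and $\{x,x',y,y'\}$ is $\gamma$-invariant as a set (as $x' = \gamma x$, $y' = \gamma y$). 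In particular $W \cap \gamma W$ is nonempty, so it is a proper subgraph and $(2,2)$-sparsity gives $i_G(W \cap \gamma W) \leq 2|W \cap \gamma W| - 2$; likewise $i_G(W \cup \gamma W) \leq 2|W \cup \gamma W| - 2$ (this union is still proper in $V \setminus \{v,v'\}$, hence proper in $G$). Feeding these into the submodular equality and using $|W| + |\gamma W| = |W \cup \gamma W| + |W \cap \gamma W|$ yields
\[
(2|W|-3) + (2|W|-3) + d_G(W,\gamma W) \leq (2|W \cup \gamma W| - 2) + (2|W\cap\gamma W| - 2),
\]
which simplifies to $d_G(W,\gamma W) \leq 2$, and moreover forces both $W \cap \gamma W$ and $W \cup \gamma W$ to be $2$-critical.

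Now the punchline: $W \cap \gamma W$ is a $2$-critical subset of $V \setminus \{v,v'\}$ containing $x$ and $y$ — but this directly contradicts the hypothesis that the pair $x,y$ is not contained in any $2$-critical subset of $V \setminus \{v,v'\}$. I expect this last deduction to go through cleanly once the counting is set up, so the main obstacle is the bookkeeping at the boundary cases: I must verify that $W \cup \gamma W$ is genuinely a \emph{proper} subgraph of $G$ (it avoids $v,v'$ by assumption, so it is proper) so that $(2,2)$-sparsity applies, and that $W \cap \gamma W$ is large enough to be a legitimate proper subgraph — it contains at least the four distinct vertices $x,x',y,y'$ (distinctness here uses that $x \neq y$, that $\gamma$ is fixed-point-free on vertices for $C_i$, and for $C_s$ that $x,y$ are not the fixed vertex, which holds because $xy \notin E$ forces at least one of them off the mirror, combined with a short case check — or more simply, the conclusion about $W\cap\gamma W$ containing the pair $x,y$ is all that is needed). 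Care is also needed to confirm that the submodular inequality is being applied to $i_G$ on vertex subsets of $G$ (not of the smaller graph $G - \{v,v'\}$), which is fine since "$2$-critical subset of $V \setminus \{v,v'\}$" refers to the count $i_G$ restricted to such subsets. Assembling these observations gives the contradiction and completes the proof.
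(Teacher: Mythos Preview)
Your argument has a genuine gap at the step where you conclude that the inequality $d_G(W,\gamma W)\leq 2$ ``forces both $W\cap\gamma W$ and $W\cup\gamma W$ to be $2$-critical.'' This does not follow. From the identity $i(W)+i(\gamma W)+d_G(W,\gamma W)=i(W\cup\gamma W)+i(W\cap\gamma W)$ together with the sparsity bounds $i(W\cup\gamma W)\leq 2|W\cup\gamma W|-2$ and $i(W\cap\gamma W)\leq 2|W\cap\gamma W|-2$, you obtain $4|W|-6+d\leq 4|W|-4$, i.e.\ $d\leq 2$. Equality in \emph{both} sparsity bounds would require the sum $i(W\cup\gamma W)+i(W\cap\gamma W)$ to equal $4|W|-4$, which happens only when $d=2$; but you have no lower bound on $d$. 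If $d=0$ (which is in fact what occurs), the sum equals $4|W|-6$ and both sets could, for instance, be $3$-critical, or one $2$-critical and the other $4$-critical, etc.

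The paper's proof avoids this by first using the hypothesis to sharpen the bounds: since $W\cap W'$ and $W\cup W'$ both contain $x,y$ and lie in $V\setminus\{v,v'\}$, neither can be $2$-critical, so $i(W\cup W')\leq 2|W\cup W'|-3$ and $i(W\cap W')\leq 2|W\cap W'|-3$. This forces $d=0$ and equality throughout, in particular $i(W\cup W')=2|W\cup W'|-3$. The contradiction then comes from a parity observation you are missing entirely: $W\cup W'$ is $\tau(\Gamma)$-symmetric and, for $\tau(\Gamma)\in\{C_i,C_s\}$, contains no fixed edges, so its edges come in pairs and $i(W\cup W')$ is even, contradicting the odd value $2|W\cup W'|-3$. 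Your proposed punchline (deriving a $2$-critical set containing $x,y$) cannot be reached from the counting alone; the parity of the edge count in a symmetric subgraph is the missing ingredient.
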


\begin{proof}
Suppose for a contradiction that there exists some $W\subseteq V\setminus \{v,v'\}$, $x,x',y,y' \in W$ with $i(W) = 2|W|-3$.
Observe $i(W') = i(W)$, $i(W\cup W') \leq 2|W\cup W'|-3$ and $i(W\cap W') \leq 2|W\cap W'|-3$ (since $x,x',y,y'\in W \cap W'$).
Now we have
\begin{equation}\label{counting W}
    \begin{split}
        2|W|-3 + 2|W'|-3 & =  i(W) + i(W') = i(W\cup W') + i(W \cap W') - d(W,W') \\
        & \leq 2|W \cup W'| - 3 + 2|W \cap W'| - 3 - d(W,W') \\
        & = 2|W| + 2|W'| - 6 - d(W,W').
    \end{split}
\end{equation}
It follows that we have equality throughout and $d(W,W') = 0$.
However $W\cup W'$ is $C$-symmetric with no fixed edges, so $i(W\cup W')$ is even, a contradiction.
\end{proof}

\begin{rem}\label{critical cup and cap}
Similar counting arguments to Equation (\ref{counting W}) can be used to give the following (and other similar observations) on the union and intersection of $k$-critical sets that we use repeatedly.
Let $(G,\phi)$ be $(2,2)$-tight.
Take $X,Y \subseteq V$.
If $X,Y \subseteq V$ are $2$-critical and $X \cap Y \neq \emptyset$ then $X \cup Y$ and $X \cap Y$ are $2$-critical and $d(X,Y) = 0$.

Further if $X$ is 2-critical, $Y$ is 3-critical and $X\cap Y \neq \emptyset$, then either:
\begin{itemize}
    \item $d(X,Y) = 0$, $i(X\cap Y) = 2|X\cap Y|-3$ and $i(X \cup Y) = 2|X \cup Y| - 2$; or
    \item $d(X,Y) = 0$, $i(X\cap Y) = 2|X\cap Y|-2$ and $i(X \cup Y) = 2|X \cup Y| - 3$; or
    \item $d(X,Y) = 1$ and $X\cap Y$ and $X\cup Y$ are $2$-critical.
\end{itemize}
\end{rem}

\begin{lem}\label{lem: deg(v)=3, cap empty}
Let $(G,\phi)$ be $(2,2)$-$C$-tight for $C\in \{C_i,C_s\}$ and suppose $v \in V$ is a node with $N(v) \cap N(v') = \emptyset$.
Then either $G[N[v]] = K_{4}$, or there exists $x,y \in N(v)$ such that $xy \notin E$, and $G^- = G-\{v,v'\}+\{xy,x'y'\}$ is $(2,2)$-$C$-tight.
\end{lem}

\begin{proof}
Assume that $G[N[v]] \neq K_{4}$.
By Lemma \ref{lem no W}, we only need to show that for one pair of non-adjacent vertices in $N(v)$, there is no $2$-critical set containing them.
We consider cases based on $i(N(v))$.
Let $N(v)=\{x,y,z\}$.
Firstly, where there are no edges on the neighbours of $v$, if all of the pairs $\{x,y\},\{x,z\},\{y,z\}$ are contained in $2$-critical sets $U_1, U_2, U_3 \subseteq V-\{v,v'\}$ say, then by Remark \ref{critical cup and cap}, $U_1 \cup U_2$ is $2$-critical and so $U_1 \cup U_2 \cup \{v\}$ breaks $(2,2)$-sparsity of $G$.
Similarly when $i(N(v))=1$.
Now suppose $i(N(v))=2$, and say $xy \notin E$.
If there existed a $2$-critical $U \subseteq V- \{v,v'\}$ with $x,y \in U$, then $i_G(U \cup \{v,z\}) = 2|U \cup \{v,z\}|-1$  which contradicts $(2,2)$-sparsity of $G$.
Hence $G^- = G-\{v,v'\}+\{xy,x'y'\}$ is $(2,2)$-$C$-tight.
\end{proof}

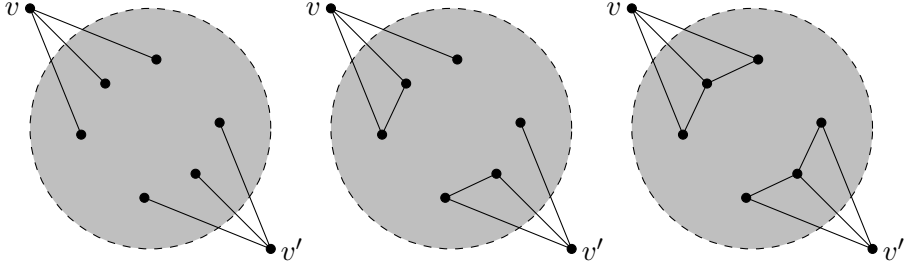
\begin{figure}[ht]
    \centering
    \begin{tikzpicture}
  [scale=.4,auto=left]
  
  \coordinate (n1) at (0,8);
  \coordinate (n2) at (1.7,3.8);
  \coordinate (n3) at (2.5,5.5);
  \coordinate (n4) at (4.2,6.3);
  \coordinate (n5) at (8,0);
  \coordinate (n6) at (3.8,1.7);
  \coordinate (n7) at (5.5,2.5);
  \coordinate (n8) at (6.3,4.2);
  \coordinate (n9) at (4,4);
  
 \draw[dashed, fill=lightgray] (n9) circle (4cm); 
 \draw[fill=black] (n1) circle (0.15cm)
	    node[left] {$v$};
 \draw[fill=black] (n2) circle (0.15cm);
 \draw[fill=black] (n3) circle (0.15cm);
 \draw[fill=black] (n4) circle (0.15cm);
 \draw[fill=black] (n5) circle (0.15cm)
	    node[right] {$v'$};
 \draw[fill=black] (n6) circle (0.15cm);
 \draw[fill=black] (n7) circle (0.15cm);
 \draw[fill=black] (n8) circle (0.15cm);

  \foreach \from/\to in {n1/n2,n1/n3,n1/n4,n5/n6,n5/n7,n5/n8} 
    \draw (\from) -- (\to);

  \coordinate (n11) at (10,8);
  \coordinate (n12) at (11.7,3.8);
  \coordinate (n13) at (12.5,5.5);
  \coordinate (n14) at (14.2,6.3);
  \coordinate (n15) at (18,0);
  \coordinate (n16) at (13.8,1.7);
  \coordinate (n17) at (15.5,2.5);
  \coordinate (n18) at (16.3,4.2);
  \coordinate (n19) at (14,4);
  
 \draw[dashed, fill=lightgray] (n19) circle (4cm); 
 \draw[fill=black] (n11) circle (0.15cm)
	    node[left] {$v$};
 \draw[fill=black] (n12) circle (0.15cm);
 \draw[fill=black] (n13) circle (0.15cm);
 \draw[fill=black] (n14) circle (0.15cm);
 \draw[fill=black] (n15) circle (0.15cm)
	    node[right] {$v'$};
 \draw[fill=black] (n16) circle (0.15cm);
 \draw[fill=black] (n17) circle (0.15cm);
 \draw[fill=black] (n18) circle (0.15cm);

  \foreach \from/\to in {n11/n12,n11/n13,n11/n14,n15/n16,n15/n17,n15/n18,n12/n13,n16/n17} 
    \draw (\from) -- (\to);
	    
  \coordinate (n21) at (20,8);
  \coordinate (n22) at (21.7,3.8);
  \coordinate (n23) at (22.5,5.5);
  \coordinate (n24) at (24.2,6.3);
  \coordinate (n25) at (28,0);
  \coordinate (n26) at (23.8,1.7);
  \coordinate (n27) at (25.5,2.5);
  \coordinate (n28) at (26.3,4.2);
  \coordinate (n29) at (24,4);
  
 \draw[dashed, fill=lightgray] (n29) circle (4cm); 
 \draw[fill=black] (n21) circle (0.15cm)
	    node[left] {$v$};
 \draw[fill=black] (n22) circle (0.15cm);
 \draw[fill=black] (n23) circle (0.15cm);
 \draw[fill=black] (n24) circle (0.15cm);
 \draw[fill=black] (n25) circle (0.15cm)
	    node[right] {$v'$};
 \draw[fill=black] (n26) circle (0.15cm);
 \draw[fill=black] (n27) circle (0.15cm);
 \draw[fill=black] (n28) circle (0.15cm);

  \foreach \from/\to in {n21/n22,n21/n23,n21/n24,n25/n26,n25/n27,n25/n28,n22/n23,n23/n24,n26/n27,n27/n28} 
    \draw (\from) -- (\to);

\end{tikzpicture}
    \caption{The local structure of the cases in Lemma \ref{lem: deg(v)=3, cap empty}.}
    \label{fig CI1}
\end{figure}

\begin{lem}\label{deg(v)=3, x=y'}
Let $(G,\phi)$ be $(2,2)$-$C$-tight for $C \in \{C_i, C_s\}$ and suppose $v \in V$ is a node such that $N(v)=\{x,y,z\}$ and $N(v) \cap N(v') = \{x,y\}$, with $x' = y$ or $C = C_s$ and $x$ and $y$ are fixed vertices. Then one of the following hold:
\begin{enumerate}
    \item $G[\{v,v',x,y,z,z'\}] \cong (F_{1}, \phi_1)$
    \item $C=C_s$ and $G[\{v,v',x,y,z,z'\}] \cong (F_1, \phi_6)$. 
    \item there exists some $v_1 \in \{x,y\}$ such that $G^- = G-\{v,v'\}+\{v_{1}z,v_{1}z'\}$ is $(2,2)$-$C$-tight.
\end{enumerate}
\end{lem}

\begin{proof}
Suppose $\{xz,yz,xz',yz'\} \subset E$.
If $x' = y$, then $G[\{v,v',x,y,z,z'\}] \cong (F_{1},\phi_1)$ as in (1), otherwise $x$ and $y$ are fixed and $G[\{v,v',x,y,z,z'\}] \cong (F_{1},\phi_6)$ as in (2).

When one of the edge pairs $\{xz,yz'\},\{xz',yz\}$ is present, without loss of generality say $\{xz,yz'\} \in E$.
Suppose there exists a $U \subseteq V-v$, $y,z \in U$ which is $2$-critical.
If $U \cap U' \neq \emptyset$, then $U \cup U'$ is $2$-critical by Remark \ref{critical cup and cap}, and $U \cup U' \cup \{v\}$ violates $(2,2)$-sparsity of $G$.
If $U \cap U' = \emptyset$, then $xz', yz' \in d(U,U')$ so $U\cup U'$ is $2$-critical and $U \cup U' \cup \{v\}$ again breaks $(2,2)$-sparsity.
By Lemma \ref{lem no W}, since there is no $2$-critical set on $x,y,z,z'$, we have that $i_{G}(W) \leq 2|W|-4$ for all $W \subseteq V\setminus \{v,v'\}$ such that $x,y,z,z' \in W$, so $G^- = G-\{v,v'\}+\{xz',yz\}$ is $(2,2)$-$C$-tight. 

Now assume we have no edges on $N(v)$.
We want to show that we can add either $xz,yz'$ or $yz,xz'$ to $G-\{v,v'\}$.
Suppose we can add neither $xz$ or $x'z$, that is, there are $2$-critical sets $U_1,U_2 \subseteq V-v$ with $x,z \in U_1$ and $x',z \in U_2$.
Then $U_1 \cap U_2 \neq \emptyset$, so $U_1 \cup U_2$ is $2$-critical by Remark \ref{critical cup and cap}. Thus the subgraph induced by $U_1 \cup U_2 \cup \{v\}$ contradicts $G$ being $(2,2)$-tight. 
We recall Lemma \ref{lem no W} gives for any $W$ containing $\{x,y,z,z'\}$, $i_G(W) \leq 2|W|-4$, giving us the required result.
\end{proof}

\begin{figure}[ht]
    \centering
    \begin{tikzpicture}
  [scale=.4,auto=left]

  \coordinate (n11) at (0,8);
  \coordinate (n12) at (2.5,2.5);
  \coordinate (n13) at (2.5,5.5);
  \coordinate (n14) at (5.5,5.5);
  \coordinate (n15) at (8,0);
  \coordinate (n16) at (2.5,2.5);
  \coordinate (n17) at (5.5,2.5);
  \coordinate (n18) at (5.5,5.5);
  \coordinate (n19) at (4,4);
  
 \draw[dashed, fill=lightgray] (n19) circle (4cm); 
 \draw[fill=black] (n11) circle (0.15cm)
	    node[left] {$v$};
 \draw[fill=black] (n12) circle (0.15cm)
	    node[below left] {$x$};
 \draw[fill=black] (n13) circle (0.15cm)
	    node[below right] {$y$};
 \draw[fill=black] (n14) circle (0.15cm)
	    node[above right] {$x'$};
 \draw[fill=black] (n15) circle (0.15cm)
	    node[right] {$v'$};
 \draw[fill=black] (n16) circle (0.15cm);
 \draw[fill=black] (n17) circle (0.15cm)
	    node[above left] {$y'$};
 \draw[fill=black] (n18) circle (0.15cm);
	    
  \foreach \from/\to in { n11/n12,n11/n13,n11/n14,n15/n16,n15/n17,n15/n18,n12/n13,n13/n14,n16/n17,n17/n18} 
    \draw (\from) -- (\to);

  \coordinate (n21) at (10,8);
  \coordinate (n22) at (12.5,2.5);
  \coordinate (n23) at (12.5,5.5);
  \coordinate (n24) at (15.5,5.5);
  \coordinate (n25) at (18,0);
  \coordinate (n26) at (12.5,2.5);
  \coordinate (n27) at (15.5,2.5);
  \coordinate (n28) at (15.5,5.5);
  \coordinate (n29) at (14,4);
  
 \draw[dashed, fill=lightgray] (n29) circle (4cm); 
 \draw[fill=black] (n21) circle (0.15cm)
	    node[left] {$v$};
 \draw[fill=black] (n22) circle (0.15cm)
	    node[below left] {$x$};
 \draw[fill=black] (n23) circle (0.15cm)
	    node[below right] {$y$};
 \draw[fill=black] (n24) circle (0.15cm)
	    node[above right] {$x'$};
 \draw[fill=black] (n25) circle (0.15cm)
	    node[right] {$v'$};
 \draw[fill=black] (n26) circle (0.15cm);
 \draw[fill=black] (n27) circle (0.15cm)
	    node[above left] {$y'$};
 \draw[fill=black] (n28) circle (0.15cm);

  \foreach \from/\to in { n21/n22,n21/n23,n21/n24,n25/n26,n25/n27,n25/n28,n23/n24,n26/n27} 
    \draw (\from) -- (\to);

  \coordinate (n31) at (20,8);
  \coordinate (n32) at (22.5,2.5);
  \coordinate (n33) at (22.5,5.5);
  \coordinate (n34) at (25.5,5.5);
  \coordinate (n35) at (28,0);
  \coordinate (n36) at (22.5,2.5);
  \coordinate (n37) at (25.5,2.5);
  \coordinate (n38) at (25.5,5.5);
  \coordinate (n39) at (24,4);
  
 \draw[dashed, fill=lightgray] (n39) circle (4cm); 
 \draw[fill=black] (n31) circle (0.15cm)
	    node[left] {$v$};
 \draw[fill=black] (n32) circle (0.15cm)
	    node[below left] {$x$};
 \draw[fill=black] (n33) circle (0.15cm)
	    node[below right] {$y$};
 \draw[fill=black] (n34) circle (0.15cm)
	    node[above right] {$x'$};
 \draw[fill=black] (n35) circle (0.15cm)
	    node[right] {$v'$};
 \draw[fill=black] (n36) circle (0.15cm);
 \draw[fill=black] (n37) circle (0.15cm)
	    node[above left] {$y'$};
 \draw[fill=black] (n38) circle (0.15cm);

  \foreach \from/\to in { n31/n32,n31/n33,n31/n34,n35/n36,n35/n37,n35/n38} 
    \draw (\from) -- (\to);

\end{tikzpicture}
    \caption{The local structure of the cases in Lemma \ref{deg(v)=3, x=y'}.}
    \label{fig CI1.5}
\end{figure}
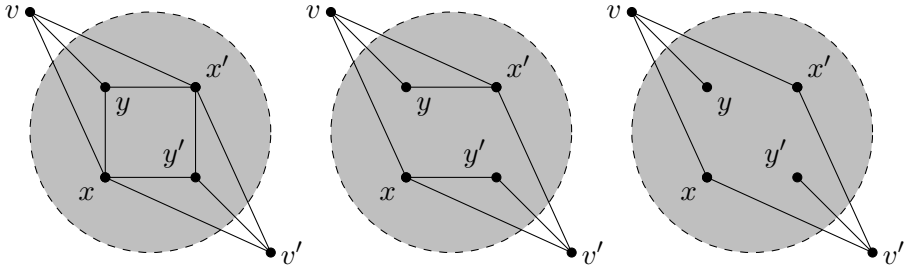

\begin{lem}\label{deg 3 C_2 t}
Let $(G,\phi)$ be $(2,2)$-$C$-tight for $C \in \{C_2,C_s\}$ and suppose $v \in V$ is a node so that $N[v] \cap N[v'] = \{t\}$, where $t$ is a fixed vertex in $G$.
Let $N(v) = \{x,y,t\}$.
Then either $G[N[v]\cup N[v']] = (Wd(4,2),\phi_5)$ or one of $G_1 = G-\{v,v'\}+\{xt,x't\}$, $G_2 = G-\{v,v'\}+\{yt,y't\}$, or $G_3 = G-\{v,v'\}+\{xy,x'y'\}$ is $(2,2)$-$C$-tight.
\end{lem}

\begin{proof}
Since $G$ has no fixed edges, Lemma \ref{lem no W} implies that if $x,y,x',y'$ are in a 3-critical set then they are in a 2-critical set too. Hence, for the remainder of the proof, we only consider 2-critical or 4-critical sets in the case when $C = C_2$.

We break up the proof into cases by considering the number of edges induced by the neighbours of $v$.
Firstly, when all $3$ edges $xy, xt, yt$ are present in the graph, we have a copy of $Wd(4,2)$.
Now, when two edges are present, without loss of generality, we may assume either $xy \notin E$ or  $yt \notin E$.
If $xy \notin E$ (resp. $yt \notin E$), suppose there exists a $2$-critical $U \subset V$ with $x,y \in U$ (resp. $t, y \in U$). Then the subgraph induced by $U \cup \{v,t\}$ (resp. $U \cup \{v,x\}$) violates the $(2,2)$-sparsity of $G$.
There is no $4$-critical $C_2$-symmetric set $T$ containing $x,y$ and not $v,t$ since the subgraph induced by $T \cup \{v,v',t\}$ violates $(2,2)$-sparsity.

Consider now the case where one or zero edges are induced by $\{x,y,t\}$.
No two of the pairs $\{x,y\},\{x,t\},\{y,t\}$ can each be contained in a $2$-critical set, as if any two were contained in $2$-critical sets $U_1, U_2$, then by Remark \ref{critical cup and cap}, $U_1 \cup U_2$ is $2$-critical but the subgraph induced by $U_1 \cup U_2 +v$ violates the $(2,2)$-sparsity of $G$.
For $C= C_2$, to complete the proof we need to confirm that one of these pairs and its symmetric copy is not in a 4-critical set which contains no fixed vertex.
However, for any two sets from $\{x,y,x',y'\},\{x,x',t\},\{y,y',t\}$, at least one contains the fixed vertex of $G$.
Hence we may reduce symmetrically unless $G[N[v]\cup N[v']] \cong (Wd(4,2),\phi_5)$.
\end{proof}

\begin{lem}\label{deg 3 C_2 t x x'}
Let $(G,\phi)$ be $(2,2)$-$C$-tight for $C \in \{C_2,C_s\}$ and suppose $v \in V$ is a node chosen so that $N[v] \cap N[v'] = \{t,x,x'\}$, where $t$ is fixed.
Then either $G[N[v]\cup N[v']] = (W_5,\phi_4)$, or $G' = G-\{v,v'\}+\{xt,x't\}$ is $(2,2)$-$C$-tight.
\end{lem}

\begin{proof}
Since $t$ is a fixed vertex, the edge $xx'$ does not exist.
We therefore only have to consider whether $xt$ and $x't$ are edges of $G$.
If $xt, x't \in E$, then $G[N[v]\cup N[v']] = (W_5,\phi_4)$.
So suppose $xt, x't \notin E$.
Suppose there exist sets $W_1, W_2 \subset V$ that are both $2$-critical, with $x,t \in W_1$, $x',t \in W_2$.
Then $W_1 \cup W_2$ is $2$-critical and the subgraph induced by $W_1 \cup W_2  \cup \{v,v'\}$ contradicts the $(2,2)$-sparsity of $G$.
Similarly, any $3$-critical blocking set $U$ containing $x,x',t$ would induce a subgraph that breaks $(2,2)$-sparsity after adding $v,v'$ and their incident edges.
Finally, for $C = C_2$, $xt$ cannot be blocked by a $4$-critical set $T$, as they cannot contain fixed vertices and $t$ itself is fixed.
\end{proof}

\subsection{Contraction operations}

\begin{lem}\label{deg(v)=3, K4}
Let $(G,\phi)$ be $(2,2)$-$C$-tight for $C\in \{C_i,C_2,C_s\}$.
Suppose $G$ contains a copy of $K_{4}$ with vertices $\{x_{1}, x_{2}, x_{3}, x_{4}\} = X$, and put $\{x_{1}', x_{2}', x_{3}', x_{4}'\} = X'$ where $X\neq X'$.
Let $G^-$ denote the graph obtained from $G$ by contracting $X$ to $w$ and $X'$  to $w'$ so that, for any $v\in V\setminus (X\cup X')$ with $vx_{i} \in E$ (resp. $vx_{i}' \in E$), we have $vw \in E(G^-)$ (resp. $vw' \in E(G^-)$).
Then either
\begin{enumerate}
    \item  $G^-$ is $(2,2)$-$C$-tight, 
    \item there exists $y \in V\setminus X$ such that $yx_{i}, yx_{j} \in E$ for some $1 \leq i <j \leq 4$,
    \item $C=C_i, C_2, C_s$ and $G[X, X'] \cong (F_{2},\phi_2)$,
    \item $C=C_2, C_s$ and $G[X,X'] \cong (Wd(4,2),\phi_5)$.
\end{enumerate}
\end{lem}

\begin{proof}
First note that for any $C$, $|X \cap X'| \leq 1$ since $G$ is $(2,2)$-tight.
If $|X \cap X'| = 1$, this vertex must be fixed by any of the symmetries, so $C = C_2$ or $C_s$, and $G[X,X'] \cong (Wd(4,2),\phi_5)$, which is condition (4).
We may therefore suppose $X \cap X' = \emptyset$.
Let $G^{-}$ be as above.
Observe that $C$-symmetry is preserved in the reduction operation.
We have $|V(G^{-})|= |V|-6$ and $|E(G^{-})| = |E|-12$.
We first show that if $G^{-}$ is simple, then it is $(2,2)$-tight.
By construction, 
$$|E(G^{-})| = |E|-12 = 2|V| - 2-12 = 2(|V|-6) -2 = 2|V(G^{-})| -2.$$
Now consider $F \leq G^{-}$.
If $w, w' \notin V(F)$, then $F$ is a subgraph of $G$. Since $G$ is $(2,2)$-tight, $|E(F)| \leq 2|V(F)| - 2$.
Any subgraph containing $w$ or $w'$ can be compared to a subgraph $F' \leq G$, by replacing $w, w'$ with $X,X'$ respectively, as well as making the appropriate edge set adjustment.
From $F'$ being a subgraph of $G$ it easily follows that $F$ is $(2,2)$-sparse, so $G^{-}$ is $(2,2)$-$C$-tight.

We next consider when the operation could create multiple edges.
Let $t$ denote the number of neighbours in $X$ of a vertex $v \in V\setminus X$. Note that $t \leq 2$ as $i_{G^{-}}(\{x_{1}, x_{2}, x_{3}, x_{4},v\}) = 6 +t \leq 8$.
If $t = 2$, we create an edge of multiplicity two between $v$ and $w$. This gives condition (2).
The other possibility is for a multiple edge between $w$ and $w'$.
This will happen when $d(X,X') \geq 2$.
Since $i_{G^{-}}(\{x_{1}, x_{2}, x_{3}, x_{4},x_{1}', x_{2}', x_{3}', x_{4}'\}) \leq 14$, there can be at most two such edges.
When this is an equality, $G[X, X'] \cong (F_{2},\phi_2)$, giving condition (3).
Thus we may perform the reduction operation and the resulting graph $G^-$ is $(2,2)$-$C$-tight, which is condition (1) and completes the proof.
\end{proof}

\begin{lem}\label{deg(v)=3, cycle contraction}
Let $(G,\phi)$ be $(2,2)$-$C$-tight for $C\in \{C_i,C_2,C_s\}$ and let $X$ be a copy of $K_4$ in $G$ which contains a node $v$ and $X\cap X'=\emptyset$.
Suppose we cannot contract $X$ since there exists $y \in V$ with two edges to distinct vertices, say $a,b$ in $X$.
Then there is a $C$-symmetric $C_{4}$ contraction that results in a $(2,2)$-$C$-tight graph.
\end{lem}

\begin{proof}
Label the final vertex of $X$ as $c$.
We write $H = G[\{a,b,c,v,y\}]$.
Note that $vy \notin E$, and so $G[\{a,b,v,y\}] \cong K_{4}-e$. Hence there is a potential $C_{4}$ contraction, with $v \to y$.
We claim that this $C_{4}$ contraction results in a smaller $(2,2)$-tight graph and hence the $C$-symmetric $C_4$ contraction results in a $(2,2)$-$C$-tight graph.
We begin by noting that there is no 2-critical set $U$ containing $v,y$ and at most one of $a,b$ (otherwise adding the vertices of $H$ not contained in $U$ and their incident edges violates $(2,2)$-sparsity). Similarly there is no 3-critical set containing $v,y$ but not $a,b$. 

Since $v$ is a node, and $a,b \in N(y)$, $c \notin N(y)$, the subgraphs of the contracted graph we are interested in will contain one or both of the edges $cy, c'y'$. 
Suppose there exists a  $2$-critical set $U$ with $\{c,y\} \in U$, and $v \notin U$. Then $U\cup v$ is 3-critical and hence does not exist as above.
Similarly there is no $2$-critical set containing $\{c',y'\}$.
To complete the proof we check that there is no $3$-critical set $W$ containing $c, y, c'$ and $y'$.
Let $L = W + \{a,b,a',b'\}$. Since $ac, ay, bc, by, a'c', a'y', b'c', b'y' \in E$, we have $i_{G}(L) \geq 2|L| - 3$.
However, we then see that 
$$i_{G}(L + \{v, v'\}) \geq 2|L| - 3 + 6 = 2|L + \{v, v'\}| -1,$$ contradicting $G$ being $(2,2)$-tight.
\end{proof}

\begin{lem}\label{lem: 2,2 tight to vertex}
Let $(G,\phi)$ be $(2,2)$-$C$-tight for $C \in \{C_2,C_s\}$ with no fixed edges, $\delta(G) \geq 3$ and let $H \leq G$ be a proper subgraph.
If $H$ is $(2,2)$-$C$-tight then there exists a proper tight subgraph $F$ of $G$, with $H \leq F$ such that $G/F$ is $(2,2)$-$C$-tight.
\end{lem}

\begin{proof}
We begin by noting that unless there exists a $y_1 \in V \setminus V(H)$ that is adjacent to two vertices of $H$, we can contract $H$ to a fixed vertex to create a simple graph $G/H$, and $$|E(G/H)| = |E| - |E(H)| = 2|V| - 2 - 2|V(H)| + 2 = 2(|V(G/H)|-1).$$
Any subgraph of $G/H$ which breaks $(2,2)$-sparsity either does not contain the contracted vertex and hence trivially breaks the $(2,2)$-sparsity of $G$, or does and the obvious corresponding subgraph breaks the $(2,2)$-sparsity of $G$ since $K_1$ and $H$ are both $(2,2)$-tight.
If $G$ is $C_2$-symmetric, for $H$ to be $(2,2)$-$C_2$-tight it must contain the fixed vertex of $G$, and since $H$ contracts to a fixed vertex in $G/H$ it would be the only such fixed vertex.
This contraction preserves $C$-symmetry, so $G/H$ would be $(2,2)$-$C$-tight.
If such a $y_1$ exists, then let $H_1$ be the subgraph of $G$ including $H$ and $y_1$ (and $y_1'$ if $y_1$ is not fixed).
Note that $H_1$ is also $(2,2)$-$C$-tight.
By the same reasoning as above, $H_1$ can be contracted to a fixed vertex unless there exists $y_2$ adjacent to two vertices of $H_1$.
This sequence must end with a proper tight subgraph $F = H_k$ as $\delta(G) \geq 3$, completing the proof.
\end{proof}

\section{$C_i$-symmetric isostatic graphs}
\label{sec:inversion}

We now focus exclusively on $C_i$ symmetry and put together the combinatorial analysis to this point to prove the following recursive construction. From this we then deduce our characterisation of completely $C_i$-regular isostatic frameworks. We need one final lemma first.

\begin{lem}\label{deg(v)=3, lem G2edgesep}
Let $(G,\phi)$ be a $(2,2)$-$C_i$-tight graph distinct from $(F_{1}, \phi_1)$ and $(F_{2},\phi_2)$.
If all nodes are in copies of $(F_1,\phi_1)$ or $(F_2,\phi_2)$, then $G$ contains a 2-edge-separating set $S$. Further, let $G_1,G_2$ be the connected components of $G-S$. Then both $G_1$ and $G_2$ are $(2,2)$-$C_i$-tight with one of the $G_i$ being isomorphic to $(F_1,\phi_1)$ or $(F_2,\phi_2)$.
\end{lem}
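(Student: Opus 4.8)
The plan is to reduce the lemma to finding one structural object: a proper $\varphi$-invariant vertex set $X$ such that both $G[X]$ and $G[V\setminus X]$ are $(2,2)$-tight. Before anything else I would record that $G$ is connected and bridgeless: if $G$ were disconnected, or if $e$ were a bridge with $G-e$ having parts $A,B$, edge counting gives $|E|\le 2|V|-4$, respectively $|E|=i_G(A)+i_G(B)+1\le 2|V|-3$, contradicting $|E|=2|V|-2$. I would also note that we may assume $\delta(G)\ge 3$ (a degree-$2$ vertex is removed by Lemma~\ref{deg(v)=2}), so $G$ has a node. Now suppose $X\subsetneq V$ is $\varphi$-invariant and $2$-critical with exactly two edges of $G$ leaving it. Then $i_G(V\setminus X)=|E|-i_G(X)-2=2|V\setminus X|-2$, so $V\setminus X$ is $2$-critical as well; $G[X]$ and $G[V\setminus X]$ are both connected, since a component of either side meeting at most one of the two crossing edges would be a component of $G$ or be joined to the rest of $G$ by a bridge; and the two crossing edges form a $\varphi$-orbit, because a $\varphi$-fixed edge would have a $\varphi$-fixed endpoint and $\varphi$ is fixed-point-free. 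Letting $S$ be that orbit, $G-S$ has exactly the two components $G[X]$ and $G[V\setminus X]$, each of which is $(2,2)$-tight, inherits $(2,2)$-sparsity and the absence of $\varphi$-fixed edges from $G$, and is $\varphi$-invariant, hence is $(2,2)$-$C_i$-tight. So everything reduces to producing such an $X$.

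To produce $X$ I would work with the family $\mathcal{X}$ of proper $\varphi$-invariant $2$-critical subsets $X\subseteq V$ with $G[X]$ connected. It is non-empty: every node lies in a $C_i$-symmetric copy $H$ of $F_1$ or $F_2$, and since $(2,2)$-sparsity forces $i_G(V(H))$ to equal $|E(F_1)|=10$ or $|E(F_2)|=14$, the set $V(H)$ is $2$-critical with $G[V(H)]$ connected, and $V(H)\ne V$ as $G\notin\{F_1,F_2\}$. Choose $X\in\mathcal{X}$ of maximum size; I then claim exactly two edges leave $X$, which by the previous paragraph finishes the proof. Suppose not, so $d_G(X,V\setminus X)\ge 4$ (an even number, since no crossing edge is $\varphi$-fixed). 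Let $C_1,\dots,C_k$ be the components of $G[V\setminus X]$; a degree count gives $\sum_i d_G(C_i,X)=d_G(X,V\setminus X)$ and $d_G(C_i,X)=2|C_i|-i_G(C_i)\ge 2$. For each $i$ one computes $i_G(X\cup C_i)=2|X\cup C_i|-2$, so $X\cup C_i$ is again $2$-critical with connected induced subgraph; hence if some $C_i$ is $\varphi$-invariant and $k\ge 2$ then $X\cup C_i\in\mathcal{X}$ is larger than $X$, and if $\varphi$ swaps $C_i$ and $C_j=\varphi(C_i)$ with $k\ge 4$ then $X\cup C_i\cup C_j\in\mathcal{X}$ is larger than $X$. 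Both contradict maximality, leaving only two cases: (i) $k=1$, with $V\setminus X$ a single $\varphi$-invariant component joined to $X$ by at least four edges, and (ii) $k=2$, with $C_2=\varphi(C_1)$ and $d_G(C_1,X)=d_G(C_2,X)\ge 2$.

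Ruling out (i) and (ii) is the real obstacle, and here I would finally invoke the hypothesis on nodes. The key point is that the relevant component $C$ induces a connected graph with strictly fewer than $2|C|$ edges, so it contains a low-degree vertex; chasing degrees and using $\delta(G)\ge 3$, one locates an actual node of $G$ inside $C$, whose $C_i$-symmetric $F_1$/$F_2$ copy $H$ contains the $\varphi$-image of that node. In case (ii) the image lies in the \emph{other} component $C_2$, and since $C_1$ and $C_2$ share no edge while $H$ is connected and $2$-edge-connected, $H$ must enter $X$ through two edges of the $C_1$-$X$ interface and two of the $C_2$-$X$ interface; a vertex count then forces $H\cong F_2$ and exhibits a vertex of $X$ with only two neighbours in $H$, contradicting $\delta(F_2)=3$. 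When $d_G(C_1,X)\ge 3$ one argues in the same way, using the extra sparsity of $G[C_1]$ to find a node whose copy is still squeezed through a two-edge bottleneck. Case (i) is handled similarly: here $H$ may lie entirely inside $C$, but then $V(H)\in\mathcal{X}$, and using $V(H)$ in place of $X$ together with the structure of $V\setminus X$, or enlarging $X$ by a suitable $\varphi$-orbit of $C$ adjacent to $X$ by exactly two edges, contradicts the maximal choice. I expect this last step to be the most case-heavy part: one has to separate $F_1$ from $F_2$ (the latter has up to four boundary vertices) and control precisely which interface edges each copy can use. Everything preceding it is routine counting.
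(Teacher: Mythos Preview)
Your first two paragraphs are correct and nicely set up the problem: reducing to finding a proper $\varphi$-invariant $2$-critical $X$ with exactly two outgoing edges, and attempting this via maximality. The reduction to cases (i) and (ii) is also fine.

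The gap is real, and it is exactly where you flag it. In case~(ii) with $d_G(C_1,X)=2$ the contradiction you describe is not the one that actually occurs: vertices of $V(H)\cap X$ end up with $H$-degree~$4$, not~$2$. What does work is that maximality together with Remark~\ref{critical cup and cap} forces $X\cup V(H)=V$, so $C_1\cup C_2\subseteq V(H)$ and $V(H)\cap X$ is $\varphi$-invariant and $2$-critical; then $|C_1|\in\{1,2,3\}$ and each value is killed by a degree-sum or fixed-edge count. So the case closes, but not by your stated mechanism, and you give no argument at all for $d_G(C_1,X)\ge 3$. Case~(i) is worse: when the copy $H$ of $F_1/F_2$ containing a node of $C$ lies entirely inside $C$, knowing $V(H)\in\mathcal{X}$ and $|V(H)|\le|X|$ does not contradict maximality, and ``using $V(H)$ in place of $X$'' simply reproduces case~(i) for a possibly smaller set --- you can iterate without ever terminating. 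Your fallback, ``enlarging $X$ by a $\varphi$-orbit adjacent to $X$ by exactly two edges'', has no source: nothing in the setup produces such an orbit. As written the argument does not close.

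The paper avoids this entirely by a short global count rather than maximality. It first shows the $C_i$-symmetric copies $H_1,\dots,H_k$ of $F_1,F_2$ are pairwise vertex-disjoint (their intersection would be $2$-critical, $C_i$-symmetric, of size $\ge 4$, and there is no such proper subgraph of $F_1$ or $F_2$). With $r$ vertices outside all $H_j$ and $s$ edges not inside any $H_j$, tightness gives $s=2r+2k-2$. Every vertex outside the $H_j$'s has degree $\ge 4$, and each $H_j$ has an even number $a_j\ge 2$ of outgoing edges, so $2s\ge 4r+\sum a_j$, whence $\sum a_j\le 4k-4$. Thus at least two of the $a_j$ equal~$2$, and any such $H_j$ gives the required $2$-edge cut directly. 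This sidesteps both of your problematic cases; if you want to salvage the maximality route, restricting $\mathcal{X}$ to sets of the form $V(H_j)$ and then invoking this same disjointness/degree count would be the natural repair.
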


\begin{proof}
Let $k$ be the number of $(2,2)$-$C_i$-tight subgraphs which are isomorphic to $(F_1,\phi_1)$ or $(F_2,\phi_2)$.
We first show that these $k$ $(2,2)$-$C_i$-tight subgraphs cannot have intersecting vertex sets.
Two $C_i$-symmetric subgraphs cannot have an intersection of size $1$, since the intersection is $C_i$-symmetric and there are no fixed vertices.
Since $G$ is $(2,2)$-sparse the intersection of any two $F_i$'s is $2$-critical.
Hence the intersection is of size at least four.
Since each $F_i$ is $C_i$-symmetric, their intersection must be, so $H = F_i \cap F_j$ is a proper $(2,2)$-$C_i$-tight subgraph with $4\leq |V(H)|\leq 6$.
Since $F_1$ is not a subgraph of $F_2$, and $K_4$ is not $C_i$-symmetric, this means that all of the $F_i$'s are pairwise vertex disjoint.
Let $v_{0}$ be the number of vertices of $G$ in these $k$ $(2,2)$-$C_i$-tight subgraphs, $r = |V| - v_{0}$,  $e_{0} = 2v_{0} - 2k$ be the number of edges of $G$ in these $k$ subgraphs, and $s = |E| - e_{0}$.

Since $|E| = 2|V| -2$, we can now deduce, with substitutions from above, that $s + e_{0} = 2r + 2v_{0} - 2$, and hence 
$$s + 2v_{0} - 2k = 2r +2v_{0} -2.$$ 
This implies that $s = 2r + 2k -2$.
Let $H_1,H_2,\dots, H_k$ denote the $k$ copies of $F_1,F_2$.
For any $1\leq j \leq k$, $G\setminus H_j$ is $(2,2)$-$C_i$-tight and $d(H_j, G\setminus H_j)$ is even, since no edges of $G$ are fixed by the inversion.
Each of the $r$ vertices not in some $H_j$ are of degree at least four.
Counting incidences, we see $2s \geq 4r + \sum_{i=1}^{k} a_{i}$ where for each $i$, $a_{i} \in \{2,4,\dots\}$ is counting the number of edges incident to each $H_{j}$.
We can substitute $s$ from the above to obtain
$$2(2r + 2k - 2) \geq 4r + \sum_{i=1}^{k} a_{i},$$ 
and cancelling gives $4k - 4 \geq \sum_{i=1}^{k} a_{i}$.
This means at least two of the $a_{i}$ are equal to two, so at least two $(2,2)$-$C_i$-tight subgraphs can be separated from $G$ with the removal of two edges, hence $G$ contains a 2-edge-separating set $S$. 

Let $G_1, G_2$ be the components of $G-S$.
We know from the above that one component is isomorphic to $(F_1,\phi_1)$ or $(F_2,\phi_2)$, without loss of generality say $G_2$.
Then $G_1$ is $(2,2)$-tight and contains a copy of $F_1$ or $F_2$ which is $(2,2)$-$C_i$-tight.
This gives us that $\varphi (G_{1}) \cap G_{1} \neq \emptyset$.
Further, we note that $G_1$ inherits inversion symmetry from $G$ and $S \cap \varphi(G_1) = \emptyset$. Since $\varphi (G_{1})$ is connected, this implies $\varphi (G_{1}) = G_{1}$.
Since $\varphi$ fixes no vertices or edges of $G$, it will not fix any vertices or edges of $G_{i}$. Hence $G_1$ is $(2,2)$-$C_i$-tight.
\end{proof}

\begin{thm}\label{thm:cirecursion}
A graph $(G,\phi)$ is $(2,2)$-$C_i$-tight if and only if $(G,\phi)$ can be generated from $(F_{1},\phi_1)$ or $(F_{2},\phi_2)$ by symmetrised 0-extensions, 1-extensions, vertex-to-$K_{4}$ operations, vertex-to-$C_{4}$ operations, and joining such a graphs to a copy of $(F_1,\phi_1)$ or $(F_2,\phi_2)$ by two new distinct edges that are images of each other under $\varphi$.
\end{thm}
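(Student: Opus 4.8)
The plan is to prove the two implications separately, with essentially all of the work in the ``only if'' direction, which proceeds by induction on $|V|$ and orchestrates the reduction lemmas of this section.

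For the ``if'' direction I would first record that $F_1$ and $F_2$ are $(2,2)$-$C_i$-tight by inspection, and then check that each of the five operations sends a $(2,2)$-$C_i$-tight graph to a $(2,2)$-$C_i$-tight graph. Preservation of $C_i$-symmetry and of the absence of $\varphi$-fixed edges and vertices is immediate, since each operation adjoins a full $\varphi$-orbit and $\varphi$ acts freely. Equality $|E^+|=2|V^+|-2$ is a one-line count in each case: a symmetrised $0$-extension, $1$-extension, or vertex-to-$C_4$ operation adds two vertices and four edges, a vertex-to-$K_4$ operation adds six vertices and twelve edges, and the join adds two edges and no vertices. $(2,2)$-sparsity of proper subgraphs then follows from the same elementary counting used in the non-symmetric case \cite{NOP}, applied one orbit at a time; for the join one notes that a proper subgraph either lies in $G_1$, lies in $G_2$, or meets both, and in the last case contains at most the two connecting edges, so its edge count is bounded by the sum of the two sparsity bounds minus two.

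For the ``only if'' direction I argue by induction on $|V|$. If $G\cong F_1$ or $G\cong F_2$ there is nothing to prove. Otherwise $|E|=2|V|-2$ forces the minimum degree to be $2$ or $3$ (no vertex has degree $\le1$, as deleting it would break sparsity, and the average degree is below $4$). If $G$ has a degree-$2$ vertex $v$, then by Lemma~\ref{deg(v)=2} the graph $H=G-\{v,v'\}$ is a strictly smaller $(2,2)$-$C_i$-tight graph, so by induction $H$ is constructible and $G$ is recovered from $H$ by a symmetrised $0$-extension. So assume $G$ has minimum degree $3$ and let $v$ be a node with $N(v)=\{x,y,z\}$; since $\varphi$ fixes no vertex and $vv'\notin E$, the set $N(v)\cap N(v')$ is $\varphi$-invariant of even size at most $3$, hence empty or $\{x,x'\}$. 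If for \emph{some} node $v$ we have neither $G[N[v]]\cong K_4$ nor $G[\{v,v',x,x',y,y'\}]\cong F_1$, then Lemma~\ref{deg(v)=3, cap empty} (if $N(v)\cap N(v')=\varnothing$) or Lemma~\ref{deg(v)=3, x=y'} (if $N(v)\cap N(v')=\{x,x'\}$) produces a strictly smaller $(2,2)$-$C_i$-tight graph from which $G$ is recovered by a symmetrised $1$-extension, and induction finishes. Hence we may assume every node lies in a copy of $K_4$ or in a $C_i$-symmetric copy of $F_1$.

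It remains to dispose of this configuration, and to note that no genuine obstacle arises here, since Lemmas~\ref{deg(v)=2}--\ref{deg(v)=3, lem G2edgesep} carry all the combinatorial weight. If some node lies in a copy $X$ of $K_4$, Lemma~\ref{deg(v)=3, K4} leaves three possibilities: a symmetrised $K_4$-contraction at $\{X,X'\}$ yields a smaller $(2,2)$-$C_i$-tight graph (finish with a vertex-to-$K_4$ operation); some vertex has two edges into $X$, whence Lemma~\ref{deg(v)=3, cycle contraction} gives a $C_i$-symmetric $C_4$-contraction to a smaller $(2,2)$-$C_i$-tight graph (finish with a vertex-to-$C_4$ operation); or $G[X\cup X']\cong F_2$, so this node lies in a $C_i$-symmetric $F_2$ (here the observation is that $X\cup X'$ is $\varphi$-invariant with $X,X'$ disjoint, so the copy is symmetric in the required sense). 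If the first or second possibility occurs for some node-in-$K_4$ we are done; otherwise the third holds for all of them, and, combined with the previous paragraph, every node of $G$ lies in a $C_i$-symmetric $F_1$ or $F_2$. Since $G\not\cong F_1,F_2$, Lemma~\ref{deg(v)=3, lem G2edgesep} then yields a $2$-edge-separating set $S$ whose two edges join the $\varphi$-invariant components $G_1,G_2$ of $G-S$ and, being fixed by no element, are images of one another under $\varphi$; each $G_i$ is a strictly smaller $(2,2)$-$C_i$-tight graph, so by induction both are constructible and $G$ is their join by a $\varphi$-orbit of two edges. The only remaining points are bookkeeping: every reduction strictly decreases $|V|$ (removing $2$, $2$, $2$, or $6$ vertices, or splitting $G$ into two proper subgraphs), and one never drops below the base graphs since the smallest $(2,2)$-$C_i$-tight graph has at least six vertices ($|V|$ must be even and $K_4$ carries a $\varphi$-fixed edge). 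If forced to name where the difficulty \emph{really} lies it is in Lemma~\ref{deg(v)=3, lem G2edgesep}, producing the $2$-edge cut, which is already established.
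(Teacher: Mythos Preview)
Your proposal is correct and follows essentially the same approach as the paper's proof: both directions are handled identically, with the ``only if'' direction proceeding by induction on $|V|$ and invoking Lemmas~\ref{deg(v)=2}, \ref{deg(v)=3, cap empty}, \ref{deg(v)=3, x=y'}, \ref{deg(v)=3, K4}, \ref{deg(v)=3, cycle contraction}, and \ref{deg(v)=3, lem G2edgesep} in exactly the same logical order. Your write-up is in fact slightly more explicit than the paper's in a couple of places (the parity argument showing $|N(v)\cap N(v')|\in\{0,2\}$, and the quantification over all node-in-$K_4$ configurations before concluding that every node lies in a symmetric $F_1$ or $F_2$).
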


\begin{proof}
We first show that if $G$ can be generated from the stated operations, then it is $(2,2)$-$C_i$-tight.
Note that $(F_{1},\phi_1)$ and $(F_{2},\phi_2)$ are independent and $(2,2)$-$C_i$-tight.
In Section \ref{sec:ops} we showed that the named operations preserve independence.
It is clear these operations preserve the top count and do not introduce fixed edges.
Thus, if we apply these operations to an independent and $(2,2)$-$C_i$-tight graph, the result will also be independent and have the correct edge counts described in Section \ref{sec:necrep}.
Thus the new graph must be $(2,2)$-$C_i$-tight from Theorem \ref{fixed e&v on cy}.

For the converse, we show by induction that any $(2,2)$-$C_i$-tight graph $G$ can be generated from a copy of $(F_{1},\phi_1)$ or $(F_{2},\phi_2)$.
Suppose the induction hypothesis holds for all graphs with $|V| < n$.
Now let $|V|= n$ and suppose $G$ is not isomorphic to either of the base graphs $(F_1,\phi_1)$ and $(F_2,\phi_2)$.
We wish to show that there is an operation from our list taking $G$ to a $(2,2)$-$C_i$-tight graph $G^- = (V^-,E^-)$ with $|V^-| < n$.
Then we know that $G^-$ can be generated from a copy of $(F_{1},\phi_1)$ or $(F_{2},\phi_2)$, and hence so can $G$.
We first note that any $(2,2)$-$C_i$-tight graph $G$ has $2 \leq \delta(G) \leq 3$. There is no $v \in V$ with $d(v) = 0,1$, as then $G-v$ would break sparsity. By the handshaking lemma, if all vertices are at least degree $4$, then $|E| \geq 2|V|$.
If $\delta(G) = 2$, then we remove any degree 2 and its symmetric copy. This yields a $(2,2)$-$C_i$-tight graph by Lemma \ref{deg(v)=2}, and this graph $G^- = (V^-,E^-)$ has $|V^-| = n-2$ as required.
Otherwise $\delta(G) = 3$.

If there exists a degree three vertex $v \in V$ with $N(v) \cap N(v') = \emptyset$, with $G[N[v]] \ncong K_{4}$, then we perform a $C_i$-symmetric 1-reduction, which is possible by Lemma \ref{lem: deg(v)=3, cap empty}. If $N(v) \cap N(v') \neq \emptyset$ and $G[N[v] \cup N[v']] \ncong F_{1}$, then we again perform a symmetrised 1-reduction which is possible by Lemma \ref{deg(v)=3, x=y'}. In both cases, the new graph $G^- = (V^-,E^-)$ also has $|V^-| = n-2$ as required.
Otherwise, all nodes are in copies of $K_{4}$ or $(F_{1},\phi_1)$.

Now suppose $G$ contains a subgraph isomorphic to $K_4$ and consider a contraction of this $K_4$.
By Lemma \ref{deg(v)=3, K4}, this $K_4$ can be reduced unless there is a vertex with two neighbours in the $K_{4}$, or the $K_{4}$ is part of a subgraph isomorphic to $(F_{2},\phi_2)$. In the former case, we use Lemma \ref{deg(v)=3, cycle contraction}, and $G^- = (V^-,E^-)$ is a $(2,2)$-$C_i$-tight graph with $|V^-| < n$.
In the latter case, all nodes are in $(2,2)$-$C_i$-tight subgraphs isomorphic to $(F_1,\phi_1)$ or $(F_2,\phi_2)$ and we recall $G$ is not isomorphic to $(F_{2},\phi_2)$.
Hence we may apply Lemma \ref{deg(v)=3, lem G2edgesep} to deduce that $G$ contains a two edge seperating set $S$, so that $G - S$ has two connected components $G_{1}, G_{2}$, where without loss of generality $G_{1}$ is $(2,2)$-$C_i$-tight and $G_2$ is isomorphic to $(F_1,\phi_1)$ or $(F_2,\phi_2)$.
Writing $G_{1} = (V_{1},E_{1})$, we have $|V_{1}| < n$ so $G_1$ and by extension $G$ can be generated from $(F_{1},\phi_1)$ or $(F_{2},\phi_2)$.
Finally, since $G$ is not isomorphic to $(F_1,\phi_1)$ or $(F_{2},\phi_2)$, we are finished.
\end{proof}

\begin{thm}\label{thm:mainci}
A graph $(G,\phi)$ is $C_i$-isostatic if and only if it is $(2,2)$-$C_i$-tight.
\end{thm}

\begin{proof}
Necessity was proved in Theorem \ref{fixed e&v on cy}.
It is easy to check using any computer algebra package that the base graphs $(F_1,\phi_1)$ and $(F_2,\phi_2)$ are $C_i$-isostatic.
Sufficiency follows from Theorem \ref{thm:cirecursion} and the results of Section \ref{sec:ops}, namely Lemmas \ref{0-ext rigid}--\ref{cycle rigid},
 by induction on $|V|$.
\end{proof}

\section{$C_2$-symmetric isostatic graphs}
\label{sec:half}

In this section we turn our attention to $C_2$-symmetric graphs on the cylinder.
In our recursive construction we will take care to maintain the number of fixed edges and vertices in each operation, and hence we will essentially view the cases of two fixed edges and no fixed vertex as disjoint from the case of no fixed edge and one fixed vertex.

\subsection{Reduction operations}

In the $C_{i}$-symmetric case, when looking at 1-reductions, we considered the induced subgraphs on open neighbourhoods of the vertex we wished to remove.
However, for $C_2$ symmetry, we must consider closed neighbourhoods, as we may have fixed edges.
The options for the intersection of the closed neighbour sets of a node, say $v$, and its image $v'$ are: empty intersection; one vertex in the intersection, where the vertex in the intersection will be fixed; two vertices in the intersection, where $v$ and $v'$ are both adjacent to a vertex and its image under the half-turn or where $vv'\in E$; three vertices in the intersection, with one vertex fixed and no fixed edges; four vertices in the intersection, and the vertices form either $K_{4}$ or $K_{4}-e$ as an induced subgraph.
Note that the two cases above with fixed vertices were shown to be reducible in Section $\ref{sec:merged}$.

We recall from Lemma \ref{lem: no blocking}, for $C_i$ and $C_s$ symmetry, we had to consider $2$- and $3$-critical sets which prevent a symmetrised $1$-reduction.
These both need to be considered with $C_2$ symmetry, but the conditions that a $(2,2)$-$C_2$-tight graph has one fixed vertex and no fixed edges or no fixed vertex and two fixed edges means we must now also consider $4$-critical sets which do not have any fixed edges or vertices.
Performing a symmetrised $1$-reduction which adds two edges to such a set would violate our conditions for $(2,2)$-$C_2$-tightness.

\begin{lem}\label{4-crit no int}
Let $(G,\phi)$ be $(2,2)$-$C_2$-tight and suppose $v\in V$ is a node with $N[v] \cap N[v']$ either empty or consisting of only one fixed vertex and suppose $i_G(N(v)) \leq 1$.
If there is a $4$-critical $C_2$-symmetric subset $T\subset V-\{v,v'\}$, with $G[T]$ containing no fixed edges or vertices, and containing two non-adjacent vertices of $N(v)$, then there exists a $C_2$-symmetric $1$-reduction at $v$ that results in a $(2,2)$-$C_2$-tight graph.
\end{lem}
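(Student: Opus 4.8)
The plan is to produce an explicit admissible $C_2$-symmetric $1$-reduction at $v$. Write $N(v)=\{x,y,z\}$ and, invoking the hypothesis, choose $x,y$ to be two non-adjacent vertices of $N(v)$ lying in $T$; since $G[T]$ has no fixed vertex, if $N[v]\cap N[v']$ contains a fixed vertex $w$ then $w\notin T$, so neither $x$ nor $y$ equals $w$ and we may take $z=w$. Let $v'=c_2'(v)$ and set $G'=G-\{v,v'\}+\{xy,x'y'\}$. I would first clear the routine points: $xy\notin E$ by hypothesis, so $x'y'\notin E$; the edges $xy,x'y'$ are distinct, for $xy=x'y'$ would give $x=y'$ and hence $x\in N(v)\cap N(v')$, contradicting the assumption on the intersection; no edge at $v$ is fixed, since $vp$ fixed forces $p=v'$ and thus $v'\in N[v]\cap N[v']$, which is impossible, and $v,v'$ are themselves not fixed; consequently deleting $v$ and $v'$ preserves the counts of fixed vertices and fixed edges, while $\{xy,x'y'\}$ is an orbit of two non-fixed edges. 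As $|V(G')|=|V|-2$ and $|E(G')|=|E|-4=2|V(G')|-2$, the graph $G'$ keeps whichever pattern ($v_{2'}=1$ or $e_{2'}=2$) $G$ has, so it is $(2,2)$-$C_2$-tight once shown to be $(2,2)$-sparse.

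The next step is to reduce $(2,2)$-sparsity of $G'$ to two statements about $G$. Since $i_{G-\{v,v'\}}(Z)=i_G(Z)$ for every $Z\subseteq V\setminus\{v,v'\}$, a short calculation (checking what a subset $Z$ of $V(G')$ containing one or both of the new edges forces on $i_G(Z)$, and noting such a $Z$ is always a proper subset of $V$) shows that $G'$ fails to be $(2,2)$-sparse precisely when \textbf{(A)} some $2$-critical set of $G$ contains $\{x,y\}$, or \textbf{(B)} some $3$-critical set of $G$ contains $\{x,x',y,y'\}$. It therefore suffices to rule out (A) and (B), and this is the only place the existence of $T$ enters: $T$ is a $4$-critical, $C_2$-symmetric set with $x,x',y,y'\in T$ and with $G[T]$ free of fixed edges and vertices.

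To handle (A) I would take a counterexample $S$ that is maximal among $2$-critical sets containing $\{x,y\}$. Applying $(2,2)$-sparsity of $G$ to $S\cup\{v\}$ shows $z\notin S$ and that $S\cup\{v\}$ is $2$-critical; setting $S'=c_2'(S)$ and, if $S\cap S'\ne\varnothing$, $\hat S=S\cup S'$ (which is $2$-critical and $C_2$-symmetric by Remark~\ref{critical cup and cap}), I would combine $\hat S$ with $T$ through the submodular count used in~(\ref{counting W}). The key leverage is that $\hat S\cap T$ is a subset of $T$ containing $\{x,x',y,y'\}$, hence has no fixed edge and therefore an even edge count; this pins the possible deficiencies of $\hat S\cap T$ and $\hat S\cup T$ down to a handful of cases. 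In each, $\hat S\cup T$ is a $C_2$-symmetric critical set through which one can re-insert $v$ and $v'$ (legitimate since $x,x',y,y'$ lie in it); using that $z,z'$, and the fixed vertex $w$ when present, are outside it, together with the fact that $v$ is a node in the ambient construction, one extracts a subgraph of $G$ breaking $(2,2)$-sparsity, or else contradicts maximality of $S$. The subcase $S\cap S'=\varnothing$ is dealt with by the same type of count applied jointly to $S$, $S'$ and $T$, now with $d(S,S')$ entering. Claim (B) is proved in parallel, starting from a maximal $3$-critical $W$ with $x,x',y,y'\in W$, symmetrising to $\hat W=W\cup W'$, intersecting with $T$, and using Remark~\ref{U, W not all blocking}; here the relevant parity of $i(\hat W)$ is controlled by how many of the at most two fixed edges of $G$ lie inside $\hat W$, and it is precisely the absence of such edges from $T$ that forces the contradiction.

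The main obstacle is this last case analysis: at the level of the bare $(2,2)$-counts, several overlap patterns of $S$ (or $W$) with $T$ and with $\{v,v'\}$ are individually consistent, and the contradiction only materialises after combining the submodular inequalities with the $C_2$-symmetry of $T$, the parity constraints coming from $T$ (and $\hat S,\hat W$) having no fixed edges, the extra incidences regained by restoring $v$ and $v'$, and maximality of the chosen blocking set. Keeping track of where the fixed vertex or the two fixed edges of $G$ sit relative to $S$, $W$ and $T$ --- that is, which of the two $(2,2)$-$C_2$-tight patterns one is in --- is the delicate bookkeeping that makes everything close.
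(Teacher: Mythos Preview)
Your approach is genuinely different from the paper's, and the difference matters. The paper does \emph{not} try to reduce at the pair $\{x,y\}$ lying in $T$; instead it first shows that at most one non-adjacent pair in $N(v)$ can be blocked by a $4$-critical $C_2$-symmetric set with no fixed elements, and then it argues that a \emph{different} pair --- say $\{x,z\}$ --- cannot be blocked by a $2$-critical set $U_1$ (or a $3$-critical set $W$). The point of choosing the other pair is that, combining $T$ with $U=U_1\cup U_1'$, the union $T\cup U$ contains \emph{all three} of $x,y,z$ (and their images), so re-inserting $v,v'$ adds six edges and drives the count to a set the paper regards as contradictory.

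Your observation that only (A) and (B) obstruct $(2,2)$-sparsity of $G'$ is correct --- a $4$-critical $T$ only becomes $2$-critical in $G'$, which does not violate sparsity. But this is exactly why using the $T$-pair is dangerous: $T$ gives you nothing for free against (A) and (B). Concretely, take your maximal $2$-critical $S\subseteq V\setminus\{v,v'\}$ with $x,y\in S$. In the symmetric case $S=S'$ your own submodular computation yields $d(S,T)=0$ and $S\cup T$ $2$-critical, whence $T\subseteq S$ by maximality; you also correctly deduce $z\notin S$. But then $S\cup\{v,v'\}$ gains only four edges ($vx,vy,v'x',v'y'$) and is merely $2$-critical --- no sparsity violation --- and maximality does not help since $S\cup\{v,v'\}\not\subseteq V\setminus\{v,v'\}$. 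In particular, if $G$ has two fixed edges and both happen to lie in $G[S]$ (parity allows this), none of your parity or ``fixed-vertex-outside'' leverage applies, and the sketch does not close. The paper's choice of the \emph{other} pair is what guarantees the third neighbour $z$ lies in the combined set, supplying the two extra edges that your case analysis is missing.
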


\begin{proof}
Let $N(v) = \{x,y,z\}$ and let $T\subset V-\{v,v'\}$ be a $4$-critical $C_2$-symmetric set containing two non-adjacent vertices of $N(v)$.
Without loss of generality, we may suppose $xy, xz \notin E$ and $x,y\in T$. Note that $z\notin T$.
We show that either $G' = G - \{v,v'\} + \{xz,x'z'\}$ or $yz \notin E$ and $G' = G - \{v,v'\} + \{yz,y'z'\}$ is $(2,2)$-$C_2$-tight.
We first prove that there cannot exist a $4$-critical $C_2$-symmetric set $T_1$ such that $G[T_1]$ contains no fixed edges or vertices and $x,z\in T_1$.
Suppose to the contrary, that $T_1$ exists.
As $T_1 \cap T \neq \emptyset$, and both $T_1 \cap T$ and $T_1 \cup T$ are $C_2$-symmetric and the induced subgraphs do not contain fixed edges or vertices, we have $i(T_1 \cap T) \leq 2|T_1 \cap T|-4$ and $i(T_1 \cup T) \leq 2|T_1 \cup T|-4$.
Then
\begin{align*}
    2|T_1| -4 + 2|T| -4 & = i(T_1) + i(T) = i(T_1 \cup T) + i(T_1 \cap T) - d(T_1, T)\\
    & \leq 2|T_1 \cup T| -4 + 2|T_1 \cup T| -4 =  2|T_1| + 2|T| -8.
\end{align*}
Hence equality holds and $T_1 \cap T$ and $T_1 \cup T$ are $4$-critical.
This is a contradiction as $T_1 \cup T \cup \{v,v'\}$ would be $2$-critical with no fixed edge and no fixed vertex induced by this set.
Similarly if $yz \notin E$, then there does not exist a $4$-critical $C_2$-symmetric set $T_2$ such that $G[T_2]$ contains no fixed edges or vertices and $y,z\in T_2$.

Assume now that there exist two $2$-critical sets $U_1$ and $U_2$ containing $\{x,z\}$ and $\{x',z'\}$ respectively.
We may assume $U_2 = U_1'$, for otherwise we could consider $U_3 = U_1 \cup U_2'$ and $U_3' = U_1' \cup U_2$. 
Let $U = U_1 \cup U_2$.
Note that if $U_1 \cap U_2 = \emptyset$ then $U$ is $4$-critical.
Otherwise, by Remark \ref{critical cup and cap}, $U$ is $2$-critical, and since $G[U]$ is $C_2$-symmetric, it contains the fixed edges or vertex.
It follows that $T \cup U = T \cup U_1 \cup U_2$ is 4-critical. We have
\begin{align*}
    2|T| -4 + 2|U| -a & = i(T) + i(U) = i(T \cup U) + i(T \cap U) - d(T, U)\\
    & \leq 2|T \cup U| -4 + 2|T \cap U| -2 =  2|T| + 2|U| -6.
\end{align*}
If $U$ is 4-critical this would imply that $G[T\cup U \cup \{v,v'\}]$ is $(2,2)$-tight and $C_2$-symmetric but does not contain fixed elements, which contradicts Theorem \ref{fixed e&v on cy}.
So we may suppose $U$ is 2-critical and $d(T,U)=0$, that is $yz, y'z' \notin E$.
Then there cannot exist a 2-critical set on $\{y,z\}$ or $\{y',z'\}$, as say $y,z \in X$ was 2-critical, $G[U\cup X \cup \{v,v'\}]$ would not be sparse.

Finally, assume there exists a $3$-critical set $W$ containing $\{x,z,x',z'\}$ or when $yz \notin E$, $\{y,z,y',z'\}$.
We can assume this set is $C_2$-symmetric by taking $W \cup W'$.
Since the induced subgraph contains only one fixed edge, $i(W\cup T) \leq 2|W\cup T| -3$, and $i(W\cap T) \leq 2|W\cap T| -4$.
By similar calculations as we did for $4$ and $2$-critical sets, we see that in the equations above equality holds throughout, and hence $T\cup W \cup \{v,v'\}$ breaks $(2,2)$-sparsity of $G$.
Then by Lemma \ref{lem: no blocking}, either $G' = G - \{v,v'\} + \{xz,x'z'\}$ or $yz \notin E$ and $G' = G - \{v,v'\} + \{yz,y'z'\}$ is $(2,2)$-$C_2$-tight as required.
\end{proof}

\begin{lem}\label{deg 3 C_2 empty}
Let $(G,\phi)$ be $(2,2)$-$C_2$-tight and suppose $v \in V$ is a node with $N[v] \cap N[v'] = \emptyset$. Then either $G[N[v]] = K_{4}$, or there exists $x,y \in N(v)$ such that $xy \notin E$, and $G^- = G-\{v,v'\}+\{xy,x'y'\}$ is $(2,2)$-$C_2$-tight.
\end{lem}

\begin{proof}
We break up this proof into cases by looking at the number of edges amongst the neighbours of $v$.
Label the neighbours of $v$ by $x,y,z$.
Firstly, when all $3$ edges $xy, xz, yz$ are present in the graph, we have a $K_{4}$.
Next suppose two edges are present, say without loss of generality $xy \notin E$.
Suppose there exists a $2$-critical set $U \subset V-v$ with $x,y \in U$. Then the subgraph induced by $U \cup \{v,z\}$ violates the $(2,2)$-sparsity of $G$.
To do the $1$-reduction symmetrically, we must check that there is no $W \subset V-v$ with $x,y,x',y' \in W$ such that $|E(W)|=2|V(W)|-3$.
This follows since the subgraph induced by $W\cup\{v,z,v',z'\}$ breaks $(2,2)$-sparsity.
If there exists a $4$-critical $C_2$-symmetric subset $T\subset V-\{v,v'\}$ containing $x,y,x',y'$ then $T \cup \{v,z,v',z'\}$ is $2$-critical and $C_2$-symmetric, so all fixed edges and/or vertices are contained in $G[T]$.

For the case with one or zero edges amongst $x,y,z$, we begin by noting that 
no two of the pairs $\{x,y\},\{x,z\},\{y,z\}$ can each be contained in a $2$-critical set, as if any two were contained in $2$-critical sets $U_1, U_2$, then, by Remark \ref{critical cup and cap}, $U_1 \cup U_2$ is $2$-critical and $U_1 \cup U_2 +v$ violates the $(2,2)$-sparsity of $G$.
If $\{v_1,v_2\} \in \{\{x,y\},\{x,z\},\{y,z\}\}$ was the only pair not in a $2$-critical set, and $\{v_2,v_3\} \in \{\{x,y\},\{x,z\},\{y,z\}\}\setminus \{v_1,v_2\}$ is in a $2$-critical set $U$ (note this implies $v_1v_3 \in E$ or we are in the final case below), but $\{v_1,v_2,v_1',v_2'\}$ was in a $3$-critical set $W$, then $W\cup U \cup U'$ is $3$-critical and there is a subgraph containing $W\cup U \cup U' \cup \{v,v'\}$ which breaks $(2,2)$-sparsity.
If there exists a $4$-critical $C_2$-symmetric subset $T\subset V-\{v,v'\}$ containing $v_1,v_2,v_1',v_2'$ then $T \cup \{v,v_3,v',v_3'\}$ is $2$-critical and $C_2$-symmetric.
Hence by
Lemma \ref{4-crit no int}, if $\{v_1,v_2\}$  was the only pair not in a $2$-critical set we can perform a $C_2$-symmetric $1$-reduction at $v$ in this case.

Finally we must consider when there does not exist $2$-critical sets containing $\{v_1,v_2\}$ and $\{v_2,v_3\}$ respectively with $\{v_1,v_2,v_3\} = \{x,y,z\}$. (Whether there is a 2-critical set containing $v_1,v_3$ is not important for the argument that follows.)
Assume for a contradiction that $W_1,W_2 \subset V-v$ are $3$-critical with $\{v_1, v_2,v_1',v_2'\} \in W_1$, $\{v_2,v_3,v_2',v_3'\} \in W_2$.
By counting similar to Remark $\ref{critical cup and cap}$, the union and intersection of two $3$-critical sets are either both $3$-critical or one is $2$-critical and the other is $4$-critical.
Since $W_1 \cap W_1'$ and $W_1 \cup W_1'$ contain $\{v_1,v_2\}$ neither are 2-critical (similarly $W_2 \cap W_2'$ and $W_2 \cup W_2'$ are not 2-critical since they both contain $\{v_2,v_3\}$).
Hence both $W_1 \cup W_1'$ and $W_2 \cup W_2'$ are $3$-critical and $C_2$-symmetric, so the subgraphs induced by these sets must each contain exactly $1$ fixed edge.
If they do not contain the same fixed edge, $(W_1 \cup W_1')\cap (W_2 \cup W_2')$ is $C_2$-symmetric and contains no fixed edges or vertices, so must be $4$-critical, which would imply $(W_1 \cup W_1')\cup (W_2 \cup W_2')$ is $2$-critical but then $\{v_1,v_2\}$ is contained in a $2$-critical set.
If the induced subgraphs do contain the same fixed edge, both $(W_1 \cup W_1')\cap (W_2 \cup W_2')$ and $(W_1 \cup W_1')\cup (W_2 \cup W_2')$ would be $3$-critical, but then the subgraph induced by $(W_1 \cup W_1')\cup (W_2 \cup W_2') + \{v,v'\}$ would violate $(2,2)$-sparsity. Hence one of the pairs $v_i,v_j$ is not contained in a 2-critical or a 3-critical subset of $V-v$.
It remains to deal with the case when this pair $v_i,v_j$ is blocked by a 4-critical subset $T\subset V-\{v,v'\}$. It follows from
Lemma \ref{4-crit no int} that we can reduce $v$ symmetrically and the proof is complete.
\end{proof}

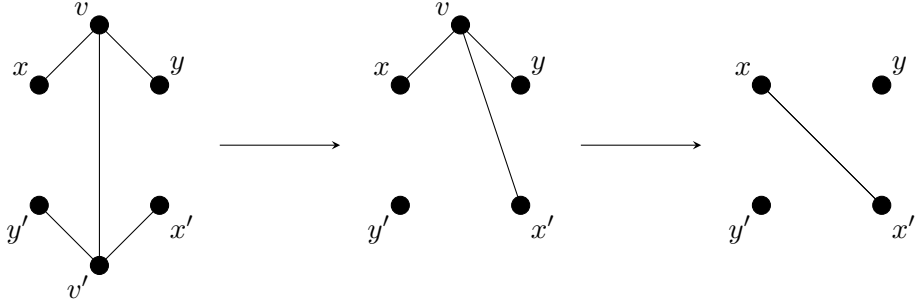
\begin{figure}
    \centering
    \begin{tikzpicture}
  [scale=.8,auto=left]
  
  \coordinate (n1) at (1,4);
  \coordinate (n2) at (0,3);
  \coordinate (n3) at (2,3);
  \coordinate (n4) at (0,1);
  \coordinate (n5) at (2,1);
  \coordinate (n6) at (1,0);
  
 \draw[fill=black] (n1) circle (0.15cm)
    node[above left] {$v$};
 \draw[fill=black] (n2) circle (0.15cm)
    node[above left] {$x$};
 \draw[fill=black] (n3) circle (0.15cm)
    node[above right] {$y$};
 \draw[fill=black] (n4) circle (0.15cm)
    node[below left] {$y'$};
 \draw[fill=black] (n5) circle (0.15cm)
    node[below right] {$x'$};
 \draw[fill=black] (n6) circle (0.15cm)
    node[below left] {$v'$};

  \foreach \from/\to in {n1/n6,n1/n2,n1/n3,n6/n4,n6/n5} 
    \draw (\from) -- (\to);
    
  \coordinate (n11) at (7,4);
  \coordinate (n12) at (6,3);
  \coordinate (n13) at (8,3);
  \coordinate (n14) at (6,1);
  \coordinate (n15) at (8,1);
  
 \draw[fill=black] (n11) circle (0.15cm)
    node[above left] {$v$};
 \draw[fill=black] (n12) circle (0.15cm)
    node[above left] {$x$};
 \draw[fill=black] (n13) circle (0.15cm)
    node[above right] {$y$};
 \draw[fill=black] (n14) circle (0.15cm)
    node[below left] {$y'$};
 \draw[fill=black] (n15) circle (0.15cm)
    node[below right] {$x'$};

  \foreach \from/\to in {n11/n12,n11/n13,n11/n15} 
    \draw (\from) -- (\to);

  \coordinate (n22) at (12,3);
  \coordinate (n23) at (14,3);
  \coordinate (n24) at (12,1);
  \coordinate (n25) at (14,1);
  
 \draw[fill=black] (n22) circle (0.15cm)
    node[above left] {$x$};
 \draw[fill=black] (n23) circle (0.15cm)
    node[above right] {$y$};
 \draw[fill=black] (n24) circle (0.15cm)
    node[below left] {$y'$};
 \draw[fill=black] (n25) circle (0.15cm)
    node[below right] {$x'$};

  \foreach \from/\to in {n25/n22} 
    \draw (\from) -- (\to);

\foreach \from/\to in {n25/n22} 
    \draw (\from) -- (\to);

  \coordinate (n31) at (3,2);
  \coordinate (n32) at (5,2);
  \coordinate (n33) at (9,2);
  \coordinate (n34) at (11,2);
\draw [-stealth] (n31) -- (n32); \draw [-stealth] (n33) -- (n34);

\end{tikzpicture}
    \caption{Reduction schematic when the degree three vertex is adjacent to its symmetric image.}
    \label{extended base}
\end{figure}

\begin{lem}\label{deg 3 C_2 v,v'}
Let $(G,\phi)$ be $(2,2)$-$C_2$-tight and suppose $v \in V$ is a node with $N(v)=\{x,y,v'\}$ and $N[v] \cap N[v'] = \{v,v'\}$. 
\begin{enumerate}
    \item Suppose $xx', yy' \notin E$. Then either $G_1' = G-\{v,v'\}+\{xx'\}$ or $G_2' = G-\{v,v'\}+\{yy'\}$ is $(2,2)$-$C_2$-tight.
    \item Suppose $xx'\in E$ or $yy' \in E$. Then there is another node in $G$ and it is not of this type.
\end{enumerate}
\end{lem}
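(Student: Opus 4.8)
The plan is to treat the symmetrised $1$-reduction at $v$ as the main tool and to show that exactly one of the two admissible reductions survives. First I would record the structural consequences of the hypotheses. Since $v'\in N(v)$ the edge $vv'$ is fixed, so by definition of $(2,2)$-$C_2$-tightness the graph $G$ has two fixed edges and no fixed vertex; writing $N(v)=\{v',x,y\}$, the hypothesis $N[v]\cap N[v']=\{v,v'\}$ forces $x,y,x',y'$ to be four distinct vertices, all different from $v,v'$, with none of $vx',v'x,vy',v'y$ an edge. Deleting $v,v'$ removes precisely the five edges $vv',vx,vy,v'x',v'y'$, so $G-\{v,v'\}$ has $\lvert E\rvert-5$ edges and re-adding one $c_2'$-invariant edge brings the count back to $2(\lvert V\rvert-2)-2$; the only such candidates are $xx'$ and $yy'$, and since this trade swaps the fixed edge $vv'$ for the new fixed edge and leaves the second fixed edge of $G$ intact, $G_1'$ (resp.\ $G_2'$) is $(2,2)$-$C_2$-tight iff it is $(2,2)$-sparse, and adding a single edge can only overfill a $2$-critical set. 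Hence $G_1'$ (resp.\ $G_2'$) fails precisely when $G$ has a $2$-critical set $U\subseteq V\setminus\{v,v'\}$ with $x,x'\in U$ (resp.\ $y,y'\in U$).

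For part (1) I would argue by contradiction, assuming both reductions fail, and take $2$-critical witnesses $U_x\ni x,x'$ and $U_y\ni y,y'$. Passing to $U_x\cup U_x'$ and $U_y\cup U_y'$ (still $2$-critical by Remark~\ref{critical cup and cap}) I may assume both are $C_2$-symmetric; a parity count on their edges then shows neither contains a fixed edge, as they miss $vv'$ and a single fixed edge in a $C_2$-symmetric subgraph would be odd. If $U_x\cap U_y\neq\varnothing$, then $U_x\cup U_y$ is $2$-critical (Remark~\ref{critical cup and cap}), and adjoining $v,v'$ — which contribute the five edges above — gives a set $W$ with $i_G(W)\ge 2\lvert U_x\cup U_y\rvert-2+5=2\lvert W\rvert-1$, contradicting $(2,2)$-sparsity. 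If $U_x\cap U_y=\varnothing$, then since $G$ has no fixed vertex no edge between these $C_2$-symmetric sets can be fixed, so $d(U_x,U_y)$ is even; $(2,2)$-sparsity of $U_x\cup U_y$ forces $d(U_x,U_y)\le 2$, and the value $2$ again makes $U_x\cup U_y$ $2$-critical, reproducing the contradiction. The remaining possibility, $d(U_x,U_y)=0$, is the delicate one: then $U_x\cup U_y\cup\{v,v'\}$ is only $3$-critical, so the two-blocker count does not close, and I would finish by exploiting that the second fixed edge $e_0$ of $G$ must lie outside $U_x\cup U_y\cup\{v,v'\}$, combining this with $(2,2)$-tightness of $G$ to force a sparsity violation (or, failing that, replacing the $1$-reduction at $v$ by the inverse of the double $1$-extension of Lemma~\ref{double 1-ext rig}).

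For part (2) I would assume $xx'\in E$ (the case $yy'\in E$ being symmetric). Then $G[\{v,v',x,x'\}]$ is a $4$-cycle through both fixed edges $vv'$ and $xx'$ (no chords, as $vx',v'x\notin E$), so this set is $4$-critical. A handshake count shows $v,v'$ cannot be the only nodes: otherwise $2\lvert E\rvert=\sum_w d(w)\ge 2\cdot 3+4(\lvert V\rvert-2)=4\lvert V\rvert-2>4\lvert V\rvert-4=2\lvert E\rvert$; and since $G$ has no fixed vertex, nodes occur in $c_2'$-orbits of size two, so $G$ has at least four nodes. Now each node of ``this type'' is adjacent to its own image, hence is an endpoint of one of the two fixed edges, so there are at most four such nodes and they lie on the two fixed edges; if $G$ has a further node or one fixed edge is not incident with a node, then some node $w\notin\{v,v',x,x'\}$ has $N[w]\cap N[w']$ of a different shape — empty, a fixed vertex (impossible here), or a pair $\{a,a'\}$ with $w,w'$ both adjacent to $a,a'$ — i.e.\ $w$ falls under Lemma~\ref{deg 3 C_2 empty}, Lemma~\ref{deg 3 C_2 t}, or the remaining pattern, so $w$ is not of this type. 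The one configuration that must be excluded separately is the one in which the four endpoints of the two fixed edges are exactly the nodes and lie together on a $4$-cycle, which is precisely the structure built by a double $1$-extension.

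\textbf{Main obstacle.} The hardest point is the $d(U_x,U_y)=0$ subcase of part (1): the usual ``combine every blocker with $v,v'$'' count only produces a $3$-critical set rather than a sparsity violation, so one has to feed in the location of the second fixed edge (or switch to the inverse double $1$-extension). The secondary difficulty, in part (2), is verifying that the extra node produced by the degree count genuinely has a different closed-neighbourhood intersection pattern, which amounts to ruling out the ``both fixed edges on a common $4$-cycle whose four endpoints have degree $3$'' configuration.
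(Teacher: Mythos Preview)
Your case analysis for part~(1) is more cautious than the paper's, which skips the two-blocker split entirely and asserts that \emph{no} $2$-critical $U\subseteq V\setminus\{v,v'\}$ can contain $\{x,x'\}$: symmetrising to $U\cup U'$ would give a $C_2$-symmetric $2$-critical set missing the fixed edge $vv'$, ``which is a contradiction''. But parity only forces such a set to carry an \emph{even} number of fixed edges, not both of them; zero is permitted, and the paper's contradiction is not one. Your disjoint $d(U_x,U_y)=0$ subcase is exactly the place where both arguments stall.

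That subcase cannot be closed, because the statement as written fails. Take two vertex-disjoint copies of the $8$-vertex graph $F_2$, on $\{a_i,a_i':1\le i\le 4\}$ and $\{b_i,b_i':1\le i\le 4\}$, with the half-turn swapping primes and the two connecting edges of each copy chosen as $a_1a_3',\,a_3a_1'$ (resp.\ $b_1b_3',\,b_3b_1'$) so that they form one orbit rather than two fixed edges. Adjoin $v,v',w,w'$ with edges $vv',\,va_1,\,v'a_1',\,vb_1,\,v'b_1'$ and $ww',\,wa_2,\,w'a_2',\,wb_2,\,w'b_2'$. The result is $(2,2)$-$C_2$-tight with fixed edges $vv',ww'$; the node $v$ lies in case~(1), yet both symmetrised reductions are blocked by the $2$-critical sets $\{a_i,a_i'\}$ and $\{b_i,b_i'\}$. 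Neither ``feed in the location of $e_0$'' nor the inverse of Lemma~\ref{double 1-ext rig} applies at $v$ here, so what the recursion actually needs is to abandon $v$ and reduce at a different node. The same phenomenon undermines part~(2): the paper's line ``if $v,v',x,x'$ are the only nodes then $G-\{v,v',x,x'\}$ is $(2,2)$-$C_2$-tight'' cannot hold, since both fixed edges lie on that $4$-cycle, and a single $F_2$-copy attached to such a $4$-cycle already gives a $(2,2)$-$C_2$-tight graph whose only four nodes are all of this type --- so your worry about that configuration is justified.
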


The following proof has two cases, firstly assuming the edges $xx'$ and $yy'$ are not present among the neighbours of $v$ and $v'$, and secondly assuming one is. (Note that it is not possible for both to be since $vv'\in E$ would give three fixed edges.)

\begin{proof}
For (1), we may perform a non-symmetric 1-reduction at $v'$ as it cannot happen that $\{v,x'\}$ and $\{v,y'\}$ can be in $2$-critical blocking sets, else the union of these sets, say $W$, is $2$-critical and $W + v'$ breaks sparsity.
To perform a second non-symmetric $1$-reduction at $v$, we see that neither $\{x,x'\}$ or $\{y,y'\}$ can be contained in a $2$-critical set.
If there were such a set, without loss of generality call it $U$ and let it contain $x,x'$, then $U \cup U'$ is $2$-critical ($x,x' \in U \cap U'$), $C_2$-symmetric, but cannot contain both of the fixed edges of $G$, which is a contradiction.

For (2), assume without loss of generality that $xx'\in E$.
Since $G$ is $(2,2)$-tight and $\delta(G)=3$ there are at least four nodes in $G$.
If $v,v',x,x'$ are the only nodes, then $G-\{v,v',x,x'\}$ is $(2,2)$-$C_2$-tight.
We now simply note that this arrangement can only appear once in each graph, since it has both of the fixed edges.
\end{proof}

In the above proof, for the second 1-reduction we are still considering $G$, rather than $G-v+xx'$.
We can do this since the blocking set in the reduced graph does not use $v$, therefore it does not include the edge $vv'$ so the blocking set without $v'$ would still be $2$-critical.

\begin{lem}\label{deg 3 C_2 x,x'}
Let $(G,\phi)$ be $(2,2)$-$C_2$-tight, suppose $v \in V$ is a node such that $N[v] \cap N[v'] = \{x,x'\}$ and let the other neighbour of $v$ be $z$.
Then $G_1' = G-\{v,v'\}+\{xz,yz'\}$ or $G_2' = G-\{v,v'\}+\{yz,xz'\}$ is $(2,2)$-$C_2$-tight.
\end{lem}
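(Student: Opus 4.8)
The plan is to exhibit one of the two candidate symmetrised $1$-reductions at $v$ that yields a $(2,2)$-$C_2$-tight graph, reducing the whole verification to a critical-set analysis of the kind already used in Lemmas \ref{deg 3 C_2 empty} and \ref{deg 3 C_2 t}. First I would record the forced local structure: since $N[v]\cap N[v']=\{x,x'\}$ with $x\neq x'$ non-fixed (the case $x=x'$ is the fixed-vertex case, Lemma \ref{deg 3 C_2 t}), we have $vv'\notin E$, $N(v)=\{x,x',z\}$, $N(v')=\{x,x',z'\}$, and $z\neq z'$ is non-fixed with $z\notin\{x,x'\}$. Hence the only symmetrised $1$-reductions at $v$ restore either the non-fixed edge-orbit $\{xz,x'z'\}$ (giving $G_1'$) or $\{x'z,xz'\}$ (giving $G_2'$): restoring $\{x,x'\}$ would add a fixed edge and leave the wrong edge count. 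Each $G_i'$ has $|V|-2$ vertices and $2(|V|-2)-2$ edges, is $C_2$-symmetric by construction, and — since none of the six edges at $\{v,v'\}$ is fixed and neither restored orbit is fixed — has the same numbers of fixed edges and fixed vertices as $G$. So it suffices to produce an $i$ for which $G_i'$ is simple and $(2,2)$-sparse.

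For simplicity, observe that $G_1'$ is simple iff $xz\notin E$ and $G_2'$ is simple iff $x'z\notin E$. The only problematic case is that $xz$ and $x'z$ both lie in $E$; here, by $C_2$-symmetry, $G[\{v,v',x,x',z,z'\}]$ carries at least ten (hence, by $(2,2)$-sparsity, exactly ten) edges, so this set is $2$-critical, $C_2$-symmetric and free of fixed edges and vertices, and one argues directly — as in the multiplicity steps of Lemmas \ref{deg 3 C_2 empty}--\ref{deg 3 C_2 v,v'} — that such a configuration cannot occur in a $(2,2)$-$C_2$-tight graph with $v$ a node. Relabelling, we may thus assume $xz,x'z'\notin E$, so $G_1'$ is simple.

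It then remains to check $(2,2)$-sparsity of $G_1'$ (and, should this fail, to rerun the argument for $G_2'$). As in Lemma \ref{deg 3 C_2 empty}, a subgraph of $G_1'$ with too many edges must contain both new edges, hence $x,x',z,z'$ but neither $v$ nor $v'$; so one must rule out $W\subseteq V\setminus\{v,v'\}$ with $x,x',z,z'\in W$ and $i_G(W)\geq 2|W|-3$. I would proceed by cases on $i_G(N(v))$: a $2$-critical blocking set for the pair $\{x,z\}$ is combined with its $c_2'$-image and, via Remark \ref{critical cup and cap} (and Remark \ref{U, W not all blocking} when a $2$-critical and a $3$-critical set interact), enlarged and then augmented by $\{v,v'\}$ to contradict the $(2,2)$-sparsity of $G$, exactly as in the earlier lemmas; once $\{x,z\}$ lies in no $2$-critical set, Lemma \ref{lem no W} excludes a $3$-critical $W$ containing $x,x',z,z'$; and a $4$-critical $C_2$-symmetric blocking set is handled by Lemma \ref{4-crit no int}. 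The crux is that at least one of the two admissible pairs $\{x,z\}$, $\{x',z\}$ is simultaneously non-adjacent, contained in no $2$-critical set, contained with its image in no $3$-critical set, and not trapped by a $4$-critical set, which follows because a blocking set for one pair combines with the image of a blocking set for the other to force a sparsity violation in $G$.

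I expect the main obstacle to be precisely this last step, together with the simultaneous-edge point above: compared with the $C_i$ case one must juggle $2$-, $3$- \emph{and} $4$-critical obstructions at once and keep track of how they interact with the one fixed vertex (or two fixed edges) of $G$, and since $x,x'$ form an orbit while $z$ does not, the three neighbours of $v$ are not interchangeable, so the "one admissible pair must be unblocked" computation has to be carried out pair by pair rather than via a symmetry shortcut.
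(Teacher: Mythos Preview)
Your overall plan---reduce the check that one of $G_1',G_2'$ is $(2,2)$-$C_2$-tight to a blocking-set analysis---is the paper's plan too. But two of your citations do not apply here, and this matters. Lemma~\ref{lem no W} is stated only for $\tau(\Gamma)\in\{C_i,C_s\}$; its proof uses that the symmetric union $W\cup W'$ has no fixed edges so that $i(W\cup W')$ is even, and under $C_2$ (which allows fixed edges) this parity step is unavailable. Lemma~\ref{4-crit no int} assumes $N[v]\cap N[v']$ is empty or a single fixed vertex together with $i_G(N(v))\le1$, neither of which holds when $N[v]\cap N[v']=\{x,x'\}$. The paper bypasses both lemmas by exploiting the special feature of this case: all three neighbours $x,x',z$ of $v$ already lie in any $W\subseteq V\setminus\{v,v'\}$ containing $\{x,x',z,z'\}$, so adjoining $\{v,v'\}$ to a $3$-critical such $W$ adds six edges and gives $2|W\cup\{v,v'\}|-1$ outright, while adjoining $\{v,v'\}$ to a $4$-critical $C_2$-symmetric $T$ yields the forbidden $2$-critical symmetric set with no fixed edge or vertex. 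So the $3$- and $4$-critical obstructions are disposed of in one line each, without the auxiliary lemmas.

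Two smaller points. First, your claim that an oversaturated subgraph of $G_1'$ ``must contain both new edges'' is false: containing only $xz$ forces a $2$-critical set on $\{x,z\}$ in $G-\{v,v'\}$, which you do subsequently address but should flag as a separate case. Second, the paper splits further according to whether the fixed edge $xx'$ is present and, in the delicate subcase $x'z\in E$, $xz,xx'\notin E$, detours through a non-symmetric intermediate $1$-reduction (first adding $xx'$). Your ``combine a blocking $U_1$ with its image $U_1'$'' actually handles that subcase more directly---the edge $x'z$ and its image $xz'$ force $d(U_1,U_1')\ge2$ when $U_1\cap U_1'=\varnothing$, so $U_1\cup U_1'\cup\{v,v'\}$ already overcounts---but you should spell this out rather than appeal to Lemmas~\ref{lem no W} and~\ref{4-crit no int}.
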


\begin{proof}
Again we apply Lemma \ref{lem no W}, $G$ has no fixed edges, if $v_1,v_2,v_1',v_2'$ are in a 3-critical set then they are in a 2-critical set too.
For the remainder of the proof we only consider 2-critical or 4-critical sets.
We prove this by case analysis, counting if the edges $xz$, $yz$, and $xy$ are present.
Firstly, $xz, x'z$ and $xx'$ cannot all be present, as the subgraph induced by $N[v]\cup N[v']$ breaks $(2,2)$-sparsity.
Further we do not have $xz,x'z \in E$ and $xx'\notin E$, as $N[v]\cup N[v']$ is $2$-critical, $C_2$-symmetric and the induced subgraph does not contain the correct fixed elements.
Our first case where a 1-reduction is possible is when one edge of $xz,x'z$ is present with $xx'$, say $x'z, xx' \in E$, $xz \notin E$.
If there exists a $2$-critical set $U$ containing $x,z$, not containing $v$, then the subgraph induced by $U \cup \{v,x'\}$ contradicts the $(2,2)$-sparsity of $G$.
If there exists a $3$-critical set $W$, with $x,x',z,z' \in U$, $v,v' \notin W$, then the subgraph induced by $W \cup \{v,v'\}$ also breaks the $(2,2)$-sparsity of $G$.
For any $4$-critical $T$ containing $x,z,x',z'$, $G[T]$ contains a fixed edge, namely $xx'$.
By counting, if $G[T]$ contains one fixed edge and $T$ is $4$-critical, it must contain both fixed edges, therefore there is no $4$-critical blocking set for the $1$-reduction at $v$ and $v'$.

Consider the case when one of the edges $xz,x'z$ is present, say $x'z \in E$, $xz, xx' \notin E$.
There does not exist $2$-critical $U_1, U_2$ with $x,z \in U_1$, $x,x' \in U_2$, as this contradicts Remark \ref{critical cup and cap} as $d(U_1, U_2) \neq 0$.
We therefore know that one of $G_1 = G - v + xx'$ or $G_2 = G - v + xz$ is $(2,2)$-tight, although not $C_2$-symmetric.
We want to show that it is always the case that we can perform a (non-symmetric) $1$-reduction at $v$ by adding the edge $xz$.
Suppose we add $xx'$.
Consider $1$-reductions at $v' \in G_1$.
If there exists a $2$-critical set $U$ containing $\{x',z'\}$, then the subgraph induced by $W\cup\{x',z'\}$ contradicts the $(2,2)$-sparsity of $G_1$.
Hence we can perform a $1$-reduction at $v'$ in $G_1$ adding the edge $x'z'$.
Since we could perform this one reduction in $G_1$, we know a $2$-critical set $U^*$ in $G$ preventing a $1$-reduction adding the edge $x'z'$ must contain $v$.
However, then the subgraph $H$ induced by $U^*\cup\{x,v'\}$ contradicts the $(2,2)$-sparsity of $G$, as $H$ contains the edges $xv,xz',v'x',v'z',v'x$.
Hence, we may perform a $1$-reduction at $v'$ in $G$ by adding the edge $x'z'$.

Now when $xz, x'z \notin E$, if both $\{x,z\}$ and $\{x',z\}$ are in $2$-critical sets $U_1$ and $U_2$ respectively, $U_1 \cup U_2$ is $2$ critical so $U_1 \cup U_2 \cup \{v\}$ contradicts $(2,2)$-sparsity of $G$.
There is no $4$-critical set $T$ containing $x,z,x',z'$.
Observe that such a set $T+\{v,v'\}$ would be $2$-critical, $C_2$-symmetric, but $G[T\cup\{v,v'\}]$ contains no fixed vertex or edge..
\end{proof}

\begin{lem}\label{deg 3 C_2 v v' x x'}
Let $(G,\phi)$ be $(2,2)$-$C_2$-tight and suppose $v \in V$ is a node such that $N[v] \cap N[v'] = \{v,v',x,x'\}$ and $xx' \notin E$. Then $G' = G-\{v,v'\}+\{xx'\}$ is $(2,2)$-$C_2$-tight.
\end{lem}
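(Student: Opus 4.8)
The plan is to first pin down the local structure forced by the hypothesis, then check the global vertex/edge count, the symmetry, and the fixed-edge/vertex bookkeeping, and finally establish $(2,2)$-sparsity of $G'$ by a short counting argument.

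First I would observe that since $v'\in N[v]$ and $v\ne v'$ we must have $vv'\in E$, so $G$ contains the fixed edge $vv'$; consequently $G$ cannot be of the ``one fixed vertex, no fixed edge'' type, so by definition of $(2,2)$-$C_2$-tightness $G$ has exactly two fixed edges and no fixed vertex. In particular $x\ne x'$, and all of $v,v',x,x'$ are distinct. Moreover $x,x'\in (N[v]\cap N[v'])\setminus\{v,v'\}$ forces $x,x'\in N(v)\cap N(v')$, and since $d(v)=3$ we get $N(v)=\{v',x,x'\}$ and, applying $c_2'$, $N(v')=\{v,x,x'\}$; thus $G[\{v,v',x,x'\}]\cong K_{4}-xx'$ with exactly the five edges $vv',vx,vx',v'x,v'x'$.

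Next I would record the elementary bookkeeping. Deleting $v,v'$ removes exactly these five edges and adds the single edge $xx'$, which is not already present by hypothesis, so $G'$ is simple, $|V(G')|=|V|-2$, and $|E(G')|=|E|-5+1=|E|-4=2|V(G')|-2$. Since $\{v,v'\}$ is a $c_2'$-orbit and $xx'$ is a $c_2'$-fixed edge, $G'$ is $C_2$-symmetric. None of $vx,vx',v'x,v'x'$ is fixed by $c_2'$ (each is mapped to a genuinely different edge, using that $v,v',x,x'$ are distinct), so the only fixed edge destroyed is $vv'$ and the only fixed edge created is $xx'$; hence $G'$ again has exactly two fixed edges and no fixed vertex.

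It remains to prove $(2,2)$-sparsity of $G'$, which is the only real content. For $X\subseteq V(G')$, if $\{x,x'\}\not\subseteq X$ then $G'[X]=G[X]$ and sparsity is inherited from $G$ since $X$ is a proper subset of $V$. If $\{x,x'\}\subseteq X$ then $i_{G'}(X)=i_G(X)+1$, so it suffices to show $i_G(X)\le 2|X|-3$, i.e.\ that no such $X$ is $2$-critical in $G$. Suppose instead $i_G(X)=2|X|-2$ with $x,x'\in X$ and $v,v'\notin X$; then $X\cup\{v,v'\}$ gains the five edges $vv',vx,vx',v'x,v'x'$, so $i_G(X\cup\{v,v'\})\ge 2|X|-2+5=2(|X|+2)-1=2|X\cup\{v,v'\}|-1$, contradicting $(2,2)$-sparsity of $G$ (directly, if $X\cup\{v,v'\}=V$, since then the left side equals $|E|=2|V|-2$). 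This finishes the proof. I do not anticipate a serious obstacle; the only point needing care is the fixed-edge/vertex count, which relies on the observation that $vv'$ being a fixed edge already determines which of the two defining cases of $(2,2)$-$C_2$-tightness $G$ is in.
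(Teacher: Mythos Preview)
Your proof is correct and follows essentially the same approach as the paper's. The paper's proof is a terse one-liner (``$vv'$ is not in $G[X]$ so such a set $X$ cannot exist''), which is exactly your counting argument---adding $v,v'$ to a putative $2$-critical blocker $X$ picks up all five edges of the $K_4-xx'$ and violates sparsity; you have simply made the local structure, the fixed-edge bookkeeping, and the edge count explicit.
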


\begin{proof}
$G'$ is not $(2,2)$-$C_2$-tight if and only if there exists a $2$-critical set $X$ in $G-\{v,v'\}$ containing $x$ and $x'$. However $vv'$ is not in $G[X]$ so such a set $X$ cannot exist and the lemma follows.
\end{proof}

\subsection{Combinatorial characterisation}

We can now put together the combinatorial results of this section to prove the following recursive construction and then apply this result alongside the results of Section \ref{sec:ops} to deduce our characterisation of $C_2$-isostatic graphs.

\begin{thm}\label{thm:recursionc2}
A graph $(G,\phi)$ is $(2,2)$-$C_2$-tight if and only if $(G,\phi)$ can be generated from\break $(K_4,\phi_3), (W_5,\phi_4), (Wd(4,2),\phi_5), (F_2,\phi_2)$ (these graphs were depicted in Figure \ref{BaseGraphs,C_2,cylinder}) by symmetrised 0-extensions, 1-extensions, vertex-to-$K_{4}$ operations and vertex-to-$C_{4}$ operations.
\end{thm}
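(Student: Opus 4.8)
The strategy is to follow the template of the proof of Theorem~\ref{thm:cirecursion}, splitting into the (routine) sufficiency direction and the (substantial) necessity direction, the latter by induction on $|V|$. For sufficiency I would first check directly, aided by the sparsity counts, that each of the four graphs in Figure~\ref{BaseGraphs,C_2,cylinder} is $(2,2)$-$C_2$-tight, observing that $K_4$ and the eight-vertex graph realise the ``two fixed edges, no fixed vertex'' case while $W_5$ and $Wd(4,2)$ realise the ``one fixed vertex, no fixed edge'' case. Then one verifies that each of the four operations preserves $(2,2)$-$C_2$-tightness. Each operation adds twice as many edges as vertices and preserves $C_2$-symmetry by construction, so exactly as in the proof of Theorem~\ref{thm:cirecursion} it only remains to see that no subgraph becomes over-dense; the one new point relative to the $C_i$ argument is the bookkeeping of fixed edges and vertices. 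A symmetrised $0$-extension adds a free (size-two) vertex orbit, and a symmetrised $1$-extension deletes a free edge orbit and adds a free vertex orbit, so in both cases the fixed-edge and fixed-vertex counts are untouched; for the vertex-to-$K_4$ and vertex-to-$C_4$ operations one checks the same by inspecting how the operation acts on the fixed vertex or on the endpoints of a fixed edge.

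For necessity, assume $G$ is $(2,2)$-$C_2$-tight and not isomorphic to a base graph, and inductively that every smaller $(2,2)$-$C_2$-tight graph is constructible; I want to exhibit a reduction of $G$ to a smaller $(2,2)$-$C_2$-tight graph by an inverse of one of the listed operations. As in Theorem~\ref{thm:cirecursion}, handshaking forces $2\le\delta(G)\le 3$. If $\delta(G)=2$ then a degree-two vertex is non-fixed (the remark after Lemma~\ref{lem:0reductionc2}) and Lemma~\ref{lem:0reductionc2} supplies a symmetrised $0$-reduction. So suppose $\delta(G)=3$ and fix a node $v$. The case analysis is driven by the possibilities for the \emph{closed} neighbourhood overlap $N[v]\cap N[v']$ (closed, because fixed edges matter): empty; $\{t\}$ with $t$ the fixed vertex; $\{v,v'\}$ (so $vv'\in E$); $\{x,x'\}$; $\{t,x,x'\}$; $\{v,v',x,x'\}$; or $N[v]$ itself inducing $K_4$ or $K_4-e$. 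Lemmas~\ref{deg 3 C_2 empty}, \ref{deg 3 C_2 t}, \ref{deg 3 C_2 x,x'}, \ref{deg 3 C_2 t x x'} and \ref{deg 3 C_2 v v' x x'} say that in each of these cases either a symmetrised $1$-reduction (or, in the $\{v,v',x,x'\}$ case with $xx'\notin E$, the inverse of a double $1$-extension) yields a smaller $(2,2)$-$C_2$-tight graph, finishing that branch by induction, or the local structure around $v$ is forced to be $K_4$, $W_5$ or $Wd(4,2)$; in the remaining possibility of Lemma~\ref{deg 3 C_2 v,v'}(2) (where $vv'\in E$ and $xx'$ or $yy'$ is an edge) one concludes instead that $G$ contains another node not of this special type, to which we pass. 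All these lemmas rest on the same mechanism as in Section~\ref{sec:inversion}: bounding how $2$- and $3$-critical sets, and now also the genuinely new $4$-critical $C_2$-symmetric sets that induce no fixed edge or vertex (Remark~\ref{U, W not all blocking}, Lemma~\ref{4-crit no int}), can obstruct edge additions, together with the parity fact that a $C_2$-symmetric set inducing no fixed edge has an even number of edges.

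It remains to treat the ``stuck'' situation in which $G$ is not reduced by any of the above and therefore every node of $G$ lies in a copy of $K_4$, $W_5$ or $Wd(4,2)$. Here — unlike the $C_i$ case, there is no graph-splitting move — I would argue as follows. A copy of $Wd(4,2)$ around a node contains a $K_4$ containing a node, and a node $v$ with $G[N[v]\cup N[v']]\cong W_5$ has a rim node $x$ (of degree three) to which Lemma~\ref{deg 3 C_2 t x x'} applies, returning either a reduction or again the $W_5$; so we may assume some node lies in a $K_4$, say $X$. If $X=X'$ then connectivity of $G$ forces $G=K_4$; otherwise $X\ne X'$, and either $G[X,X']$ is one of the forbidden overlaps ($F_2$ or $Wd(4,2)$), or there is a vertex with two neighbours in $X$, in which case Lemma~\ref{lem:c4contractc2} gives a symmetrised $C_4$-contraction to a smaller $(2,2)$-$C_2$-tight graph, or else Lemma~\ref{lem:k4c2} contracts $X$. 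The residual cases ($G[X,X']\cong F_2$ or $Wd(4,2)$, and the all-$W_5$ case) are then pinned down by a global edge count: using $|E|=2|V|-2$, $\delta(G)=3$ and the sector constraint (two fixed edges, or one fixed vertex) one shows that $G$ must in fact equal $K_4$, $W_5$, $Wd(4,2)$ or the eight-vertex base graph, completing the induction.

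The main obstacle is precisely this necessity direction: the degree-three case requires the exhaustive and delicate analysis of the closed-neighbourhood overlaps $N[v]\cap N[v']$, with each sub-case demanding simultaneous control of critical sets of three different deficiencies, the $4$-critical $C_2$-symmetric sets with no fixed element being the new ingredient and the very reason an ordinary $1$-reduction can fail and a double $1$-extension must be inverted. A secondary difficulty is the endgame bookkeeping: one must verify that when no local reduction exists the fixed-edge/fixed-vertex constraints together with global $(2,2)$-sparsity leave $G$ no choice but to be one of the four base graphs.
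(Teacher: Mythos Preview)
Your overall architecture matches the paper's: sufficiency via preservation of $(2,2)$-$C_2$-tightness under each operation, necessity by induction with the degree-two case handled by Lemma~\ref{lem:0reductionc2} and the degree-three case split by the closed-neighbourhood overlap $N[v]\cap N[v']$, invoking Lemmas~\ref{deg 3 C_2 empty}--\ref{deg 3 C_2 v v' x x'} exactly as the paper does. The identification of the new $4$-critical obstruction (Lemma~\ref{4-crit no int}) is also correct.

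The genuine gap is in your endgame. Two of your claims fail. First, ``a node $v$ with $G[N[v]\cup N[v']]\cong W_5$ has a rim node $x$ (of degree three)'': the rim vertex $x$ has degree three \emph{inside} the $W_5$, but nothing prevents it from having further neighbours in $G$, so you cannot apply Lemma~\ref{deg 3 C_2 t x x'} to it, and even if you could, you would get back the same $W_5$. Second, ``if $X=X'$ then connectivity of $G$ forces $G=K_4$'' is simply false; a $C_2$-symmetric $K_4$ can sit properly inside a larger $(2,2)$-$C_2$-tight graph. Finally, your ``global edge count'' for the residual cases is not an argument but a hope: one can build $(2,2)$-$C_2$-tight graphs of minimum degree three that are not base graphs yet have every node in some $W_5$ or $Wd(4,2)$ around the fixed vertex, so a local or purely numerical argument cannot force $G$ to equal a base graph.

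The paper's resolution of this step is different and worth absorbing. If a base graph $H$ occurs as a subgraph of $G$, then, because every base graph in Figure~\ref{BaseGraphs,C_2,cylinder} carries \emph{all} of the fixed edges or the fixed vertex, $H$ is the unique such subgraph. A short degree-count (using $|E|=2|V|-2$, $\delta(G)\ge 3$, and $d(V(H),V\setminus V(H))\ge 2$) then shows there must be nodes of $G$ lying outside $H$. Any such node, being blocked only by $K_4$, $W_5$ or $Wd(4,2)$ but lying outside the unique base-graph copy, is therefore in a non-$C_2$-symmetric $K_4$, and now Lemmas~\ref{lem:k4c2} and~\ref{lem:c4contractc2} finish. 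Replacing your rim-node/global-count paragraph with this ``nodes outside the unique base subgraph'' argument closes the gap.
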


\begin{proof}
Each of the base graphs are independent and tight and Section \ref{sec:ops} showed the symmetrised 0-extension, 1-extension, double 1-extension, vertex-to-$K_4$, vertex-to-$C_4$ and vertex-to-$(2,2)$-$C_2$-tight operations preserve independence.
It is easy to see the operations also preserve $(2,2)$-tightness and the number of fixed elements.
We can therefore apply Theorem \ref{fixed e&v on cy} to the extended graph.

Conversely, we show by induction that any $(2,2)$-$C_2$-tight graph $G$ can be generated from our base graphs.
Suppose the induction hypothesis holds for all graphs with $|V| < n$.
Now let $|V|= n$ and suppose $G$ is not isomorphic to one of the base graphs in Figure $\ref{BaseGraphs,C_2,cylinder}$.
Obviously any $(2,2)$-$C_2$-tight graph contains a vertex of degree 2 or 3. The former case is dealt with by Lemma \ref{deg(v)=2}.
Hence suppose $\delta(G)=3$ and $v$ is a vertex of minimum degree.
There are 6 cases depending on the closed neighbourhood of $v$, namely with labelling from this section, $N[v]\cap N[v'] \in \{\emptyset,\{t\}, \{v,v'\},\{x,x'\}, \{t,x,x'\}, \{v,v', x,x'\}\}$.
By Lemmas \ref{deg 3 C_2 t}, \ref{deg 3 C_2 t x x'}, and \ref{deg 3 C_2 empty}--\ref{deg 3 C_2 v v' x x'} 
we see that the only blocks to reducing any given node are $K_4$ (either $C_2$-symmetric or non-symmetric) and the base graphs $(Wd(4,2),\phi_5)$ and $(W_5,\phi_4)$. (Note that if the option in Lemma \ref{deg 3 C_2 v,v'}(2) occurs then we may reduce the other node unless it is contained in a non-symmetric $K_4$.)

Suppose one of the base graphs in Figure \ref{BaseGraphs,C_2,cylinder} is a subgraph of $G$, denoted by $H$. 
If $H \cong (K_4, \phi_3)$ or $(F_2, \phi_2)$, $H$ contains all the fixed edges of $G$ and there can be no other base graph copy.
Otherwise, $H \cong (W_5,\phi_4)$ or $(Wd(4,2),\phi_5)$ and if another copy of  either $(W_5,\phi_4)$ or $(Wd(4,2),\phi_5)$ exist, call it $H_1$, then note that $H_1\cap H$ is precisely the fixed vertex.
Then $H_1$ is a proper $(2,2)$-$C_2$-tight subgraph of $G$ and we apply Lemma \ref{lem: 2,2 tight to vertex}. 
We may now suppose that $H$ is the only subgraph of $G$ which is a copy of a base graph depicted in Figure \ref{BaseGraphs,C_2,cylinder}.

We will show there is a node in $G$ not contained in $H$.
Note that $H$ has at least four degree three vertices.
Observing that $d(V(H), V(G\setminus H)) \geq 2$, the sum of the degrees in $H$ increases by at least two, meaning there must be two nodes in $G\setminus H$.
Hence we may assume all vertices of degree three are in a unique $(2,2)$-$C_2$-tight base graph or a $K_4$ copy which is not $C_2$-symmetric. We may now, in all cases, suppose that $G$ has a degree 3 that is contained in a $K_4$. We now apply Lemma
\ref{deg(v)=3, K4} and \ref{deg(v)=3, cycle contraction}
to complete the proof.
\end{proof}

\begin{thm}\label{thm:mainc2}
A graph $(G,\phi)$ is $C_2$-isostatic if and only if it is $(2,2)$-$C_2$-tight.
\end{thm}

\begin{proof}
Since $C_2$-isostatic graphs are $(2,2)$-tight, necessity follows from Theorem \ref{fixed e&v on cy}.
It is easy to check using any computer algebra package that the base graphs depicted in Figure \ref{BaseGraphs,C_2,cylinder} are $C_2$-isostatic.
Hence the sufficiency follows from Theorem \ref{thm:recursionc2} and Lemmas \ref{0-ext rigid}, \ref{1-ext rigid}, \ref{minrigidK_4}, \ref{cycle rigid}, \ref{double 1-ext rig} by induction on $|V|$.
\end{proof}

\section{$C_s$-symmetric isostatic graphs}
\label{sec:mirror}

We turn our attention to $C_s$-symmetric graphs on the cylinder.
Here $C_s$ is generated by a single reflection $\sigma$ which could contain the cylinder axis or be perpendicular to it.

\subsection{Reduction Operations}

For a $(2,2)$-$C_s$-tight graph, there are 6 possible cases for the structure of $N(v) \cap N(v')$, namely $N(v) \cap N(v') \in \{\emptyset, \{t\}, \{x,x'\}, \{t_1,t_2\}, \{x,x',t\}, \{t_1,t_2,t_3\}\}$, where vertices fixed by non-trivial element are denoted $t$, and those not fixed $x$.
In section \ref{sec:merged}, Lemmas \ref{lem: deg(v)=3, cap empty} -\ref{deg 3 C_2 t x x'} dealt with the first five of these cases.
These lemmas showed the reduction is possible, or the node is contained in a $(2,2)$-$C_s$-tight subgraph of $G$.
This leaves only the toughest case when all three neighbours of a node lie on the mirror.




Hence, for the remainder of this section we assume that all nodes have all neighbours on the mirror.
The following lemmas require some new notation for describing our graphs.
We will consider a vertex partition $V = V^r \cup V^b \cup V^g$ into red, blue and green vertices. The partition is chosen so that a vertex which is fixed by the mirror symmetry is red, any vertex which is adjacent to a red vertex is blue, and the remaining vertices are green.
This also gives us a notion of edge colouring.
We colour an edge red-blue if its endpoints are one red and one blue, blue-blue if its endpoints are blue, blue-green if its endpoints are one blue and one green, and green-green if its endpoints are green.
Note that red-red edges are not possible in a $(2,2)$-$C_s$-tight graph, and red-green edges are not possible by the choice of the partition.
We can therefore write $E = E^{rb} \cup E^{bb} \cup E^{bg} \cup E^{gg}$.

It will also be useful to consider the subgraphs of $G$ which consist of red-blue and blue-blue edges.
We will call these \textit{red-blue connected components} or \emph{rb-components} for shorthand.
We label the rb-components of a graph $A_1, \dots, A_k$, so the component $A_i = (V_i, E_i)$ is $k_i$-critical, has red vertex set $V_{i}^r$ and blue vertex set $V_{i}^b$, and has red-blue edges $E_{i}^{rb}$ and blue-blue edges $E_{i}^{bb}$ as in $G$.
A natural extension of this is to say that the subset of the edges $E^{bg}$ that are incident to a vertex in $A_i$ form a new set denoted $E_{i}^{bg}$.
Lastly, let $S \subset V^b$ be the nodes with all three neighbours on the mirror.
Let $s=|S|$ and $s_i = |S\cap V_i|$. We illustrate these definitions in Figure \ref{fig: rb-components}.

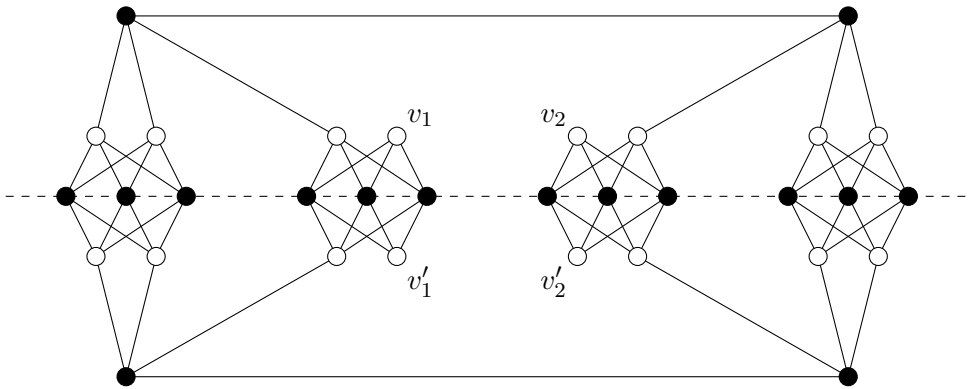
\begin{figure}[ht]
    \centering
    \begin{tikzpicture}
  [scale=.8,auto=left]
  
  \coordinate (n1) at (1,0);
  \coordinate (n2) at (1,6);
  \coordinate (n3) at (13,0);
  \coordinate (n4) at (13,6);
  \coordinate (n5) at (-1,3);
  \coordinate (n6) at (15,3);
  
 \draw[fill=black] (n1) circle (0.15cm);
 \draw[fill=black] (n2) circle (0.15cm);
 \draw[fill=black] (n3) circle (0.15cm);
 \draw[fill=black] (n4) circle (0.15cm);
  
  \coordinate (n11) at (0.5,2);
  \coordinate (n12) at (0.5,4);
  \coordinate (n13) at (0,3);
  \coordinate (n14) at (1,3);
  \coordinate (n15) at (2,3);
  \coordinate (n16) at (1.5,2);
  \coordinate (n17) at (1.5,4);

 \draw[fill=black] (n13) circle (0.15cm);
 \draw[fill=black] (n14) circle (0.15cm);
 \draw[fill=black] (n15) circle (0.15cm);

  \foreach \from/\to in {n11/n13,n11/n14,n11/n15,n12/n13,n12/n14,n12/n15,n13/n16,n13/n17,n14/n16,n14/n17,n15/n16,n15/n17} 
    \draw (\from) -- (\to);
    
  \coordinate (n21) at (4.5,2);
  \coordinate (n22) at (4.5,4);
  \coordinate (n23) at (4,3);
  \coordinate (n24) at (5,3);
  \coordinate (n25) at (6,3);
  \coordinate (n26) at (5.5,2);
  \coordinate (n27) at (5.5,4);
  
 \draw[fill=black] (n23) circle (0.15cm);
 \draw[fill=black] (n24) circle (0.15cm);
 \draw[fill=black] (n25) circle (0.15cm);

  \foreach \from/\to in {n21/n23,n21/n24,n21/n25,n22/n23,n22/n24,n22/n25,n23/n26,n23/n27,n24/n26,n24/n27,n25/n26,n25/n27} 
    \draw (\from) -- (\to);
    
  \coordinate (n31) at (8.5,2);
  \coordinate (n32) at (8.5,4);
  \coordinate (n33) at (8,3);
  \coordinate (n34) at (9,3);
  \coordinate (n35) at (10,3);
  \coordinate (n36) at (9.5,2);
  \coordinate (n37) at (9.5,4);
  
 \draw[fill=black] (n33) circle (0.15cm);
 \draw[fill=black] (n34) circle (0.15cm);
 \draw[fill=black] (n35) circle (0.15cm);

  \foreach \from/\to in {n31/n33,n31/n34,n31/n35,n32/n33,n32/n34,n32/n35,n33/n36,n33/n37,n34/n36,n34/n37,n35/n36,n35/n37} 
    \draw (\from) -- (\to);
    
  \coordinate (n41) at (12.5,2);
  \coordinate (n42) at (12.5,4);
  \coordinate (n43) at (12,3);
  \coordinate (n44) at (13,3);
  \coordinate (n45) at (14,3);
  \coordinate (n46) at (13.5,2);
  \coordinate (n47) at (13.5,4);

 \draw[fill=black] (n43) circle (0.15cm);
 \draw[fill=black] (n44) circle (0.15cm);
 \draw[fill=black] (n45) circle (0.15cm);

  \foreach \from/\to in {n41/n43,n41/n44,n41/n45,n42/n43,n42/n44,n42/n45,n43/n46,n43/n47,n44/n46,n44/n47,n45/n46,n45/n47} 
    \draw (\from) -- (\to);

  \foreach \from/\to in {n1/n11,n1/n16,n1/n21,n1/n3,n2/n12,n2/n17,n2/n22,n2/n4,n3/n36,n3/n41,n3/n46,n4/n37,n4/n42,n4/n47} 
    \draw (\from) -- (\to);

  \draw[dashed] (n5) -- (n6);
  
 \draw[fill=white] (n11) circle (0.15cm);
 \draw[fill=white] (n12) circle (0.15cm);
 \draw[fill=white] (n16) circle (0.15cm);
 \draw[fill=white] (n17) circle (0.15cm);
 \draw[fill=white] (n21) circle (0.15cm);
 \draw[fill=white] (n22) circle (0.15cm);
 \draw[fill=white] (n26) circle (0.15cm)
     node[below right] {$v_1'$};
 \draw[fill=white] (n27) circle (0.15cm)
     node[above right] {$v_1$};
 \draw[fill=white] (n31) circle (0.15cm)
     node[below left] {$v_2'$};
 \draw[fill=white] (n32) circle (0.15cm)
     node[above left] {$v_2$};
 \draw[fill=white] (n36) circle (0.15cm);
 \draw[fill=white] (n37) circle (0.15cm);  
 \draw[fill=white] (n41) circle (0.15cm);
 \draw[fill=white] (n42) circle (0.15cm);
 \draw[fill=white] (n46) circle (0.15cm);
 \draw[fill=white] (n47) circle (0.15cm);
    
\end{tikzpicture}
    \caption{A $(2,2)$-$C_s$-tight graph $G$. The red vertices lie on the mirror line, the blue vertices are depicted as unfilled circles and the green vertices are the filled vertices not on the mirror. Each copy of $K_{3,4}$ in $G$ is a rb-component and $S=\{v_1,v_1',v_2,v_2'\}$.}
    \label{fig: rb-components}
\end{figure}

\begin{lem}\label{lem: Vgempty}
Let $(G,\phi)$ be $(2,2)$-$C_s$-tight with $\delta(G) \geq 3$.
Then $G[V^r \cup V^b]$ is $(2,2)$-$C_s$-tight if and only if $V^g = \emptyset$.
Moreover, if $V^g\neq \emptyset$ then there exists an $i \in \{1,\dots,k\}$ such that $|E^{bg}_{i}| \leq 2k_i - 2$ and $s_i \geq 2k_i -|E^{bg}_{i}|$.
\end{lem}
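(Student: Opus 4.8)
The plan is to run everything off one global edge count for the $(2,2)$-tight graph $G$ together with a degree count inside each rb-component. Since the rb-components $A_1,\dots,A_n$ are exactly the connected components of $G[V^r\cup V^b]$, every edge of $G$ lies in some $E_i$, is blue--green, or is green--green (there are no red--red or red--green edges), so
\[
\sum_{i=1}^n|E_i|+|E^{bg}|+|E^{gg}|=|E|=2|V|-2 .
\]
Writing $|E_i|=2|V_i|-k_i$ and $|V^r\cup V^b|=|V|-|V^g|$ this becomes $\sum_ik_i=|E^{bg}|+|E^{gg}|-2|V^g|+2$. Two structural remarks I would make first: each $A_i$ is invariant under the mirror $\sigma$ (a component containing a blue vertex contains its red neighbours, and red vertices are $\sigma$-fixed, so no two components are interchanged), and $G$ has no fixed edge; hence, counting edge- and vertex-orbits, the numbers $|E_i|$, $|E_i^{bg}|$, $k_i=2|V_i|-|E_i|$ and $2k_i-|E_i^{bg}|$ are all even. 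I will also use that every vertex of $G$ lying outside $S$ has degree at least $4$ (in the situation where the lemma is applied every node of $G$ lies in $S$, so this holds; with only $\delta(G)\ge 3$ the statement is false, e.g. for a tight graph obtained by attaching a $\sigma$-invariant green cycle to one of the base graphs).

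Feeding the degree bound into the count: since $2|E^{gg}|+|E^{bg}|=\sum_{v\in V^g}d_G(v)\ge 4|V^g|$, substituting $|E^{bg}|=\sum_ik_i-|E^{gg}|+2|V^g|-2$ gives $|E^{gg}|+\sum_ik_i\ge 2|V^g|+2$, and therefore
\[
\sum_{i=1}^n\bigl(2k_i-|E_i^{bg}|\bigr)=2\sum_ik_i-|E^{bg}|=\Bigl(\sum_ik_i+|E^{gg}|\Bigr)-2|V^g|+2\ \ge\ 4 .
\]
This single inequality drives both parts. For the equivalence: $G[V^r\cup V^b]=\bigsqcup_iA_i$ is always $(2,2)$-sparse with $2|V^r\cup V^b|-\sum_ik_i$ edges and each $k_i\ge 2$, so it is $(2,2)$-$C_s$-tight exactly when $\sum_ik_i=2$. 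If $V^g=\emptyset$ then $G[V^r\cup V^b]=G$ and we are done; conversely, if $\sum_ik_i=2$ while $V^g\neq\emptyset$ (so $V^r\cup V^b\neq\emptyset$ and $G[V^g]$ is a proper subgraph of $G$), the displayed bound forces $|E^{gg}|\ge 2|V^g|$, contradicting $(2,2)$-sparsity of $G[V^g]$.

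For the ``moreover'', suppose $V^g\neq\emptyset$. The per-component degree count
\[
4|V_i|-2k_i+|E_i^{bg}|=\sum_{v\in V_i}d_G(v)\ \ge\ 3s_i+4(|V_i|-s_i)=4|V_i|-s_i
\]
(the $s_i$ vertices of $S\cap V_i$ have degree exactly $3$, the others at least $4$) rearranges to $s_i\ge 2k_i-|E_i^{bg}|$, so the second inequality in the conclusion holds for \emph{every} $i$. Since $\sum_i\bigl(2k_i-|E_i^{bg}|\bigr)\ge 4>0$, some summand is positive, and being an even integer it is at least $2$; for that index $i$ we get $|E_i^{bg}|\le 2k_i-2$, as required.

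The argument is short once the identity $\sum_ik_i=|E^{bg}|+|E^{gg}|-2|V^g|+2$ and the per-component inequality are in place, so there is no single hard computation. The two points that need care — and that I expect to be the real content — are: securing the degree-$4$ bound on vertices outside $S$ (equivalently, justifying that one is in the case where every node lies in $S$, since the statement genuinely fails otherwise), and the parity observation that each $2k_i-|E_i^{bg}|$ is even, which is exactly what upgrades ``some $2k_i-|E_i^{bg}|\ge 1$'' to ``some $|E_i^{bg}|\le 2k_i-2$''.
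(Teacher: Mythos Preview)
Your argument is the same as the paper's: write the global edge identity, subtract the per-component counts to relate $|E^{bg}|+|E^{gg}|$ to $\sum_i k_i$, bound via the green degree sum, and finish with the per-component degree inequality for $s_i$. Where you differ is in rigour, and in both places you are right and the paper is loose.

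First, you correctly observe that the stated hypothesis $\delta(G)\ge 3$ does not justify the step $\sum_{v\in V^g}d_G(v)\ge 4|V^g|$; the paper's proof uses exactly this bound without comment. Your counterexample sketch is valid: attach a $\sigma$-invariant green $C_4$ to $K_{3,4}$ (three fixed red vertices, four blue) by one edge from each green vertex to a blue vertex, symmetrically; the result is $(2,2)$-$C_s$-tight with $\delta=3$, yet $G[V^r\cup V^b]=K_{3,4}$ is $(2,2)$-$C_s$-tight while $V^g\neq\emptyset$. So the lemma as stated is false, and your reading --- that the intended hypothesis is ``every node has all its neighbours fixed'', which is the only context in which the lemma is invoked --- is the right fix.

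Second, your parity argument (each $A_i$ is $\sigma$-invariant with no fixed edges, so $|E_i|$, $|E_i^{bg}|$ and hence $k_i$ and $2k_i-|E_i^{bg}|$ are all even) is exactly what is needed to pass from ``some $2k_i-|E_i^{bg}|>0$'' to ``some $|E_i^{bg}|\le 2k_i-2$''. The paper only argues that if every $|E_i^{bg}|\ge 2k_i$ one contradicts the global bound, which a priori yields only $|E_i^{bg}|\le 2k_i-1$ for some $i$; your parity step closes that gap.
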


\begin{proof}
If $V^g=\emptyset$ then $G[V^r \cup V^b]=G$ and hence it is $(2,2)$-$C_s$-tight. Conversely, 
we begin by noting that 
\begin{equation}\label{rbgtight}
    |E^{rb}|+|E^{bb}|+|E^{bg}|+|E^{gg}| = 2|V^r| + 2|V^b| + 2|V^g| -2.
\end{equation}
Then, for each $i \in \{1,\dots,k\}$, $|E^{rb}_{i}| + |E^{bb}_{i}| = 2|V^{r}_{i}|+2|V^{b}_{i}|-k_i$.
Summing gives
\begin{equation}\label{rbtight}
   |E^{rb}| + |E^{bb}| = 2|V^{r}|+2|V^{b}|- \sum_{i=1}^{k}k_i
\end{equation}
and then, by subtracting (\ref{rbtight}) from (\ref{rbgtight}), we obtain $|E^{bg}|+|E^{gg}| = 2|V^g| -2 + \sum_{i=1}^{k}k_i$.
Counting vertex degrees gives  $|E^{bg}|+2|E^{gg}| = \sum _{v \in V^g} d_G(v) \geq 4|V^g|$.
Therefore, $2|E^{bg}|+2|E^{gg}| = 4|V^g| -4 + 2\sum_{i=1}^{k}k_i \leq |E^{bg}|+2|E^{gg}| -4 + 2\sum_{i=1}^{k}k_i$. Rearranging and simplifying gives
\begin{equation}\label{bgedgeUB}
    |E^{bg}| \leq 2\sum_{i=1}^{k}k_i -4,
\end{equation}
which, for $k = 1$ and $k_1 = 2$, completes the first statement of the proof.

If $|E^{bg}_i| \geq 2k_i$ for all $i$, we would contradict Equation (\ref{bgedgeUB}).
Again counting vertex degrees,
\begin{equation}\label{reddeg}
    |E^{rb}_{i}| = \sum_{v \in V^{r}_{i}} d_G(v) \geq 4|V^{r}_{i}|
\end{equation}
and since the vertices of $S_i$ are nodes,
\begin{equation}\label{bluedeg}
    |E^{rb}_{i}| + 2|E^{bb}_{i}| + |E^{bg}_{i}| = \sum_{v \in V^{b}_{i}} d_G(v) \geq 4|V^{b}_{i}| - s_i.
\end{equation}
Adding Equations (\ref{reddeg}) and (\ref{bluedeg}), we see that $2|E^{rb}_{i}| + 2|E^{bb}_{i}| + |E^{bg}_{i}| \geq 4|V^{r}_{i}| + 4|V^{b}_{i}| - s_i$.
Now recalling Equation (\ref{rbtight}) (restricted to $A_i$), we obtain $$4|V^{r}_{i}| + 4|V^{b}_{i}| - s_i - |E^{bg}_{i}| \leq 2|E^{rb}_{i}| + 2|E^{bb}_{i}| \leq 4|V^{r}_{i}| + 4|V^{b}_{i}| - 2k_i,$$ 
which completes the proof.
\end{proof}

\begin{lem}\label{lem: node all mirror}
Let $(G,\phi)$ be a $(2,2)$-$C_s$-tight graph, distinct from $K_{3,4}$, with $\delta(G)= 3$. Suppose that the neighbour set of every node consists only of fixed vertices and that no proper subgraph $H$ of $G$ is $(2,2)$-$C_s$-tight.
Then there exists a $C_4$-contraction (which contracts two fixed vertices) that results in a $(2,2)$-$C_s$-tight graph.
\end{lem}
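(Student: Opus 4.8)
The plan is to reduce to a convenient structural situation using Lemma~\ref{lem: Vgempty}, locate two fixed vertices whose common neighbourhood is a single $\sigma$-orbit, and then verify that the associated $C_4$-contraction (the reverse of the fixed-vertex vertex-to-$C_4$ move of Remark~\ref{rem: C4 mirror}) preserves $(2,2)$-$C_s$-tightness.

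First I would note that $G$ is connected (every $(2,2)$-tight graph is) and set up the red/blue/green partition. If $V^g=\emptyset$, a degree count — a fixed vertex has even degree at least $4$ since $G$ has no red--red edge, a non-node blue vertex has degree at least $4$, and a node has degree exactly $3$ — together with $|E|=2|V|-2$ forces at least four nodes, all of whose neighbours lie on the mirror. If $V^g\neq\emptyset$, the ``moreover'' clause of Lemma~\ref{lem: Vgempty} supplies an rb-component $A_i$ with $|E^{bg}_i|\le 2k_i-2$, hence $s_i\ge 2k_i-|E^{bg}_i|\ge 2$; running the same count inside $A_i$ (whose red vertices still have degree at least $4$ in $G$) yields at least two nodes of $A_i$ whose three neighbours all lie in $V^r_i$. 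In either case we have a node $v$ with $N(v)=\{a,b,c\}\subseteq V^r$, so (no red--red edges) $G[\{v,v',a,b,c\}]\cong K_{2,3}$.

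Next I would produce the pair to be contracted. The key preliminary observation is that no blue vertex other than $v,v'$ is adjacent to all three of $a,b,c$: such a $y$ together with $y'$ and $\{v,v',a,b,c\}$ would induce $K_{3,4}$, which would be a proper $(2,2)$-$C_s$-tight subgraph since $G\neq K_{3,4}$, contradicting the hypothesis. The claim is that for some node $v$, one of the pairs $\{a,b\},\{a,c\},\{b,c\}$ has common neighbourhood exactly $\{v,v'\}$. I would argue this by an extremal choice: take, over all nodes and pairs of their fixed neighbours, a pair with $|N(b)\cap N(c)|$ minimal; if this minimum exceeds $2$, the extra common neighbours form blue $\sigma$-orbits, and pursuing them — distinguishing the case where such a vertex is itself a node (producing a fresh pair of fixed vertices with a strictly smaller common neighbourhood, contradicting minimality) from the case where it has degree at least $4$ — together with finiteness of $G$ and the exclusion of $K_{3,4}$ and of the other base graphs as proper subgraphs, gives a contradiction. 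Relabel so that $N(b)\cap N(c)=\{v,v'\}$.

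Finally I would form $\bar G=G/(b=c)$, merging the fixed vertices $b,c$ into a fixed vertex $\bar{bc}$; this is exactly the $C_4$-contraction dual to Remark~\ref{rem: C4 mirror}, it inherits $C_s$-symmetry, creates no fixed edge (a fixed edge at $\bar{bc}$ would force $v=v'$), and $|E(\bar G)|=|E|-2=2|V(\bar G)|-2$. To see $\bar G$ is $(2,2)$-sparse I would suppose not, pick a vertex-minimal $F$ with $i_{\bar G}(F)\ge 2|F|-1$, note $\bar{bc}\in F$, and use that in $\bar G$ both $v,v'$ have degree $2$ with neighbourhood $\{a,\bar{bc}\}$: if $a\in F$, minimality lets one enlarge $F$ to $F^\ast\supseteq\{\bar{bc},v,v'\}$ with the same property, and lifting $F^\ast$ to $\hat F^\ast=(F^\ast\setminus\bar{bc})\cup\{b,c\}$ gives $i_G(\hat F^\ast)=i_{\bar G}(F^\ast)+2\ge 2|\hat F^\ast|-1$, contradicting $(2,2)$-sparsity of $G$; in the residual cases the blocker lifts only to a $2$- or $3$-critical $W\subseteq V(G)$ containing the fixed vertices $b,c$, and then $W\cap\sigma W$ is $\sigma$-invariant, at most $2$-critical, has no fixed edge, and (since $|W\cap\sigma W|\ge 5$, as a $(2,2)$-tight set containing $b,c$ cannot be $K_4$) is a proper $(2,2)$-$C_s$-tight subgraph, contradicting the hypothesis; the $3$-critical subcases are brought into this form by iterating the enlargement/symmetrisation step. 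I expect the main obstacle to be the extremal argument of the third paragraph: the ``extra common neighbour'' obstructions produce further copies of $K_{2,3}$ that do not by themselves violate sparsity, so one must chase them through $G$ and systematically invoke the absence of every small $(2,2)$-$C_s$-tight subgraph, while keeping the $V^g=\emptyset$ and $V^g\neq\emptyset$ bookkeeping straight.
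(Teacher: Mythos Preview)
Your sparsity verification in the last paragraph is close to the paper's and, once the symmetrisation is filled in carefully, it works. The substantive gap is in the third paragraph, where you try to locate a pair of fixed vertices whose only common neighbours are $v,v'$. You claim that if an extra common neighbour $y$ of $b,c$ is itself a node, say $N(y)=\{b,c,d\}$, then one of the fresh pairs $\{b,d\},\{c,d\}$ has \emph{strictly smaller} common neighbourhood than $\{b,c\}$, contradicting minimality. This is not justified: under the contradiction hypothesis every pair of fixed neighbours of a node has common neighbourhood of even size at least $4$ (common neighbours of two fixed vertices form a union of blue $\sigma$-orbits), and nothing prevents $|N(b)\cap N(d)|=|N(c)\cap N(d)|=|N(b)\cap N(c)|=4$. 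The case where $y$ has degree at least $4$ you leave to ``finiteness and the exclusion of base graphs'', which is not an argument. So your extremal/chasing scheme, as written, does not terminate.

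The paper handles this step by a global degree count rather than a local extremal argument. From your own observation that no blue vertex outside $\{v,v'\}$ is adjacent to all of $a,b,c$ (else a proper $K_{3,4}$ subgraph), it follows that under the contradiction hypothesis every red vertex adjacent to a node has degree at least $6$: for $N(v)=\{t,t_1,t_2\}$, the extra common neighbours of $\{t,t_1\}$ and of $\{t,t_2\}$ must be disjoint. The paper then partitions the nodes of a suitable rb-component $A_i$ (supplied by Lemma~\ref{lem: Vgempty} when $V^g\neq\emptyset$) into classes $S_1,\dots,S_l$ according to which nodes share two red neighbours, counts red and blue vertices class by class, and compares $\sum d(v)$ against $2|E_i|+|E^{bg}_i|$ to obtain a numerical contradiction of the form $4l+4p\le -2$. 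This counting argument is the missing idea; it does precisely the work your extremal step was meant to do.
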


\begin{figure}[ht]
    \centering
    \begin{tikzpicture}
  [scale=.8,auto=left]
  
  \coordinate (n11) at (-1,0);
  \coordinate (n12) at (0,-1.5);
  \coordinate (n13) at (0,0);
  \coordinate (n14) at (0,1.5);
  \coordinate (n15) at (1,-1.5);
  \coordinate (n16) at (1,0);
  \coordinate (n17) at (1,1.5);
  \coordinate (n18) at (2,0);
  
 \draw[fill=black] (n11) circle (0.15cm);
 \draw[fill=black] (n12) circle (0.15cm)
 node [below left] {$v_{11}'$};
 \draw[fill=black] (n13) circle (0.15cm);
 \draw[fill=black] (n14) circle (0.15cm)
 node [above left] {$v_{11}$};
 \draw[fill=black] (n15) circle (0.15cm)
 node [below left] {$v_{12}'$};
 \draw[fill=black] (n16) circle (0.15cm);
 \draw[fill=black] (n17) circle (0.15cm)
 node [above left] {$v_{12}$};
 \draw[fill=black] (n18) circle (0.15cm);

  \foreach \from/\to in {n11/n12,n11/n14,n12/n13,n12/n16,n13/n14,n13/n15,n13/n17,n14/n16,n15/n16,n16/n17,n17/n18,n15/n18} 
    \draw (\from) -- (\to);
    
  \coordinate (n21) at (3.5,0);
  \coordinate (n22) at (4.5,-1.5);
  \coordinate (n23) at (4.5,0);
  \coordinate (n24) at (4.5,1.5);
  \coordinate (n25) at (5.5,0);
  
 \draw[fill=black] (n21) circle (0.15cm);
 \draw[fill=black] (n22) circle (0.15cm)
 node [below left] {$v_{21}'$};
 \draw[fill=black] (n23) circle (0.15cm);
 \draw[fill=black] (n24) circle (0.15cm)
 node [above left] {$v_{21}$};
 \draw[fill=black] (n25) circle (0.15cm);

  \foreach \from/\to in {n21/n22,n21/n24,n22/n23,n23/n24,n22/n25,n24/n25} 
    \draw (\from) -- (\to);
    
  \coordinate (n31) at (7,0);
  \coordinate (n32) at (8,-1.5);
  \coordinate (n33) at (8,0);
  \coordinate (n34) at (8,1.5);
  \coordinate (n35) at (9,-1.5);
  \coordinate (n36) at (9,0);
  \coordinate (n37) at (9,1.5);
  \coordinate (n38) at (10,-1.5);
  \coordinate (n39) at (10,0);
  \coordinate (n40) at (10,1.5);
  \coordinate (n41) at (11,0);
  \coordinate (n42) at (12,-1.5);
  \coordinate (n43) at (12,0);
  \coordinate (n44) at (12,1.5);
  \coordinate (n45) at (13,-1.5);
  \coordinate (n46) at (13,0);
  \coordinate (n47) at (13,1.5);
  \coordinate (n48) at (14,0);
  
 \draw[fill=black] (n31) circle (0.15cm);
 \draw[fill=black] (n32) circle (0.15cm)
 node [below left] {$v_{31}'$};
 \draw[fill=black] (n33) circle (0.15cm);
 \draw[fill=black] (n34) circle (0.15cm)
 node [above left] {$v_{31}$};
 \draw[fill=black] (n35) circle (0.15cm)
 node [below left] {$v_{32}'$};
 \draw[fill=black] (n36) circle (0.15cm);
 \draw[fill=black] (n37) circle (0.15cm)
 node [above left] {$v_{32}$};
 \draw[fill=black] (n38) circle (0.15cm)
 node [below left] {$v_{33}'$};
 \draw[fill=black] (n39) circle (0.15cm);
 \draw[fill=black] (n40) circle (0.15cm)
 node [above left] {$v_{33}$};
 \draw[fill=black] (n41) circle (0.15cm);
 \draw[fill=black] (n42) circle (0.15cm)
 node [below left] {$v_{41}'$};
 \draw[fill=black] (n43) circle (0.15cm);
 \draw[fill=black] (n44) circle (0.15cm)
 node [above left] {$v_{41}$};
 \draw[fill=black] (n45) circle (0.15cm)
 node [below left] {$v_{42}'$};
 \draw[fill=black] (n46) circle (0.15cm);
 \draw[fill=black] (n47) circle (0.15cm)
 node [above left] {$v_{42}$};
 \draw[fill=black] (n48) circle (0.15cm);

  \foreach \from/\to in {n31/n32,n31/n34,n32/n33,n32/n36,n33/n34,n33/n35,n33/n37,n34/n36,n35/n36,n36/n37,n37/n39,n35/n39,n36/n38,n36/n40,n38/n39,n39/n40,n38/n41,n40/n41,n41/n42,n41/n44,n42/n43,n42/n46,n43/n44,n43/n45,n43/n47,n44/n46,n45/n46,n46/n47,n47/n48,n45/n48} 
    \draw (\from) -- (\to);
    
\end{tikzpicture}
    \caption{The depicted graph $H$ is a subgraph of some $(2,2)$-$C_s$-tight graph $G$. All nodes in $G$ have all their neighbours on the mirror and $H$ is induced by $S$ and the neighbours of vertices in $S$. We have labeled so that $v_{ji},v_{ji}' \in S_j$. Note that $v_{31}$ and $v_{33}'$ are in the same set of the partition since there is a 4-cycle containing $v_{31}$ and $v_{32}$ and another containing $v_{32}$ and $v_{33}'$, but $v_{33}$ and $v_{41}$ are in different sets since no two common neighbours exist.}
    \label{fig:13partition}
\end{figure}
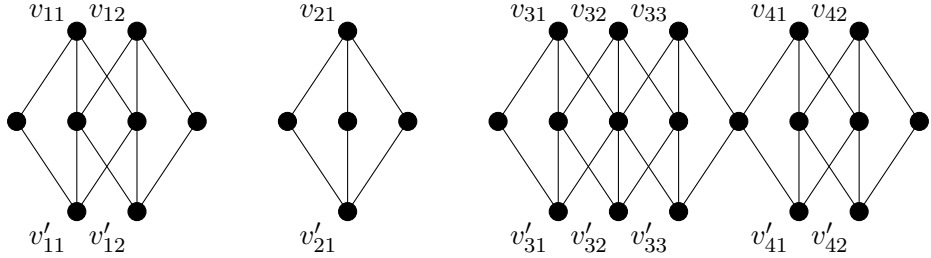

\begin{proof}
If $V^g \neq \emptyset$, then by Lemma \ref{lem: Vgempty} there exists a rb-component $A_i$ with $|E^{bg}_{i}| \leq 2k_i - 2$ and $s_i \geq 2k_i -|E^{bg}_{i}|$.
Suppose $S\cap V_i=\{u_1,u_2,\dots,u_r\}$.
We define $S_1$ recursively.
Let $u_1$ be in $S_1$.
For any $u_q \in S\cap V_i$, $u_q \in S_1$ if there exists $t_1,t_2 \in V^r$ and $u_p \in S_1$ so that $u_qt_1u_pt_2$ is a $4$-cycle in $G$.
If $S_1 \neq S\cap V_i$, take $u_k \in (S \cap V_i)\setminus S_1$ and put it in $S_2$, then define $S_2$ analogously.
In this manner we obtain the partition $S\cap V_i = S_1 \sqcup S_2 \sqcup \dots \sqcup S_l$. (See Figure \ref{fig:13partition} for an illustration.)
Since there is no $K_{3,4}$ subgraph of $G$ we may assume, for a contradiction, that all red pairs of neighbours of nodes are contained in at least two $4$-cycles. Hence the degree of any red vertex adjacent to a node is at least six.
It is possible that a vertex of $S_j$ shares exactly one neighbour with a vertex of $S_k$ for $j \neq k$. Let there be $p$ such vertices. All such vertices have degree at least 12.

For each $S_j$ there are $\frac{1}{2}|S_j|+2$ red vertices of degree at least six (this double counts the $p$ vertices of minimum degree 12), and at least $|S_j|+4$ blue vertices of degree at least 4.
Let $r$ and $b$ be the number of red and blue vertices respectively of $V_i$ not already counted.
Then, $|V_i| = \frac{1}{2}s_i + 2l - p + r + s_i + 4l + b + s_i$, the first four summands representing red vertices, the next three blue vertices, and the last summand the nodes (which are also blue).
Once again we turn to counting degrees. We have
\begin{equation}
    2|E_i| + |E^{bg}_{i}| \geq 6(\tfrac{1}{2}s_i + 2l -2p) +12p + 4r + 4(s_i +4l) +4b + 3s_i = 10s_i +28l + 4r + 4b.
\end{equation}
Also, since each $A_i$ has $|E^{bg}_{i}| \leq 2k_i - 2$ and $|V_i| = \frac{5}{2}s_i + 6l + r +b - p$, we have
\begin{equation}\label{Aiedgecount}
    2|E_i| + |E^{bg}_{i}| \leq 4|V_i| - 2k_i +2k_i - 2 = 10s_i + 24l + 4r +4b - 4p -2.
\end{equation}
This implies that $4l +4p \leq -2$, contradicting our assumption that all pairs of neighbours of a node are in two $C_4$.
If $V^g = \emptyset$, the proof is unchanged except that $|E^{bg}| = 0$ and $|E| = 2|V|-2$, so Equation (\ref{Aiedgecount}) is 
$$2|E| \leq 4|V| - 4 = 10s +24l +4r +4b -4p -4$$ 
and so $4l +4p \leq -4$ instead.

Finally, we need to check that this $C_4$-contraction preserves sparsity.
Label the vertices $v,v',t,t_1$ where $t,t_1$ are fixed and are contracted and labelled $t$ in the new graph, and let the final neighbour of $v$ be $t_2$.
Indeed, if a subgraph $H$ of the reduced graph breaks sparsity, then $H = (V_t, E_t)$ has $|E_t| \geq 2|V_t|-1$.
If $H$ is $C_s$-symmetric this must be $|E_t| \geq 2|V_t|$, and if $H$ is not symmetric, $H$ has at least one fixed vertex (namely $t$), so $V_t \cap \sigma V_t \neq \emptyset$, and by similar counting arguments to Remark \ref{critical cup and cap}, one of $H \cap H'$ and $H \cup H'$ has $|\Tilde{E}| = 2|\Tilde{V}|$.
We may therefore assume $H$ is $C_s$-symmetric.
Noting that $E_t \subset E$, we draw the following conclusions.

If $v,v' \in V_t$ then $i(V_t + \{t_1\}) = |E_t| + 2$ so $V_t + \{t_1\}$ breaks sparsity.
Else  we have $v,v' \notin V_t$ and $i(V_t + \{t_1,v,v'\}) = |E_t| + 4$ if $t_2 \notin V_t$ and $i(V_t + \{t_1,v,v'\}) = |E_t| + 6$ if $t_2 \in V_t$.
Therefore $V_t + \{t_1,v,v'\}$ breaks sparsity unless $t_2 \notin V_t$ and $|E_t| = 2|V_t|$.
However, in this final case $G[V_t + \{t_1,v,v'\}]$ is a $(2,2)$-$C_s$-tight proper subgraph of $G$ contradicting the conditions of the lemma.
\end{proof}

\subsection{Combinatorial characterisation}

We can now put together the combinatorial results of this section to prove the following recursive construction and then apply this result alongside the results of Section \ref{sec:ops} to deduce our characterisation of $C_2$-isostatic graphs.

\begin{thm}\label{thm:recursioncs}
A graph $(G,\phi)$ is $(2,2)$-$C_s$-tight if and only if $(G,\phi)$ can be generated from the graphs $(F_2,\phi_2),(W_5,\phi_4),(Wd(4,2),\phi_5),(F_1,\phi_1),(F_1,\phi_6),(K_{3,4},\phi_7)$ (these graphs were depicted in Figure \ref{BaseGraphs,C_s,cylinder}) by fixed-vertex 0-extensions, fixed-vertex-to-$C_4$ and symmetrised 0-extensions, 1-extensions, vertex-to-$K_{4}$, vertex-to-$C_{4}$, and vertex-to-$(2,2)$-$C_s$-tight operations.
\end{thm}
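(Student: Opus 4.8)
The plan is to follow the template of the proofs of Theorems \ref{thm:cirecursion} and \ref{thm:recursionc2}, now with the reduction moves adapted to the reflection. For the forward implication I would first record that each of the six graphs in Figure \ref{BaseGraphs,C_s,cylinder} is $(2,2)$-$C_s$-tight (an edge count together with the observation, evident from the pictures, that no edge is fixed by the indicated reflection), and then check that each listed operation preserves $(2,2)$-$C_s$-tightness. Every such operation adds exactly twice as many edges as vertices and, by construction, keeps the graph $C_s$-symmetric and free of fixed edges, so it suffices to verify that subgraphs of the enlarged graph are $(2,2)$-sparse. For the two kinds of $0$-extension, the $1$-extension, the vertex-to-$K_4$ and the vertex-to-$C_4$ operations this is exactly the computation carried out in Theorem \ref{thm:cirecursion}; for the vertex-to-$(2,2)$-$C_s$-tight operation it reduces to the elementary fact that gluing two $(2,2)$-tight graphs along a single vertex preserves $(2,2)$-tightness, any subgraph straddling the gluing vertex being sparse because each of its two parts is.

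For the converse I would argue by strong induction on $|V|$. Let $G$ be $(2,2)$-$C_s$-tight; if $G$ is one of the six base graphs there is nothing to prove, so assume it is not. Exactly as in Theorem \ref{thm:cirecursion}, $\delta(G)\in\{2,3\}$. If $\delta(G)=2$ then Lemma \ref{lem:0reductioncs} yields a smaller $(2,2)$-$C_s$-tight graph from which $G$ is recovered by a fixed-vertex or a symmetrised $0$-extension, so by induction we are done. Assume now $\delta(G)=3$. If $G$ has a proper $(2,2)$-$C_s$-tight subgraph $H$, then Lemma \ref{lem: 2,2 tight to vertex} produces a proper subgraph $F$ of $G$ with $H\le F$ and $G/F$ again $(2,2)$-$C_s$-tight; the identities $|E(G/F)|=|E(G)|-|E(F)|$ and $|V(G/F)|=|V(G)|-|V(F)|+1$ then force $|E(F)|=2|V(F)|-2$, so $F$ is itself $(2,2)$-$C_s$-tight. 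Both $F$ and $G/F$ have fewer vertices than $G$ and so are generatable by induction, and $G$ is obtained from $G/F$ by a vertex-to-$(2,2)$-$C_s$-tight operation.

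It remains to treat a $(2,2)$-$C_s$-tight $G$ with $\delta(G)=3$ that is not a base graph and has no proper $(2,2)$-$C_s$-tight subgraph. Pick a node $v$. If $N(v)\cap N(v')$ is one of the five configurations of $N(v)\cap N(v')$ covered by Lemma \ref{lem: 1ext Cs main}, that lemma offers, in each configuration, one of the following outcomes: a symmetrised $1$-reduction to a smaller $(2,2)$-$C_s$-tight graph, in which case $G$ arises by a symmetrised $1$-extension; or the appearance of a $(2,2)$-$C_s$-tight subgraph isomorphic to $Wd(4,2)$, $F_1$ or $W_5$, which cannot happen here since such a subgraph would be either a proper $(2,2)$-$C_s$-tight subgraph or equal to $G$, making $G$ a base graph; or, only when $N(v)\cap N(v')=\varnothing$, the conclusion $G[N[v]]\cong K_4$. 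In this last case the $K_4$ is not $C_s$-symmetric, so in place of contracting a symmetric subgraph we apply Lemmas \ref{lem:k4cs} and \ref{lem:c4contractcs}: either the symmetrised $K_4$-contraction, or a symmetrised $C_4$-contraction, produces a smaller $(2,2)$-$C_s$-tight graph, and $G$ is recovered by a vertex-to-$K_4$ or a vertex-to-$C_4$ operation. The only remaining possibility is that the neighbour set of every node of $G$ consists of three vertices fixed by $\sigma$; then, since $G$ is not $K_{3,4}$ and has no proper $(2,2)$-$C_s$-tight subgraph, Lemma \ref{lem: node all mirror} provides a symmetrised $C_4$-contraction (identifying a pair of fixed vertices) to a smaller $(2,2)$-$C_s$-tight graph, and once more $G$ is recovered by a vertex-to-$C_4$ operation. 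This exhausts all cases and completes the induction.

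I expect the main obstacle to be organisational rather than substantive: verifying that, for a node, the five configurations of $N(v)\cap N(v')$ treated by Lemma \ref{lem: 1ext Cs main} together with the configuration in which all neighbours are fixed are genuinely exhaustive, and, throughout, being careful about when a small $(2,2)$-$C_s$-tight substructure forces $G$ to \emph{be} a base graph versus when it can be contracted. The one genuine trap is that the $K_4$'s arising when $N(v)\cap N(v')=\varnothing$ are not themselves $C_s$-symmetric, so they must be removed by the symmetrised $K_4$ and $C_4$ contractions rather than by contracting a symmetric subgraph. The genuinely hard combinatorics --- the control of the red--blue components and of the set $S$ of nodes whose neighbours all lie on the mirror --- has already been absorbed into Lemmas \ref{lem: Vgempty} and \ref{lem: node all mirror}.
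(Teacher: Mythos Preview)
Your proposal is correct and follows essentially the same route as the paper's own proof: the forward direction by checking that the operations preserve $(2,2)$-$C_s$-tightness, and the converse by induction, first stripping off degree-$2$ vertices (Lemma \ref{lem:0reductioncs}), then contracting any proper $(2,2)$-$C_s$-tight subgraph via Lemma \ref{lem: 2,2 tight to vertex}, then handling the five configurations of $N(v)\cap N(v')$ through Lemma \ref{lem: 1ext Cs main}, the $K_4$ case through Lemmas \ref{lem:k4cs} and \ref{lem:c4contractcs}, and finally the all-neighbours-fixed case through Lemma \ref{lem: node all mirror}. Your organisational worry about exhaustiveness is easily discharged (a non-fixed node has $v\neq v'$ and $vv'\notin E$, so $N(v)\cap N(v')\subseteq N(v)$ has at most three elements, and the six listed patterns are exactly the $\sigma$-invariant subsets of size at most three), and your remark that the $K_4$ in case~(1) is not $C_s$-symmetric is correct since $v'\notin N[v]$.
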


\begin{proof}
Each of the base graphs are independent and tight and it can be seen that the fixed-vertex 0-extension, fixed-vertex-to-$C_4$ and symmetrised 0-extension, 1-extension, vertex-to-$K_4$, vertex-to-$C_4$ and vertex-to-$(2,2)$-$C_s$-tight operations preserve independence, tightness and do not introduce fixed edges.
By Theorem \ref{fixed e&v on cy}, any graph after such operations is $(2,2)$-$C_s$-tight.

Conversely, we show by induction that any $(2,2)$-$C_s$-tight graph $G$ can be generated from our base graphs.
Suppose the induction hypothesis holds for all graphs with $|V| < n$.
Now let $|V|= n$ and suppose $G$ is not isomorphic to one of the base graphs in Figure $\ref{BaseGraphs,C_s,cylinder}$.
Obviously any $(2,2)$-$C_s$-tight graph contains a vertex of degree 2 or 3. The former case is dealt with by Lemma \ref{deg(v)=2}.
We can also apply Lemma \ref{lem: 2,2 tight to vertex} to assume there are no $(2,2)$-$C_s$-tight proper subgraphs of $G$.
Hence suppose $\delta(G)=3$ and $v$ is a vertex of minimum degree.
There are 6 cases depending on the closed neighbourhood of $v$, namely with labelling from this section, $N(v)\cap N(v') \in \{\emptyset,\{t\},\{x,x'\}, \{t_1,t_2\}, \{t,x,x'\}, \{t_1,t_2,t_3\}\}$.
By Lemmas \ref{lem: deg(v)=3, cap empty} ($\emptyset$), \ref{deg 3 C_2 t} ($\{t\}$), \ref{deg(v)=3, x=y'} ($\{x,x'\}$ and $\{t_1,t_2\}$), \ref{deg 3 C_2 t x x'} ($\{t,x,x'\}$) we see that the only remaining blocks to reducing any given node is $K_4$ or all three neighbours being fixed vertices.
If $G$ has a degree 3 that is contained in a $K_4$ then by Lemmas \ref{deg(v)=3, K4} and \ref{deg(v)=3, cycle contraction} we may assume that the $K_4$ and its symmetric copy intersect non-trivially. Since there are no $(2,2)$-$C_s$-tight proper subgraphs of $G$ this gives a contradiction.
Finally we may suppose that all nodes have all neighbours on the mirror, and by Lemma \ref{lem: node all mirror} there exists a $C_4$ contraction, completing the proof.
\end{proof}

\begin{thm}\label{thm:maincs}
A graph $(G,\phi)$ is $C_s$-isostatic if and only if it is $(2,2)$-$C_s$-tight.
\end{thm}

\begin{proof}
Since $C_s$-isostatic graphs are $(2,2)$-tight, necessity follows from Theorem \ref{fixed e&v on cy}.
It is easy to check using any computer algebra package that the base graphs depicted in Figure \ref{BaseGraphs,C_s,cylinder} are $C_s$-isostatic.
Hence the sufficiency follows from Theorem \ref{thm:recursioncs}, Lemmas \ref{0-ext rigid}, \ref{1-ext rigid}, \ref{minrigidK_4}, \ref{2addition rigid}, \ref{cycle rigid}, and Remarks \ref{rem: Cs fixed 0-ext}, \ref{rem: C4 mirror} by induction on $|V|$.
\end{proof}

\section{Concluding remarks} \label{sec:final}

It is classical \cite{N-W} that every $(2,2)$-tight graph can be decomposed into the edge-disjoint union of two spanning trees, and such packing or decomposition results are often of interest in combinatorial optimisation \cite{Fra}.
We derive symmetric decomposition results for $C_2$, $C_i$ and $C_s$ in the following corollaries. 

\begin{cor}
A graph $(G,\phi)$ is $C_2$-isostatic if and only if it is the edge-disjoint union of two $\mathbb{Z}_2$-symmetric spanning trees $(T_1,\phi), (T_2,\phi)$.
\end{cor}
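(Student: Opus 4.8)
The plan is to deduce this from Theorem~\ref{thm:mainc2} together with the recursive construction of Theorem~\ref{thm:recursionc2}. For the implication ``decomposition $\Rightarrow$ $C_2$-isostatic'', suppose $E(G)=E(T_1)\sqcup E(T_2)$ with $T_1,T_2$ spanning trees each invariant under $c_2'$. Counting edges gives $|E(G)|=2(|V(G)|-1)$, and for a proper subgraph $F$ the sets $E(F)\cap E(T_i)$ are forests on $V(F)$, so $|E(F)|\le 2(|V(F)|-1)$; since $c_2'$ preserves $E(T_1)$ it preserves $E(G)$, so $G$ is $(2,2)$-tight and $C_2$-symmetric. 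To see that the fixed-edge/fixed-vertex counts are the ones allowed by $(2,2)$-$C_2$-tightness, I would use the structure of a nontrivial involution of a finite tree: the vertices of $T_i$ fixed by $c_2'$ span a subtree $S_i$, and if $S_i\neq\varnothing$ then $T_i$ has no $c_2'$-reversed edge, whereas if $S_i=\varnothing$ then $T_i$ has exactly one $c_2'$-reversed edge. As each $T_i$ is spanning, $S_1$ and $S_2$ have the same vertex set, namely the set of $c_2'$-fixed vertices of $G$; a short case distinction (using that $G$ is simple and $(2,2)$-sparse, so two fixed vertices are joined by at most one edge) then forces $G$ to have either one fixed vertex and no fixed edge, or no fixed vertex and exactly two fixed edges. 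Hence $G$ is $(2,2)$-$C_2$-tight and, by Theorem~\ref{thm:mainc2}, $C_2$-isostatic.

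For the converse, by Theorem~\ref{thm:mainc2} it suffices to prove that every $(2,2)$-$C_2$-tight graph is the edge-disjoint union of two $C_2$-symmetric spanning trees, and I would do this by induction on $|V(G)|$ using Theorem~\ref{thm:recursionc2}. The four base graphs of Figure~\ref{BaseGraphs,C_2,cylinder} are decomposed by hand: for the two with two fixed edges one places a fixed edge in each tree and completes each with $c_2'$-orbits of further edges into a spanning tree (for $K_4$ with fixed edges $xy,zw$, take $T_1=\{xy\}$ together with one orbit-pair and $T_2=\{zw\}$ with the other); for $W_5$ and $Wd(4,2)$ (the graphs with a fixed vertex) one reads off a symmetric decomposition directly (for $W_5$, two opposite spokes plus two opposite rim edges form one tree and the rest the other). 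For the inductive step one shows that each operation of Theorem~\ref{thm:recursionc2} — the symmetrised $0$-extension, $1$-extension, vertex-to-$K_4$ and vertex-to-$C_4$ (and the double $1$-extension of Lemma~\ref{double 1-ext rig}, should it be needed in that construction) — carries a $C_2$-symmetric $2$-tree decomposition of the smaller graph to one of the larger. The $0$-extension adds the new degree-two orbit $\{v,v'\}$ with one incident edge-orbit going to $T_1$ and the other to $T_2$, a pendant addition preserving both trees and their symmetry; the $1$-extension removes a size-two edge-orbit (which by symmetry lies entirely in one tree, say $T_1$), re-routes it through $v$ and $v'$ within $T_1$, and places the remaining orbit $\{vz_0,v'z_0'\}$ into $T_2$.

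The vertex-to-$K_4$ and vertex-to-$C_4$ steps are where the real work lies. Here the idea is that the inserted gadget (a $K_4$, or a $4$-cycle with two chords) has on its own a $C_2$-symmetric decomposition into two forests, and one must choose such a decomposition so that, after attaching these forests to the ``blown-up'' forests obtained from $T_1$ and $T_2$ by splitting the relevant vertex, both resulting graphs are spanning trees. The existence of a compatible choice follows from the matroid-union/Nash--Williams theorem \cite{N-W} applied to an auxiliary graph in which everything outside the gadget is contracted, and equivariance of the choice is automatic because the operation is symmetrised — the gadget is either swapped with a disjoint $c_2'$-image or (the delicate subcase) fixed by $c_2'$ and then split as in the base-case analysis.

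The main obstacle is precisely to enforce these two requirements simultaneously in the vertex-to-$K_4$/vertex-to-$C_4$ case: each tree must remain connected after the blow-up, which depends on how the edges at the split vertex were distributed between $T_1$ and $T_2$, and the whole assignment of the new internal edges to $T_1$ versus $T_2$ must be $c_2'$-equivariant, the hardest instance being when the split vertex lies on the $c_2'$-axis so that the gadget maps to itself. (A tempting shortcut — averaging over all $2$-tree decompositions under the $c_2'$-action — only produces a decomposition $\{T_1,c_2'T_1\}$ that is symmetric as an unordered pair but need not have individually symmetric members, and converting it seems no easier than the recursive argument, so I would pursue the latter.)
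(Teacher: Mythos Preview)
Your overall strategy matches the paper's: prove the decomposition direction by counting and tree-involution structure, and prove the converse by induction along the recursive construction of Theorem~\ref{thm:recursionc2}, checking that each operation carries a symmetric two-tree decomposition forward. Your sufficiency argument is in fact more careful than the paper's, which simply calls that direction ``straightforward''; your tree-involution analysis correctly forces the fixed-vertex/fixed-edge counts.

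Where your proposal is incomplete is exactly the step you flag as the hard one, the vertex-to-$C_4$ case. Your appeal to Nash--Williams on a contracted auxiliary graph is too vague to count as a proof, and in fact no matroid-union machinery is needed. The paper's argument is elementary: if $wv_1$ and $wv_2$ have different colours, colour $uv_1,uv_2$ the same way and keep each $v_iw$ or $v_iu$ ($i\geq 3$) the colour $v_iw$ had; if $wv_1,wv_2$ are both red, colour three of $wv_1,wv_2,uv_1,uv_2$ red and one blue. If every choice of the single blue edge created a blue cycle, there would be a blue path through all four of $w,u,v_1,v_2$ even when all four edges are red, but then the blue subgraph of $G^+$ has only $|V(G)|-1=|V(G^+)|-3$ edges and cannot connect $|V(G^+)|$ vertices, contradicting that the blue edges formed a spanning tree of $G$. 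So some choice works, and one applies the symmetric choice on the image $C_4$.

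Your worry about a self-symmetric gadget (the split vertex lying on the axis) does not arise here: in a $(2,2)$-$C_2$-tight graph the fixed vertex, if any, has even degree (its neighbours are permuted by an involution with no fixed point among them, else there would be a second fixed vertex), so a node is never fixed and the vertex-to-$C_4$ operation in Theorem~\ref{thm:recursionc2} is always performed at a genuine two-element orbit $\{w,w'\}$; likewise Lemma~\ref{lem:k4c2} only contracts $K_4$'s with $X\neq X'$, so vertex-to-$K_4$ is never at a fixed vertex either.
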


\begin{proof}
To show sufficiency, note that $(T_1,\phi), (T_2,\phi)$ can be labelled so that if $u$ is the symmetric copy of $v$ in $T_1$, then they are symmetric copies in $T_2$.
By parity, each tree will either have one fixed vertex, which will be the same vertex in $G$, or one fixed edge.
Since the spanning trees are edge-disjoint, $G$ will either have one fixed vertex and no fixed edge, or no fixed vertex and two fixed edges.
Further, it is known that the edge-disjoint union of two spanning trees is $(2,2)$-tight.
The fact that $G$ is $(2,2)$-$C_2$-tight now follows from the $C_2$-symmetry of the two spanning trees.

We prove the necessity of the symmetric decomposition by applying Theorem \ref{thm:recursionc2}.
It will be convenient to think of the edges of the two trees as being coloured red and blue respectively.
We illustrate appropriate colourings of the base graphs in Figure \ref{ColouredBaseGraphs,C_2}.
To check that the operations preserve the coloured trees, we describe the edge colourings for each operation.

Firstly, the symmetrised 0-extension has one edge coloured red and the other blue, with the symmetric edges coloured the same as their preimage.
For a symmetrised 1-extension, say $xy$ and $x'y'$ in $G$ are deleted and the new vertices added in $G^+$ are $v$ and $v'$, then colour $vx,vy,v'x',v'y'$ in $G^+$ the colour of $xy$ in $G$, and set the third edge incident to $v$ (resp. $v'$) as the other colour.
A double 1-extension can be thought of in the same way; if $vv'$ was deleted in $G$, the path containing $v,w,w',v'$ will be coloured the same as $ww'$.
In a symmetrised vertex-to-$K_4$ operation, the two new $K_4$ subgraphs should be coloured as in Figure \ref{ColouredBaseGraphs,C_2} in such a way to preserve the symmetry.
A vertex-to-$(2,2)$-$C_2$-tight subgraph operation replaces a fixed vertex with a $(2,2)$-$C_2$-tight subgraph.
As seen in Lemma \ref{lem: 2,2 tight to vertex}, the new subgraph can be constructed from either $(W_5,\phi_4)$ or $(Wd(4,2)\phi_5)$ with a series of symmetrised 0-extensions. We therefore colour the subgraph starting with the $(W_5,\phi_4)$ or $(Wd(4,2),\phi_5)$ copy, and colour the edges of the 0-extensions as previously described.

Finally, we note that we do not perform fixed-vertex-to-$C_4$ operations when considering $(2,2)$-$C_2$-tight graphs. A symmetrised vertex-to-$C_4$ operation can have two possibilities.
The path of length 2 on $v_1,w,v_2$ (with $w$ to be split into $w$ and $u$ in the operation, $N_G(w)=v_1,v_2,\dots,v_r$ and $v_1,v_2$ becoming adjacent to both) can be coloured with both edges the same colour, or each edge different. In both cases, colour the edges of $\hat{G} = G^{+} \setminus \{wv_1, wv_2, uv_1, uv_2, w'v_1', w'v_2', u'v_1', u'v_2'\}$ as in $G$.
Now suppose first that $wv_1$ is red and $wv_2$ is blue in $G$.
Then in $G^+$, we colour $wv_1, uv_1$ red and $wv_2, uv_2$ blue, and $\mu v_i$ the same colour as $wv_i$ for all $\mu \in \{w,u\}$ and $i \in \{3,\dots,r\}$ (colouring the edges in the orbit analogously).

Hence we may suppose both $wv_1$ and $wv_2$ are coloured red in $G$.
We claim that for any arrangement of the edges from $v_3,\dots,v_r$ to either $w$ or $u$ in $G^+$, there is a colouring in $G^+$ of $wv_1, wv_2, uv_1, uv_2$ with three red edges and one blue edge that will result in $G^+$ being the edge-disjoint union of two $C_2$-symmetric spanning trees.
Note that such a colouring gives $|V(G^+)|-1$ blue and red edges.
Necessarily, $w$ and $u$ are in different connected components of the $\hat{G}$ induced by the blue edges, say $X_w$ and $X_u$ respectively.
The vertex $v_1$ will be in one of these components, without loss of generality say $X_w$.
Colouring the edge $uv_1$ blue will connect these two components and hence give a blue spanning tree. Since $wv_1$ and $wv_2$ are coloured red in $G$ it is easy to see that colouring the edges $uv_2,wv_1,wv_2$ red in $G^+$ will produce a red spanning tree.
Applying this colouring symmetrically completes the proof.
\end{proof}

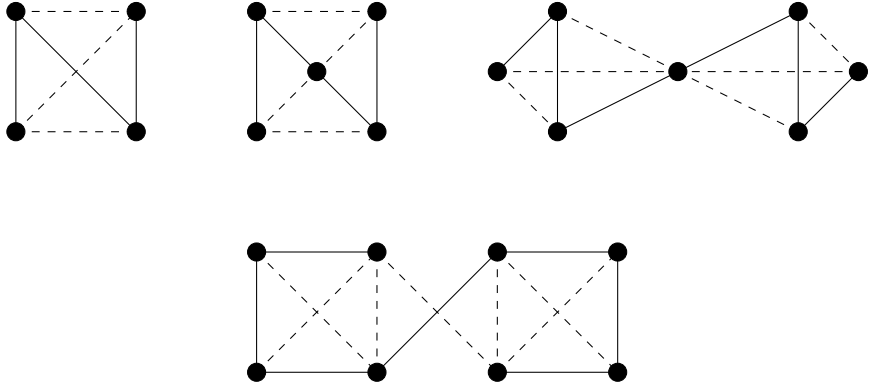
\begin{figure}[ht]
    \centering
    \begin{tikzpicture}
  [scale=.8,auto=left]
  
  \coordinate (n21) at (0,4);
  \coordinate (n22) at (0,6);
  \coordinate (n23) at (2,4);
  \coordinate (n24) at (2,6);
 \draw[fill=black] (n21) circle (0.15cm);
 \draw[fill=black] (n22) circle (0.15cm);
 \draw[fill=black] (n23) circle (0.15cm);
 \draw[fill=black] (n24) circle (0.15cm);

  \foreach \from/\to in {n21/n22,n23/n24,n22/n23} 
    \draw (\from) -- (\to);
  \foreach \from/\to in {n21/n23,n22/n24,n21/n24} 
    \draw[dashed] (\from) -- (\to);
  
  \coordinate (n1) at (4,4);
  \coordinate (n2) at (4,6);
  \coordinate (n3) at (6,4);
  \coordinate (n4) at (6,6);
  \coordinate (n5) at (5,5);
 \draw[fill=black] (n1) circle (0.15cm);
 \draw[fill=black] (n2) circle (0.15cm);
 \draw[fill=black] (n3) circle (0.15cm);
 \draw[fill=black] (n4) circle (0.15cm);
 \draw[fill=black] (n5) circle (0.15cm);

  \foreach \from/\to in {n1/n2,n3/n4,n2/n5,n3/n5} 
    \draw (\from) -- (\to);
  \foreach \from/\to in {n1/n3,n2/n4,n1/n5,n4/n5} 
    \draw[dashed] (\from) -- (\to);
    
  \coordinate (n11) at (8,5);
  \coordinate (n12) at (9,4);
  \coordinate (n13) at (9,6);
  \coordinate (n14) at (13,4);
  \coordinate (n15) at (13,6);
  \coordinate (n16) at (14,5);
  \coordinate (n17) at (11,5);
 \draw[fill=black] (n11) circle (0.15cm);
 \draw[fill=black] (n12) circle (0.15cm);
 \draw[fill=black] (n13) circle (0.15cm);
 \draw[fill=black] (n14) circle (0.15cm);
 \draw[fill=black] (n15) circle (0.15cm);
 \draw[fill=black] (n16) circle (0.15cm);
 \draw[fill=black] (n17) circle (0.15cm);

  \foreach \from/\to in {n11/n13,n12/n13,n14/n15,n14/n16,n12/n17,n15/n17} 
    \draw (\from) -- (\to);
  \foreach \from/\to in {n11/n12,n15/n16,n11/n17,n13/n17,n14/n17,n16/n17} 
    \draw[dashed] (\from) -- (\to);

  \coordinate (n41) at (4,0);
  \coordinate (n42) at (4,2);
  \coordinate (n43) at (6,0);
  \coordinate (n44) at (6,2);
  \coordinate (n45) at (8,0);
  \coordinate (n46) at (8,2);
  \coordinate (n47) at (10,0);
  \coordinate (n48) at (10,2);  
  
 \draw[fill=black] (n41) circle (0.15cm);
 \draw[fill=black] (n42) circle (0.15cm);
 \draw[fill=black] (n43) circle (0.15cm);
 \draw[fill=black] (n44) circle (0.15cm);
 \draw[fill=black] (n45) circle (0.15cm);
 \draw[fill=black] (n46) circle (0.15cm);
 \draw[fill=black] (n47) circle (0.15cm);
 \draw[fill=black] (n48) circle (0.15cm);

  \foreach \from/\to in {n41/n42,n41/n43,n42/n44,n43/n46,n45/n47,n46/n48,n47/n48} 
    \draw (\from) -- (\to);
  \foreach \from/\to in {n41/n44,n42/n43,n43/n44,n44/n45,n45/n46,n45/n48,n46/n47} 
    \draw[dashed] (\from) -- (\to);
        
\end{tikzpicture}
    \caption{The $C_2$-symmetric  base graphs decomposed into two $C_2$-symmetric edge disjoint trees, coloured red and blue (depicted with dashed and solid edges respectively).}
    \label{ColouredBaseGraphs,C_2}
\end{figure}

With a similar proof (see \cite{Wall} for details) we can establish the following analogous result.
We illustrate appropriate colourings of the base graphs in Figures \ref{ColouredBaseGraphs, C_i} and \ref{ColouredBaseGraphs, C_s}.

\begin{cor}
For $\tau(\Gamma) \in \{C_i, C_s\}$, a $\Gamma$-symmetric graph $(G,\phi)$ is $\tau(\Gamma)$-isostatic if and only if it is the edge-disjoint union of two spanning trees $T_1,T_2$, where $\phi(\gamma)T_1=T_2$ for the non-trivial element $\gamma$ of $\Gamma$.
\end{cor}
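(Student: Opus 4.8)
The plan is to mimic the proof of the previous corollary almost line for line, now running the induction along the recursive constructions of Theorem~\ref{thm:cirecursion} (for $C_i$) and Theorem~\ref{thm:recursioncs} (for $C_s$) instead of Theorem~\ref{thm:recursionc2}, and bookkeeping the two tree-colours so that the generator $\gamma$ of $\tau(\Gamma)$ \emph{interchanges} the red and blue trees rather than preserving each of them. The one structural difference from the $C_2$ case is exactly this: a $(2,2)$-$C_i$-tight or $(2,2)$-$C_s$-tight graph has no $\gamma$-fixed edge, so a tree cannot be $\gamma$-invariant, and the natural decomposition is into a tree together with its $\gamma$-image.

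For sufficiency I would argue as before: if $G=T_1\cup T_2$ is an edge-disjoint union of two spanning trees with $\gamma(T_1)=T_2$, then $G$ is $(2,2)$-tight (the edge-disjoint union of two spanning trees always is, since each tree meets any vertex subset in a forest), $G$ is $\tau(\Gamma)$-symmetric since $\gamma$ permutes $E(T_1)\cup E(T_2)$, and $G$ has no $\gamma$-fixed edge, because an edge $e$ fixed by $\gamma$ with $e\in T_1$ would force $\gamma(e)=e\in T_2$, violating edge-disjointness. Hence $G$ is $(2,2)$-$\tau(\Gamma)$-tight and is $\tau(\Gamma)$-isostatic by Theorem~\ref{thm:mainci} or Theorem~\ref{thm:maincs}.

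For necessity I would induct along the recursive construction. The base graphs are handled by exhibiting explicit colourings (Figures~\ref{ColouredBaseGraphs, C_i} and~\ref{ColouredBaseGraphs, C_s}). For the inductive step I would reuse the colouring recipes from the previous corollary for symmetrised $0$-extensions, $1$-extensions, vertex-to-$K_4$ and vertex-to-$C_4$, with the single modification that the $\gamma$-image of a red edge is coloured blue and vice versa; the fixed-vertex $0$-extension available for $C_s$ adds a vertex with two incident edges swapped by $\sigma$, which I colour one red and one blue. The two operations with no $C_2$ analogue I would handle by gluing the inductive decompositions: for joining two $(2,2)$-$C_i$-tight graphs by a $\varphi$-orbit of two edges $\{e_1,e_2\}$ I set $T_1=T_1^1\cup T_1^2\cup\{e_1\}$ and $T_2=T_2^1\cup T_2^2\cup\{e_2\}$, which are edge-disjoint spanning trees with $\varphi(T_1)=T_2$ because $e_1$ is a bridge between the two pieces; for a vertex-to-$(2,2)$-$C_s$-tight expansion I would attach the two trees of the inserted graph to the two trees at the contracted vertex in a $\sigma$-equivariant fashion, the edge counts matching as in the definition of the operation.

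The main obstacle is, as in the $C_2$ corollary, the vertex-to-$C_4$ operation in the case where the two edges doubled at the split vertex $w$ have the same colour: one must show that for every distribution of the remaining neighbours between $w$ and the new vertex $u$ there is a choice of three red and one blue (or three blue and one red) among $wv_1,wv_2,uv_1,uv_2$ that creates no monochromatic cycle. I would run the monochromatic-cycle counting argument of the previous corollary on the $w$-side of the orbit and let $\gamma$-equivariance force the colours on the $w'$-side; when $w$ is fixed by $\sigma$ (so $u$ is too, cf.\ Remark~\ref{rem: C4 mirror}) the local gadget is self-symmetric and the same counting applies. These verifications are routine and, as the excerpt already indicates, I would refer to \cite{Wall} for the details.
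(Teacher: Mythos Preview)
Your proposal is correct and follows essentially the same approach as the paper: the paper's proof of this corollary consists of the single sentence ``With a similar proof (see \cite{Wall} for details) we can establish the following analogous result'' together with the coloured base-graph figures, so your plan to rerun the inductive argument of the $C_2$ corollary along the recursive constructions of Theorems~\ref{thm:cirecursion} and~\ref{thm:recursioncs}, with $\gamma$ now interchanging rather than preserving the two colours, is exactly what is intended.
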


\begin{figure}[ht]
    \centering
    \begin{tikzpicture}
  [scale=.8,auto=left]
  
  \coordinate (n1) at (0,1);
  \coordinate (n2) at (1,0);
  \coordinate (n3) at (1,2);
  \coordinate (n4) at (3,0);
  \coordinate (n5) at (3,2);
  \coordinate (n6) at (4,1);
  
 \draw[fill=black] (n1) circle (0.15cm);
 \draw[fill=black] (n2) circle (0.15cm);
 \draw[fill=black] (n3) circle (0.15cm);
 \draw[fill=black] (n4) circle (0.15cm);
 \draw[fill=black] (n5) circle (0.15cm);
 \draw[fill=black] (n6) circle (0.15cm);

  \foreach \from/\to in {n1/n2,n1/n4,n2/n3,n3/n5,n4/n6} 
    \draw (\from) -- (\to);
  \foreach \from/\to in {n1/n3,n2/n4,n3/n6,n4/n5,n5/n6} 
    \draw[dashed] (\from) -- (\to);
    
  \coordinate (n11) at (6,0);
  \coordinate (n12) at (6,2);
  \coordinate (n13) at (8,0);
  \coordinate (n14) at (8,2);
  \coordinate (n15) at (10,0);
  \coordinate (n16) at (10,2);
  \coordinate (n17) at (12,0);
  \coordinate (n18) at (12,2);  
  
 \draw[fill=black] (n11) circle (0.15cm);
 \draw[fill=black] (n12) circle (0.15cm);
 \draw[fill=black] (n13) circle (0.15cm);
 \draw[fill=black] (n14) circle (0.15cm);
 \draw[fill=black] (n15) circle (0.15cm);
 \draw[fill=black] (n16) circle (0.15cm);
 \draw[fill=black] (n17) circle (0.15cm);
 \draw[fill=black] (n18) circle (0.15cm);

  \foreach \from/\to in {n11/n12,n11/n13,n12/n14,n13/n15,n15/n16,n15/n18,n16/n17} 
    \draw (\from) -- (\to);
  \foreach \from/\to in {n11/n14,n12/n13,n13/n14,n14/n16,n15/n17,n16/n18,n17/n18} 
    \draw[dashed] (\from) -- (\to);
    
\end{tikzpicture}
    \caption{The $C_i$-symmetric base graphs decomposed into two edge disjoint spanning trees, coloured red and blue (depicted with dashed and solid edges respectively), which are symmetric copies of one another.}
    \label{ColouredBaseGraphs, C_i}
\end{figure}
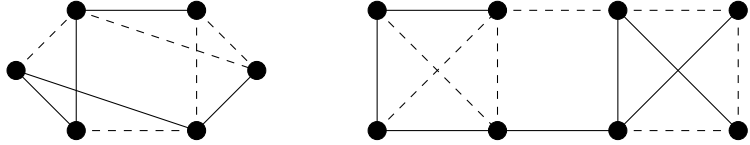

\begin{figure}[ht]
    \centering
    \begin{tikzpicture}
  [scale=.8,auto=left]
  
  \coordinate (n1) at (1,-3);
  \coordinate (n2) at (2,-4);
  \coordinate (n3) at (2,-2);
  \coordinate (n4) at (4,-4);
  \coordinate (n5) at (4,-2);
  \coordinate (n6) at (5,-3);
 \draw[fill=black] (n1) circle (0.15cm);
 \draw[fill=black] (n2) circle (0.15cm);
 \draw[fill=black] (n3) circle (0.15cm);
 \draw[fill=black] (n4) circle (0.15cm);
 \draw[fill=black] (n5) circle (0.15cm);
 \draw[fill=black] (n6) circle (0.15cm);

  \foreach \from/\to in {n1/n2,n1/n4,n1/n5,n2/n3,n5/n6} 
    \draw (\from) -- (\to);
  \foreach \from/\to in {n1/n3,n2/n6,n3/n6,n4/n5,n4/n6} 
    \draw[dashed] (\from) -- (\to);

  \coordinate (n11) at (0,0);
  \coordinate (n12) at (0,2);
  \coordinate (n13) at (2,0);
  \coordinate (n14) at (2,2);
  \coordinate (n15) at (4,0);
  \coordinate (n16) at (4,2);
  \coordinate (n17) at (6,0);
  \coordinate (n18) at (6,2);  
  
 \draw[fill=black] (n11) circle (0.15cm);
 \draw[fill=black] (n12) circle (0.15cm);
 \draw[fill=black] (n13) circle (0.15cm);
 \draw[fill=black] (n14) circle (0.15cm);
 \draw[fill=black] (n15) circle (0.15cm);
 \draw[fill=black] (n16) circle (0.15cm);
 \draw[fill=black] (n17) circle (0.15cm);
 \draw[fill=black] (n18) circle (0.15cm);

  \foreach \from/\to in {n11/n12,n11/n13,n12/n14,n13/n16,n15/n16,n15/n18,n16/n17} 
    \draw (\from) -- (\to);
  \foreach \from/\to in {n11/n14,n15/n17,n17/n18,n12/n13,n13/n14,n14/n15,n16/n18} 
    \draw[dashed] (\from) -- (\to);
    
  \coordinate (n21) at (12,1);
  \coordinate (n22) at (13,0);
  \coordinate (n23) at (13,2);
  \coordinate (n24) at (17,0);
  \coordinate (n25) at (17,2);
  \coordinate (n26) at (18,1);
  \coordinate (n27) at (15,1);
 \draw[fill=black] (n21) circle (0.15cm);
 \draw[fill=black] (n22) circle (0.15cm);
 \draw[fill=black] (n23) circle (0.15cm);
 \draw[fill=black] (n24) circle (0.15cm);
 \draw[fill=black] (n25) circle (0.15cm);
 \draw[fill=black] (n26) circle (0.15cm);
 \draw[fill=black] (n27) circle (0.15cm);

  \foreach \from/\to in {n21/n23,n24/n25,n22/n27,n25/n27,n21/n22,n26/n27} 
    \draw (\from) -- (\to);
  \foreach \from/\to in {n22/n23,n24/n26,n25/n26,n21/n27,n23/n27,n24/n27} 
    \draw[dashed] (\from) -- (\to);
    
  \coordinate (n31) at (13,-4);
  \coordinate (n32) at (13,-2);
  \coordinate (n33) at (15,-5);
  \coordinate (n34) at (15,-3);
  \coordinate (n35) at (15,-1);
  \coordinate (n36) at (17,-4);
  \coordinate (n37) at (17,-2);
 \draw[fill=black] (n31) circle (0.15cm);
 \draw[fill=black] (n32) circle (0.15cm);
 \draw[fill=black] (n33) circle (0.15cm);
 \draw[fill=black] (n34) circle (0.15cm);
 \draw[fill=black] (n35) circle (0.15cm);
 \draw[fill=black] (n36) circle (0.15cm);
 \draw[fill=black] (n37) circle (0.15cm);

  \foreach \from/\to in {n31/n33,n31/n34,n32/n34,n32/n35,n35/n36,n33/n37} 
    \draw (\from) -- (\to);
  \foreach \from/\to in {n31/n35,n32/n33,n34/n36,n34/n37,n35/n37,n33/n36} 
    \draw[dashed] (\from) -- (\to);
    
  \coordinate (n41) at (7,-4);
  \coordinate (n42) at (7,-2);
  \coordinate (n43) at (9,-4);
  \coordinate (n44) at (9,-2);
  \coordinate (n45) at (11,-4);
  \coordinate (n46) at (11,-2);
 \draw[fill=black] (n41) circle (0.15cm);
 \draw[fill=black] (n42) circle (0.15cm);
 \draw[fill=black] (n43) circle (0.15cm);
 \draw[fill=black] (n44) circle (0.15cm);
 \draw[fill=black] (n45) circle (0.15cm);
 \draw[fill=black] (n46) circle (0.15cm);

  \foreach \from/\to in {n41/n42,n41/n43,n42/n44,n44/n45,n43/n46} 
    \draw (\from) -- (\to);
  \foreach \from/\to in {n41/n44,n42/n43,n43/n45,n44/n46,n45/n46} 
    \draw[dashed] (\from) -- (\to);
    
  \coordinate (n51) at (8,0);
  \coordinate (n52) at (8,2);
  \coordinate (n53) at (9,2.5);
  \coordinate (n54) at (10,0);
  \coordinate (n55) at (10,2);
 \draw[fill=black] (n51) circle (0.15cm);
 \draw[fill=black] (n52) circle (0.15cm);
 \draw[fill=black] (n53) circle (0.15cm);
 \draw[fill=black] (n54) circle (0.15cm);
 \draw[fill=black] (n55) circle (0.15cm);

  \foreach \from/\to in {n51/n53,n52/n53,n52/n54,n54/n55} 
    \draw (\from) -- (\to);
  \foreach \from/\to in {n51/n52,n51/n55,n53/n55,n53/n54} 
    \draw[dashed] (\from) -- (\to);

\end{tikzpicture}
    \caption{The $C_s$-symmetric base graphs decomposed into two edge disjoint spanning trees, coloured red and blue (depicted with dashed and solid edges respectively), which are symmetric copies of one another.}
    \label{ColouredBaseGraphs, C_s}
\end{figure}
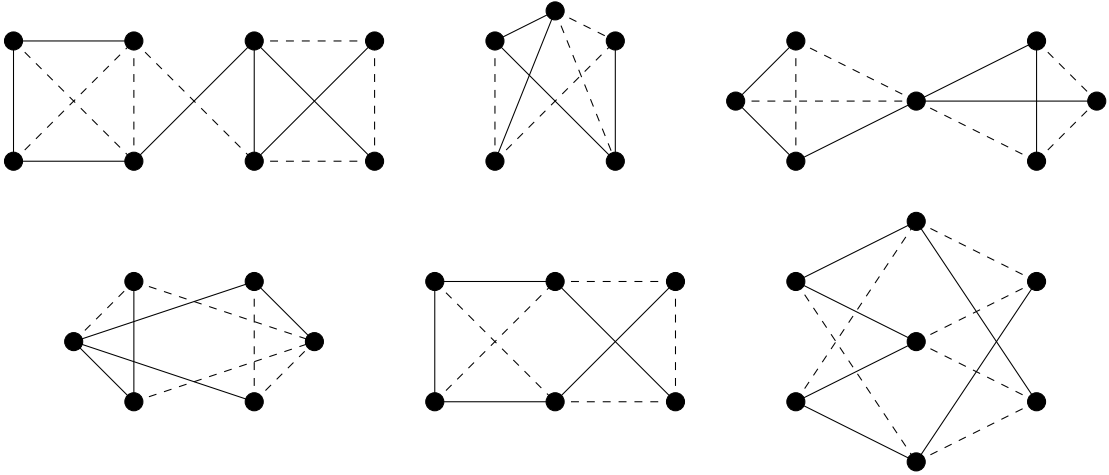

An immediate consequence of  Theorems~\ref{thm:mainci}, \ref{thm:mainc2}, and \ref{thm:maincs} is that there are efficient, deterministic algorithms for determining whether a given graph is  $C_i$-, $C_2$-, or $C_s$-isostatic since the $(2,2)$-sparsity counts can be checked using the standard pebble game algorithm \cite{Hen,ls} and  the additional symmetry conditions for the number of fixed vertices and edges can be checked in constant time, from the group action $\phi$.

The next obvious challenge would be to extend the characterisations in Theorems \ref{thm:mainci}, \ref{thm:mainc2}, and \ref{thm:maincs} to deal with the remaining groups described in Theorem \ref{fixed e&v on cy}.
While it is conceivable these groups could be handled by an elaboration of our techniques there will be many more cases and technical details to consider due to the multiple symmetry conditions. Moreover the corresponding problems in the Euclidean plane \cite{schulze,BS4} remain open, providing a note of caution.

Analogous to the situation for frameworks in the Euclidean plane, an infinitesimally rigid $C_2$-symmetric framework on $\Y$ does not necessarily have a spanning isostatic subframework with the same symmetry.  An example is depicted in Figure \ref{fig C2 no iso sub}. 
Thus, for symmetric frameworks on $\Y$, infinitesimal rigidity can in general
not be characterised in terms of symmetric isostatic subframeworks. To analyse symmetric frameworks for infinitesimal rigidity, rather than isostaticity, a different approach (similar to the one in \cite{ST}, for example) may be needed. Surprisingly, it turns out that for $C_i$ and $C_s$ the situation is special and a simplified version of the approach in \cite{ST} may be applied 
in combination with Theorems \ref{thm:mainci} and \ref{thm:maincs} 
to deduce the following characterisation of symmetric infinitesimal rigidity. We refer the reader to \cite{Wall} for the proof.

\begin{figure}[ht]
    \centering
    \begin{tikzpicture}
  [scale=.8,auto=left]
    
  \coordinate (n1) at (6,0);
  \coordinate (n2) at (6,2);
  \coordinate (n3) at (8,0);
  \coordinate (n4) at (8,2);
  \coordinate (n5) at (10,0);
  \coordinate (n6) at (10,2);
  \coordinate (n7) at (12,0);
  \coordinate (n8) at (12,2);  
  
 \draw[fill=black] (n1) circle (0.15cm);
 \draw[fill=black] (n2) circle (0.15cm);
 \draw[fill=black] (n3) circle (0.15cm);
 \draw[fill=black] (n4) circle (0.15cm);
 \draw[fill=black] (n5) circle (0.15cm);
 \draw[fill=black] (n6) circle (0.15cm);
 \draw[fill=black] (n7) circle (0.15cm);
 \draw[fill=black] (n8) circle (0.15cm);

  \foreach \from/\to in {n1/n2,n1/n3,n1/n4,n2/n3,n2/n4,n3/n5,n3/n4,n4/n6,n5/n6,n5/n7,n5/n8,n6/n7,n6/n8,n7/n8,n2/n5,n4/n7} 
    \draw (\from) -- (\to);
    
\end{tikzpicture}
    \caption{A $C_2$-rigid graph where no vertex or edge is fixed by the half-turn. There is no $(2,2)$-$C_2$-tight spanning subgraph.}
    \label{fig C2 no iso sub}
\end{figure}
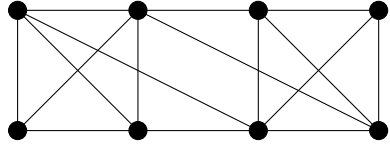

\begin{thm}
For $\tau(\Gamma) \in \{C_i, C_s\}$, a graph $(G,\phi)$ is $\tau(\Gamma)$-infinitesimally rigid if and only if $(G,\phi)$ has a spanning subgraph $H$ that is $(2,2)$-$\tau(\Gamma)$-tight.
\end{thm}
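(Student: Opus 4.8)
The plan is as follows. The ``if'' direction is immediate: suppose $H$ is a spanning subgraph of $G$ that is $(2,2)$-$\tau(\Gamma)$-tight. By Theorem~\ref{thm:mainci} (when $\tau(\Gamma)=C_i$) or Theorem~\ref{thm:maincs} (when $\tau(\Gamma)=C_s$), $H$ is $\tau(\Gamma)$-isostatic, so there is a $\tau(\Gamma)$-symmetric framework $(H,p)$ on $\Y$ with $\rank R_{\Y}(H,p)=3|V|-2$. Since $E(H)\subseteq E(G)$, the framework $(G,p)$ (with the same placement) is $\tau(\Gamma)$-symmetric and $R_{\Y}(H,p)$ is a row-submatrix of $R_{\Y}(G,p)$, whence $\rank R_{\Y}(G,p)=3|V|-2$ and $G$ is $\tau(\Gamma)$-infinitesimally rigid.

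For the converse, let $(G,p)$ be a $\tau(\Gamma)$-symmetric infinitesimally rigid framework on $\Y$. Since infinitesimal rigidity is an open condition and the completely $\Gamma$-regular realizations form an open dense subset of the $\Gamma$-symmetric ones, we may take $(G,p)$ to be completely $\Gamma$-regular. As $\Gamma\cong\mathbb{Z}_2$ has exactly two (real, one-dimensional) irreducible representations, the block-decomposition following Proposition~\ref{Rextu=intz} writes $R_{\Y}(G,p)$ as $R_0\oplus R_1$, with $R_0$ carrying the trivial and $R_1$ the sign representation, relative to decompositions $\R^{3|V|}=X_0\oplus X_1$ and $\R^{|E|+|V|}=Y_0\oplus Y_1$; by Theorem~\ref{thm:trivialinv}, $\T(G,p)=T_0\oplus T_1$, and $(G,p)$ is infinitesimally rigid precisely when $\rank R_j=\dim X_j-\dim T_j$ for $j=0,1$. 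The plan is to choose a $\Gamma$-invariant set $S$ of orbits of non-fixed edges of $G$ with $|\bigcup S|=2|V|-2$ so that, after restricting each $R_j$ to the rows indexed by $S$ and by the vertex normals, both blocks still attain maximal rank; the graph $H=(V,\bigcup S)$ is then spanning, $\tau(\Gamma)$-symmetric and has no fixed edge, and, being infinitesimally rigid with $|E(H)|=2|V|-2$, is $(2,2)$-$\tau(\Gamma)$-tight by Theorem~\ref{thm:lm}. A short computation with the character values in Table~\ref{tab:CTCyl} and Table~\ref{table:3.6} shows that in the $C_i$ and $C_s$ cases the dimensions $\dim X_j-\dim T_j$ are exactly those needed for such an $H$ to be isostatic, i.e.\ for $R_0^H$ and $R_1^H$ to be \emph{square} of full rank; this is where $C_i$ and $C_s$ are special, since for the groups involving a half-turn the same bookkeeping forces any symmetric $(2,2)$-tight subgraph to carry a fixed vertex or two fixed edges, which a $\tau(\Gamma)$-infinitesimally rigid graph need not possess (compare Figure~\ref{fig C2 no iso sub}).

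Contracting $R_0$ and $R_1$ by the spans of their vertex-normal rows yields two linear matroids $\mathcal{M}_0,\mathcal{M}_1$ on the set of orbits of non-fixed edges of $G$, each of rank $|V|-1$ (the maximality of $\rank R_j$ supplies this rank), and the task reduces to exhibiting a common basis of $\mathcal{M}_0$ and $\mathcal{M}_1$; I expect this to be the main obstacle. When $\tau(\Gamma)=C_i$ one has $\tau(\varphi)=-I$, so the $R_0$- and $R_1$-rows attached to an edge orbit agree on edges lying inside a single vertex orbit and differ only by a sign on one endpoint-block otherwise, which makes the two matroids tractable enough to pair a basis directly; in general, however, $\mathcal{M}_0\neq\mathcal{M}_1$ and one must invoke matroid intersection, verifying $r_{\mathcal{M}_0}(A)+r_{\mathcal{M}_1}(\overline{A})\geq |V|-1$ for every set $A$ of non-fixed edge orbits. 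For this one identifies $\mathcal{M}_0,\mathcal{M}_1$ with the relevant generic count matroids and checks the inequality by a $(2,2)$-sparsity count on the associated $\tau(\Gamma)$-symmetric subgraphs, using that every $(2,2)$-$\tau(\Gamma)$-tight graph arises from the symmetry-adapted moves of Section~\ref{sec:ops} (Theorems~\ref{thm:cirecursion} and~\ref{thm:recursioncs}) to preclude a violating set. The reflection case $C_s$ requires the most care, since $\tau(\sigma)$ is neither $I$ nor $-I$ and the fixed vertices must be tracked separately through the two blocks. A complete proof is carried out in \cite{Wall}.
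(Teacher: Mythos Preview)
The paper itself gives no proof of this theorem; it only remarks that ``a simplified version of the approach in \cite{ST} may be applied in combination with Theorems~\ref{thm:mainci} and~\ref{thm:maincs}'' and refers to \cite{Wall} for details. Your proposal follows exactly that hint and likewise defers the hard step to \cite{Wall}, so in that sense you and the paper are aligned.

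Your ``if'' direction is complete and correct. For the converse, the block-decomposition setup and the reduction to finding a common basis of two rank-$(|V|-1)$ matroids on the non-fixed edge orbits is the right framework, and your character-table bookkeeping explaining why the target ranks come out equal (so that a common basis yields a \emph{square} isostatic system in each block) correctly isolates what is special about $C_i$ and $C_s$ versus $C_2$. However, the proposal is a plan rather than a proof at the decisive point: you do not verify the matroid-intersection inequality $r_{\mathcal{M}_0}(A)+r_{\mathcal{M}_1}(\overline A)\geq |V|-1$, and the suggestion to ``identify $\mathcal{M}_0,\mathcal{M}_1$ with the relevant generic count matroids'' and then appeal to the recursive constructions is where all the content lies and is left unsubstantiated. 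In particular, for $C_s$ the presence of fixed vertices means the two blocks see genuinely different row structures (a fixed vertex contributes its normal row to only one block), so the matroids are not obviously isomorphic to a single $(2,2)$-sparsity matroid, and one must argue more carefully---either by computing the block-ranks of an arbitrary orbit set directly, or by showing that a greedy symmetric edge-deletion (removing a whole orbit whenever it is dependent in the rigidity matroid) never gets stuck on a fixed edge. As written, your proposal correctly locates the obstacle but does not overcome it.
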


It would be interesting to continue the line of investigation initiated in this paper by looking at other surfaces. As mentioned in the introduction in the case of the sphere we can use the proof technique of Section \ref{sec:necrep} to derive the following theorem. See \cite{Wall} for details.

\begin{thm}
Any $\tau(\Gamma)$-symmetric isostatic framework on the sphere will have the following restrictions:
\begin{center}
    \begin{tabular}{|c|c|}
    \hline
    $\tau(\Gamma)$ & $\text{Number of edges and vertices fixed by symmetry operations}$ \\
    \hline
    $C_s$ & $e_{\sigma} = 1$ \\ 
    $C_2$ & $e_{2} = 1$\\ 
    $C_3$ & $e_3 = v_3 = 0$\\
    $C_{2v}$ & $e_{\sigma} = e_{2} = 1, v_{2} = 0$ \\
    $C_{3v}$ & $e_{3} = 0, v_{3}=0, e_{\sigma} = e_{\sigma'} = e_{\sigma''} = 1$ \\
    $D_{3}$ & $e_{3} = 0, v_{3}= v_{2'} = 0, e_{2'} = 1$ \\
    \hline
    \end{tabular}
\end{center}
\end{thm}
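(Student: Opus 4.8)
The plan is to run the argument of Section~\ref{sec:necrep} verbatim with the cylinder $\Y$ replaced by the unit sphere $S^{2}=\{x^{2}+y^{2}+z^{2}=1\}$, and then compute the one character that genuinely changes. First I would set up the sphere analogues of the block decomposition: on $S^{2}$ the constraint $\|p(v_{i})\|^{2}=1$ differentiates to $x_{i}\dot x_{i}+y_{i}\dot y_{i}+z_{i}\dot z_{i}=0$, so the unit normal at $p_{k}$ is $p_{k}$ itself; since $\tau(\gamma)p_{k}$ is again the normal at $\tau(\gamma)p_{k}$, the proof of Proposition~\ref{Rextu=intz} applies unchanged, and hence so do the Schur's-lemma block decomposition, the invariance statement of Theorem~\ref{thm:trivialinv} and the character identity of Theorem~\ref{thm: character count}. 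Thus for a $\tau(\Gamma)$-symmetric isostatic framework on $S^{2}$ one still has
\[
\chi(\Tilde{P}_{E})=\chi(\tau\otimes P_{V})-\chi((\tau\otimes P_{V})^{(\T)}),
\]
and it remains to compute the three characters and read off the constraints group by group.

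The one new ingredient is $\chi((\tau\otimes P_{V})^{(\T)})$, the analogue of Lemma~\ref{cyl-triv-mot}. For an affinely spanning framework on $S^{2}$ the trivial motion space is the $3$-dimensional space of infinitesimal rotations about the centre; writing such a motion as the field $p\mapsto\omega\times p$ and using $Q(a\times b)=(\det Q)(Qa)\times(Qb)$ for $Q\in O(\R^{3})$, one checks that a symmetry operation $\tau(\gamma)=Q$ acts on the parameter space $\langle\omega\rangle\cong\R^{3}$ by $\omega\mapsto(\det Q)\,Q\omega$. Hence this is the pseudovector representation of $O(\R^{3})$, so $\chi((\tau\otimes P_{V})^{(\T)})(\gamma)=\det(\tau(\gamma))\,\tr(\tau(\gamma))$, which evaluates to $3$ at $\textrm{id}$, $-1$ at a half-turn $c_{2}$ (or $c_{2}'$), $0$ at a rotation $c_{3}$, $-1$ at a reflection $\sigma$, and $3$ at the inversion. (This is precisely where the sphere differs from the cylinder: the trivial motion space is now $3$-dimensional and its character at $c_{3}$ is $0$ rather than $2$, which is why $C_{3}$-isostatic frameworks exist on the sphere.) Together with the immediate identities $\chi(\Tilde{P}_{E})(\gamma)=e_{\gamma}+v_{\gamma}$ and $\chi(\tau\otimes P_{V})(\gamma)=v_{\gamma}\,\tr(\tau(\gamma))$ this produces the full sphere analogue of Table~\ref{tab:CTCyl}; note that, unlike on $\Y$, a rotation $c_{n}$ now fixes the two polar points of its axis, so the counts $v_{n}$ and the possible polar-chord edge $e_{n}$ must be carried along rather than set to zero a priori.

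Finally I would read the character equation off at each non-identity generator. At a reflection $\sigma$ it gives $e_{\sigma}+v_{\sigma}=v_{\sigma}+1$, hence $e_{\sigma}=1$; at a half-turn $c_{2}$ (resp.\ $c_{2}'$) it gives $e_{2}+v_{2}=-v_{2}+1$, forcing $v_{2}=0$ and $e_{2}=1$; at a rotation $c_{3}$ it gives $e_{3}+v_{3}=0$, forcing $e_{3}=v_{3}=0$; and the same equation at the inversion would give $e_{\varphi}=-3$, which is impossible (paralleling Corollary~\ref{element count on cylinder}). Specialising to $C_{s}$, $C_{2}$, $C_{3}$ yields the first three rows of the table, and intersecting the constraints coming from the generators of $C_{2v}=\{\textrm{id},c_{2},\sigma,\sigma'\}$, of $C_{3v}$ (the rotation $c_{3}$ together with its three mirror planes) and of $D_{3}$ (the rotation $c_{3}$ together with its three perpendicular half-turns $c_{2}'$) yields the remaining three rows. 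I expect the main obstacle to be purely bookkeeping: establishing the pseudovector description of the trivial-motion character cleanly, and keeping careful track of which symmetry operations are allowed to fix vertices (poles) and edges (polar chords) on the sphere, so that no count in the character equation is annihilated prematurely; once that is done the remaining arithmetic is routine.
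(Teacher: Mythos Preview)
Your proposal is correct and follows exactly the approach the paper indicates: it explicitly says the theorem is obtained by running the proof technique of Section~\ref{sec:necrep} on the sphere (with details deferred to \cite{Wall}), and your identification of the trivial-motion representation as the pseudovector representation $\omega\mapsto(\det Q)\,Q\omega$ with character $\det(\tau(\gamma))\tr(\tau(\gamma))$ is precisely the sphere analogue of Lemma~\ref{cyl-triv-mot} that drives the new table. The only thing to tidy up is the implicit claim that the six listed groups are the only possibilities: you have shown inversion is excluded, but you should also record that the $c_{n}$ equation for $n\geq 4$ is inconsistent once combined with the constraints from the powers $c_{n}^{k}$ lying in the same cyclic group (e.g.\ $v_{6}\leq v_{2}=0$ kills $C_{6}$), so that $C_{n}$, $C_{nv}$, $D_{n}$ survive only for $n\leq 3$.
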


There is a precise geometric correspondence between infinitesimal rigidity in the plane and on the sphere (see \cite{EJNSTW} for details) and this extends to symmetric frameworks for any plane symmetry group \cite{CNSW}. By applying this projective correspondence alongside the results of \cite{CNSW} alongside \cite{schulze} and \cite{BS4} we immediately obtain precise analogues of our main result for $C_s,C_2$ and $C_3$ symmetry groups on the sphere. The next two cases, $C_{2v}$ and $C_{3v}$, remain open in the plane and hence the same tactic is unavailable. That leaves $D_3$ which does not exist as a symmetry group in the plane. However this group, due to the higher order of the group, is unlikely to be straightforward.

In \cite{NOP14} surfaces with one ambient rigid motion were analysed and, combinatorially, the necessary change in the count is from $(2,2)$-tight to $(2,1)$-tight\footnote{A graph $G=(V,E)$ is $(2,1)$-tight if $|E|=2|V|-1$ and every proper subgraph on $V'$ vertices has at most $2|V'|-1$ edges.}. We expect that elaborations of our techniques would be possible for a small number of groups; interestingly, which groups are tractable may depend on the choice of which surface with one ambient rigid motion is chosen.
However, a possibly more tractable and more widely applicable problem is to move to linearly constrained frameworks (see, for example, \cite{CGJN}). These frameworks, in the 3-dimensional case, model `generic surfaces', but the concept is easily adaptable to arbitrary dimension where some interesting results are known \cite{CGJN,JNT}.

\section*{Acknowledgements}

AN was partially supported by EPSRC grants EP/W019698/1 and EP/X036723/1.

\bibliographystyle{abbrv}

\begin{thebibliography}{40}

\bibitem{Abb} T. Abbott,
\emph{Generalizations of Kempe's universality theorem},
Master's thesis, Massachusetts Institute of Technology (2008).

\bibitem{altherz} 
S. L. Altmann and P. Herzig, \emph{Point-Group Theory Tables}, Clarendon Press, Oxford,
1994

\bibitem{AR} L. Asimow and B. Roth,
\emph{The rigidity of graphs},
Transactions of the American Mathematical Society,
245 (1978) 279--289.

\bibitem{CNSW} K. Clinch, A. Nixon, B. Schulze and W. Whiteley, \emph{Pairing Symmetries for Euclidean and Spherical Framework}, Discrete and Computational Geometry 64 (2020) 483--518.

\bibitem{cg} R. Connelly and S.D. Guest, \emph{Frameworks, Tensegrities, and Symmetry}, Cambridge University Press, 2022.

\bibitem{CGJN} J. Cruickshank, H. Guler, B. Jackson and A. Nixon,
\emph{Rigidity of linearly constrained frameworks}, 
International Mathematics Research Notices, 2020:12 (2020) 3824--3840.

\bibitem{EJNSTW} Y. Eftekhari, B. Jackson, A. Nixon, B. Schulze, S. Tanigawa and W. Whiteley, Point-hyperplane frameworks, slider joints, and rigidity preserving transformations, Journal of Combinatorial Theory: Series B, 135 (2019) 44--74.

\bibitem{FGsymmax} P.W. Fowler and S.D. Guest, \emph{A symmetry extension of Maxwell's rule for rigidity of frames}, International Journal of Solids and Structures 37 (2000), 1793--1804.

\bibitem{Fra} A. Frank, Connections in Combinatorial Optimization, Oxford University Press, 2011.

\bibitem{Hen} B. Hendrickson and D. Jacobs, \emph{An Algorithm for two-dimensional Rigidity Percolation: the Pebble Game}, J. Computational Physics, 137, (1997), 346--365.

\bibitem{JKN} B. Jackson, V. Kaszanitzky and A. Nixon, Rigid cylindrical frameworks with two coincident points, Graphs and Combinatorics, 35:1 (2019) 141--168.

\bibitem{JNT} B. Jackson, A. Nixon and S. Tanigawa, An improved bound for the rigidity of linearly constrained frameworks, SIAM Journal on Discrete Mathematics, 35:2 (2021) 928--933.

\bibitem{KG2} R.D. Kangwai and S.D. Guest, \emph{Symmetry-adapted equilibrium
matrices},  International Journal of Solids and Structures 37 (2000), 1525--1548.

\bibitem{laman} G. Laman, \emph{On graphs and rigidity of plane skeletal structures}, Journal of Engineering Mathematics, 4 (1970) 331--340.

\bibitem{ls} A. Lee and I. Streinu, \emph{Pebble Game Algorithms and Sparse Graphs}, Discrete Mathematics, 308, 8,
(2008), 1425--1437.

\bibitem{N-W} C. St. J. Nash-Williams, \emph{Edge-disjoint spanning trees of finite graphs}, J. London Math. Soc. 36
(1961) 445--450.

\bibitem{NOP} A. Nixon, J.C. Owen, and S.C. Power, 
\emph{Rigidity of frameworks supported on surfaces}, SIAM Journal on Discrete Mathematics. 26:4 (2012) 1733--1757.

\bibitem{NOP14} A.~Nixon,  J.~Owen and S.C.~Power, \emph{A characterization of generically rigid frameworks on surfaces of revolution}, SIAM Journal on Discrete Mathematics, 28:4, (2014), 2008--2028.

\bibitem{NS}
A.~Nixon and B.~Schulze, \emph{Symmetry-forced rigidity of frameworks on surfaces}, Geometriae Dedicata, 182:1, (2016), 163--201.

\bibitem{owen} J. C. Owen and S.C. Power, \emph{Frameworks, Symmetry and Rigidity},
International Journal of Computational Geometry \& Applications 20:6 (2010),723--750.

\bibitem{PG} H. Pollaczek-Geiringer, \emph{Uber die Gliederung ebener Fachwerke}, Zeitschrift fur Angewandte Mathematik und Mechanik (ZAMM), 7 (1927) 58-72.

\bibitem{schulze} B.~Schulze, \emph{Symmetric versions of Laman's theorem}, Discrete Comput. Geom. {\bf 44}(4) (2010), 946-972.

\bibitem{BS4} B.~Schulze, \emph{Symmetric Laman theorems for the groups $C_2$ and $C_s$},
The Electronic Journal of Combinatorics, 17 (2010) 1, R154, 1--61.

\bibitem{BS2}
B.~Schulze, \emph{Block-diagonalized rigidity matrices of symmetric frameworks and
  applications}, Contributions to Algebra and Geometry, 51:2 (2010) 427--466.
  
\bibitem{ST} B. Schulze and S. Tanigawa, {Infinitesimal rigidity of symmetric bar-joint frameworks}, SIAM J. Discrete Math. 29, 3 (2015), 1259--1286. 

\bibitem{SW2} B. Schulze and W. Whiteley,  \emph{Rigidity of symmetric frameworks}, Handbook of Discrete and Computational Geometry,  C.D. Toth, J. O'Rourke, J.E. Goodman, editors, Third Edition, Chapman \& Hall CRC, 2018.

\bibitem{Serre}
J.-P. Serre,  \emph{Linear Representations of Finite Groups}, Springer-Verlag, 1977 

\bibitem{Wall}
J. Wall, \emph{Rigidity of symmetric frameworks on surfaces}, PhD thesis, Lancaster University, 2023.
\end{thebibliography}
\def\lfhook#1{\setbox0=\hbox{#1}{\ooalign{\hidewidth
  \lower1.5ex\hbox{'}\hidewidth\crcr\unhbox0}}}

\end{document}